\newtheorem{theorem}{Theorem}[section]
\newtheorem{corollary}{Corollary}[section]
\newtheorem{definition}{Definition}[section]
\newtheorem{lemma}{Lemma}[section]
\newtheorem{proposition}{Proposition}[section]
\newtheorem{remark}{Remark}[section]
\numberwithin{equation}{section} \numberwithin{theorem}{section}
\numberwithin{lemma}{section} \numberwithin{corollary}{section}
\numberwithin{definition}{section} \numberwithin{remark}{section}
\numberwithin{proposition}{section}
\newcommand{\vx}{\boldsymbol{x}}
\newcommand{\vq}{\boldsymbol{q}}
\newcommand{\vvarphi}{\boldsymbol{\varphi}}
\newcommand{\RR}{{\mathbb R}}
\def\supp{\mathop{\mathrm{supp}}}
\title{\sc Singular limits of the quasi-linear Kolmogorov-type equation\\ with a source term}
\author{{\bf Ivan~V.~Kuznetsov} and {\bf Sergey~A.~Sazhenkov}
\footnote{Lavrentyev Institute of Hydrodynamics, Siberian Branch of Russian Academy of Sciences, Prospekt Lavrentyeva 15, Novosibirsk 630090, Russia; Novosibirsk State University, Mechanical \& Mathematical Department, Pirogova Street 2, Novosibirsk 630090, Russia.} \footnote{{\it E-mail addresses:} kuznetsov\vbox{\hrule width 1mm}i@hydro.nsc.ru (Ivan Kuznetsov), sazhenkovs@yandex.ru (Sergey Sazhenkov).}}
\date{}
\begin{document}
\maketitle
\begin{abstract}
Existence, uniqueness and stability of kinetic and entropy solutions to the boundary value
problem for the Kolmogorov-type genuinely nonlinear ultra-parabolic equation with a smooth source term is established. After this, we consider the case when the source term contains a small positive parameter and collapses to the Dirac delta-function, as this parameter tends to zero. In this case, the limiting passage from the original equation with the smooth source to the impulsive ultra-parabolic equation is fulfilled and rigorously justified. The proofs rely on the method of kinetic equation and on the compensated
compactness techniques for genuinely nonlinear equations.\\[1ex]
\indent 2010 {\it Mathematics Subject Classification.} 35D30, 35K70, 35R12.

{\it Key words and phrases.} Ultra-parabolic equation; Entropy solution; Kinetic formulation; Genuine nonlinearity condition; Impulsive equation.

%35R12 Impulsive partial differential equations
%35K70 Ultraparabolic equations, pseudoparabolic equations, etc.

\end{abstract}

\section*{Introduction}
In this article, we study the second-order quasi-linear
ultra-parabolic equation with partial diffusivity and a smooth
distributed source supplemented by a set of initial, final and
boundary conditions. The article consists of two major parts. The
first part (Sections \ref{Pi-gamma}--\ref{sec.8}) is devoted to
development of the existence and uniqueness theory for the
boundary-value problem under consideration in suitable classes of
kinetic and entropy solutions. The research in the second part
(Sections \ref{SingLim}--\ref{Limited}) is related with the case
when the source term contains a small positive parameter and
collapses to the Dirac delta-function as this small parameter tends
to zero. In this case, we focus on justification of the claim that
the singular limit of the family of kinetic and entropy solutions of
the original problem exists and resolves the boundary-value problem
for the impulsive Kolmogorov-type equation.

Let us recall that ultra-parabolic equations with partial
diffusivity arise in fluid dynamics, physics of particles,
combustion theory, mathematical biology and financial mathematics
\cite{LPP-2002,OL-RAD-1973}. They describe, in particular,
non-stationary transport of matter or energy in cases when effects
of diffusion in some spatial directions are negligible as compared
to convection. In line with the famous works of A.~N.~Kolmogorov
\cite{KOLM-1931,KOLM-1934,KOLM-1937}, these equations are commonly
called {\it Kolmogorov-type equations}. Worth noticing that they
also have some other names in literature. For example, they are
called {\it the Graetz--Nusselt equations} in description of partial
diffusion of heat in fluid dynamics
\cite{MP-1956,PLOT-SAZH-2005,SAZH-2006} or {\it Fokker--Planck
(Fokker--Planck--Kolmogorov) equations} in description of stochastic
diffusive processes modelling Brownian motion \cite{BOG-2015},
\cite[Chapter II, \S\,21]{LIF-PIT}.

The nowadays theory of linear and quasi-linear ultra-parabolic
equations is rather vast. Our study in this article is somewhat
close to the works
\cite{AM-2013,AN-2003,EVZ-1994,Kozh-2010,TAS-2005,TAS-2013,TSA-2001}
devoted to well-posedness topics for boundary-value problems.
Besides the presence of the source term, the peculiarity of the
ultra-parabolic equation, considered in the present article, is that
the flux  in a purely convective direction (assigned to variable
$s$) is non-monotonous. This circumstance makes the question of
proper setting of initial and final conditions with respect to $s$
rather sophisticated. Our approach to overcoming this difficulty
ascends to the works \cite{BLN-1979,O-1996}. More precisely, we
formulate the initial and final conditions in the form of boundary
entropy inequalities or, equivalently, boundary kinetic equalities,
which eventually helps to establish the well-posedness results. The
proofs in the article are heavily based on {\it the genuine
nonlinearity (non-degeneracy) condition} imposed on the equation
under study. We systematically use the techniques elaborated for
genuinely nonlinear ultra-parabolic equations in
\cite{AM-2013,Kuz-2015,Kuz-2016,Kuz-2017,Panov2009,Panov2011,SAZH-2006}.

In the second part of the article, we encounter with the
ultra-parabolic equation, which involves a Dirac delta-function as
the source term. Such equations are called {\it impulsive
equations}. They also can be equivalently written as systems
consisting of the equation with zero source term and additional {\it
impulsive condition}. From the physical viewpoint, an impulsive
source (or condition) reflects phenomena of instantaneous loading,
i.e., drastic change of mass, energy, impulse, etc., at a moment.
Studies of impulsive ordinary differential equations have long
history and cover a wide range of topics in natural sciences (see,
for example, monographs
\cite{Bainov-1993,Laksh-1989,Samoilenko-1995} and references
therein). At the same time, the theory of impulsive partial
differential equations is rather new and far from complete.
Recently, some progress has been achieved in constructing
well-posedness theory for impulsive hyperbolic, elliptic, parabolic,
and abstract equations (see, for example, articles
\cite{Bainov-1995,Bainov-1996,H-Li-2011,Hern-Aki-2008,Hern-OReg-2013}
and references therein). In the present article, we develop the idea
from \cite{V-2002}. Namely, we deduce the equation with the Dirac
delta-function-type source from the equation with the smooth
distributed source. In other words, we fulfill and rigorously
justify the limiting passage from the delayed ultra-parabolic
equation to the impulsive ultra-parabolic equation as the time of
delay tends to zero.

This paper develops and is essentially based on the results
\cite{Kuz-2015,Kuz-2016,Kuz-2017,Kuz-2018,Kuzn-Sazh-2017,Kuzn-Sazh-1-2018,Kuzn-Sazh-2-2018}
due to the authors.

\section{Formulation of the basic problem incorporating\\ a distributed source} \label{Pi-gamma}
Let $\Omega$ be a bounded domain of spatial variables
$\boldsymbol{x}\in\mathbb{R}^d$ with a smooth boundary
$\partial\Omega$ ($\partial\Omega\in C^2$). Let $t\in[0,T]$ and
$s\in[0,S]$ be two independent time-like variables. Here $T$ and $S$
are given positive constants. Denote
\begin{eqnarray*}
 & & \displaystyle G_{T,S}:=\Omega\times(0,T)\times(0,S),\\[1ex]
 & & \displaystyle \Xi^1:=\overline{\Omega}\times[0,S], \quad \Xi^2:=\overline{\Omega}\times[0,T],\\[1ex]
& & \displaystyle  \Gamma_l:=\partial\Omega\times[0,T]\times[0,S],\\[1ex]
& & \displaystyle \Gamma_0^1:=\overline{\Omega}\times\{t=0\}\times[0,S], \quad
\Gamma_T^1:=\overline{\Omega}\times\{t=T\}\times[0,S],\\[1ex]
& & \displaystyle \Gamma_0^2:=\overline{\Omega}\times[0,T]\times\{s=0\},\quad \Gamma_S^2:=\overline{\Omega}\times[0,T]\times\{s=S\}.
\end{eqnarray*}
 In this article, we study the following Cauchy --- Dirichlet problem for the ultra-parabolic equation with partial diffusivity and a source term.\\[1ex]
\noindent \textbf{Problem} $\boldsymbol{\Pi}_{\boldsymbol{\gamma}}$. \emph{It is necessary to
find a function $u$: $G_{T,S}\mapsto\mathbb{R}$ satisfying
the quasi-linear ultra-parabolic Kolmogorov-type equation
\begin{subequations}  \label{e1.01}
\begin{equation}
\partial_tu+\partial_sa(u)+{\rm div}_x\boldsymbol{\varphi}(u)=
\Delta_xu+Z_\gamma(\boldsymbol{x},t,s,u),\quad(\boldsymbol{x},t,s)\in
G_{T,S},
\label{e1.01a}%
\end{equation}
the initial condition with respect to time-like variable $t$
\begin{equation}
u\vert_{t=0}=u_0^{(1)}(\boldsymbol{x},s),
\quad (\boldsymbol{x},s)\in\Xi^1,
\label{e1.01b}%
\end{equation}
the initial and final conditions with respect to time-like variable $s$
\begin{equation}
u\vert_{s=0}\approx u_0^{(2)}(\boldsymbol{x},t),\quad
u\vert_{s=S}\approx u_{S}^{(2)}(\boldsymbol{x},t),
\quad (\boldsymbol{x},t)\in\Xi^2,
\label{e1.01c}%
\end{equation}
and the homogeneous boundary condition
\begin{equation}
u\vert_{\Gamma_l}=0.
\label{e1.01d}%
\end{equation}
\end{subequations}}
In the formulation of Problem $\Pi_\gamma$, we suppose that the
initial and final data $u_0^{(1)}$, $u_0^{(2)}$, $u_S^{(2)}$, the
    nonlinearities $a=a(\lambda)$, $\boldsymbol{\varphi}=(\varphi_1(\lambda),\ldots,\varphi_d(\lambda))$, and
the source term $Z_\gamma=Z_\gamma(\boldsymbol{x},t,s,\lambda)$ are
given and satisfy the conditions stated further.

In \eqref{e1.01c} the relation sign $\approx$ means that $u_0^{(2)}$
and $u_S^{(2)}$ may be unattained by a solution $u$ on some parts of
the sets $\Gamma_0^2$ and $\Gamma_S^2$, respectively. The fact
whether $\approx$ becomes equality $(=)$, or not, is figured out
\textbf{\emph{a posteriori}}, i.e., after a solution of equation \eqref{e1.01a} is constructed somehow.\\[1ex]
\noindent{\bf Conditions on $\boldsymbol{u}_{\boldsymbol{0}}^{{\bf (1)}} \boldsymbol{\&}\boldsymbol{u}_{\boldsymbol{0}}^{\bf (2)}\boldsymbol{\&} \boldsymbol{u}_{\boldsymbol{S}}^{\bf (2)}$.} The
initial and final data meet the regularity requirements
\begin{equation}
\label{u1.01}%
u_0^{(1)}\in C^{2+\alpha}(\Xi^1),\quad u_0^{(2)},u_S^{(2)}\in
C^{2+\alpha}(\Xi^2),\quad\alpha\in(0,1),
\end{equation}
and the consistency conditions
\begin{equation}
\label{u1.02}%
u_0^{(1)}=0\text{ in a neighborhood of }\partial\Xi^1,
\end{equation}
\begin{equation}
\label{u1.03}%
u_0^{(2)}=0,\,u_S^{(2)}=0\text{ in a neighborhood of
}\partial\Xi^2.
\end{equation}
By $C^{2+\alpha}(\overline{\mathcal{O}})$
($\overline{\mathcal{O}}\subset\mathbb{R}^N$) we standardly denote
the space of twice-differentiable functions, whose second
derivatives are H\"{o}lder-continuous with exponent
$\alpha\in(0,1)$, equipped with the norm
\[
\left\Vert\Phi\right\Vert_{C^{2+\alpha}(\overline{\mathcal{O}})}=
\left\Vert\Phi\right\Vert_{C^2(\overline{\mathcal{O}})}+\sum\limits_{\vert\boldsymbol{k}\vert=2}
\sup\limits_{{\,}^{\boldsymbol{\zeta},\boldsymbol{\eta}\in\mathcal{O}}_{\boldsymbol{\zeta}\neq\boldsymbol{\eta}}}
\frac{\left\vert
D^{\boldsymbol{k}}\Phi(\boldsymbol{\zeta})-D^{\boldsymbol{k}}\Phi(\boldsymbol{\eta})
\right\vert}{\left\vert\boldsymbol{\zeta}-\boldsymbol{\eta}\right\vert^\alpha}.
\]

\noindent{\bf Conditions on $\boldsymbol{a}\boldsymbol{\&}\boldsymbol{\varphi}\boldsymbol{\&}\boldsymbol{Z}_{\boldsymbol{\gamma}}$.}
{\bf (i)} Functions $a,Z_\gamma$ and $\varphi_i$
$(i=1,\ldots,d)$ meet the regularity requirements
\begin{equation}
\label{u1.04}%
a\in C_{\mathrm{loc}}^2(\mathbb R),\quad a(0)=0,\quad Z_\gamma\in
L^\infty(0,T;C_{\mathrm{loc}}^1(\Xi^1\times\mathbb{R})),\quad \varphi_i\in
C_{\mathrm{loc}}^2(\mathbb{R}),\quad \varphi_i(0)=0.
\end{equation}
{\bf (ii)}
Function $a$ satisfies {\it the genuine nonlinearity
condition}:
\begin{equation}
\label{e1.02}%
{\rm
meas}\left\{\lambda\in\mathbb{R}\!:\,\,\xi_1+a'(\lambda)\xi_2=0\right\}=0 \quad \mbox{for every fixed } (\xi_1,\xi_2)\in\mathbb{S}^{1}.
\end{equation}
{\bf (iii)}  Function $Z_\gamma$ satisfies the following growth condition:\\[1ex]
there exist constants $b_\gamma^{(1)},b_\gamma^{(2)}\geqslant0$ such
that for a.e. $(\boldsymbol{x},t,s)\in G_{T,S}$ and
$\lambda\in\mathbb{R}$ the inequality
\begin{equation}
\label{u1.05}%
\lambda Z_\gamma(\boldsymbol{x},t,s,\lambda)\leqslant
b_\gamma^{(1)}\lambda^2+ b_\gamma^{(2)}
\end{equation}
holds.\\[1ex]
\indent In item (ii) and further in the article, by $\mathbb{S}^{1}$ we
denote the unit circle in $\mathbb{R}^{2}$ centered at the origin,
$\mathbb{S}^{1}:=\{(\xi_1,\xi_2)\in\mathbb{R}^{2}\!:\,\xi_1^2+\xi_2^2=1\}$, and by ${\rm meas}\, {\mathcal Q}$ we denote the Lebesgue measure of any Lebesgue-measurable set $\mathcal Q$.

\begin{remark}
\label{rem.1.1}%
According to the existing theory of genuinely nonlinear
ultra-parabolic equations \cite{Panov2009}, condition \eqref{e1.02}
can be generalized in the following way:
$$\mbox{set } \left\{\lambda\in\mathbb{R}\!:\,\xi_1+a'(\lambda)\xi_2=0\right\}
\mbox{ has empty interior for each } (\xi_1,\xi_2)\in\mathbb{S}^1.$$
\end{remark}

\begin{remark}
\label{rem.1.2}%
According to the well-known theory of quasi-linear parabolic
equations \cite[Chapter 1, Section 2]{LSU-1968}, the growth condition
\eqref{u1.05} along with the regularity demands \eqref{u1.01} provides the
maximum principle for a solution (if any) of Problem $\Pi_\gamma$.
  \end{remark}

\begin{remark}
\label{rem.1.3}%
Note that, with $u\big\vert_{s=0}=u_0^{(2)}$ and
$u\big\vert_{s=S}=u_S^{(2)}$ on $\Xi^2$ on the place of
\eqref{e1.01c}, Problem $\Pi_\gamma$ becomes ill-posed. Indeed, since
function $a=a(\lambda)$ is nonlinear and, in general,
non-monotonous, it may be impossible to equate a solution $u$ of
Problem $\Pi_\gamma$  to $u_0^{(2)}$ and $u_S^{(2)}$ on the
\textbf{entire} sets $\Gamma_0^2$ and $\Gamma_S^2$. Therefore, we
permit that  a possible weak solution of Problem $\Pi_\gamma$ may
deviate from $u_0^{(2)}$ and $u_S^{(2)}$ on $\Gamma_0^2$ and
$\Gamma_S^2$, respectively. We set up a more loose non-classical
condition \eqref{e1.01c} following the original ideas presented in
\cite{BLN-1979,O-1996}, see also
\cite[Sections 2.6--2.8]{MNRR-1996}.
  \end{remark}

In the formulation of Problem $\Pi_\gamma$ and in Conditions on
$a\&\boldsymbol\varphi\& Z_\gamma$ above, the label `$\gamma$' is
`dumb' so far.
%More certainly, it is a fixed small nonnegative parameter.
Our first main result in this article consists of the proof of
existence, uniqueness and stability of entropy and kinetic solutions to Problem
$\Pi_\gamma$. Thus we generalize the results established in
\cite{Kuz-2017} onto the case of equations with the source term. In
order to do this, we introduce into considerations and
systematically study a strictly parabolic regularized formulation.

After this, in Sections \ref{SingLim}--\ref{Limited} we consider the case when the
source term has the specific form
\begin{equation}
\label{u1.06}%
Z_\gamma(\boldsymbol{x},t,s,\lambda)=K_\gamma(t,\tau)\beta(\boldsymbol{x},s,\lambda),
  \end{equation}
where $\tau\in(0,T)$ is a given fixed value and functions $K_\gamma$
and $\beta$ satisfy the following requirements.\\[1ex]
\noindent{\bf Conditions on $\boldsymbol{K}_{\boldsymbol{\gamma}}\boldsymbol{\&}\boldsymbol{\beta}$.}
{\bf (i)} Function $K_\gamma$: $\mathbb{R}^2\mapsto\mathbb{R}^+$ is
defined by the formula
\begin{subequations}
\begin{equation}
\label{u1.07i}%
K_\gamma(t,\tau)=\mathbf{1}_{(t\leqslant\tau)}\, \frac{2}{\gamma}\,\omega\left(\frac{t-\tau}{\gamma}\right)
\quad(\gamma>0),
\end{equation}
where $\omega$: $\mathbb{R}\to\mathbb{R}^+$ is a standard regularizing
kernel having the properties
\begin{equation}
\label{u1.07ii}%
\omega\in C_0^\infty(\mathbb{R}),\quad\omega(t)\geqslant 0,\quad
\omega(-t)=\omega(t)\quad\forall\, t\in\mathbb{R},
  \end{equation}
\begin{equation}
\label{u1.07iii}%
\mathrm{supp\,}\omega\subset[-1,1],\quad\int\limits_{\mathbb{R}}\omega(t)\,dt=1.
  \end{equation}
{\bf (ii)} Function
$\beta$: $\Xi^1\times\mathbb{R}_\lambda\mapsto\mathbb{R}$ belongs to
the space  $C_0^1(\Xi^1\times\mathbb{R}_\lambda)$ and
satisfies the growth condition:
\begin{equation}
\label{u1.07iv}%
\max\limits_{\Xi^1\times\mathbb{R}_\lambda}
\left\vert\partial_\lambda\beta(\boldsymbol{x},s,\lambda)\right\vert\leqslant
b_0=\mathrm{const}<+\infty,
  \end{equation}
and the localization condition:
\begin{equation}
\label{u1.07v}%
\begin{split}
& \beta=0 \mbox{ in some neighborhood of } \partial \Xi^1 \mbox{ for all } \lambda\in \mathbb{R},\\
& \mbox{there is } b_1=\mbox{const}>0 \mbox{ such that } \beta=0
\mbox{ for all } |\lambda| > b_1.
\end{split}
\end{equation}
    \end{subequations}

\begin{remark}
\label{rem.1.4}%
On the strength of item $\mathrm{(i)}$ in Conditions on
$K_\gamma\&\beta$, we easily deduce that
\begin{equation}
\label{u1.08}%
K_\gamma(\cdot,\tau)\underset{\gamma\to0+}{\longrightarrow}\delta_{(t=\tau-0)}\text{
weakly}^*\text{ in }\mathcal{M}(\mathbb{R}),
  \end{equation}
i.e.,
\begin{equation}
\label{u1.09}%
\lim\limits_{\gamma\to0+}\int\limits_{\mathbb{R}}\phi(t)K_\gamma(t,\tau)\,dt=\phi(\tau-0)
  \end{equation}
for any integrable in a neighborhood of
$\{t=\tau\}\subset\mathbb{R}$ function $\phi$ having the trace at the point
$t=\tau$ from the left:
\[
\phi(\tau-0)=\lim\limits_{t\to\tau-0}\phi(t).
\]
\end{remark}

In \eqref{u1.08} and further in the article,
$\mathcal{M}(\mathbb{R})$ is the normed space of Radon measures and
$\delta_{(t=\tau)}$ is the Dirac delta-function on $\mathbb{R}$
concentrated at the point $t=\tau$.

As an example of proper $\omega$ in \eqref{u1.07i}, we can take the
classical regularizing kernel
\[
\omega(t)=\left\{
\begin{array}{lll} \displaystyle
C_*^{-1}\exp\left(\frac{-1}{1-t^2}\right) &\text{ for
}&0\leqslant|t|<1,
\\
0 &\text{ for }&|t|\geqslant1,
  \end{array}
\right.
\]
where $C_*=\mathrm{const}>0$ ($C_*\approx 2,2522$) is such that
condition \eqref{u1.07iii}$_2$ holds.

\begin{remark}
\label{rem.1.5}%
 Using the Lagrange mean value theorem and \eqref{u1.07iv}, we find
\begin{equation}
\label{u1.10}%
\beta(\boldsymbol{x},s,\lambda)\lambda=\left(\partial_{\lambda'}\beta(\boldsymbol{x},s,\lambda')
\big\vert_{\lambda'=\theta\lambda}\right)\lambda^2-\beta(\boldsymbol{x},s,0)\lambda\leqslant
b_0\lambda^2-\beta(\boldsymbol{x},s,0)\lambda,\;\forall\,(\boldsymbol{x},s)\in\Xi^1,\,\forall\,\lambda\in\mathbb{R},
  \end{equation}
where $\theta\in(0,1)$. Using \eqref{u1.07i}--\eqref{u1.07iii}, by
rather simple evaluation from \eqref{u1.10} we deduce that the
source term $Z_\gamma=K_\gamma\beta$ satisfies the growth condition
\eqref{u1.05} with
$$\beta_\gamma^{(1)}=2\gamma^{-1}
\|\omega\|_{C[-1,1]}\left(b_0+2^{-1}\|\beta(\cdot,\cdot,0)\|_{C(\Xi^1)}\right),\quad
\beta_\gamma^{(2)}=\gamma^{-1}\Vert\omega\Vert_{C[-1,1]}\|\beta(\cdot,\cdot,0)\|_{C(\Xi^1)}.$$

Thus, Conditions on $K_\gamma\&\beta$ are consistent with Conditions
on $a\&\boldsymbol{\varphi}\& Z_\gamma$.
  \end{remark}

Our second main result in this article consists of limiting passage as $\gamma\to0+$
in Problem $\Pi_\gamma$, with $Z_\gamma$ of the form \eqref{u1.06}. We prove that the family $\{u_\gamma\}$ of kinetic
solutions to Problem $\Pi_\gamma$ has the unique limit $u_*\in
L^\infty(G_{T,S})\cap
L^2((0,T)\times(0,S);\text{\it\r{W}}{}_2^{\,1}(\Omega))$ in
$L^1$-strong sense as $\gamma\to0+$, and that this limiting
function $u_*$ is the unique kinetic and entropy solution of the
limiting problem. This limiting problem is formulated in Section
\ref{SingLim} further. It is called {\it Problem $\Pi_0$}.

\section{Parabolic Regularization}
\label{sec.2}

We construct a kinetic solution of Problem $\Pi_\gamma$ as a
singular limit of the family of classical solutions $u_\varepsilon$ of the following
strictly parabolic model.\\[1ex]
\noindent{\bf Problem $\boldsymbol{\Pi}_{\boldsymbol{\gamma}\boldsymbol{\varepsilon}}$.} \emph{For
arbitrarily given boundary data satisfying Conditions on
$u_0^{(1)}\&u_0^{(2)}\&u_{S}^{(2)}$, it is necessary to find a
function $u_\varepsilon$: $G_{T,S}\mapsto\mathbb{R}$ satisfying
\begin{subequations}
\label{e2.01}
the quasi-linear parabolic equation
\begin{equation}
\label{e2.01a}
\partial_{t}u_{\varepsilon}+\partial_{s}a(u_\varepsilon)+
{\rm div}_x\boldsymbol{\varphi}(u_\varepsilon)=\Delta_x
u_\varepsilon
+\varepsilon\partial_{ss}^2u_\varepsilon+Z_\gamma(\boldsymbol{x},t,s,u_\varepsilon),\quad
(\boldsymbol{x},t,s)\in G_{T,S},
\end{equation}
the initial condition
\begin{equation}
u_\varepsilon\vert_{t=0}=u_0^{(1)}(\boldsymbol{x},s),
\,\,(\boldsymbol{x},s)\in\Xi^1,
\label{e2.01b}%
\end{equation}
the initial and final conditions
\begin{equation}
u_\varepsilon\vert_{s=0}=u_0^{(2)}(\boldsymbol{x},t),\,\,
u_\varepsilon\vert_{s=S}=u_S^{(2)}(\boldsymbol{x},t),
\,\,(\boldsymbol{x},t)\in\Xi^2,
\label{e2.01c}%
\end{equation}
and the homogeneous boundary condition
\begin{equation}
u_\varepsilon\vert_{\Gamma_l}=0.
\label{e2.01d}%
\end{equation}
\end{subequations} Here $\varepsilon\in(0,1]$ is an arbitrarily fixed
small parameter. }

\begin{proposition}
\label{prop.2.0}%
Whenever Conditions on $u_0^{(1)}\&u_0^{(2)}\&u_S^{(2)}$ and
Conditions on $a\&\boldsymbol{\varphi}\&Z_\gamma$ hold, for any
fixed $\varepsilon>0$ there exists the unique classical solution
$u_\varepsilon=u_\varepsilon(\boldsymbol{x},t,s)$ of Problem
$\Pi_{\gamma\varepsilon}$ such that $u_\varepsilon\in
H^{\alpha',\frac{\alpha'}{2}}(\overline{G}_{T,S})\cap
H^{2+\alpha'',1+\frac{\alpha''}2}(G_{T,S})$, where
$\alpha',\alpha''\in(0,1)$ depend on $\alpha$ and $\varepsilon$.

Moreover, the maximum principle
\begin{multline}
\label{e2.03}%
\!\!\!\!\!\!\left\Vert u_{\varepsilon}\right\Vert
_{L^{\infty}(\Xi_1\times(0,t'))}\leqslant \\
\inf\limits_{\xi>b_\gamma^{(1)}} \left(e^{\xi t'}\max\left\{\Vert
u_{0}^{(1)}\Vert_{L^{\infty}(\Xi^1)}, \Vert
u_{0}^{(2)}\Vert_{L^{\infty}(\Omega\times(0,t'))},\Vert
u_{S}^{(2)}\Vert_{L^{\infty}(\Omega\times(0,t'))},
\sqrt{\frac{b_\gamma^{(2)}}{\xi-b_\gamma^{(1)}}}\,\right\}\right)\overset{\mathrm{def}}{=}
\\
M\left(t',\|u_0^{(1)}\|_{L^\infty(\Xi^1)},\|u_0^{(2)}\|_{L^\infty(\Omega\times(0,t'))}
\|u_S^{(2)}\|_{L^\infty(\Omega\times(0,t'))}\right)\leqslant
M\Big\vert_{t'=T}\equiv M_0<+\infty,\\ \forall\, t'\in(0,T],
  \end{multline}
and the energy estimate
\begin{equation}
\label{e2.04}%
\int\limits_{G_{T,S}}\left(  \left\vert \nabla_x
u_{\varepsilon}\right\vert^2+\varepsilon\left\vert
\partial_{s}u_{\varepsilon}\right\vert^2\right)
~d\boldsymbol{x}dtds\leqslant C_1
\end{equation}
hold. The constant $C_1$ does not depend on $\varepsilon$.
\end{proposition}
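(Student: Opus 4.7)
My plan is to invoke the standard theory of quasi-linear uniformly parabolic equations from \cite{LSU-1968}. For fixed $\varepsilon>0$ the principal part $-\Delta_{\boldsymbol{x}}-\varepsilon\partial_{ss}^2$ is uniformly elliptic in the ``spatial'' variable $(\boldsymbol{x},s)\in\Omega\times(0,S)$, with $t\in(0,T)$ acting as the sole time variable; the lower-order coefficients $a'(u),\boldsymbol{\varphi}'(u),Z_\gamma$ are smooth in $u$, and the consistency conditions \eqref{u1.02}--\eqref{u1.03} force the Cauchy--Dirichlet data to vanish on neighborhoods of all corners and edges of the parabolic cylinder $\Omega\times(0,S)\times(0,T)$, so the compatibility conditions of LSU are automatic. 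Once the a priori $L^\infty$ bound below is in hand, the nonlinearities $a,\boldsymbol{\varphi}$ can be truncated to a compact range of $u$, Schauder estimates apply, and the Leray--Schauder scheme yields the unique classical solution of H\"older class $H^{2+\alpha'',1+\alpha''/2}(G_{T,S})\cap H^{\alpha',\alpha'/2}(\overline{G}_{T,S})$ with exponents depending on $\alpha$ and $\varepsilon$.

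\textbf{Maximum principle via barriers.} For \eqref{e2.03} I would construct an explicit supersolution $V(t):=M_0\,e^{\xi t}$ independent of $(\boldsymbol{x},s)$, where $\xi>b_\gamma^{(1)}$ is arbitrary and $M_0$ is the max-quantity appearing under the infimum in \eqref{e2.03}. Because $V$ is constant in $(\boldsymbol{x},s)$, the convective and diffusive contributions vanish identically, so the supersolution inequality reduces to $\xi V\geqslant Z_\gamma(\boldsymbol{x},t,s,V)$; using \eqref{u1.05} in the form $Z_\gamma(\cdot,V)\leqslant b_\gamma^{(1)} V + b_\gamma^{(2)}/V$ (valid for $V>0$) this becomes $V^2\geqslant b_\gamma^{(2)}/(\xi-b_\gamma^{(1)})$, which is built into the choice of $M_0$. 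The same $M_0$ ensures $V\geqslant u_0^{(1)}$ on $\Gamma_0^1$, $V\geqslant u_0^{(2)},u_S^{(2)}$ on $\Gamma_0^2,\Gamma_S^2$, and $V\geqslant 0=u_\varepsilon|_{\Gamma_l}$. The classical comparison principle for strictly parabolic equations then gives $u_\varepsilon\leqslant V$ on $\Xi^1\times[0,t']$; the symmetric barrier $-V$ yields the lower bound, and taking the infimum over admissible $\xi$ produces \eqref{e2.03}.

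\textbf{Energy estimate uniform in $\varepsilon$.} I would test \eqref{e2.01a} with $u_\varepsilon$ itself and integrate over $G_{T,S}$. The time-derivative term yields $\tfrac12\int_{\Xi^1}(u_\varepsilon^2|_{t=T}-|u_0^{(1)}|^2)\,d\boldsymbol{x}\,ds$, controlled by $M_0$. The nonlinear fluxes are written in divergence form through the antiderivatives $A(\lambda):=\int_0^\lambda\mu\,a'(\mu)\,d\mu$ and $\Phi_i(\lambda):=\int_0^\lambda\mu\,\varphi_i'(\mu)\,d\mu$: the $\boldsymbol{\varphi}$-term vanishes after integration by parts in $\boldsymbol{x}$ thanks to \eqref{e2.01d}, while the $\partial_s a$-term leaves only boundary contributions at $s=0,S$ bounded in terms of $M_0$ and the $C^{2+\alpha}$-norms of the data. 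Integration by parts in the diffusive terms shifts $\int_{G_{T,S}}(|\nabla_{\boldsymbol{x}}u_\varepsilon|^2+\varepsilon|\partial_s u_\varepsilon|^2)$ to the left-hand side, up to standard boundary contributions handled via the traces of $u_0^{(2)},u_S^{(2)}$. The source is absorbed through $\int_{G_{T,S}}u_\varepsilon Z_\gamma\leqslant b_\gamma^{(1)}M_0^2|G_{T,S}|+b_\gamma^{(2)}|G_{T,S}|$ by \eqref{u1.05}, giving \eqref{e2.04} with a constant $C_1$ depending only on $M_0$, $b_\gamma^{(i)}$, $T,S$, and the data.

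\textbf{Main obstacle.} The delicate point is arranging these estimates so that both \eqref{e2.03} and $C_1$ in \eqref{e2.04} are genuinely $\varepsilon$-independent, which will be essential for the vanishing-viscosity limit $\varepsilon\to 0+$ performed in the next sections. This is achieved precisely because the barrier argument involves no $\varepsilon$ at all, and because the $\varepsilon\partial_{ss}^2 u_\varepsilon$ term enters the energy identity with a favorable sign, contributing $+\varepsilon|\partial_s u_\varepsilon|^2$ to the left-hand side of \eqref{e2.04} rather than demanding $\varepsilon$-dependent absorption. By contrast, the Hölder exponents $\alpha',\alpha''$ and the Schauder constants in the existence step do degenerate as $\varepsilon\to 0+$, but this is immaterial at the level of Proposition \ref{prop.2.0}.
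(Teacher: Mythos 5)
Your existence step and your barrier argument for \eqref{e2.03} are sound; the paper itself disposes of this proposition by a bare citation to \cite{LSU-1968} (Chapter 1, Theorem 2.9 for the maximum principle and Chapter 5, Theorem 6.2 for classical solvability), and your supersolution $V(t)=N_\xi e^{\xi t}$, with $N_\xi$ the maximum appearing under the infimum in \eqref{e2.03}, is essentially the substitution $u=e^{\xi t}v$ on which the LSU bound is built. The verification $\xi V\geqslant Z_\gamma(\cdot,V)$ via \eqref{u1.05} and the choice of $N_\xi$ so that $V$ dominates the data on the whole parabolic boundary $\Gamma_0^1\cup\Gamma_0^2\cup\Gamma_S^2\cup\Gamma_l$ are correct, and the infimum over $\xi>b_\gamma^{(1)}$ reproduces \eqref{e2.03} exactly.

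There is, however, a genuine gap in your energy estimate. Testing \eqref{e2.01a} with $u_\varepsilon$ itself, the term $-\varepsilon\int_{G_{T,S}}u_\varepsilon\,\partial_{ss}^2u_\varepsilon\,d\boldsymbol{x}dtds$ produces, besides the good term $\varepsilon\int|\partial_su_\varepsilon|^2$, the boundary contribution $-\varepsilon\int_{\Xi^2}\bigl(u_S^{(2)}\,\partial_su_\varepsilon\vert_{s=S}-u_0^{(2)}\,\partial_su_\varepsilon\vert_{s=0}\bigr)\,d\boldsymbol{x}dt$, which involves the \emph{normal derivative} $\partial_su_\varepsilon$ on $\Gamma_0^2\cup\Gamma_S^2$, not merely the traces of $u_\varepsilon$. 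Since $u_0^{(2)}$ and $u_S^{(2)}$ need not vanish, this term is not ``handled via the traces of $u_0^{(2)},u_S^{(2)}$'': the only available control of $\partial_su_\varepsilon$ up to $\{s=0\}$ and $\{s=S\}$ comes from Schauder estimates whose constants blow up as $\varepsilon\to0+$, so retaining it would destroy precisely the $\varepsilon$-independence of $C_1$ that you (rightly) identify as the point of \eqref{e2.04}. The standard repair --- and the one the authors themselves employ when they rederive a refined version of this estimate in Lemma \ref{lem.11.2} --- is to test with $u_\varepsilon-\hat u$, where $\hat u$ is a smooth lift of the data (available by \eqref{u1.01}--\eqref{u1.03}, e.g.\ a harmonic extension) vanishing on $\Gamma_l$ and matching $u_0^{(1)},u_0^{(2)},u_S^{(2)}$ on $\Gamma_0^1,\Gamma_0^2,\Gamma_S^2$. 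The test function then vanishes on all lateral boundaries, every boundary term disappears, and the additional interior terms involving $\hat u$ are absorbed by Young's inequality, the contribution $\varepsilon|\partial_s\hat u|^2$ being harmless for $\varepsilon\in(0,1]$. With that modification your outline closes and is consistent with the route the paper takes.
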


In the formulation of Proposition \ref{prop.2.0}, by
$H^{\alpha',\frac{\alpha'}2}(\overline{G}_{T,S})$ we standardly
denote the spaces of H\"{o}lder-continuous functions of $(\boldsymbol{x},s)$ and $t$, with exponents $\alpha'$ and
$\frac{\alpha'}2$, respectively. By
$H^{2+\alpha'',\,1+\frac{\alpha''}2}(G_{T,S})$ we denote the space of
differentiable functions $\phi$ on $G_{T,S}$ such that
$\partial_t\phi$, $\partial_{x_i}\phi$, $\partial_s\phi$,
$\partial_{x_ix_j}^2\phi$, $\partial_{ss}^2\phi$,
$\partial_{sx_i}^2\phi\in H^{\alpha'',\frac{\alpha''}2}(G_{T,S})$,
$i,j=1,\ldots,d$.\\[1ex]
{\it Proof} of Proposition \ref{prop.2.0}
directly follows from the well-known theory of quasi-linear parabolic
equations of the second order \cite[Chapter 1, Theorem 2.9; Chapter 5,
Theorem 6.2]{LSU-1968}. \qed\\[1ex]
\indent In view of the forthcoming consideration of impulsive equation in
Sec. \ref{SingLim}--\ref{Limited}, it is suitable also to introduce
the notion of weak solutions to Problem $\Pi_{\gamma\varepsilon}$
for the case of weaker restrictions on initial and final
data than the ones that take place in Conditions on
$u_0^{(1)}\&u_0^{(2)}\&u_S^{(2)}$.

Let us suppose that the initial and final data meet the weakened
regularity requirements
\begin{equation}
\label{u1.01w}%
u_0^{(1)}\in L^\infty(\Xi^1),\quad u_0^{(2)},u_S^{(2)}\in
C^{2+\alpha}(\Xi^2),\quad\alpha\in(0,1),
  \end{equation}
and the consistency conditions \eqref{u1.03}.

Let $\hat{u}\in L^\infty(G_{T,S})\cap C(0,T;W_2^1(\Xi^1))$ be an
arbitrary extension of $u_0^{(1)}$, $u_0^{(2)}$ and $u_S^{(2)}$ into
$G_{T,S}$ such that $\hat{u}\vert_{\Gamma_l}=0$.

\begin{remark}
\label{rem.2.01}%
Such $\hat{u}$ exists due to the well-known Inverse Trace Theorem
(see, for example, \cite[Theorem 5.1]{Lions-Mag-1961} or \cite[Theorem
2.22]{Steinbach-2008}) and requirements \eqref{u1.01w} and
\eqref{u1.03}.
\end{remark}

\begin{definition}
\label{def.2.01}%
 Function $u_{\varepsilon}\in L^\infty(G_{T,S})\cap
L^2(0,T;\text{\it\r{W}}{}_2^{\,1}(\Xi^1))$ is called a weak solution of Problem
$\Pi_{\gamma\varepsilon}$, if it satisfies the following demands.
\begin{enumerate}
\item[1.]%
The equality $u_\varepsilon-\hat{u}=0$ holds on
$\Gamma_0^1\cup\Gamma_0^2\cup\Gamma_S^2\cup\Gamma_l$ in the trace
sense.
\item[2.]%
The integral equality
\begin{equation}
\label{e2.02}%
\int\limits_{G_{T,S}}\big(-u_\varepsilon\partial_t\phi-a(u_\varepsilon)\partial_s\phi-
\boldsymbol{\varphi}(u_\varepsilon)\cdot\nabla_x\phi+
\nabla_xu_\varepsilon\cdot\nabla_x \phi
+\varepsilon\partial_{s}u_\varepsilon\partial_{s}\phi
-Z_\gamma(\boldsymbol{x},t,s,u_\varepsilon)\phi\big)\,d\boldsymbol{x}dtds=0
  \end{equation}
holds for every $\phi\in
L^{\infty}(G_{T,S})\cap\text{\it\r{W}}{}_2^{\,1}(G_{T,S})$.
\end{enumerate}
\end{definition}

\begin{proposition}
\label{prop.2.1}%
Whenever $u_0^{(1)}$, $u_0^{(2)}$ and $u_S^{(2)}$ satisfy the
requirements \eqref{u1.01w} and \eqref{u1.03}, and Conditions on
$a\&\boldsymbol{\varphi}\&Z_\gamma$ hold, for any fixed
$\varepsilon\in(0,1]$ there exists the unique weak solution
$u_\varepsilon=u_\varepsilon(\boldsymbol{x},t,s)$ of Problem
$\Pi_{\gamma\varepsilon}$ such that $u_\varepsilon\in
W_2^{2,1}(G')$, where $G'$ is an arbitrary strictly interior
subdomain  of $G_{T,S}$ with a smooth boundary.

Moreover, the maximum principle \eqref{e2.03} and the energy estimate
\eqref{e2.04} hold and $u_\varepsilon$ possesses the additional
regularity property: $u_\varepsilon\in
H^{\alpha',\frac{\alpha'}2}(G_{T,S}\cup\Xi_*^2)$, where $\Xi_*^2$ is
an arbitrary closed subset of $\Xi^2$ such
that $\Xi_*^2\cap\{t=0\}=\emptyset$ and $\partial\,\Xi_*^2$ is smooth.
\end{proposition}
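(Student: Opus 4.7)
The plan is to obtain $u_\varepsilon$ as a limit of classical solutions corresponding to mollified initial data. First I would regularize $u_0^{(1)}$: for each small $\eta>0$, choose $u_{0,\eta}^{(1)}\in C^{2+\alpha}(\Xi^1)$ with $u_{0,\eta}^{(1)}=0$ in a neighborhood of $\partial\Xi^1$, compatible with $u_0^{(2)}$ and $u_S^{(2)}$ at $t=0$ on $\Gamma_0^2\cap\Gamma_0^1$ and $\Gamma_S^2\cap\Gamma_0^1$, uniformly bounded by $\|u_0^{(1)}\|_{L^\infty(\Xi^1)}$, and converging to $u_0^{(1)}$ a.e.\ and in $L^p(\Xi^1)$ for every finite $p$ as $\eta\to 0+$. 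Apply Proposition \ref{prop.2.0} with initial datum $u_{0,\eta}^{(1)}$ (the data $u_0^{(2)}$, $u_S^{(2)}$ already satisfy \eqref{u1.01}) to obtain classical solutions $u_{\varepsilon,\eta}$. The bounds \eqref{e2.03} and \eqref{e2.04} are uniform in $\eta$ since the right-hand side of \eqref{e2.03} depends only on the $L^\infty$-norms of the data.

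Next I would pass to the limit $\eta\to 0+$. The uniform $L^\infty$ bound together with \eqref{e2.04} gives weak-$\ast$ compactness of $\{u_{\varepsilon,\eta}\}$ in $L^\infty(G_{T,S})$ and weak compactness in $L^2(0,T;\text{\it\r{W}}{}_2^{\,1}(\Xi^1))$. To handle the nonlinear terms $a(u_{\varepsilon,\eta})$, $\boldsymbol\varphi(u_{\varepsilon,\eta})$, $Z_\gamma(\cdot,u_{\varepsilon,\eta})$ in the identity \eqref{e2.02}, I would establish strong compactness in $L^2_{\text{loc}}$. Rewriting \eqref{e2.01a} as $\partial_t u_{\varepsilon,\eta} = \Delta_x u_{\varepsilon,\eta} + \varepsilon \partial^2_{ss} u_{\varepsilon,\eta} - \partial_s a(u_{\varepsilon,\eta}) - \mathrm{div}_x\boldsymbol\varphi(u_{\varepsilon,\eta}) + Z_\gamma$, the uniform bounds and Lipschitz continuity of $a,\boldsymbol\varphi,Z_\gamma$ on the range of $u_{\varepsilon,\eta}$ yield a uniform bound on $\partial_t u_{\varepsilon,\eta}$ in, say, $L^2(0,T;H^{-1}(\Xi^1))$. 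The Aubin--Lions lemma then provides $u_{\varepsilon,\eta}\to u_\varepsilon$ strongly in $L^2(G_{T,S})$ (and in $L^p$ for any finite $p$ by interpolation with $L^\infty$). Passing to the limit in \eqref{e2.02} is then routine; the trace identities $u_\varepsilon=\hat u$ on $\Gamma_0^1\cup\Gamma_0^2\cup\Gamma_S^2\cup\Gamma_l$ pass to the limit because the traces of $u_{\varepsilon,\eta}$ on $\Gamma_0^2\cup\Gamma_S^2\cup\Gamma_l$ are unchanged and on $\Gamma_0^1$ one uses that $u_{0,\eta}^{(1)}\to u_0^{(1)}$ in $L^p(\Xi^1)$, combined with continuity in time of $u_\varepsilon$ into $L^2(\Xi^1)$ inherited from the time derivative estimate.

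For uniqueness, I would use the standard energy method: given two weak solutions $u_\varepsilon^{(1)}$, $u_\varepsilon^{(2)}$ with identical boundary traces, their difference $w = u_\varepsilon^{(1)}-u_\varepsilon^{(2)}$ satisfies a linear parabolic equation with $L^\infty$ coefficients (via mean-value representations of $a(u^{(1)})-a(u^{(2)})$, $\boldsymbol\varphi(u^{(1)})-\boldsymbol\varphi(u^{(2)})$, $Z_\gamma(\cdot,u^{(1)})-Z_\gamma(\cdot,u^{(2)})$), zero trace on the parabolic boundary, and the test function $w$ itself is admissible; a Gronwall estimate in $t$ then forces $w\equiv 0$. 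The interior $W_2^{2,1}(G')$ regularity follows at once from the standard $L^p$-theory of linear parabolic equations \cite{LSU-1968} applied on $G'$, treating the nonlinear terms as $L^\infty$ right-hand side inputs after a localization argument with a cutoff supported in $G_{T,S}$. Finally, for the Hölder regularity on $G_{T,S}\cup\Xi_*^2$, I would localize the equation near $\Xi_*^2$: since $\Xi_*^2\cap\{t=0\}=\emptyset$, by shrinking in $t$ we obtain a domain bounded away from $t=0$ on which $u_\varepsilon$ solves the parabolic equation with strictly parabolic operator $\partial_t-\Delta_x-\varepsilon\partial_{ss}^2$ and smooth lateral data $u_0^{(2)}$, $u_S^{(2)}$ on $\Xi_*^2$; standard De Giorgi--Nash--Moser or Ladyzhenskaya--Solonnikov--Ural'ceva estimates then produce the Hölder exponent $\alpha'\in(0,1)$.

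The main obstacle is the passage to the limit at $t=0$: recovering the initial trace $u_\varepsilon|_{t=0}=u_0^{(1)}$ in the trace sense required by Definition \ref{def.2.01} when only an $L^\infty$ datum is prescribed. I would handle this by testing the equation against functions of the form $\phi(\boldsymbol x,s)\,\chi_\delta(t)$ with $\chi_\delta$ a cutoff approximating $\mathbf{1}_{\{t<\delta\}}$, exploiting the uniform-in-$\eta$ bound on $\partial_t u_{\varepsilon,\eta}$ to deduce strong $C([0,T];L^2(\Xi^1))$ convergence after time-regularization, and then using density of smooth test functions to identify $u_\varepsilon(\cdot,0,\cdot)$ with $u_0^{(1)}$ a.e.
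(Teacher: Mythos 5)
Your proposal is essentially correct, but it takes a genuinely different route from the paper: the paper disposes of Proposition \ref{prop.2.1} in one line by invoking the classical quasi-linear parabolic theory of Lady\v{z}enskaja--Solonnikov--Ural$'\!$ceva \cite[Chapter 1, Theorem 2.9; Chapter 5, Theorem 1.1]{LSU-1968}, which directly covers bounded measurable initial data, whereas you reconstruct the result from scratch by mollifying $u_0^{(1)}$, applying Proposition \ref{prop.2.0} to the regularized data, and passing to the limit via the uniform bounds \eqref{e2.03}--\eqref{e2.04}, a $\partial_t$-estimate in $L^2(0,T;W^{-1,2}(\Xi^1))$ and Aubin--Lions compactness. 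What the citation buys is brevity and the full interior $W_2^{2,1}$ and H\"older regularity package in one stroke; what your construction buys is a self-contained argument whose uniform-in-$\eta$ estimates are exactly of the type reused later in the paper (Sections \ref{sec.4} and \ref{sec.6}), and it makes transparent why the H\"older continuity up to $\Xi_*^2$ requires $\Xi_*^2\cap\{t=0\}=\emptyset$. Two points in your sketch deserve tightening. First, in the uniqueness step the difference $w=u_\varepsilon^{(1)}-u_\varepsilon^{(2)}$ is not directly admissible in \eqref{e2.02} because only $\nabla_x w$ and $\partial_s w$ are known to lie in $L^2$, not $\partial_t w$; one must use Steklov averaging in $t$ (or an equivalent doubling/regularization device) before the Gr\"onwall argument, and one should note that absorbing the convective term $\partial_s(Aw)$ uses the $\varepsilon\partial_{ss}^2$-dissipation, so the estimate is legitimate only for fixed $\varepsilon>0$. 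Second, for the interior $W_2^{2,1}(G')$ regularity the terms $\partial_s a(u_\varepsilon)=a'(u_\varepsilon)\partial_s u_\varepsilon$ and $\mathrm{div}_x\boldsymbol{\varphi}(u_\varepsilon)=\boldsymbol{\varphi}'(u_\varepsilon)\cdot\nabla_x u_\varepsilon$ are $L^2$ right-hand sides (with the $\partial_s$-bound degenerating like $\varepsilon^{-1/2}$, harmless for fixed $\varepsilon$), not $L^\infty$ ones; the linear $L^2$-theory still applies, but the statement should be phrased accordingly. Neither issue is a genuine gap, only a gloss.
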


In the formulation of Proposition \ref{prop.2.1}, exponent
$\alpha'\in(0,1)$ depends on $\alpha$ and $\varepsilon$.  By
$W_2^{2,1}(G')$ we standardly denote the Sobolev space of
measurable integrable functions $\phi$ on $G'$ such
that $\partial_t\phi$, $\partial_{x_i}\phi$, $\partial_s\phi$,
$\partial_{x_ix_j}^2\phi$, $\partial_{ss}^2\phi$,
$\partial_{sx_i}^2\phi\in L^2(G')$, $i,j=1,\ldots,d$.\\[1ex]
\noindent {\emph{Proof}} of Proposition \ref{prop.2.1}
follows from \cite[Chapter 1, Theorem 2.9; Chapter 5, Theorem
1.1]{LSU-1968}.\qed
\begin{remark} \label{classic-weak} Clearly, a classical solution of Problem $\Pi_{\gamma\varepsilon}$ is a weak solution in the sense of Definition \ref{def.2.01}.
\end{remark}

\section{Notions of kinetic and entropy solutions\\ to Problem $\boldsymbol{\Pi}_{\boldsymbol{\gamma}}$}
\label{sec.3}%
In this section we set up notions of kinetic and entropy
solutions to Problem $\Pi_\gamma$. To this end, let us introduce a few necessary concepts firstly.

Set
\begin{equation}
\label{chi-f}%
\chi(\lambda;\upsilon)= \left\{\begin{array}{rl} 1& \text{for }
0<\lambda<\upsilon,
\\
-1& \text{for }\upsilon<\lambda<0,
\\
0& \text{otherwise.}
\end{array}\right.
\end{equation}

\begin{definition}
\label{def.3.01}%
 Let $N\in\mathbb{N}$, $L>0$, and $\mathcal{O}$ be an open
set in  $\mathbb{R}^N$. Let $h\in
L^\infty(\mathcal{O}\times(-L,L))$  satisfy the inequality
$$0\leqslant h(\boldsymbol{z},\lambda)\mathrm{sgn}(\lambda)\leqslant1 \mbox{ for a.e. }
(\boldsymbol{z},\lambda)\in\mathbb{R}^{N+1}.$$
We say that $h$ is a
$\chi$-function, if there exists a function $\upsilon\in
L^\infty(\mathcal{O})$ such that
\begin{equation*}
h(\boldsymbol{z},\lambda)=\chi(\lambda;\upsilon(\boldsymbol{z}))\text{
for a.e. }\boldsymbol{z}\in\mathcal{O}.
\end{equation*}
\end{definition}

\begin{lemma}
\label{lem.3.01}%
(\cite[Lemma 2.1.1]{Per-2002}.) The following two identities hold true
(say, for $\Psi$ locally Lipschitz continuous, i.e. $\Psi'\in L_{\rm
loc}^\infty(\mathbb{R})$):
\begin{enumerate}
\item[$\mathrm{(i)}$]
$\displaystyle \int\limits_{\mathbb{R}_\lambda}\Psi'(\lambda)\chi(\lambda;v)\,d\lambda=\Psi(v)-\Psi(0)$,
in particular,
$\displaystyle \int\limits_{\mathbb{R}_\lambda}\chi(\lambda;v)\,d\lambda=v$;
\item[$\mathrm{(ii)}$]
$\displaystyle \int\limits_{\mathbb{R}_\lambda}\Psi'(\lambda)
\left\vert\chi(\lambda;v)-\chi(\lambda;w)\right\vert\,d\lambda={\rm
sgn}(v-w)(\Psi(v)-\Psi(w))$, in particular,\\
$\displaystyle \int\limits_{\mathbb{R}_\lambda}\left\vert\chi(\lambda;v)-\chi(\lambda;w)\right\vert\,d\lambda
=\left\vert v-w\right\vert$.
\end{enumerate}
Additionally,
\begin{enumerate}
\item[$\mathrm{(iii)}$] $\left\vert\chi(\lambda;v)-\chi(\lambda;w)\right\vert=
\left\vert\chi(\lambda;v)-\chi(\lambda;w)\right\vert^2$.
\end{enumerate}
\end{lemma}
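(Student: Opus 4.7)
The plan is straightforward since the definition \eqref{chi-f} makes $\chi(\lambda;v)$ a simple indicator-type function supported on the interval between $0$ and $v$, signed by $\mathrm{sgn}(v)$. All three items reduce to direct case analysis on the signs and relative order of $v$ and $w$.

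For (i), I would split into the cases $v>0$, $v<0$, and $v=0$. If $v>0$, then $\chi(\lambda;v)=\mathbf{1}_{(0,v)}(\lambda)$, so
\[
\int_{\mathbb{R}_\lambda}\Psi'(\lambda)\chi(\lambda;v)\,d\lambda=\int_{0}^{v}\Psi'(\lambda)\,d\lambda=\Psi(v)-\Psi(0).
\]
If $v<0$, then $\chi(\lambda;v)=-\mathbf{1}_{(v,0)}(\lambda)$, giving $-\int_{v}^{0}\Psi'(\lambda)\,d\lambda=\Psi(v)-\Psi(0)$ as well. The case $v=0$ is trivial. Specializing to $\Psi(\lambda)=\lambda$ yields the second identity.

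For (ii), the key observation is that $\chi(\lambda;v)$ and $\chi(\lambda;w)$ cannot take opposite nonzero signs at the same $\lambda$: a value $+1$ of $\chi(\cdot;v)$ forces $\lambda>0$, while a value $-1$ of $\chi(\cdot;w)$ forces $\lambda<0$. Hence the difference $\chi(\lambda;v)-\chi(\lambda;w)$ takes values only in $\{-1,0,1\}$, which simultaneously establishes (iii). For the integral identity, I would assume without loss of generality $v>w$ (the case $v<w$ follows by swapping, and $v=w$ is trivial) and check three subcases: $0\le w<v$, $w<0\le v$, and $w<v<0$. In every subcase one verifies directly that $|\chi(\lambda;v)-\chi(\lambda;w)|=\mathbf{1}_{(w,v)}(\lambda)$, so
\[
\int_{\mathbb{R}_\lambda}\Psi'(\lambda)\,|\chi(\lambda;v)-\chi(\lambda;w)|\,d\lambda=\int_{w}^{v}\Psi'(\lambda)\,d\lambda=\Psi(v)-\Psi(w),
\]
and because we assumed $v>w$ this equals $\mathrm{sgn}(v-w)(\Psi(v)-\Psi(w))$. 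Taking $\Psi(\lambda)=\lambda$ yields the $|v-w|$ formula.

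There is no genuine obstacle here; the statement is essentially a bookkeeping lemma reproduced from Perthame. The only mild care needed is in (ii), where one must keep track of signs when $v<w$ so that the $\mathrm{sgn}(v-w)$ factor appears correctly, and in justifying that the three subcases in $\{v>w\}$ exhaust all possibilities. The validity of the computations under the mere assumption $\Psi'\in L^{\infty}_{\mathrm{loc}}(\mathbb{R})$ is automatic since all integrals are over bounded intervals between $0$, $v$, and $w$, so the fundamental theorem of calculus for absolutely continuous $\Psi$ applies without modification.
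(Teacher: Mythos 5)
Your proof is correct; the case analysis on the signs and relative order of $v$ and $w$, together with the observation that $\chi(\cdot;v)$ and $\chi(\cdot;w)$ cannot take opposite nonzero values at the same $\lambda$, is exactly the standard argument behind this lemma. The paper itself offers no proof but simply cites Perthame's book, and your computation is the self-contained version of that cited result, with the appeal to absolute continuity of locally Lipschitz $\Psi$ correctly covering the regularity hypothesis.
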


The following lemma establishes the link between sequences of
$\chi$-functions and their limits.
\begin{lemma}
\label{lem.3.02} (\cite{V-2001}.) Let $\mathcal{O}$ be an open set of
$\mathbb{R}^N$ and $h_n\in L^\infty(\mathcal{O}\times(-L,L))$ be a
sequence of $\chi$-functions converging weakly* to $h\in
L^\infty(\mathcal{O}\times(-L,L))$. Set
$$\upsilon_n(\cdot)=\int\limits_{-L}^Lh_n(\cdot,\lambda)\,d\lambda,\quad
\upsilon(\cdot)=\int\limits_{-L}^L h(\cdot,\lambda)\,d\lambda.$$

Then the three assertions are equivalent to each other:
\begin{itemize}
\item $h_n$ converges strongly to $h$ in $L^1_{\rm loc}(\mathcal{O}\times(-L,L))$,
\item $\upsilon_n$ converges strongly to $\upsilon$ in $L^1_{\rm loc}(\mathcal{O})$,
\item $h$ is a $\chi$-function.
\end{itemize}
\end{lemma}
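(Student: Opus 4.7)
The plan is to verify the cyclic chain $(1)\Rightarrow(2)\Rightarrow(3)\Rightarrow(1)$. The implication $(1)\Rightarrow(2)$ is the easy direction: for every compact $K\Subset\mathcal{O}$, Fubini and the triangle inequality give
\[
\int_K|\upsilon_n-\upsilon|\,dz \;\leqslant\; \int_{K\times(-L,L)}|h_n-h|\,dzd\lambda \;\underset{n\to\infty}{\longrightarrow}\;0.
\]

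For $(2)\Rightarrow(3)$, I would extract a subsequence along which $\upsilon_{n_k}(z)\to\upsilon(z)$ for almost every $z\in\mathcal{O}$. Since $v\mapsto\chi(\lambda;v)$ is discontinuous only at $v=0$ and $v=\lambda$, Fubini shows that the set $\{(z,\lambda):\lambda\in\{0,\upsilon(z)\}\}$ has zero Lebesgue measure in $\mathcal{O}\times(-L,L)$; consequently
\[
h_{n_k}(z,\lambda)=\chi(\lambda;\upsilon_{n_k}(z))\longrightarrow\chi(\lambda;\upsilon(z)) \quad\text{for a.e. }(z,\lambda).
\]
Lebesgue's dominated convergence theorem (with constant dominator $1$) then upgrades this to strong $L^1_{\mathrm{loc}}$ convergence, and in particular to weak* convergence. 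Uniqueness of weak* limits forces $h=\chi(\lambda;\upsilon)$ a.e., so $h$ is a $\chi$-function.

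For $(3)\Rightarrow(1)$, the crucial algebraic identity, immediate from \eqref{chi-f}, is
\[
|\chi(\lambda;v)|^2=|\chi(\lambda;v)|=\chi(\lambda;v)\,\mathrm{sgn}(\lambda) \quad\text{a.e. in }\lambda.
\]
Applied to both $h_n$ and the limit $h$ (which is a $\chi$-function by hypothesis), expanding the square gives, for every compact $K\Subset\mathcal{O}$,
\[
\int_{K\times(-L,L)}(h_n-h)^2\,dzd\lambda = \int_{K\times(-L,L)}(h_n-h)\,\mathrm{sgn}(\lambda)\,dzd\lambda + 2\int_{K\times(-L,L)}h\,(h-h_n)\,dzd\lambda.
\]
The test functions $\mathbf{1}_K(z)\,\mathrm{sgn}(\lambda)$ and $\mathbf{1}_K(z)\,h(z,\lambda)$ both lie in $L^1(\mathcal{O}\times(-L,L))$, so the weak* convergence $h_n\to h$ sends both right-hand integrals to zero. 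Hence $h_n\to h$ strongly in $L^2_{\mathrm{loc}}$, and by the Cauchy--Schwarz inequality on the bounded cylinder $K\times(-L,L)$ also in $L^1_{\mathrm{loc}}$.

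The principal obstacle is the step $(3)\Rightarrow(1)$: the identity $|h|^2=h\,\mathrm{sgn}(\lambda)$ for the \emph{limit} $h$ is a rigid structural condition that generically fails for weak* limits of $\chi$-functions, and hypothesis $(3)$ is precisely what restores it. Once available, it converts weak* convergence into $L^2$-strong convergence essentially for free; without it, one can only extract the nonnegative defect $\int|h_n-h|^2\to\int(h\,\mathrm{sgn}(\lambda)-|h|^2)\geqslant 0$, which quantifies the loss of compactness intrinsic to the weak* topology on $\chi$-functions.
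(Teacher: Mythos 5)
Your proof is correct. The paper itself does not prove this lemma -- it is quoted from Vasseur \cite{V-2001} -- but your cyclic argument $(1)\Rightarrow(2)\Rightarrow(3)\Rightarrow(1)$ is exactly the standard one: the Fubini/triangle-inequality step, the a.e.-subsequence argument using that $v\mapsto\chi(\lambda;v)$ is continuous off the null set $\{\lambda=\upsilon(z)\}\cup\{\lambda=0\}$, and above all the linearizing identity $|\chi(\lambda;v)|^2=\chi(\lambda;v)\,\mathrm{sgn}(\lambda)$, which is precisely what turns weak$^*$ convergence into $L^2_{\mathrm{loc}}$-strong convergence in the step $(3)\Rightarrow(1)$.
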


In order to study topics regarding attainability of initial and final data, introduce the definition of one-sided essential limits.
\begin{definition} \label{def.esslim}
Let $\phi$: $[0,Y]\mapsto \RR$ be a measurable function, $Y={\rm const}>0$. We say that $K\in \RR$ is {\rm the essential limit from the left} of $\phi$ at a point $y_0\in (0,Y]$ and write $\displaystyle K=\underset{y\to y_0-0}{\mbox{\rm ess\,lim\,}} \phi(y)$, if there exists a set $E\subset (0,y_0)$ of full Lebesgue measure such that $\displaystyle \lim\limits_{{}^{y\to y_0}_{y\in E}} |\phi(y)-K|=0$.

Analogously, we say that $K\in \RR$ is {\rm the essential limit from the right} of $\phi$ at a point $y_0\in [0,Y)$ and write $\displaystyle K=\underset{y\to y_0+0}{\mbox{\rm ess\,lim\,}} \phi(y)$, if there exists a set $E\subset (y_0,Y)$ of full Lebesgue measure such that $\displaystyle \lim\limits_{{}^{y\to y_0}_{y\in E}} |\phi(y)-K|=0$.
\end{definition}

Now we are in a position to set up notions of kinetic and entropy solutions to Problem $\Pi_\gamma$.
\begin{definition}
\label{def.3.02}%
\begin{subequations}
Function $u\in L^\infty(G_{T,S})\cap
L^2((0,T)\times(0,S);\text{\it\r{W}}{}_2^{\,1}(\Omega))$ is called a
kinetic solution of Problem $\Pi_\gamma$, if it satisfies the
 kinetic equation
\begin{multline}
\label{e3.01a}%
\partial_{t}\chi(\lambda;u(\boldsymbol{x},t,s))
+a^{\prime}(\lambda)\partial_{s}\chi(\lambda;u(\boldsymbol{x},t,s))
+\\ \boldsymbol{\varphi}^{\prime}(\lambda)\cdot\nabla_x\chi(\lambda;u(\boldsymbol{x},t,s))
+Z_\gamma(\boldsymbol{x},t,s,\lambda)\partial_\lambda\chi(\lambda;u(\boldsymbol{x},t,s))=
\\
\Delta_x\chi(\lambda;u(\boldsymbol{x},t,s))+\delta_{(\lambda=0)}
Z_\gamma(\boldsymbol{x},t,s,\lambda)+\partial_\lambda(m(\boldsymbol{x},t,s,\lambda)+n(\boldsymbol{x},t,s,\lambda)),\\
(\boldsymbol{x},t,s,\lambda) \in G_{T,S}\times [-M_0,M_0],
\end{multline}
 the kinetic initial condition
\begin{equation}
\label{e3.01b}%
\underset{t\to0+}{\mathrm{ess\,lim}}\int\limits_{-M_0}^{M_0}\int\limits_{\Xi^1}
\left\vert\chi(\lambda;u(\boldsymbol{x},t,s))-
\chi(\lambda;u_0^{(1)}(\boldsymbol{x},s))\right\vert\,d\boldsymbol{x}dsd\lambda=0,
\end{equation}
the limiting relations
\begin{equation}
\label{e3.02}%
\underset{s\to0+}{\rm ess\,lim}\int\limits_{\Xi^2}\left\vert
u(\boldsymbol{x},t,s)-u_0^{{\rm
tr},(2)}(\boldsymbol{x},t)\right\vert\,d\boldsymbol{x}dt=0,
  \end{equation}
\begin{equation}
\label{e3.03}%
\underset{s\to S-0}{\rm ess\,lim}\int\limits_{\Xi^2}\left\vert
u(\boldsymbol{x},t,s)-u_S^{{\rm
tr},(2)}(\boldsymbol{x},t)\right\vert\,d\boldsymbol{x}dt=0,
  \end{equation}
with some functions $u_0^{{\rm tr},(2)}, u_S^{{\rm tr},(2)} \in L^\infty(\Xi^2)$,
and the kinetic boundary conditions
\begin{multline}
\label{e3.01c}%
a^{\prime}(\lambda)\big(\chi(\lambda;u_0^{{\rm
tr},(2)}(\boldsymbol{x},t))-
\chi(\lambda;u_0^{(2)}(\boldsymbol{x},t))\big)
\\
-\delta_{(\lambda=u_0^{(2)}(\boldsymbol{x},t))} \big(a(u_0^{{\rm
tr},(2)}(\boldsymbol{x},t)) -a(u_0^{(2)}(\boldsymbol{x},t))\big)=
\partial_\lambda\mu_0^{(2)}(\boldsymbol{x},t,\lambda),\\
(\boldsymbol{x},t,\lambda) \in \Xi^2 \times [-M_0,M_0],
  \end{multline}
\begin{multline}
\label{e3.01d}%
a^{\prime}(\lambda)\big(\chi(\lambda;u_{S}^{{\rm
tr},(2)}(\boldsymbol{x},t))
-\chi(\lambda;u_{S}^{(2)}(\boldsymbol{x},t))\big)
\\
-\delta_{(\lambda=u_{S}^{(2)}(\boldsymbol{x},t))} \big(a(u_{S}^{{\rm
tr},(2)}(\boldsymbol{x},t)) -a(u_{S}^{(2)}(\boldsymbol{x},t))\big)=
-\partial_\lambda\mu_{S}^{(2)}(\boldsymbol{x},t,\lambda),\\
(\boldsymbol{x},t,\lambda) \in \Xi^2 \times [-M_0,M_0].
  \end{multline}
  \end{subequations}
\end{definition}
In \eqref{e3.01a}--\eqref{e3.01d} we have
$m\in\mathcal{M}^+(G_{T,S}\times[-M_0,M_0])$,
$\mu_{0}^{(2)},\mu_{S}^{(2)}\in\mathcal{M}^+(\Xi^2\times[-M_0,M_0])$,
$n=\delta_{(\lambda=u)}\left\vert\nabla_x u\right\vert^2$, where
$\delta_{(\lambda=u)}$ is the Dirac measure on $\mathbb{R}_\lambda$,
concentrated at the point\linebreak $\lambda=u(\boldsymbol{x},t,s)$, and $M_0$ is the constant defined in \eqref{e2.03}.
Correspondingly, in \eqref{e3.01a}, $\delta_{(\lambda=0)}$ is the
Dirac measure on $\mathbb{R}_\lambda$ concentrated at the origin. By
$\mathcal{M}^+$ we denote the space of finite positive Radon
measures. Functions $u_0^{{\rm tr},(2)}$ and $u_S^{{\rm tr},(2)}$
are $L^1(\Xi^2)$-strong traces of $u=u(\boldsymbol{x},t,s)$ on the planes $\{s=0\}$ and
$\{s=S\}$, respectively.

The kinetic equation \eqref{e3.01a} and the kinetic boundary
conditions \eqref{e3.01c} and \eqref{e3.01d} are understood in the
sense of distributions: see further integral equality \eqref{e4.12} in Section \ref{sec.4} and integral equality \eqref{e6.15} in Section \ref{sec.6}.

\begin{definition} \label{def.3.03}%
\begin{subequations}
Function $u\in L^\infty(G_{T,S})\cap
L^2((0,T)\times(0,S);\text{\it\r{W}}{}_2^{\,1}(\Omega))$ is called
an entropy solution of Problem $\Pi_\gamma$, if it satisfies the
entropy inequality
\begin{equation}
\label{e3.04a}%
\partial_{t}\eta(u)+\partial_{s}q_a(u)+{\rm
div}_x\boldsymbol{q}_\varphi(u)-Z_\gamma(\boldsymbol{x},t,s,u)\eta'(u)-\Delta_x\eta(u)\leqslant
-\eta^{\prime\prime}(u)|\nabla_x u|^2,
\end{equation}
the maximum principle
\begin{equation}
\label{e3.04b}%
\left\Vert u\right\Vert_{L^\infty(G_{T,S})}\leqslant
M_0<+\infty,
   \end{equation}
the initial condition
\begin{equation}
\label{e3.04c}%
\underset{t\to0+}{\mathrm{ess\,lim}}\int\limits_{\Xi^1} \left\vert
u(\boldsymbol{x},t,s)-u_0^{(1)}(\boldsymbol{x},s)\right\vert\,d\boldsymbol{x}ds=0,
\end{equation}
the limiting relations \eqref{e3.02} and \eqref{e3.03} with some functions $u_0^{{\rm tr},(2)}, u_S^{{\rm tr},(2)} \in L^\infty(\Xi^2)$,
and the entropy boundary conditions
\begin{equation}
\label{e3.04d}%
q_{a}(u_0^{{\rm tr},(2)}(\boldsymbol{x},t))
-q_{a}(u_0^{(2)}(\boldsymbol{x},t))
-\eta^{\prime}(u_0^{(2)}(\boldsymbol{x},t)) (a(u_0^{{\rm
tr},(2)}(\boldsymbol{x},t))
-a(u_0^{(2)}(\boldsymbol{x},t)))\leqslant0,
\quad(\boldsymbol{x},t)\in \Xi^2,
  \end{equation}
\begin{equation}
\label{e3.04e}%
q_a(u_{S}^{{\rm tr},(2)}(\boldsymbol{x},t))-
q_a(u_{S}^{(2)}(\boldsymbol{x},t))
-\eta^{\prime}(u_{S}^{(2)}(\boldsymbol{x},t)) (a(u_{S}^{{\rm
tr},(2)}(\boldsymbol{x},t))
-a(u_{S}^{(2)}(\boldsymbol{x},t)))\geqslant0,
\quad(\boldsymbol{x},t)\in \Xi^2.
  \end{equation}
In \eqref{e3.04a}, \eqref{e3.04d} and \eqref{e3.04e}, $\eta\in
C^2(\mathbb{R})$ is an arbitrary convex  test-function:
$\eta''(z)\geqslant 0$ $\forall z\in\mathbb{R}$, and
$(\eta,q_a,\boldsymbol{q}_\varphi)$ is a convex entropy flux triple:
\begin{equation*}
q_a^{\prime}(z)=a^{\prime}(z)\eta^{\prime}(z),\quad
\boldsymbol{q}_\varphi^{\prime}(z)=\boldsymbol{\varphi}^{\prime}(z)\eta^{\prime}(z),\quad
z\in\mathbb{R}.
  \end{equation*}
  \end{subequations}
\end{definition}
Entropy inequality \eqref{e3.04a} is understood in the sense of
distributions. Entropy boundary conditions \eqref{e3.04d} and
\eqref{e3.04e} hold a.e. in $\Xi^2$.

The following theorem on well-posedness of Problem $\Pi_\gamma$ is
the first main result of the article.
\begin{theorem} \label{theo.3.1}
{\bf 1.\;Existence, uniqueness and stability of kinetic solutions.} Under
Conditions on $u_0^{(1)}\&u_0^{(2)}\&u_S^{(2)}$ and Conditions on
$a\&\boldsymbol{\varphi}\& Z_\gamma$, Problem $\Pi_\gamma$ has the
unique kinetic solution $u=u(\boldsymbol{x},t,s)$ in the sense of
Definition \ref{def.3.02}.

Moreover, let $u_1$ and $u_2$ be two kinetic solutions corresponding
to two given sets of data
$(u_{1,0}^{(1)},u_{1,0}^{(2)},u_{1,S}^{(2)})$ and
$(u_{2,0}^{(1)},u_{2,0}^{(2)},u_{2,S}^{(2)})$, respectively. Then
the estimate
\begin{multline}
\label{e3.05}%
\|u_1(\cdot,t,\cdot)-u_2(\cdot,t,\cdot)\|_{L^1(\Xi^1)}\leqslant\\
e^{\mathfrak{G}_\gamma(t)}\Biggl[ \Vert
u_{1,0}^{(1)}-u_{2,0}^{(1)}\Vert_{L^1(\Xi^1)}+
\max\limits_{\lambda\in[-M_1(t),M_1(t)]}\vert a'(\lambda)\vert
\int\limits_0^t e^{-\mathfrak{G}_\gamma(t')} \Big(\Vert
u_{1,0}^{(2)}(\cdot,t')-u_{2,0}^{(2)}(\cdot,t')\Vert_{L^1(\Omega)}+\\
\Vert
u_{1,S}^{(2)}(\cdot,t')-u_{2,S}^{(2)}(\cdot,t')\Vert_{L^1(\Omega)}\Big)dt'
\Biggr], \quad \forall\, t\in(0,T],
\end{multline}
holds true, where
\begin{equation}
\label{e3.06}%
\mathfrak{G}_\gamma(t)=\int\limits_0^t\max\limits_{(\boldsymbol{x},s,\lambda)\in\Xi^1\times[-M_1(t),M_1(t)]}|\partial_\lambda
Z_\gamma(\boldsymbol{x},t',s,\lambda)|dt',
  \end{equation}
\begin{multline}
\label{e7.01bis}%
M_1(t)=\max\bigl\{M(t,\|u_{1,0}^{(1)}\|_{L^\infty(\Xi^1)},\|u_{1,0}^{(2)}\|_{L^\infty(\Omega\times(0,t))},
\|u_{1,S}^{(2)}\|_{L^\infty(\Omega\times(0,t))}),\\
M(t,\|u_{2,0}^{(1)}\|_{L^\infty(\Xi^1)},
\|u_{2,0}^{(2)}\|_{L^\infty(\Omega\times(0,t))},
\|u_{2,S}^{(2)}\|_{L^\infty(\Omega\times(0,t))})\bigr\},
  \end{multline}
and $M$ is defined in \eqref{e2.03}.\\[1ex]
{\bf 2.\;Existence, uniqueness and stability of entropy solutions.}
Under Conditions on $u_0^{(1)}\&u_0^{(2)}\&u_S^{(2)}$ and Conditions
on $a\&\boldsymbol{\varphi}\& Z_\gamma$, Problem $\Pi_\gamma$ has the
unique entropy solution\linebreak $u=u(\boldsymbol{x},t,s)$ in the sense of
Definition \ref{def.3.03}.

Moreover, let $u_1$ and $u_2$ be two entropy solutions corresponding
to two given sets of data
$(u_{1,0}^{(1)},u_{1,0}^{(2)},u_{1,S}^{(2)})$ and
$(u_{2,0}^{(1)},u_{2,0}^{(2)},u_{2,S}^{(2)})$, respectively, then
the estimate \eqref{e3.05} holds true for them.\\[1ex]
{\bf 3.\;Equivalency of the notions of kinetic and entropy solutions.}
Function $u=u(\boldsymbol{x},t,s)$ is an entropy solution of Problem
$\Pi_\gamma$ in the sense of Definition \ref{def.3.03}, if and only if it is a kinetic solution in the sense of Definition \ref{def.3.02}.
\end{theorem}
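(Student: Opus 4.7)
The plan is to construct the kinetic solution as the singular limit $\varepsilon\to 0+$ of the family $\{u_\varepsilon\}$ of classical solutions of Problem $\Pi_{\gamma\varepsilon}$ supplied by Proposition \ref{prop.2.0}, and then to use the kinetic/entropy equivalence together with a doubling-of-variables argument for uniqueness and stability. The uniform maximum principle \eqref{e2.03} and the energy estimate \eqref{e2.04} will provide all $\varepsilon$-independent bounds needed for the passage to the limit.

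First, I would derive the \emph{regularized kinetic equation} for $\chi(\lambda;u_\varepsilon)$ from \eqref{e2.01a} in the standard way (multiply by $\eta'(u_\varepsilon)$, interpret $\partial_\lambda\chi(\lambda;u_\varepsilon)=\delta_{(\lambda=u_\varepsilon)}-\delta_{(\lambda=0)}$, and integrate against $\eta''\ge 0$). This yields an equation analogous to \eqref{e3.01a} with an extra term $-\varepsilon\partial_{ss}^2\chi(\lambda;u_\varepsilon)$ on the left and a nonnegative defect $n_\varepsilon=(|\nabla_x u_\varepsilon|^2+\varepsilon|\partial_s u_\varepsilon|^2)\,\delta_{(\lambda=u_\varepsilon)}$ on the right. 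The bounds \eqref{e2.03}--\eqref{e2.04} make $\chi(\lambda;u_\varepsilon)$ weakly-$\star$ precompact in $L^\infty$ and $n_\varepsilon$ weakly precompact in $\mathcal{M}^+$. Invoking the genuine nonlinearity condition \eqref{e1.02} in the form of the velocity-averaging/compensated-compactness results of \cite{Panov2009,Panov2011,Kuz-2017}, I would upgrade this weak precompactness of $\chi(\lambda;u_\varepsilon)$ to \emph{strong} $L^1_{\mathrm{loc}}$ compactness; by Lemma \ref{lem.3.02} the limit is again a $\chi$-function $\chi(\lambda;u)$ with $u\in L^\infty$. Passing to the limit and noting that $\varepsilon\partial_{ss}^2\chi(\lambda;u_\varepsilon)\to 0$ in $\mathcal{D}'$ yields \eqref{e3.01a} with nonnegative $m$ (the weak-$\star$ limit of $\varepsilon|\partial_s u_\varepsilon|^2\delta_{(\lambda=u_\varepsilon)}$ plus possible concentration defects) and $n=|\nabla_x u|^2\delta_{(\lambda=u)}$.

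The initial trace \eqref{e3.01b} then follows from the $L^1$-continuity in $t$ encoded in the kinetic equation, combined with the strong convergence $u_\varepsilon|_{t=0}=u_0^{(1)}$. The existence of strong $L^1$-traces $u_0^{\mathrm{tr},(2)}$, $u_S^{\mathrm{tr},(2)}$ on $\{s=0\}$ and $\{s=S\}$, relations \eqref{e3.02}--\eqref{e3.03}, and the matching kinetic boundary conditions \eqref{e3.01c}--\eqref{e3.01d} are the most delicate point: they rest on Panov's strong-trace theorem for genuinely nonlinear degenerate parabolic equations, which uses \eqref{e1.02} to propagate $L^1$-continuity in the partially convective variable $s$ up to $\Gamma_0^2\cup\Gamma_S^2$. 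The boundary measures $\mu_0^{(2)},\mu_S^{(2)}$ are then precisely the defect which appears when the strong Dirichlet data \eqref{e2.01c} of the parabolic model are relaxed to the weakened condition \eqref{e1.01c} in the limit, following the Bardos--Leroux--N\'ed\'elec/Otto boundary treatment of \cite{BLN-1979,O-1996}.

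Parts 2 and 3 are then largely formal. Equivalence of the kinetic and entropy formulations is obtained by multiplying \eqref{e3.01a} by $\eta'(\lambda)$ and integrating in $\lambda$ using Lemma \ref{lem.3.01}: the nonpositivity of the contribution of $-\partial_\lambda(m+n)$ produces exactly the right-hand side of \eqref{e3.04a}, while the boundary identities \eqref{e3.01c}--\eqref{e3.01d} tested against $\eta'(\lambda)$ yield \eqref{e3.04d}--\eqref{e3.04e}. The converse direction uses the density of piecewise-smooth convex $\eta$ and the representation $\chi(\lambda;v)-\chi(\lambda;w)=(\mathrm{sgn}(v-\lambda)-\mathrm{sgn}(w-\lambda))/2$. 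For uniqueness and stability of kinetic solutions, I would double the variables in \eqref{e3.01a}, form the equation satisfied by $|\chi(\lambda;u_1)-\chi(\lambda;u_2)|=|\chi(\lambda;u_1)-\chi(\lambda;u_2)|^2$ (Lemma \ref{lem.3.01}(iii)), drop the nonpositive parabolic and defect-measure contributions, control the $s$-boundary flux terms through \eqref{e3.01c}--\eqref{e3.01d} in terms of $\|u_{1,0}^{(2)}-u_{2,0}^{(2)}\|_{L^1(\Omega)}$ and $\|u_{1,S}^{(2)}-u_{2,S}^{(2)}\|_{L^1(\Omega)}$ (with the factor $\max|a'(\lambda)|$ on $[-M_1(t),M_1(t)]$ coming from the coefficient of the $s$-transport term), and estimate the source difference by $|\partial_\lambda Z_\gamma|\,|u_1-u_2|$; Gr\"onwall's inequality with weight $\mathfrak{G}_\gamma$ of \eqref{e3.06} delivers \eqref{e3.05}. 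The uniqueness of the entropy solution follows from \eqref{e3.05} applied to a single data set, once Part 3 is established. The principal obstacles are the construction of the strong $s$-traces and the correct identification of the boundary defect measures $\mu_0^{(2)},\mu_S^{(2)}$; the smooth source $Z_\gamma$ itself is a benign perturbation handled uniformly by the growth bound \eqref{u1.05}.
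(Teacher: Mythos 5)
Your plan follows the paper's own route essentially step for step: parabolic regularization via Problem $\Pi_{\gamma\varepsilon}$, velocity averaging under the genuine nonlinearity condition (the paper invokes the Perthame--Souganidis and Lazar--Mitrovi\'c lemmas through Proposition \ref{prop.4.1}), strong traces in $t$ via the Aleksi\'c--Mitrovi\'c theorem and in $s$ via the Panov--Vasseur--Kwon machinery, identification of the boundary kinetic measures $\mu_0^{(2)},\mu_S^{(2)}$ from the relaxed Dirichlet data in the spirit of Bardos--Leroux--N\'ed\'elec and Otto (the concrete mechanism being the limit of the boundary terms $-\varepsilon\partial_s u_\varepsilon|_{s=0,S}$ in Lemma \ref{lem.6.2}), and an $L^1$-contraction obtained by renormalizing the difference of the two kinetic equations followed by Gr\"onwall. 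Parts 2 and 3 are treated exactly as you describe.

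One step of your uniqueness sketch would fail if executed literally. After forming the equation for $\vert\chi(\lambda;u_1)-\chi(\lambda;u_2)\vert^2$ and moving the defect measures to the right-hand side, the pairing $-2\bigl\langle \delta_{(\lambda=u_2)}-\delta_{(\lambda=u_1)},\,(m_1+n_1)-(m_2+n_2)\bigr\rangle$ splits into cross terms $-2\langle\delta_{(\lambda=u_2)},m_1+n_1\rangle-2\langle\delta_{(\lambda=u_1)},m_2+n_2\rangle$, which are indeed nonpositive and may be dropped, and diagonal terms $+2\langle\delta_{(\lambda=u_1)},m_1+n_1\rangle+2\langle\delta_{(\lambda=u_2)},m_2+n_2\rangle$, which are nonnegative and cannot simply be discarded. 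The paper disposes of them by showing that they vanish in the mollification limit (relation \eqref{e7.01xiv}, the analogue of Perthame's formula (4.3.5)); without that extra argument the contraction estimate does not close. A second, smaller omission: the boundary flux terms are not controlled directly by \eqref{e3.01c}--\eqref{e3.01d} but through the intermediate comparison inequalities \eqref{e7.02}--\eqref{e7.03}, which convert the trace differences $u_{i,0}^{{\rm tr},(2)},u_{i,S}^{{\rm tr},(2)}$ into the prescribed data differences using the sign of the boundary measures; this intermediate step is where the relaxed condition \eqref{e1.01c} actually pays off.
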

Estimate \eqref{e3.05} manifests uniqueness and $L^1$-strong
stability (with respect to given initial and final data) of kinetic
and entropy solution.

The proof of assertion 1 of Theorem \ref{theo.3.1} is fulfilled
further in Sections \ref{sec.4}--\ref{sec.7}, followed by the proofs of
assertions 2 and 3 of Theorem \ref{theo.3.1} in Section \ref{sec.8}.

\section{Relative compactness of the family of classical\\ solutions
${\bf \{u_{\boldsymbol{\varepsilon}}\}_{\boldsymbol{\varepsilon}>0}}$ to Problem
$\boldsymbol{\Pi}_{\boldsymbol{\gamma}\boldsymbol{\varepsilon}}$ in $\bf L^1(G_{T,S})$.\\ Derivation of the
kinetic equation \textbf{(\ref{e3.01a})}} \label{sec.4}

Our first aim is to prove relative compactness of the family of weak
solutions to Problem $\Pi_{\gamma\varepsilon}$. To this end, we use
the Perthame --- Souganidis and Lazar --- Mitrovi\'c averaging compactness
theorems \cite[Theorem 6]{PS-1998}, \cite[Theorem 7]{LM-2012} (see also
\cite[Theorem 5.2.1]{Per-2002}). We formulate the corollary of these
theorems in the reduced form adapted for studying solutions of Problem
$\Pi_{\gamma\varepsilon}$ as $\varepsilon\to0+$.

\begin{proposition}
\label{prop.4.1}%
{\bf (Corollary of the Perthame --- Souganidis and Lazar --- Mitrovi\'c
Theorems.)}
Let functions $f_n, f, g_{0,n}\in L^2(\mathbb{R}_x^d\times
\mathbb{R}_t^+\times\mathbb{R}_s^+\times\mathbb{R}_\lambda)$ and
Radon measures\linebreak $k_n, k\in\mathcal{M}(\mathbb{R}_x^d\times
\mathbb{R}_t^+\times\mathbb{R}_s^+\times\mathbb{R}_\lambda)$
 $(n\in\mathbb{N})$ satisfy the following
relations:
\begin{itemize}
\item[$\mathrm{(i)}$]
$f_n\underset{n\to\infty}{\longrightarrow}f$ weakly in
$L^2(\mathbb{R}_x^d\times\mathbb{R}_t^+\times\mathbb{R}_s^+\times\mathbb{R}_\lambda)$;
\item[$\mathrm{(ii)}$] $g_{0,n}\underset{n\to\infty}{\longrightarrow}0$ strongly in
$L^2(\mathbb{R}_x^d\times\mathbb{R}_t^+\times\mathbb{R}_s^+\times\mathbb{R}_\lambda)$;
\item[$\mathrm{(iii)}$] $k_n\underset{n\to\infty}{\longrightarrow}k$ weakly$^*$ in
$\mathcal{M}(\mathbb{R}_x^d\times\mathbb{R}_t^+\times\mathbb{R}_s^+\times\mathbb{R}_\lambda)$,\\
$\phantom{k_n\underset{n\to\infty}{\longrightarrow}k}$
strongly in $W_{\rm
loc}^{-1,p'}(\mathbb{R}_x^d\times\mathbb{R}_t^+\times\mathbb{R}_s^+\times\mathbb{R}_\lambda)$,
$\displaystyle \forall\, p'\in \Bigl[1,\frac{d+3}{d+2}\Bigr)$;
\item[$\mathrm{(iv)}$]  the triple
$\{f_n,g_{0,n},k_n\}$ $(n\in\mathbb{N})$ resolves the kinetic
equation
\begin{equation*}
\partial_t f_n+a'(\lambda)\partial_s f_n+
\boldsymbol{\varphi}'(\lambda)\cdot\nabla_x f_n-\Delta_x
f_n=\partial_\lambda k_n+\partial_s\partial_\lambda g_{0,n}
\end{equation*}
\end{itemize}
in the sense of distributions, where coefficients $a,\varphi_1,\ldots,\varphi_d$ obey
Conditions on $a\&\boldsymbol{\varphi}\&Z_\gamma$.

Then, for all compactly supported $\psi\in L^2(\mathbb{R}_\lambda)$,
the following limiting relation holds true:
\begin{equation*}
\int\limits_{\mathbb{R_\lambda}}
f_n(\boldsymbol{x},t,s,\lambda)\psi(\lambda)\,d\lambda\underset{n\to\infty}
\longrightarrow \int\limits_{\mathbb{R_\lambda}}
f(\boldsymbol{x},t,s,\lambda)\psi(\lambda)\,d\lambda \text{ strongly
in } L_{\rm
loc}^2(\mathbb{R}_x^d\times\mathbb{R}_t^+\times\mathbb{R}_s^+).
  \end{equation*}
\end{proposition}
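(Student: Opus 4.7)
The plan is to recognize Proposition \ref{prop.4.1} as a direct specialization of the Perthame--Souganidis averaging lemma \cite[Theorem 6]{PS-1998} and its Lazar--Mitrovi\'c generalization \cite[Theorem 7]{LM-2012} to the present mixed hyperbolic--parabolic geometry, and then verify the hypotheses one by one. In particular, I would not reprove the averaging lemma: the work is entirely in matching the non-degeneracy assumption on the symbol and the negative-Sobolev regularity of the right-hand side to what the cited theorems require.

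The central check is non-degeneracy. The kinetic transport operator in item (iv),
\[
L:=\partial_t+a'(\lambda)\partial_s+\boldsymbol{\varphi}'(\lambda)\cdot\nabla_x-\Delta_x,
\]
has Fourier symbol $i\tau+ia'(\lambda)\sigma+i\boldsymbol{\varphi}'(\lambda)\cdot\boldsymbol{\xi}+|\boldsymbol{\xi}|^2$, where $(\tau,\sigma,\boldsymbol{\xi})$ are dual to $(t,s,\boldsymbol{x})$. Because of the genuinely parabolic term $|\boldsymbol{\xi}|^2$ coming from $-\Delta_x$, the averaging lemma is only sensitive to degeneracies along the hyperbolic directions $\boldsymbol{\xi}=0$, that is, it demands that for every $(\xi_1,\xi_2)\in\mathbb{S}^1$ the set $\{\lambda\in\mathbb{R}:\xi_1+a'(\lambda)\xi_2=0\}$ be Lebesgue-null. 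This is exactly condition \eqref{e1.02}, so the non-degeneracy hypothesis of \cite[Theorem 7]{LM-2012} is met.

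Next I would verify the source-regularity hypotheses. Rewrite the kinetic equation as $Lf_n=\partial_\lambda k_n+\partial_\lambda\partial_s g_{0,n}$. Hypothesis (iii) ensures that $k_n$ is strongly compact in $W^{-1,p'}_{\rm loc}$ with $p'\in[1,(d+3)/(d+2))$, so $\partial_\lambda k_n$ is compact in the corresponding $W^{-1,p'}_{\rm loc}$ space in all $(\boldsymbol{x},t,s,\lambda)$; this is precisely the tightness required on the $\partial_\lambda(\cdot)$-type forcing in the averaging lemma. Hypothesis (ii) yields $\partial_s\partial_\lambda g_{0,n}\to 0$ strongly in $H^{-1}$ (in the $(s,\lambda)$ variables), providing the vanishing correction required on the $\partial_s\partial_\lambda(\cdot)$-type forcing. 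Together with the weak $L^2$-convergence of $f_n$ supplied by (i), the triple $\{f_n,g_{0,n},k_n\}$ falls exactly within the scope of the cited theorems.

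With the hypotheses verified, the conclusion---strong $L^2_{\rm loc}$-convergence of the velocity averages $\int_{\mathbb{R}_\lambda}f_n(\boldsymbol{x},t,s,\lambda)\psi(\lambda)\,d\lambda$ for every compactly supported $\psi\in L^2(\mathbb{R}_\lambda)$---is immediate from \cite[Theorem 6]{PS-1998} and \cite[Theorem 7]{LM-2012}. The only genuine obstacle is notational: one must carefully align the roles of $(t,s)$ (both playing a kinetic time role) with the single time variable appearing in the classical formulations of the averaging lemma, and match the exponents $p'$ in (iii) to the range allowed by the parabolic version of the theorem. Once this translation is made explicit, no additional functional-analytic argument is needed, which is why the statement is offered as a ``Corollary'' rather than a theorem in its own right.
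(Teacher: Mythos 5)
Your proposal is correct and takes essentially the same route as the paper, which offers no proof of Proposition \ref{prop.4.1} beyond citing \cite[Theorem 6]{PS-1998} and \cite[Theorem 7]{LM-2012} and adapting their statements to the present geometry; your verification that the genuine nonlinearity condition \eqref{e1.02} is exactly the non-degeneracy hypothesis (the Laplacian term forces degeneracy only at $\boldsymbol{\xi}=0$) and that hypotheses (i)--(iii) supply the required compactness of the forcing terms is the intended content of the word ``Corollary.'' The only minor imprecision is calling $\partial_s\partial_\lambda g_{0,n}\to 0$ a strong $H^{-1}$ convergence (a second derivative of an $L^2$-vanishing sequence lands in $H^{-2}$), but the cited theorems are formulated precisely to accept forcing of the form $\partial_s\partial_\lambda g_{0,n}$ with $g_{0,n}\to 0$ strongly in $L^2$, so nothing in the argument is affected.
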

In item (iii) by
$W^{-1,p'}(\mathbb{R}_x^d\times\mathbb{R}_t^+\times\mathbb{R}_s^+\times\mathbb{R}_\lambda)$
we denote the dual of the Sobolev space
$W_p^1(\mathbb{R}_x^d\times\mathbb{R}_t^+\times\mathbb{R}_s^+\times\mathbb{R}_\lambda)$.

Now, using Propositions \ref{prop.2.1} and \ref{prop.4.1} and Lemmas
\ref{lem.3.01} and \ref{lem.3.02}, we establish the compactness
result for $\{u_\varepsilon\}_{\varepsilon>0}$.

\begin{lemma}
\label{lem.4.1}%
The family of classical solutions $\{u_\varepsilon\}_{\varepsilon>0}$ of
Problem $\Pi_{\gamma\varepsilon}$ is relatively compact in $L^1(G_{T,S})$.
\end{lemma}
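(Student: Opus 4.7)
The plan is to establish strong $L^1$-compactness of $\{u_\varepsilon\}_{\varepsilon>0}$ by introducing a kinetic formulation for $u_\varepsilon$, applying the averaging compactness result of Proposition \ref{prop.4.1} to the family of $\chi$-functions $\chi_\varepsilon:=\chi(\lambda;u_\varepsilon)$, and returning to $u_\varepsilon$ through Lemmas \ref{lem.3.01}(i) and \ref{lem.3.02}.

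\textbf{Step 1 (kinetic identity for $u_\varepsilon$).} First I would multiply \eqref{e2.01a} by $\eta'(u_\varepsilon)$ for an arbitrary $\eta\in C^2(\RR)$ with $\eta(0)=0$, use the chain rule $\eta'(u_\varepsilon)\Delta_x u_\varepsilon=\Delta_x\eta(u_\varepsilon)-\eta''(u_\varepsilon)|\nabla_x u_\varepsilon|^2$ (and its analogue for $\varepsilon\partial_{ss}^2$), and rewrite the resulting entropy balance in kinetic form via the representation $\eta(u)=\int\eta'(\lambda)\chi(\lambda;u)\,d\lambda$ together with the distributional identities $\partial_\lambda\chi(\lambda;u)=\delta_{(\lambda=0)}-\delta_{(\lambda=u)}$ and $\partial_s\chi(\lambda;u_\varepsilon)=\delta_{(\lambda=u_\varepsilon)}\partial_s u_\varepsilon$. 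Arbitrariness of $\eta$ then yields that $\chi_\varepsilon$ satisfies, in $\mathcal{D}'(G_{T,S}\times\RR)$,
\begin{multline*}
\partial_t\chi_\varepsilon+a'(\lambda)\partial_s\chi_\varepsilon+\vvarphi'(\lambda)\cdot\nabla_x\chi_\varepsilon+Z_\gamma\partial_\lambda\chi_\varepsilon=\\
\Delta_x\chi_\varepsilon+\varepsilon\partial_{ss}^2\chi_\varepsilon+\delta_{(\lambda=0)}Z_\gamma+\partial_\lambda(m_\varepsilon+n_\varepsilon),
\end{multline*}
where $n_\varepsilon:=|\nabla_x u_\varepsilon|^2\delta_{(\lambda=u_\varepsilon)}$ and $m_\varepsilon:=\varepsilon|\partial_s u_\varepsilon|^2\delta_{(\lambda=u_\varepsilon)}$ are positive Radon measures of total mass at most $C_1$ by \eqref{e2.04}. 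This is exactly \eqref{e3.01a} augmented by the artificial-diffusion term $\varepsilon\partial_{ss}^2\chi_\varepsilon$.

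\textbf{Step 2 (recasting in the form of Proposition \ref{prop.4.1}).} The key identity, obtained from $\delta_{(\lambda=u_\varepsilon)}=\delta_{(\lambda=0)}-\partial_\lambda\chi_\varepsilon$, reads
$$\varepsilon\partial_{ss}^2\chi_\varepsilon=-\partial_s\partial_\lambda(\varepsilon\chi_\varepsilon\partial_s u_\varepsilon)+\delta_{(\lambda=0)}\varepsilon\partial_{ss}^2 u_\varepsilon.$$
Setting $g_{0,\varepsilon}:=-\varepsilon\chi_\varepsilon\partial_s u_\varepsilon$, supported in $\overline{G}_{T,S}\times[-M_0,M_0]$ after zero-extension, and collecting the rest into a distribution $k_\varepsilon$, the kinetic identity takes the form
$$\partial_t\chi_\varepsilon+a'(\lambda)\partial_s\chi_\varepsilon+\vvarphi'(\lambda)\cdot\nabla_x\chi_\varepsilon-\Delta_x\chi_\varepsilon=\partial_\lambda k_\varepsilon+\partial_s\partial_\lambda g_{0,\varepsilon}.$$
The energy estimate \eqref{e2.04} gives $\|g_{0,\varepsilon}\|_{L^2(\RR^d\times\RR^+\times\RR^+\times\RR_\lambda)}^2\le 2M_0C_1\varepsilon\to 0$, while $k_\varepsilon$ is uniformly bounded as a Radon measure: $m_\varepsilon+n_\varepsilon$ have mass at most $C_1$; the source contributions from $-Z_\gamma\partial_\lambda\chi_\varepsilon+\delta_{(\lambda=0)}Z_\gamma$ are controlled by $\|Z_\gamma\|_{L^\infty}$ and $M_0$; and the residual $\delta_{(\lambda=0)}\varepsilon\partial_{ss}^2 u_\varepsilon=\partial_s[\delta_{(\lambda=0)}\varepsilon\partial_s u_\varepsilon]$ is controlled distributionally by $\sqrt{\varepsilon}\,\|\sqrt{\varepsilon}\partial_s u_\varepsilon\|_{L^2}\le\sqrt{\varepsilon C_1}$. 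Strong compactness of $k_\varepsilon$ in $W_{\mathrm{loc}}^{-1,p'}$ with $p'<(d+3)/(d+2)$ follows from the standard compact embedding $\mathcal{M}_{\mathrm{loc}}\hookrightarrow W_{\mathrm{loc}}^{-1,p'}$ (Rellich--Kondrachov).

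\textbf{Step 3 (applying the averaging lemma and the $L^1$-passage).} Since $\chi_\varepsilon$ is uniformly bounded in $L^\infty$ with support in $\overline{G}_{T,S}\times[-M_0,M_0]$, along a subsequence $\chi_{\varepsilon_n}\rightharpoonup\chi_*$ weakly in $L^2$, so all hypotheses of Proposition \ref{prop.4.1} hold. Choosing $\psi(\lambda):=\mathbf{1}_{[-M_0,M_0]}(\lambda)$ and invoking the first identity in Lemma \ref{lem.3.01}(i), which gives $\int_{-M_0}^{M_0}\chi(\lambda;u_\varepsilon)\,d\lambda=u_\varepsilon$, one obtains $u_{\varepsilon_n}\to u_*:=\int_{-M_0}^{M_0}\chi_*(\cdot,\lambda)\,d\lambda$ strongly in $L^2_{\mathrm{loc}}(G_{T,S})$. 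The uniform bound $|u_{\varepsilon_n}|\le M_0$ and boundedness of $G_{T,S}$ upgrade the convergence to $L^1(G_{T,S})$ by dominated convergence, yielding the desired relative compactness. The main obstacle I anticipate is the bookkeeping in Step 2: individually, the piece $\delta_{(\lambda=0)}\varepsilon\partial_{ss}^2 u_\varepsilon$ of $k_\varepsilon$ is not bounded as a measure (only $\sqrt{\varepsilon}\partial_s u_\varepsilon$, and not its $s$-derivative, is controlled in $L^2$), so one must treat $k_\varepsilon$ as a single distribution from the outset and verify its $W_{\mathrm{loc}}^{-1,p'}$-compactness after one integration by parts in $s$, rather than summand by summand.
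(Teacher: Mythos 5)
Your strategy is the same as the paper's (Section \ref{sec.4}): derive a kinetic equation for $\chi(\lambda;u_\varepsilon)$, fit it into the hypotheses of Proposition \ref{prop.4.1}, and return to $u_\varepsilon$ via Lemmas \ref{lem.3.01}(i) and \ref{lem.3.02}. Steps 1 and 3 are essentially the paper's argument. The problem is Step 2, and your closing remark does not repair it. With $g_{0,\varepsilon}=-\varepsilon\chi_\varepsilon\partial_s u_\varepsilon$ your identity leaves the residual $\varepsilon\,\delta_{(\lambda=0)}\partial_{ss}^2u_\varepsilon$, and this term fits neither admissible slot of Proposition \ref{prop.4.1}. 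Written as $\partial_\lambda\bigl(\varepsilon\mathbf{1}_{(\lambda\geqslant0)}\partial_{ss}^2u_\varepsilon\bigr)$ it would require $\varepsilon\partial_{ss}^2u_\varepsilon$ to be a bounded Radon measure, which the energy estimate \eqref{e2.04} does not provide (only $\sqrt{\varepsilon}\,\partial_su_\varepsilon$ is controlled, and only in $L^2$); written as $\partial_s\bigl(\varepsilon\delta_{(\lambda=0)}\partial_su_\varepsilon\bigr)$ it is a pure $\partial_s$-derivative, a form that hypothesis (iv) of Proposition \ref{prop.4.1} does not admit. Nor does ``treating $k_\varepsilon$ as a single distribution'' help: hypothesis (iii) requires $k_\varepsilon$ to be a Radon measure converging weakly$^*$ in $\mathcal{M}$, not merely a distribution that is compact in $W^{-1,p'}_{\mathrm{loc}}$ after an integration by parts in $s$.

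The correct repair is a one-line change of $g_{0,\varepsilon}$: take $g_{0,\varepsilon}=-\varepsilon\bigl(\chi_\varepsilon-\mathbf{1}_{(\lambda\geqslant0)}\bigr)\partial_su_\varepsilon =\varepsilon\,\mathbf{1}_{(\lambda\geqslant u_\varepsilon)}\partial_su_\varepsilon$. Since $\partial_\lambda\bigl(\chi_\varepsilon-\mathbf{1}_{(\lambda\geqslant0)}\bigr)=-\delta_{(\lambda=u_\varepsilon)}$, one gets $\partial_s\partial_\lambda g_{0,\varepsilon}=\varepsilon\,\partial_s\bigl(\delta_{(\lambda=u_\varepsilon)}\partial_su_\varepsilon\bigr)=\varepsilon\,\partial_{ss}^2\chi_\varepsilon$ exactly, with no residual, while $\Vert g_{0,\varepsilon}\Vert_{L^2}^2\leqslant 2M_0C_1\varepsilon\to0$ on the strip $\{|\lambda|\leqslant M_0\}$. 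This is precisely the function $g_\varepsilon$ defined after \eqref{e4.03} in the paper; the same subtraction of $\mathbf{1}_{(\lambda\geqslant0)}$ is used there to put the source contribution into $k_\varepsilon$ as the bounded measure $-Z_\gamma(\cdot,u_\varepsilon)\bigl(\chi_\varepsilon-\mathbf{1}_{(\lambda\geqslant0)}\bigr)$, cf. \eqref{e4.03iv}. With this modification of Step 2 the remainder of your argument goes through and coincides with the paper's proof.
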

\proof We justify Lemma \ref{lem.4.1} similarly to the claim of relative
compactness of families of weak solution to the regularized problems
outlined in \cite[Section 3]{Kuz-2016}, \cite[Section 3]{Kuz-2017}.

Recall that, due to Remark \ref{classic-weak}, the classical solution is the weak solution in the sense of Definition
\ref{def.2.01}.
In \eqref{e2.02} take an arbitrary admissible test-function $\phi$
vanishing in a neighborhood of the planes $\{t=0\}$, $\{t=T\}$,
$\{s=0\}$, $\{s=S\}$ and boundary $\partial\Omega$, and integrate
by parts with respect to $t$, $s$ and $\boldsymbol{x}$ in the first
{three} summands to get
\begin{equation}
\label{e4.01}%
\int\limits_{G_{T,S}}\big(\phi\partial_t u_\varepsilon+\phi\partial_s
a(u_\varepsilon)+\phi{\rm
div}_x\boldsymbol{\varphi}(u_\varepsilon)+\nabla_x
u_\varepsilon\cdot\nabla_x\phi
+\varepsilon\partial_su_\varepsilon\partial_s\phi
-Z_\gamma(\boldsymbol{x},t,s,u_\varepsilon)\phi\big)\,d\boldsymbol{x}dtds=0.
  \end{equation}

Taking $\phi:=\eta'(u_\varepsilon)\zeta$ with arbitrary $\eta\in
C^\infty(\mathbb{R}_\lambda)$ and $\zeta\in C_0^\infty(G_{T,S})$ in
\eqref{e4.01}, using the chain rule and integration by parts in the
first five summands, we arrive at the integral equality
\begin{multline}
\label{e4.02}%
\int\limits_{G_{T,S}} \Big(-\eta(u_\varepsilon)\partial_t\zeta-
q_a(u_\varepsilon)\partial_s\zeta-\boldsymbol{q}_\varphi(u_\varepsilon)\cdot\nabla_x\zeta
-\eta(u_\varepsilon)\Delta_x\zeta+
\varepsilon\eta'(u_\varepsilon)\partial_su_\varepsilon\partial_s\zeta
\\
+\eta''(u_\varepsilon)(\vert\nabla_xu_\varepsilon\vert^2+\varepsilon\vert\partial_su_\varepsilon\vert^2)\zeta
-Z_\gamma(\boldsymbol{x},t,s,u_\varepsilon)\eta'(u_\varepsilon)\zeta\Big)\,d\boldsymbol{x}dtds=0.
  \end{multline}

Using item (i) in Lemma \ref{lem.3.01}, we rewrite \eqref{e4.02} as
the integral equality for the function $\chi(\lambda;u_\varepsilon)$
having the form \eqref{chi-f}. More precisely, considering appropriate groups of terms in \eqref{e4.02} one by one, we get
\begin{multline}
\label{e4.03i}%
\int\limits_{G_{T,S}} \Big(-\eta(u_\varepsilon)\partial_t\zeta-
q_a(u_\varepsilon)\partial_s\zeta-\boldsymbol{q}_\varphi(u_\varepsilon)\cdot\nabla_x\zeta
-\eta(u_\varepsilon)\Delta_x\zeta\Big)\,d\boldsymbol{x}dtds=
\\
-\int\limits_{G_{T,S}}\int\limits_{\mathbb{R}_\lambda}\chi(\lambda;u_\varepsilon)\Big(
\partial_t(\zeta \eta'(\lambda))+a'(\lambda)\partial_s(\zeta
\eta'(\lambda))+\boldsymbol{\varphi}'(\lambda)\cdot\nabla_x(\zeta\eta'(\lambda))
+\Delta_x(\zeta\eta'(\lambda))\Big)\,d\lambda d\boldsymbol{x}dtds,
  \end{multline}
\begin{equation}
\label{e4.03ii}%
\int\limits_{G_{T,S}}\varepsilon\eta'(u_\varepsilon)\partial_su_\varepsilon
\partial_s\zeta\,d\boldsymbol{x}dtds=
\int\limits_{G_{T,S}}\int\limits_{\mathbb{R}_\lambda}
\varepsilon(\chi(\lambda;u_\varepsilon)-\mathbf{1}_{(\lambda\geqslant
0)})\partial_s
u_\varepsilon\partial_s(\zeta\eta''(\lambda))\,d\lambda
d\boldsymbol{x}dtds,
  \end{equation}
\begin{equation}
\label{e4.03iii}%
\int\limits_{G_{T,S}}\eta''(u_\varepsilon)\left(|\nabla_x
u_\varepsilon|^2+\varepsilon |\partial_s
u_\varepsilon|^2\right)\zeta\,d\boldsymbol{x}dtds=\left\langle\left(
|\nabla_x u_\varepsilon|^2+\varepsilon |\partial_s
u_\varepsilon|^2\right)\delta_{(\lambda=u_\varepsilon)},\zeta\eta''
\right\rangle,
  \end{equation}
  \begin{multline}
\label{e4.03iv}%
-\int\limits_{G_{T,S}}Z_\gamma(\boldsymbol{x},t,s,u_\varepsilon)\eta'(u_\varepsilon)\zeta\,d\boldsymbol{x}dtds=\\
-\int\limits_{G_{T,S}}\int\limits_{\mathbb{R}_\lambda}Z_\gamma(\boldsymbol{x},t,s,u_\varepsilon)
(\chi(\lambda;u_\varepsilon)-\mathbf{1}_{(\lambda\geqslant0)})\zeta\eta''(\lambda)\,d\boldsymbol{x}dtds=
\\
-\left\langle Z_\gamma(\cdot,\cdot,\cdot,u_\varepsilon)
(\chi(\cdot;u_\varepsilon)-\mathbf{1}_{(\lambda\geqslant0)}),\zeta\eta''
\right\rangle.
  \end{multline}
In \eqref{e4.03iii}, \eqref{e4.03iv} and further in the paper, by
$\langle\cdot,\cdot\rangle$ we standardly denote the duality bracket
between
$\mathcal{M}(\mathbb{R}_x^d\times\mathbb{R}_t^+\times\mathbb{R}_s^+\times\mathbb{R}_\lambda)$
and
$C_0(\mathbb{R}_x^d\times\mathbb{R}_t^+\times\mathbb{R}_s^+\times\mathbb{R}_\lambda)$.

Now, combining \eqref{e4.02}--\eqref{e4.03iv} we arrive at the desired integral
equality:
  \begin{multline}
\label{e4.03}%
-\int\limits_{G_{T,S}}\int\limits_{\mathbb{R}_\lambda}\chi(\lambda;u_\varepsilon)\Big(
\partial_t(\zeta \eta'(\lambda))+a'(\lambda)\partial_s(\zeta
\eta'(\lambda))+\boldsymbol{\varphi}'(\lambda)\cdot\nabla_x(\zeta\eta'(\lambda))
+\Delta_x(\zeta\eta'(\lambda))\Big)\,d\lambda d\boldsymbol{x}dtds
\\
+\int\limits_{G_{T,S}}\int\limits_{\mathbb{R}_\lambda}g_\varepsilon(\boldsymbol{x},t,s,\lambda)
\partial_s(\zeta\eta''(\lambda))\,d\lambda d\boldsymbol{x}dtds
+\langle k_\varepsilon,\zeta\eta''\rangle=0,
  \end{multline}
where
\begin{equation*}
g_\varepsilon(\boldsymbol{x},t,s,\lambda):=\varepsilon
\left(\chi(\lambda;u_\varepsilon(\boldsymbol{x},t,s))-\mathbf{1}_{(\lambda\geqslant0)}\right)
\partial_su_\varepsilon(\boldsymbol{x},t,s),
  \end{equation*}
\begin{multline*}
k_\varepsilon(\boldsymbol{x},t,s,\lambda):=(|\nabla_x
u_\varepsilon(\boldsymbol{x},t,s)|^2+\varepsilon |\partial_s
u_\varepsilon(\boldsymbol{x},t,s)|^2)\delta_{(\lambda=u_\varepsilon(\boldsymbol{x},t,s))}-
\\
Z_\gamma(\boldsymbol{x},t,s,u_\varepsilon(\boldsymbol{x},t,s))\left(\chi(\lambda;u_\varepsilon(\boldsymbol{x},t,s))-
\mathbf{1}_{(\lambda\geqslant0)}\right),\quad
  k_\varepsilon\in\mathcal{M}(\mathbb{R}_x^d\times\mathbb{R}_t^+\times\mathbb{R}_s^+\times\mathbb{R}_\lambda).
  \end{multline*}
Notice that ${\rm supp\,}\chi(\lambda;u_\varepsilon)$ lays in the
layer $\{|\lambda|\leqslant M_0\}$, since the maximum principle
\eqref{e2.03} holds for $\{u_{\varepsilon}\}_{\varepsilon>0}$.
Therefore integration with respect to $\lambda$ in \eqref{e4.03} is
fulfilled, in fact, merely over $[-M_0,M_0]$. Since the linear span of the set
$\{\zeta(\boldsymbol{x},t,s)\eta'(\lambda)\}$ is dense in
$C_0^2(G_{T,S}\times\mathbb{R}_\lambda)$, equality \eqref{e4.03} is
equivalent to the kinetic equation
\begin{equation*}
\partial_t\chi(\lambda;u_\varepsilon)+a'(\lambda)
\partial_s\chi(\lambda;u_\varepsilon)+\boldsymbol{\varphi}'(\lambda)\cdot
\nabla_x\chi(\lambda;u_\varepsilon)-\Delta_x\chi(\lambda;u_\varepsilon)=\partial_\lambda
k_\varepsilon -\partial_s\partial_\lambda g_\varepsilon
  \end{equation*}
in the sense of distributions on $G_{T,S}\times[-M_0,M_0]$.

On the strength of the energy estimate \eqref{e2.04}, we have that
\begin{equation}
\label{e4.04}%
\|g_\varepsilon\|_{L^2(G_{T,S}\times[-M_0,M_0])}\leqslant
\sqrt{2\varepsilon}C_1\underset{\varepsilon\to0+}{\longrightarrow}0,
\end{equation}
\begin{equation}
\label{e4.05}%
\|k_\varepsilon\|_{L_w^1(G_{T,S};\mathcal{M}[-M_0,M_0])}\leqslant
C_1.
\end{equation}
Here $L_w^1(G_{T,S};\mathcal{M}[-M_0,M_0])$ is the space of weakly
measurable functions from $G_{T,S}$ into\linebreak $\mathcal{M}[-M_0,M_0]$
equipped with the norm
\[
\|\nu\|_{L_w^1(G_{T,S};\mathcal{M}[-M_0,M_0])}=\int\limits_{G_{T,S}}
\|\nu(\boldsymbol{x},t,s,\cdot)\|_{\mathcal{M}[-M_0,M_0]}d\boldsymbol{x}dtds.
\]
Recall that the Dirac delta-function $\delta_{(\lambda=u_\varepsilon(\boldsymbol{x},t,s))}$ (in the expression of $k_\varepsilon$) is a positive
Radon measure having the unit mass, i.e., it is a probability measure.
Clearly,
\[
L_w^1(G_{T,S};\mathcal{M}[-M_0,M_0])\hookrightarrow\mathcal{M}(G_{T,S}\times[-M_0,M_0]).
\]
Therefore there exist a subsequence $\{\varepsilon'\to0+\}$ and measure $k\in\mathcal{M}(G_{T,S}\times[-M_0,M_0])$ such
that
\begin{equation}
\label{e4.06}%
k_{\varepsilon'}\underset{\varepsilon'\to0+}{\longrightarrow}k\text{
weakly* in }\mathcal{M}(G_{T,S}\times[-M_0,M_0]).
  \end{equation}

On the strength of Sobolev's embedding theorem \cite[Chapter I, Section
8]{Sob-2008}, the space of functionals
$\mathcal{M}(G_{T,S}\times[-M_0,M_0])$ is compactly embedded into
$W_{\rm loc}^{-1,p'}(G_{T,S}\times[-M_0,M_0])$ for any
$p'\in[1,\frac{d+3}{d+2})$. Hence
\begin{equation}
\label{e4.07}%
k_{\varepsilon'}\underset{\varepsilon'\to0+}{\longrightarrow} k\text{
strongly in } W_{\rm loc}^{-1,p'}(G_{T,S}\times[-M_0,M_0]).
  \end{equation}
Finally, there exist a subsequence $\{\varepsilon''\to0+\}$ from
$\{\varepsilon'\to0+\}$ and $f\in
L^2(G_{T,S}\times[-M_0,M_0])$ such that
\begin{equation}
\label{e4.08}%
\chi(\lambda;u_{\varepsilon''})\underset{\varepsilon''\to0+}{\longrightarrow}
f\text{ weakly in } L^2(G_{T,S}\times[-M_0,M_0]),
  \end{equation}
since, obviously, $\|\chi(\lambda;u_{\varepsilon''})\|_{L^2(G_{T,S}\times[-M_0,M_0])}\leqslant
\sqrt{M_0\,T S\,{\rm meas\,}\Omega}$, which means that the family $\{\chi(\lambda;u_{\varepsilon''})\}$ is uniformly bounded in $\varepsilon''$.
On the strength of Proposition \ref{prop.4.1}, from \eqref{e4.04}--\eqref{e4.08} it follows that
\begin{equation}
\label{e4.09}%
\int\limits_{-M_0}^{M_0}\chi(\lambda;u_{\varepsilon''})\,d\lambda\underset{\varepsilon''\to0+}{\longrightarrow}
u=\int\limits_{-M_0}^{M_0}f\,d\lambda\text{ strongly in }
L^2(G_{T,S}).
  \end{equation}
In view of Lemma \ref{lem.3.02} and item (i) in Lemma
\ref{lem.3.01}, this yields that
\begin{equation}
\label{e4.10}%
u_{\varepsilon''}\underset{\varepsilon''\to0+}{\longrightarrow}
u\text{ strongly in } L^2(G_{T,S})\,\,(\text{and in }L^1(G_{T,S}))
  \end{equation}
and that $f$ is the $\chi$-function:
\begin{equation}
\label{e4.11}%
f(\boldsymbol{x},t,s,\lambda)=\chi(\lambda;u(\boldsymbol{x},t,s)).
  \end{equation}
Lemma \ref{lem.4.1} is proved. \qed\\[1ex]
\indent Furthermore, passing to the limit along the subsequence
$\varepsilon''\to0+$ in \eqref{e4.03}, we get
\begin{multline}
\label{e4.12i}%
-\int\limits_{G_{T,S}}\int\limits_{\mathbb{R}_\lambda}\chi(\lambda;u_{\varepsilon''})
\Bigl(\partial_t(\zeta\eta'(\lambda))+a'(\lambda)
\partial_s(\zeta\eta'(\lambda))+\\ \boldsymbol{\varphi}'(\lambda)\cdot\nabla_x
(\zeta\eta'(\lambda))+\Delta_x(\zeta\eta'(\lambda))\Bigr)d\lambda
d\boldsymbol{x}dtds \underset{\varepsilon''\to0+}{\longrightarrow}
\\
-\int\limits_{G_{T,S}}\int\limits_{\mathbb{R}_\lambda}
\chi(\lambda;u)\Bigl(\partial_t(\zeta\eta'(\lambda))+a'(\lambda)
\partial_s(\zeta\eta'(\lambda))+\boldsymbol{\varphi}'(\lambda)\cdot\nabla_x
(\zeta\eta'(\lambda))+\Delta_x(\zeta\eta'(\lambda))\Bigr)d\lambda
d\boldsymbol{x}dtds
\end{multline}
due to \eqref{e4.08} and \eqref{e4.11},
\begin{equation}
\label{e4.12ii}%
-\int\limits_{G_{T,S}}\int\limits_{\mathbb{R}_\lambda}
g_{\varepsilon''}(\boldsymbol{x},t,s,\lambda)\partial_s(\zeta\eta''(\lambda))\,d\lambda
d\boldsymbol{x}dtds \underset{\varepsilon''\to0+}{\longrightarrow}0
  \end{equation}
due to \eqref{e4.04}, and
\begin{multline}
\label{e4.12iii}%
\langle k_{\varepsilon''},\zeta\eta''\rangle\equiv
\int\limits_{G_{T,S}}\Bigl(\eta''(u_\varepsilon)(|\nabla_x
u_{\varepsilon''}|^2+\varepsilon''|\nabla_s
u_{\varepsilon''}|^2)-\eta'(u_{\varepsilon''})Z_\gamma(\boldsymbol{x},t,s,u_{\varepsilon''})\Bigr)\zeta\,d\boldsymbol{x}dtds
\underset{\varepsilon''\to0+}{\longrightarrow}
\\
\left\langle\lim\limits_{\varepsilon''\to0+}\delta_{(\lambda=u_{\varepsilon''})}
\left(|\nabla_x u_{\varepsilon''}|^2+\varepsilon''|\nabla_s
u_{\varepsilon''}|^2\right),\eta''(\cdot)\zeta\right\rangle- \hspace{7cm}
\\
\int\limits_{G_{T,S}}\int\limits_{\mathbb{R}_\lambda}
\chi(\lambda;u)
\partial_\lambda(\eta'(\lambda)Z_\gamma(\boldsymbol{x},t,s,\lambda))\zeta\,d\lambda d\boldsymbol{x}dtds-
\bigl\langle
\delta_{(\lambda=0)}Z_\gamma(\boldsymbol{x},t,s,\cdot),\eta'(\cdot)\zeta\bigr\rangle
  \end{multline}
due to \eqref{e4.03iii}, \eqref{e4.03iv}, \eqref{e4.08},
\eqref{e4.11} and item (i) in Lemma \ref{lem.3.01}.

Denote
\begin{equation}
\label{e4.12iv}%
n:=\delta_{(\lambda=u)}|\nabla_x u|^2, \quad
m:=\lim\limits_{\varepsilon''\to0+}\Bigl(
\delta_{(\lambda=u_{\varepsilon''})}\left\vert\nabla_x
u_{\varepsilon''}\right\vert^2-\delta_{(\lambda=u)}\left\vert\nabla_x
u\right\vert^2+\delta_{(\lambda=u_{\varepsilon''})}\varepsilon''\left\vert\partial_s
u_{\varepsilon''}\right\vert^2\Bigr).
  \end{equation}
Notice that $m,n\in\mathcal{M}^+(G_{T,S}\times[-M_0,M_0])$.

Using \eqref{e4.12i}--\eqref{e4.12iv}, taking
into account that integration in $\lambda$ is fulfilled merely over\linebreak
$[-M_0,M_0]$ and that the linear span of the set
$\{\zeta(\boldsymbol{x},t,s)\eta'(\lambda)\}$ is dense in
$C_0^2(G_{T,S}\times\mathbb{R}_\lambda)$, from \eqref{e4.03} we derive the integral
equality
\begin{multline}
\label{e4.12}%
\int\limits_{G_{T,S}}\int\limits_{-M_0}^{M_0} \chi(\lambda;u)\Bigl(
\partial_t\Phi+a'(\lambda)\partial_s\Phi+
\boldsymbol{\varphi}'(\lambda)\cdot\nabla_x\Phi+\Delta_x\Phi+
\partial_\lambda(Z_\gamma(\boldsymbol{x},t,s,\lambda)\Phi)\Bigr)\,d\lambda d\boldsymbol{x}dtds +
\\
\langle\delta_{(\lambda=0)}Z_\gamma(\boldsymbol{x},t,s,\cdot),\Phi\rangle=\langle
m+n,\partial_\lambda\Phi\rangle\quad\forall\Phi\in
C_0^2(G_{T,S}\times\mathbb{R}_\lambda).
  \end{multline}
In the sense of distributions, \eqref{e4.12} is equivalent to the
kinetic equation \eqref{e3.01a}.

Thus, we have proved the following lemma.

\begin{lemma}
\label{lem.4.2}%
The limit $u\in L^\infty(G_{T,S}\times[-M_0,M_0])$ of the
subsequence $\{u_{\varepsilon''}\}_{\varepsilon''\to0+}$ of classical
solutions to Problem $\Pi_{\gamma\varepsilon}$, along with measures\;
$m,n\in\mathcal{M}^+(G_{T,S}\times[-M_0,M_0])$, resolves the kinetic equation \eqref{e3.01a} in the sense of
distributions.
  \end{lemma}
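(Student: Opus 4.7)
The plan is to derive the kinetic equation \eqref{e3.01a} for the limit $u$ by passing to the limit $\varepsilon'' \to 0+$ in a distributional kinetic formulation satisfied by the classical solutions $u_\varepsilon$ of Problem $\Pi_{\gamma\varepsilon}$. First, I would start from the weak formulation \eqref{e2.02} (which is satisfied by classical solutions, per Remark \ref{classic-weak}), test against $\phi = \eta'(u_\varepsilon)\zeta$ with $\eta \in C^\infty(\mathbb{R}_\lambda)$ and $\zeta \in C_0^\infty(G_{T,S})$, integrate by parts, and invoke item (i) of Lemma \ref{lem.3.01} to rewrite every entropy flux as the $\lambda$-integral of $\chi(\lambda;u_\varepsilon)$ against the corresponding $\eta'(\lambda)$-factor. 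This yields a distributional kinetic equation for $\chi(\lambda;u_\varepsilon)$ whose right-hand side collects the viscous dissipation $|\nabla_x u_\varepsilon|^2+\varepsilon|\partial_s u_\varepsilon|^2$, the $\varepsilon$-term $g_\varepsilon = \varepsilon(\chi(\lambda;u_\varepsilon)-\mathbf{1}_{\lambda\geqslant 0})\partial_s u_\varepsilon$, and the source contribution built from $Z_\gamma(\boldsymbol{x},t,s,u_\varepsilon)$.

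Next, I would collect the a priori estimates needed for passage to the limit. The energy estimate \eqref{e2.04} bounds $\|g_\varepsilon\|_{L^2(G_{T,S}\times[-M_0,M_0])}$ by $\sqrt{2\varepsilon}\,C_1$, so this term drops out in the limit. The Radon measure $k_\varepsilon$ encoding the dissipation and the source is uniformly bounded in $L^1_w(G_{T,S};\mathcal{M}[-M_0,M_0])$, hence relatively compact weak-$*$ in $\mathcal{M}(G_{T,S}\times[-M_0,M_0])$. By the compactness result in Lemma \ref{lem.4.1}, a subsequence $\{u_{\varepsilon''}\}$ converges strongly in $L^2(G_{T,S})$ to $u$, and Lemma \ref{lem.3.02} then upgrades this to weak $L^2$-convergence of $\chi(\lambda;u_{\varepsilon''})$ to the $\chi$-function $\chi(\lambda;u)$. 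Dominated convergence handles the pointwise limit $Z_\gamma(\cdot,\cdot,\cdot,u_{\varepsilon''}) \to Z_\gamma(\cdot,\cdot,\cdot,u)$ in any finite $L^p$.

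The main obstacle is identifying the limit of the quadratic measure $\delta_{(\lambda=u_{\varepsilon''})}(|\nabla_x u_{\varepsilon''}|^2+\varepsilon''|\partial_s u_{\varepsilon''}|^2)$: weak-$*$ convergence alone does not recover $\delta_{(\lambda=u)}|\nabla_x u|^2$, because this quadratic functional in the gradient is only lower semicontinuous under weak $L^2$-convergence. The standard remedy is to split the limit as $n + m$ with $n := \delta_{(\lambda=u)}|\nabla_x u|^2$ and the nonnegative defect measure $m$ defined by \eqref{e4.12iv}; nonnegativity of $m$ against test functions of the form $\zeta\,\eta''$ with $\zeta, \eta'' \geqslant 0$ follows from the weak $L^2$ lower semicontinuity of the gradient-square. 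At the same time, the source contribution must be rearranged using item (i) of Lemma \ref{lem.3.01}, so that a $\delta_{(\lambda=0)}Z_\gamma$ contribution and a $\chi\,\partial_\lambda(Z_\gamma \eta')$ contribution appear separately, matching the form of the right-hand side in \eqref{e3.01a}.

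Finally, I would conclude by invoking the density of the linear span of products $\zeta(\boldsymbol{x},t,s)\eta'(\lambda)$ in $C_0^2(G_{T,S}\times\mathbb{R}_\lambda)$: combined with the support restriction $|\lambda| \leqslant M_0$ from the maximum principle \eqref{e2.03}, the resulting integral identity \eqref{e4.12} is equivalent to the distributional kinetic equation \eqref{e3.01a} with nonnegative Radon measures $m, n \in \mathcal{M}^+(G_{T,S}\times[-M_0,M_0])$, completing the proof.
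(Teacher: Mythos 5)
Your proposal is correct and follows essentially the same route as the paper: derive the $\varepsilon$-level kinetic equation from \eqref{e2.02} via the test function $\eta'(u_\varepsilon)\zeta$ and Lemma \ref{lem.3.01}(i), kill the $g_\varepsilon$-term by \eqref{e2.04}, pass to the weak$^*$ limit in the measure term with the defect decomposition $m+n$ of \eqref{e4.12iv}, identify $\chi(\lambda;u)$ via Lemmas \ref{lem.4.1} and \ref{lem.3.02}, and conclude by density of the span of $\zeta(\boldsymbol{x},t,s)\eta'(\lambda)$. The only cosmetic imprecision is the phrase about Lemma \ref{lem.3.02} ``upgrading'' strong convergence of $u_{\varepsilon''}$ to weak convergence of the $\chi$-functions; what it actually supplies is the identification $f=\chi(\lambda;u)$ of the weak limit, exactly as in \eqref{e4.11}.
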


\section{Traces of solutions of the kinetic equation \textbf{(\ref{e3.01a})}}
\label{sec.5}
In this section, we establish existence of $L^1$-strong one-sided
traces $u_0^{{\rm tr},(1)}$, $u_{\tau-0}^{{\rm tr},(1)}$,
$u_{\tau+0}^{{\rm tr},(1)}$, $u_{0}^{{\rm tr},(2)}$, and $u_{S}^{{\rm
tr},(2)}$ of solutions of the kinetic equation \eqref{e3.01a} on the
sections $\Gamma_{0+}^1=\overline{\Omega}\times\{t=0+\}\times[0,S]$,
$\Gamma_{\tau-0}^1=\overline{\Omega}\times\{t=\tau-0\}\times[0,S]$,
$\Gamma_{\tau+0}^1=\overline{\Omega}\times\{t=\tau+0\}\times[0,S]$,
$\Gamma_{0+}^2=\overline{\Omega}\times[0,T]\times\{s=0+\}$, and
$\Gamma_{S-0}^2=\overline{\Omega}\times[0,T]\times\{s=S-0\}$,
respectively. Here $\tau\in(0,T)$ is an arbitrarily fixed value.

At first, we prove that there exists the trace on $\Gamma_{0+}^1$.
To this end, we use the theory  elaborated in \cite{AM-2013}.

\begin{definition}
\label{def.5.1}%
(\cite[Definition 1]{AM-2013}.) We say that function $f\in
L^\infty(\mathbb{R}_x^d\times\mathbb{R}_t^+\times
\mathbb{R}_s^+\times\mathbb{R}_\lambda)$ admits {\rm an averaged
trace}, if there exist function $f_0\in
L^\infty(\mathbb{R}_x^d\times\mathbb{R}_s^+\times\mathbb{R}_\lambda)$
and set $E\subset\mathbb{R}_t^+$ of full Lebesgue measure such
that
\begin{equation}
\label{e5.01}%
\lim\limits_{{}_{t\in
E}^{t\to0}}\int\limits_{\mathcal K}\left\vert\,\int\limits_{\mathbb{R}_\lambda}
\left(f(\boldsymbol{x},t,s,\lambda)-f_0(\boldsymbol{x},s,\lambda)\right)
\rho(\lambda)\,d\lambda\right\vert d\boldsymbol{x}ds=0
  \end{equation}
for any function $\rho\in C_0^1(\mathbb{R}_\lambda)$ and any
relatively compact set ${\mathcal K}\subset\mathbb{R}_x^d\times\mathbb{R}_s^+$.
\end{definition}

\begin{remark} \label{rem.esslim}
In view of Definition \ref{def.esslim}, relation \eqref{e5.01} can be equivalently written as follows:
\begin{equation*}
%\label{e5.01}%
\underset{t\to0+}{\mbox{\rm ess\,lim}}\int\limits_{\mathcal K}\left\vert\,\int\limits_{\mathbb{R}_\lambda}
\left(f(\boldsymbol{x},t,s,\lambda)-f_0(\boldsymbol{x},s,\lambda)\right)
\rho(\lambda)\,d\lambda\right\vert d\boldsymbol{x}ds=0.
  \end{equation*}
\end{remark}

\begin{proposition}
\label{prop.5.1}%
{\bf (Corollary of the  Aleksi\'c --- Mitrovi\'c
Strong Trace Existence Theorem \cite[Theorem 4]{AM-2013}.)} Let function $f\in
L^\infty\left(\mathbb{R}_x^d\times\mathbb{R}_t^+
\times\mathbb{R}_s^+\times\mathbb{R}_\lambda\right)$ and locally
finite Borel measure
$k\in\mathcal{M}\left(\mathbb{R}_x^d\times\mathbb{R}_t^+
\times\mathbb{R}_s^+\times\mathbb{R}_\lambda\right)$ resolve the
kinetic equation
\begin{equation}
\label{e5.02}%
\partial_t f+a'(\lambda)\partial_sf+\boldsymbol{\varphi}'(\lambda)\cdot
\nabla_xf-\Delta_xf=\partial_\lambda k
  \end{equation}
in the sense of distributions, where the coefficients
$a,\varphi_1,\ldots,\varphi_d$ obey Conditions on
$a\&\boldsymbol{\varphi}\&Z_\gamma$.

Additionally, let
\begin{equation}
\label{e5.03}%
{\rm supp}f\text{ lay in the layer }\{|\lambda|\leqslant M\}, \text{
with some }M={\rm const}>0,
  \end{equation}
and
\begin{equation*}
\partial_{x_i}f\in L_w^2\left(\mathbb{R}_x^d\times\mathbb{R}_t^+
\times\mathbb{R}_s^+;\mathcal{M}(\mathbb{R}_\lambda)\right),\quad
i=1,\ldots,d.
  \end{equation*}

Then there exists an average trace $f_0\in
L^\infty(\mathbb{R}_x^d\times\mathbb{R}_s^+\times
\mathbb{R}_\lambda)$ of function $f$ on\linebreak
$\mathbb{R}_x^d\times\{t=0+\}\times\mathbb{R}_s^+\times
\mathbb{R}_\lambda$ in the sense of Definition \ref{def.5.1}.
\end{proposition}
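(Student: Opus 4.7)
The plan is to present this proposition as a direct verification of the hypotheses of the Aleksi\'c--Mitrovi\'c strong trace existence theorem \cite[Theorem 4]{AM-2013}, so the proof is essentially a matching exercise followed by invocation of that result. First I would match the structure of the kinetic equation \eqref{e5.02} with the abstract equation treated in \cite{AM-2013}: the coefficients $a'(\lambda), \varphi_1'(\lambda),\ldots,\varphi_d'(\lambda)$ are continuous (in fact $C^1_{\mathrm{loc}}$) by virtue of \eqref{u1.04}, and the genuine nonlinearity condition \eqref{e1.02} (or its generalization stated in Remark \ref{rem.1.1}) is precisely the non-degeneracy hypothesis on the principal symbol $\xi_1+a'(\lambda)\xi_2$ required there to make velocity averaging applicable.

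Next I would check that the remaining ingredients are in place. The function $f$ is bounded and effectively compactly supported in $\lambda$ by \eqref{e5.03}; the measure $k$ is locally finite as assumed; the hypothesis $\partial_{x_i} f \in L^2_w(\mathbb R^d_x\times \mathbb R^+_t\times \mathbb R^+_s;\mathcal M(\mathbb R_\lambda))$ for $i=1,\ldots,d$ supplies the spatial regularity demanded in \cite[Theorem 4]{AM-2013}; and the presence of the diffusion term $-\Delta_x f$ on the left-hand side gives the parabolic regularization in $\boldsymbol x$ that distinguishes the ultra-parabolic setting from the purely hyperbolic one. With these verifications the theorem applies and yields an averaged trace $f_0\in L^\infty(\mathbb R^d_x\times \mathbb R^+_s\times \mathbb R_\lambda)$ in the sense of Definition \ref{def.5.1}, which by Remark \ref{rem.esslim} is equivalent to the essential-limit formulation.

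The main obstacle that lies inside \cite[Theorem 4]{AM-2013} (rather than in our verification) is producing an equicontinuity estimate in $t$ for the $\lambda$-averages $\int f(\boldsymbol x,t,s,\lambda)\rho(\lambda)\,d\lambda$. To sketch how this is obtained, I would test the kinetic equation \eqref{e5.02} against $\rho(\lambda)\,\zeta(\boldsymbol x,s)$ and use the local finiteness of $k$ to bound the contribution of $\partial_\lambda k$ as a distribution of order one in $\lambda$; the diffusive and first-order flux terms are controlled by the $L^\infty$-bound on $f$ and the spatial regularity; genuine nonlinearity is then used, via an averaging lemma, to upgrade weak compactness of these averages to strong compactness in $L^2_{\mathrm{loc}}$. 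Extracting a subsequence converging along a set $E\subset \mathbb R^+_t$ of full Lebesgue measure, and identifying the limit as $f_0$ independent of the chosen subsequence, completes the construction. Since all this is carried out in \cite{AM-2013}, our proof amounts to a checklist citation.
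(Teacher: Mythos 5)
Your overall strategy coincides with the paper's: Proposition \ref{prop.5.1} is stated there as a corollary of \cite[Theorem 4]{AM-2013} with no proof beyond the citation, so a hypothesis-checking argument of the kind you give is exactly what is intended, and your verification of the boundedness of $f$, the support condition \eqref{e5.03}, the local finiteness of $k$, and the regularity $\partial_{x_i}f\in L_w^2(\mathbb{R}_x^d\times\mathbb{R}_t^+\times\mathbb{R}_s^+;\mathcal{M}(\mathbb{R}_\lambda))$ is the right checklist.

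One point in your sketch of the internal mechanism is, however, at odds with the paper and with the structure of the result. You assert that the genuine nonlinearity condition \eqref{e1.02} is ``precisely the non-degeneracy hypothesis required'' and that it is used ``via an averaging lemma, to upgrade weak compactness of these averages to strong compactness.'' The paper's Remark \ref{rem.5.1} states explicitly that \eqref{e1.02} is \emph{excessive} in Proposition \ref{prop.5.1}. The reason is that the conclusion here is only the existence of an \emph{averaged} trace in the sense of Definition \ref{def.5.1}, i.e.\ convergence of the quantities $\int_{\mathbb{R}_\lambda} f(\cdot,t,\cdot,\lambda)\rho(\lambda)\,d\lambda$ as $t\to0+$; this follows from equicontinuity in $t$ of these averages (obtained by testing \eqref{e5.02} against $\rho(\lambda)\zeta(\boldsymbol{x},s)$ and using the bounds on $k$ and on $\partial_{x_i}f$) together with weak-$*$ compactness, and no velocity averaging is needed. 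Genuine nonlinearity enters only when one wants \emph{strong} traces without averaging in $\lambda$ --- compare Lemma \ref{lem.5.3} and Remark \ref{rem.5.3}, where the paper stresses that \eqref{e1.02} cannot be discarded. Since the hypotheses of the proposition include \eqref{e1.02} anyway, your proof is not invalid, but the attribution of the key mechanism to the averaging lemma is a mischaracterization you should correct.
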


\begin{remark}
\label{rem.5.1} It is worth to notice that the genuine nonlinearity
condition \eqref{e1.02} is excessive in Proposition \ref{prop.5.1}.
(See \cite[Proof of Theorem 4]{AM-2013}.)
 Also remark that, in \cite{AM-2013}, the space $L_w^2(\mathbb{R}^N;\mathcal{M}(\mathbb{R}))$
is denoted by $L^2(\mathbb{R}^N;\mathcal{M}(\mathbb{R}))$.
  \end{remark}

Now we establish the following lemma.
\begin{lemma}
\label{lem.5.1}%
Let triple $u\in L^\infty(G_{T,S})\cap
L^2((0,T)\times(0,S);\text{\it\r{W}}{}_2^{\,1}(\Omega))$, $m\in
\mathcal{M}^+(G_{T,S}\times[-M_0,M_0])$ and
$n=\delta_{(\lambda=u)}|\nabla_xu|^2$ resolve the kinetic equation
\eqref{e3.01a} in the sense of the integral equality \eqref{e4.12}.
(As the matter of fact, we consider the case when
$u=\lim\limits_{\varepsilon''\to0+}u_{\varepsilon''}$).

Then there exists $u_0^{{\rm tr},(1)}\in L^\infty(\Xi^1)$, the trace
of $u$ on $\Gamma_{0+}^1$, such that
\begin{equation}
\label{e5.04}%
\underset{t\to0+}{\rm ess\,lim}\int\limits_{\Xi^1} \left\vert
u(\boldsymbol{x},t,s)-u_0^{{\rm
tr},(1)}(\boldsymbol{x},s)\right\vert\,d\boldsymbol{x}ds=0,
\end{equation}
\begin{equation}
\label{e5.05}%
\underset{t\to0+}{\rm
ess\,lim}\int\limits_{\Xi^1}\int\limits_{-M_0}^{M_0}
\left\vert\chi(\lambda;u(\boldsymbol{x},t,s))- \chi(\lambda;
u_0^{{\rm tr},(1)}(\boldsymbol{x},s))\right\vert\,d\lambda
d\boldsymbol{x}ds=0.
\end{equation}
\end{lemma}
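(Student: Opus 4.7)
The plan is to deduce Lemma \ref{lem.5.1} by applying the Aleksi\'c --- Mitrovi\'c Strong Trace Existence Theorem (Proposition \ref{prop.5.1}) to the kinetic equation \eqref{e3.01a}, and then to upgrade the resulting averaged trace to an $L^1$-strong trace via Lemma \ref{lem.3.02}.

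First, I would recast \eqref{e3.01a} in the canonical form \eqref{e5.02}. Using the Leibniz rule $Z_\gamma\partial_\lambda\chi(\lambda;u)=\partial_\lambda(Z_\gamma\chi)-(\partial_\lambda Z_\gamma)\chi$, the distributional identity $\delta_{(\lambda=0)}Z_\gamma(\boldsymbol{x},t,s,\lambda)=\partial_\lambda\bigl(\mathbf{1}_{(\lambda\geqslant 0)}Z_\gamma(\boldsymbol{x},t,s,0)\bigr)$, and representing the bounded remainder $-(\partial_\lambda Z_\gamma)\chi$ as $\partial_\lambda G$ with $G(\boldsymbol{x},t,s,\lambda):=-\int_{-M_0}^{\lambda}(\partial_\mu Z_\gamma)(\boldsymbol{x},t,s,\mu)\,\chi(\mu;u(\boldsymbol{x},t,s))\,d\mu$, I would absorb every source term of \eqref{e3.01a} into a single right-hand side $\partial_\lambda k$ with $k:=m+n-Z_\gamma\chi+\mathbf{1}_{(\lambda\geqslant 0)}Z_\gamma(\boldsymbol{x},t,s,0)+G$. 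Positivity of $m,n$, regularity \eqref{u1.04} of $Z_\gamma$, and the support confinement $\mathrm{supp\,}\chi(\lambda;u)\subset\{|\lambda|\leqslant M_0\}$ furnished by \eqref{e2.03}, then guarantee that $k$ is a locally finite Borel measure.

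Next, I would extend $u$, $m$, $n$, $\chi(\lambda;u)$ and $k$ by zero outside $G_{T,S}$. The vanishing data $u|_{\Gamma_l}=0$ together with $a(0)=0$ and $\varphi_i(0)=0$ (see \eqref{u1.04}) makes the extension across $\Gamma_l$ compatible with the distributional equation, while the jumps across $\Gamma_0^2$ and $\Gamma_S^2$ are handled by localization --- applying Proposition \ref{prop.5.1} on relatively compact subsets of $\overline{\Omega}\times\mathbb{R}_t^+\times(0,S)$ and then exhausting $[0,S]$ afterwards. The remaining hypotheses of Proposition \ref{prop.5.1} are routine: $\mathrm{supp\,}\chi(\lambda;u)\subset\{|\lambda|\leqslant M_0\}$ by the maximum principle, and $\|\partial_{x_i}\chi(\boldsymbol{x},t,s,\cdot)\|_{\mathcal{M}(\mathbb{R}_\lambda)}=\|\delta_{(\lambda=u)}\partial_{x_i}u\|_{\mathcal{M}(\mathbb{R}_\lambda)}=|\partial_{x_i}u(\boldsymbol{x},t,s)|\in L^2(G_{T,S})$ by the Sobolev regularity of $u$ assumed in the statement. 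Proposition \ref{prop.5.1} then produces an averaged trace $f_0\in L^\infty(\Xi^1\times[-M_0,M_0])$ of $\chi(\lambda;u)$ at $t=0+$. To upgrade it to a strong trace, I would pick in Definition \ref{def.5.1} a cutoff $\rho\in C_0^1(\mathbb{R}_\lambda)$ with $\rho\equiv 1$ on $[-M_0,M_0]$; in combination with item (i) of Lemma \ref{lem.3.01}, which yields $\int_{\mathbb{R}_\lambda}\chi(\lambda;u)\rho\,d\lambda=u$, this shows that $u(\cdot,t,\cdot)\to u_0^{\mathrm{tr},(1)}:=\int_{-M_0}^{M_0}f_0(\cdot,\cdot,\lambda)\,d\lambda$ strongly in $L^1(\Xi^1)$ as $t\to 0+$ along the full-measure set $E$ of Definition \ref{def.5.1}, which, in view of Remark \ref{rem.esslim} and Definition \ref{def.esslim}, is precisely \eqref{e5.04}. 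Then Lemma \ref{lem.3.02}, applied to the $\chi$-functions $\chi(\lambda;u(\cdot,t_n,\cdot))$ along any sequence $t_n\to 0+$ in $E$, transfers this strong $L^1$-convergence of the $\lambda$-averages to strong $L^1_{\mathrm{loc}}$-convergence of the $\chi$-functions themselves and forces the weak limit $f_0$ to be a $\chi$-function, so that $f_0(\boldsymbol{x},s,\lambda)=\chi(\lambda;u_0^{\mathrm{tr},(1)}(\boldsymbol{x},s))$ and \eqref{e5.05} follows.

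I expect the main obstacle to be the reduction to the canonical form \eqref{e5.02} together with the extension across $\partial G_{T,S}$: one must verify that after absorbing the convective source $Z_\gamma\partial_\lambda\chi$, the Dirac forcing $\delta_{(\lambda=0)}Z_\gamma$, the dissipation measure $n$ and any residual boundary currents, the resulting right-hand-side measure $k$ remains locally finite in the sense demanded by Proposition \ref{prop.5.1}. Once this bookkeeping is in place, the passage from the averaged trace to the strong trace via Lemmas \ref{lem.3.01} and \ref{lem.3.02} is essentially algebraic.
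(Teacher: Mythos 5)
Your proposal follows essentially the same route as the paper's own proof: recast \eqref{e3.01a} as the canonical kinetic equation \eqref{e5.02} by absorbing the source terms into a single locally finite measure $k$ (the paper's $k_*$ in \eqref{e5.06ii} is exactly your $k$ up to rewriting $\chi(\lambda;u)-\mathbf{1}_{(\lambda\geqslant0)}=-\mathbf{1}_{(\lambda\geqslant u)}$), verify the $L_w^2(\cdot;\mathcal{M}(\mathbb{R}_\lambda))$ bound on $\partial_{x_i}\chi$ via $\partial_{x_i}\chi(\lambda;u)=\delta_{(\lambda=u)}\partial_{x_i}u$ (which the paper establishes through the Green's-theorem computation \eqref{e5.06iii}), apply Proposition \ref{prop.5.1}, and then pass from the averaged trace to \eqref{e5.04} and \eqref{e5.05} with $\rho\equiv1$ and Lemma \ref{lem.3.01}. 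The only slip is the sign of your primitive $G$: since $-Z_\gamma\partial_\lambda\chi=-\partial_\lambda(Z_\gamma\chi)+\chi\,\partial_\lambda Z_\gamma$, the remainder to be rewritten as a $\lambda$-derivative is $+\chi\,\partial_\lambda Z_\gamma$, so $G$ should be $+\int_{-M_0}^{\lambda}(\partial_\mu Z_\gamma)\chi(\mu;u)\,d\mu$; this does not affect the local finiteness of $k$ or any subsequent step.
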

\proof In \eqref{e4.12} rewrite the last two terms in the left-hand
side as follows:
\begin{multline}
\label{e5.06i}%
\int\limits_{G_{T,S}}\int\limits_{-M_0}^{M_0}\chi(\lambda;u)
\partial_\lambda(Z_\gamma(\boldsymbol{x},t,s,\lambda)\Phi(\boldsymbol{x},t,s,\lambda))d\lambda d\boldsymbol{x}dtds
+\langle\delta_{(\lambda=0)}
Z_\gamma(\boldsymbol{x},t,s,\cdot),\Phi(\boldsymbol{x},t,s,\cdot)\rangle=
\\
\int\limits_{G_{T,S}}
Z_\gamma(\boldsymbol{x},t,s,u(\boldsymbol{x},t,s))\Phi(\boldsymbol{x},t,s,u(\boldsymbol{x},t,s))\,d\boldsymbol{x}dtds=\\-
\int\limits_{G_{T,S}}\left(\,\int\limits_{-M_0}^{M_0}\boldsymbol{1}_{(\lambda\geqslant
u(\boldsymbol{x},t,s))}\partial_\lambda(Z_\gamma(\boldsymbol{x},t,s,\lambda)\Phi(\boldsymbol{x},t,s,\lambda))d\lambda\right)\,d\boldsymbol{x}dtds=
\\
-\int\limits_{G_{T,S}}\int\limits_{-M_0}^{M_0}\Bigl(\mathbf{1}_{(\lambda\geqslant
u(\boldsymbol{x},t,s))}Z_\gamma(\boldsymbol{x},t,s,\lambda)-\\ \int\limits_0^\lambda
\mathbf{1}_{(\lambda'\geqslant
u(\boldsymbol{x},t,s))}\partial_{\lambda'}Z_\gamma(\boldsymbol{x},t,s,\lambda')\,d\lambda'\Bigr)\partial_\lambda
\Phi(\boldsymbol{x},t,s,\lambda)\,d\lambda d\boldsymbol{x}dtds,
\\
\forall\,\Phi\in C_0^2(G_{T,S}\times[-M_0,M_0]).
\end{multline}
In this chain of equalities, at first, we use assertion (i) in Lemma
\ref{lem.3.01}, at second, we use the evident identity
\begin{equation*}
\phi(u(\boldsymbol{x},t,s))=-\int\limits_{\mathbb
R}  \mathbf{1}_{(\lambda\geqslant
u(\boldsymbol{x},t,s))} \phi'(\lambda)d\lambda\quad \forall\,\phi\in
C_0^1(\mathbb{R}),
  \end{equation*}
and, finally, we integrate by parts in $\lambda$.

Substituting \eqref{e5.06i} into \eqref{e4.12} we obtain the
integral equality equivalent to \eqref{e4.12}. Consequently, the
kinetic equation \eqref{e3.01a} takes the following equivalent form:
\begin{multline} \label{e3.01bis}
\partial_{t}\chi(\lambda;u(\boldsymbol{x},t,s))
+a^{\prime}(\lambda)\partial_{s}\chi(\lambda;u(\boldsymbol{x},t,s))
+\boldsymbol{\varphi}^{\prime}(\lambda)\cdot\nabla_x\chi(\lambda;u(\boldsymbol{x},t,s))
-\Delta_x\chi(\lambda;u(\boldsymbol{x},t,s))=
\\
\partial_\lambda\Biggl(m(\boldsymbol{x},t,s,\lambda)+n(\boldsymbol{x},t,s,\lambda)
+\mathbf{1}_{(\lambda\geqslant u(\boldsymbol{x},t,s))}
Z_\gamma(\boldsymbol{x},t,s,\lambda)-\int\limits_0^\lambda
\mathbf{1}_{(\lambda'\geqslant
u(\boldsymbol{x},t,s))}\partial_{\lambda'}Z_\gamma(\boldsymbol{x},t,s,\lambda')\,d\lambda'\Biggr).
\end{multline}
Hence we conclude that the kinetic equation \eqref{e3.01a} is the
kinetic equation of  the form \eqref{e5.02} with $f=\chi(\lambda;u)$
and
\begin{equation}
\label{e5.06ii}%
k:=k_*\equiv m+n+\mathbf{1}_{(\lambda\geqslant
u)}Z_\gamma-\int\limits_0^\lambda \mathbf{1}_{(\lambda'\geqslant
u)}\partial_{\lambda'}Z_\gamma(\boldsymbol{x},t,s,\lambda')\,d\lambda'.
  \end{equation}
Clearly, $\chi$ is bounded and satisfies \eqref{e5.03}, and $k_*$ is a locally finite Borel measure belonging to
$\mathcal{M}(\mathbb{R}_x^d\times\mathbb{R}_t^+\times\mathbb{R}_s^+\times\mathbb{R}_\lambda)$.
Furthermore, using assertion (i) in Lemma \ref{lem.3.01} and Green's
theorem \cite[Lemma 2.20]{MNRR-1996}, we establish the representation
\begin{multline}
\label{e5.06iii}%
\left\langle\partial_{x_i}\chi,\psi\right\rangle=-\int\limits_{G_{T,S}}
\int\limits_{-M_0}^{M_0}\chi(\lambda;u(\boldsymbol{x},t,s))\partial_{x_i}\psi(\boldsymbol{x},t,s,\lambda)\,d\lambda
d\boldsymbol{x}dtds=
\\
\int\limits_{G_{T,S}}\left[(\partial_{x_i}\Psi(\boldsymbol{x},t,s,\lambda))\big\vert_{\lambda=
u(\boldsymbol{x},t,s)}-\partial_{x_i}\Psi(\boldsymbol{x},t,s,0)\right]\,d\boldsymbol{x}dtds=
\\
\int\limits_{G_{T,S}}\left[\partial_{x_i}\Psi(\boldsymbol{x},t,s,u)
-\psi(\boldsymbol{x},t,s,u)\partial_{x_i}u-\partial_{x_i}\Psi(\boldsymbol{x},t,s,0)\right]\,d\boldsymbol{x}dtds=
\\
-\int\limits_{G_{T,S}}
\psi(\boldsymbol{x},t,s,u)\partial_{x_i}u\,d\boldsymbol{x}dtds+
\int\limits_{\Gamma_l}\left(\Psi(\boldsymbol{\sigma},t,s,u(\boldsymbol{\sigma},t,s))
-\Psi(\boldsymbol{\sigma},t,s,0)\right)n_i(\boldsymbol{\sigma})\,d\boldsymbol{\sigma}dtds=
\\
-\int\limits_{G_{T,S}}
\psi(\boldsymbol{x},t,s,u)\partial_{x_i}u\,d\boldsymbol{x}dtds,\quad \forall\,i=1,\ldots,d,\;\forall\,\psi\in
C_0^\infty(G_{T,S};C_0(\mathbb{R}_\lambda)),
  \end{multline}
where $\Psi$ is the primitive of $\psi$ with respect to $\lambda$:
\[
\partial_\lambda\Psi(\boldsymbol x,t,s,\lambda)=\psi(\boldsymbol
x,t,s,\lambda),
\]
 $n_i$ is the $i$-th component of the unit outward
normal to $\partial\Omega$, $d\boldsymbol{\sigma}$ is an
infinitesimal element of $\partial\Omega$, and
$\partial_{x_i}\Psi(\boldsymbol{x},t,s,u)$ is the full derivative of
$\Psi$ with respect to $x_i$, i.e.,
\[
\partial_{x_i}\Psi(\boldsymbol{x},t,s,u)=\left(\partial_{x_i}\Psi(\boldsymbol{x},t,s,\lambda)\right)\big\vert_{\lambda= u(\boldsymbol{x},t,s)}
+\psi(\boldsymbol{x},t,s,u)\partial_{x_i}u.
\]
The last equality in \eqref{e5.06iii} is valid due to the property
of finiteness of $\psi$. More certainly, we have
\[
\Psi(\boldsymbol{\sigma},t,s,u(\boldsymbol{\sigma},t,s))=
\Psi(\boldsymbol{\sigma},t,s,0)=0\quad \text{ for
}\boldsymbol{\sigma}\in \partial\Omega.
\]

Now using the Cauchy-Buniakovskii inequality, from \eqref{e5.06iii}
we derive the bound
\begin{multline*}
\left\vert\left\langle\partial_{x_i}\chi,\psi\right\rangle\right\vert
\equiv \left\vert-\int\limits_{G_{T,S}}
\psi(\boldsymbol{x},t,s,u)\partial_{x_i}u\,d\boldsymbol{x}dtds\right\vert\leqslant \\
\|\partial_{x_i}u\|_{L^2(G_{T,S})}\|\psi\|_{L^2(G_{T,S};\,C_0(\mathbb{R}_\lambda))}\equiv
C_u\|\psi\|_{L^2(G_{T,S};\,C_0(\mathbb{R}_\lambda))},\quad\forall\,
\psi\in C_0^1(G_{T,S};C_0(\mathbb{R}_\lambda)).
\end{multline*}
Since $C_0^1(G_{T,S};\,C_0(\mathbb{R}_\lambda))$ is dense in
$L^2(G_{T,S};\,C_0(\mathbb{R}_\lambda))$, this bound yields that
\[
\|\partial_{x_i}\chi\|_{L_w^2(G_{T,S};\mathcal{M}(\mathbb{R}_\lambda))}\leqslant
C_u.
\]
Thus, all assumptions of Proposition \ref{prop.5.1} hold for $f=\chi$
and $k=k_*$ (see \eqref{e5.06ii}). Therefore, the limiting relation
\eqref{e5.01} is valid for $\chi$, i.e., there exists $f_0\in
L^\infty(G_{T,S}\times[-M_0,M_0])$ such that
\begin{equation}
\label{e5.06}%
\underset{t\to0+}{\rm ess\,lim}\int\limits_{\Xi^1}\left\vert\,
\int\limits_{-M_0}^{M_0}\left(\chi(\lambda;u(\boldsymbol{x},t,s))-f_0(\boldsymbol{x},s,\lambda)
\right)\rho(\lambda)\,d\lambda\right\vert d\boldsymbol{x}ds=0,\quad \forall\,\rho\in C_0^1(\mathbb{R}_\lambda).
  \end{equation}
 On the strength of assertion
(i) in Lemma \ref{lem.3.01}, taking $\rho\equiv 1$ on $[-M_0,M_0]$, from \eqref{e5.06} we
immediately derive \eqref{e5.04} with
\begin{equation}
\label{e5.07}%
u_0^{{\rm
tr},(1)}(\boldsymbol{x},s):=\int\limits_{-M_0}^{M_0}f_0(\boldsymbol{x},s,\lambda)\,d\lambda.
  \end{equation}
In turn, on the strength of assertion (ii) in Lemma \ref{lem.3.01},
from \eqref{e5.04} we deduce the limiting relation \eqref{e5.05}.
Lemma \ref{lem.5.1} is proved. \qed

\begin{remark}
\label{rem.5.2}%
In view of Lemma \ref{lem.3.02}, relations
\eqref{e5.04}, \eqref{e5.05}, \eqref{e5.06}, and \eqref{e5.07} yield that
$$f_0(\boldsymbol{x},s,\lambda)=\chi(\lambda;u_0^{{\rm
tr},(1)}(\boldsymbol{x},s)).$$
\end{remark}

Next, we establish existence of traces on $\Gamma_{\tau-0}^1$ and
$\Gamma_{\tau+0}^1$ for any fixed $\tau\in(0,T)$.
\begin{lemma}
\label{lem.5.2} In the assumptions of Lemma \ref{lem.5.1}, for any
fixed $\tau\in(0,T)$ there exist $u_{\tau-0}^{{\rm tr},(1)}$,
$u_{\tau+0}^{{\rm tr},(1)}\in L^\infty(\Xi^1)$ such that
\begin{equation*}
\underset{t\to\tau\pm0}{\rm ess\,lim}\int\limits_{\Xi^1}\left\vert
u(\boldsymbol{x},t,s)-u_{\tau\pm0}^{{\rm
tr},(1)}(\boldsymbol{x},s)\right\vert\, d\boldsymbol{x}ds=0,
  \end{equation*}
\begin{equation*}
\underset{t\to\tau\pm0}{\rm
ess\,lim}\int\limits_{\Xi^1}\int\limits_{-M_0}^{M_0}\left\vert
\chi(\lambda;u(\boldsymbol{x},t,s))-\chi(\lambda;u_{\tau\pm0}^{{\rm
tr},(1)}(\boldsymbol{x},s))\right\vert\,d\lambda
d\boldsymbol{x}ds=0.
  \end{equation*}
\end{lemma}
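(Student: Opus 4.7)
The plan is to reduce both cases to Lemma \ref{lem.5.1} by elementary changes of the time variable, since Lemma \ref{lem.5.1} already supplies strong one-sided traces at the left endpoint $t=0+$ of the time interval.

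For the right trace at $\tau+0$, I would define $\tilde{u}(\boldsymbol{x},t,s):=u(\boldsymbol{x},t+\tau,s)$ on the shifted domain $\Omega\times(0,T-\tau)\times(0,S)$. A direct change of variable in the integral equality \eqref{e4.12} shows that $\tilde{u}$, together with the shifted positive Radon measures $\tilde{m}$ and $\tilde{n}=\delta_{(\lambda=\tilde{u})}|\nabla_x\tilde{u}|^2$, resolves the kinetic equation \eqref{e3.01a} on the shifted domain with source $\tilde{Z}_\gamma(\boldsymbol{x},t,s,\lambda):=Z_\gamma(\boldsymbol{x},t+\tau,s,\lambda)$. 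All Conditions on $a\&\boldsymbol{\varphi}\&Z_\gamma$ and the $L^2$-spatial-gradient bound on $u$ are time-translation invariant, so Lemma \ref{lem.5.1} applies verbatim to $\tilde{u}$ and yields the desired $u_{\tau+0}^{\mathrm{tr},(1)}\in L^\infty(\Xi^1)$. For the left trace at $\tau-0$, I would set $\hat{u}(\boldsymbol{x},t,s):=u(\boldsymbol{x},\tau-t,s)$ on $\Omega\times(0,\tau)\times(0,S)$; time reversal in the equivalent form \eqref{e3.01bis} produces a kinetic equation of the same structure, provided one replaces $a,\boldsymbol{\varphi}$ by $\hat{a}:=-a$, $\hat{\boldsymbol{\varphi}}:=-\boldsymbol{\varphi}$ and the right-hand-side measure by its time-reflected counterpart. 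The genuine nonlinearity condition \eqref{e1.02} is invariant under $a'\mapsto -a'$ because $(\xi_1,\xi_2)\in\mathbb{S}^1 \Longleftrightarrow (\xi_1,-\xi_2)\in\mathbb{S}^1$; the regularity \eqref{u1.04} is trivial; and the bound $\partial_{x_i}\chi(\lambda;\hat{u})\in L^2_w(\mathcal{M}(\mathbb{R}_\lambda))$ follows from the Green-identity computation \eqref{e5.06iii} applied to $\hat{u}$. Hence Proposition \ref{prop.5.1} supplies an averaged trace at $t=0+$ of $\hat{u}$, which after undoing the change of variable is precisely $u_{\tau-0}^{\mathrm{tr},(1)}$.

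The delicate point is the sign of the Laplacian under time reversal: the transformed equation formally carries $+\Delta_x$ rather than $-\Delta_x$. However, Proposition \ref{prop.5.1} uses the Laplacian only through the distributional bound $\partial_{x_i}f\in L^2_w(\mathcal{M}(\mathbb{R}_\lambda))$, which is insensitive to the sign of the second-order term; as noted in Remark \ref{rem.5.1}, the averaging-compactness machinery of \cite{AM-2013} is not affected by the choice of sign. Alternatively, $\Delta_x\hat{u}$ can be absorbed into the source as a $W^{-1,p'}_{\mathrm{loc}}$-distribution and the general Panov averaging lemma invoked. Once this is checked, an argument identical to the final paragraph of the proof of Lemma \ref{lem.5.1} — taking $\rho\equiv 1$ on $[-M_0,M_0]$ in \eqref{e5.01} and applying assertion (ii) of Lemma \ref{lem.3.01} — delivers both essential-limit statements for $u_{\tau\pm 0}^{\mathrm{tr},(1)}$. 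I expect this sign-of-Laplacian bookkeeping to be the only real obstacle; everything else is a direct translation or a verbatim reuse of the computations from the proof of Lemma \ref{lem.5.1}.
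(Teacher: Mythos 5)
Your reduction is in substance the same as the paper's: the authors prove Lemma \ref{lem.5.2} by declaring that Definition \ref{def.5.1} and Proposition \ref{prop.5.1} admit a ``natural modification'' with $t=0+$ replaced by $t=\tau\pm0$ and then repeating the proof of Lemma \ref{lem.5.1}, which is exactly what your time translation (for $\tau+0$) and time reversal (for $\tau-0$) accomplish at the level of the solution rather than of the theorem. The $\tau+0$ half of your argument is clean: translation invariance of all hypotheses is immediate and Proposition \ref{prop.5.1} applies verbatim to $\tilde u$.

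For the $\tau-0$ half there is one soft spot you have correctly located but not closed, and one miscitation. After reversal and multiplication by $-1$ the equation reads $\partial_t\hat f-a'(\lambda)\partial_s\hat f-\boldsymbol{\varphi}'(\lambda)\cdot\nabla_x\hat f+\Delta_x\hat f=-\partial_\lambda\hat k$, so the hypothesis of Proposition \ref{prop.5.1} --- which prescribes the sign $-\Delta_x f$ as part of the equation \eqref{e5.02}, not merely the bound $\partial_{x_i}f\in L_w^2(\mathcal{M}(\mathbb{R}_\lambda))$ --- is literally violated; you need a variant of the Aleksi\'c--Mitrovi\'c theorem for the reversed sign. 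Your heuristic that the blow-up/averaging machinery sees only the modulus of the symbol is plausible (and the paper's own ``natural modification'' for the left trace implicitly relies on the same fact), but \emph{Remark \ref{rem.5.1} does not say this}: it says only that the genuine nonlinearity condition \eqref{e1.02} is excessive in Proposition \ref{prop.5.1}, and it is silent on the orientation of time or the sign of the diffusion. So either state and justify the sign-reversed trace proposition by inspecting the proof of \cite[Theorem 4]{AM-2013} directly, or follow the paper and modify Proposition \ref{prop.5.1} at the source (replacing $t=0+$ by $t=\tau-0$ throughout its proof), which amounts to the same verification but avoids presenting a hypothesis-violating application of the proposition as stated. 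Your check that \eqref{e1.02} is invariant under $a'\mapsto-a'$ and that the signed measure $-\hat k$ is still admissible is correct and is the right bookkeeping to record.
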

\noindent {\it Proof} of this lemma is quite analogous to
justification of Lemma \ref{lem.5.1}. It is based on the natural
modification of Definition \ref{def.5.1} and Proposition
\ref{prop.5.1}, which consists in substitution of $t=0+$ by
$t=\tau-0$ and $t=\tau+0$ in the formulations and relative proofs.
After this substitution, we only need to thoroughly repeat track of
the proof of Lemma \ref{lem.5.1}. \qed\\[1ex]
\indent Existence of strong traces of
$u=\lim\limits_{\varepsilon''\to0}u_{\varepsilon''}$ on
$\Gamma_{0+}^2$ and $\Gamma_{S-0}^2$ immediately follows from the
 result on existence of traces from \cite[Section 3.3]{Kuz-2017},
which, in turn, relies on the techniques initially introduced in
\cite{KV-2007,PAN-2005,PAN-2007,V-2001}, and then developed in
\cite[Lemmas 2 and 4]{Kuz-2015}.
Thus we establish the following lemma.
\begin{lemma}
\label{lem.5.3}%
Let triple $u\in L^\infty(G_{T,S})\cap
L^2((0,T)\times(0,S);\text{\it\r{W}}{}_2^{\,1}(\Omega))$,
$m\in\mathcal{M}^+(G_{T,S}\times[-M_0,M_0])$ and
$n=\delta_{(\lambda=u)}|\nabla_x u|^2$ resolve the kinetic equation
\eqref{e3.01a} in the sense of the integral equality \eqref{e4.12}.
Let coefficients $a,\varphi_1,\ldots,\varphi_d$ and function
$Z_\gamma$ satisfy Conditions on
$a\&\boldsymbol{\varphi}\&Z_\gamma$.

 Then there exist $u_{0+}^{{\rm tr},(2)}$, $u_{S-0}^{{\rm
tr},(2)}\in L^\infty(\Xi^2)$, the traces of $u$ on $\Gamma_{0+}^2$
and $\Gamma_{S-0}^2$, satisfying the limiting
relations \eqref{e3.02} and \eqref{e3.03}, respectively.
\end{lemma}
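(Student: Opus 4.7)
The strategy is to adapt the $L^1$-strong boundary trace existence techniques of \cite{V-2001,PAN-2005,PAN-2007,KV-2007}, as refined in \cite[Section~3.3]{Kuz-2017} and \cite[Lemmas~2 and~4]{Kuz-2015}, to the $s$-direction of the kinetic equation \eqref{e3.01a}. The route of Lemma \ref{lem.5.1} (via Proposition \ref{prop.5.1}) does not apply directly here, because \eqref{e3.01a} has no diffusion with respect to $s$. However, along $s$ the equation is a pure first-order transport with flux $a'(\lambda)$, and the genuine nonlinearity condition \eqref{e1.02} is precisely what powers a Panov--Vasseur-type argument.

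I would proceed in three steps. First, I recast the kinetic equation in its equivalent form \eqref{e3.01bis}, treating $s$ as the distinguished time-like variable: the $\partial_t$, $\nabla_x$ and $-\Delta_x$ contributions, together with the measure-valued right-hand side assembled from $m$, $n$ and $Z_\gamma$ (cf.\ the measure $k_*$ in \eqref{e5.06ii}), are gathered into a single locally finite source distribution. Second, I perform a blow-up near $\{s=0\}$: for a sequence $s_k\to 0+$ drawn from a full-measure subset of $(0,S)$ and a rescaling parameter $h_k\to 0+$, introduce the rescaled $\chi$-functions $\chi_k(\boldsymbol{x},t,\sigma,\lambda):=\chi(\lambda;u(\boldsymbol{x},t,s_k+h_k\sigma))$ on $\Xi^2\times(0,1)_\sigma\times[-M_0,M_0]$. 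These satisfy a rescaled kinetic equation whose source terms meet the hypotheses of the averaging compactness Proposition \ref{prop.4.1}; combining that proposition with \eqref{e1.02}, one extracts a subsequence converging strongly in $L^1_{\mathrm{loc}}$ to a limit independent of $\sigma$.

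Third, by Lemma \ref{lem.3.02}, this limit is itself a $\chi$-function of the form $\chi(\lambda;u_{0+}^{\mathrm{tr},(2)}(\boldsymbol{x},t))$ for some $u_{0+}^{\mathrm{tr},(2)}\in L^\infty(\Xi^2)$. Item (ii) of Lemma \ref{lem.3.01} then converts the strong $L^1$ convergence of $\chi_k$ into the strong $L^1$ convergence $u(\cdot,\cdot,s_k)\to u_{0+}^{\mathrm{tr},(2)}$, which is precisely \eqref{e3.02}. A mirror-image argument at $s=S-0$ yields $u_{S-0}^{\mathrm{tr},(2)}$ and \eqref{e3.03}.

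The main obstacle lies in the second step: one must verify that the measure-valued right-hand side of the rescaled kinetic equation stays suitably controlled as $h_k\to 0+$, so that the hypotheses of Proposition \ref{prop.4.1} are satisfied uniformly in $k$. Controlling the rescaled contributions of $m$, $n$ and $Z_\gamma$ near $\{s=0\}$, and checking that the $\partial_t\chi$ and $-\Delta_x\chi$ terms survive the rescaling in appropriate negative-order Sobolev norms, is the delicate technical core. This is exactly the analysis carried out in detail in \cite[Lemmas~2 and~4]{Kuz-2015}, and it transfers to the present setting without essential modification.
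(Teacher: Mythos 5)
Your proposal matches the paper's treatment: the authors do not prove Lemma \ref{lem.5.3} in detail but derive it directly from the strong-trace results of \cite[Section 3.3]{Kuz-2017}, which rest on exactly the Vasseur--Panov blow-up and averaging machinery of \cite{V-2001,PAN-2005,PAN-2007,KV-2007} and \cite[Lemmas 2 and 4]{Kuz-2015} that you describe, with the genuine nonlinearity condition \eqref{e1.02} playing the essential role you identify (cf.\ Remark \ref{rem.5.3}). Your sketch of the rescaling near $\{s=0\}$ and the identification of the limit via Lemma \ref{lem.3.02} is a faithful outline of that cited argument, so the approach is essentially the same.
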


\begin{remark}
\label{rem.5.3}%
The demand \eqref{e1.02} of genuine nonlinearity is
necessary in assumptions of Lemma \ref{lem.5.3}, and it cannot be
discarded.
\end{remark}

\section{Kinetic initial and final conditions} \label{sec.6}
\begin{lemma}
\label{lem.6.1}%
 Let $u$ resolve the kinetic equation \eqref{e3.01a}
(along with measures $m,n\in\mathcal{M}^+(G_{T,S}\times[-M_0,M_0])$) and be a
strong limiting point of the family
$\{u_\varepsilon\}_{\varepsilon>0}$ of solutions to Problem
$\Pi_{\gamma\varepsilon}$: $u=\mbox{\rm s-}\lim\limits_{\varepsilon''\to0}u_{\varepsilon''}$.
Then the kinetic initial condition \eqref{e3.01b} holds true for
$u$.
  \end{lemma}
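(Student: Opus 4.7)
The crucial preliminary observation is that Lemma \ref{lem.5.1} already supplies a strong $L^1$-trace $u_0^{{\rm tr},(1)} \in L^\infty(\Xi^1)$ of $u$ at $t=0+$ satisfying \eqref{e5.05}. Comparing \eqref{e5.05} with the target \eqref{e3.01b}, the statement of Lemma \ref{lem.6.1} reduces to the almost-everywhere identification $u_0^{{\rm tr},(1)}(\boldsymbol{x},s) = u_0^{(1)}(\boldsymbol{x},s)$ on $\Xi^1$. The plan is to establish precisely this equality by producing two integral identities with matching interior but a priori differing boundary contributions, and then forcing them to coincide.

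The first identity is obtained by replaying the derivation of Lemma \ref{lem.4.2}, now with test functions allowed to be nonzero on $\Gamma_0^1$. Concretely, in the weak formulation \eqref{e2.02} I would insert $\phi = \eta'(u_\varepsilon)\zeta$, where $\eta \in C^\infty(\mathbb{R})$ and $\zeta \in C^\infty(\overline{G}_{T,S})$ vanishes in a neighborhood of $\Gamma_T^1 \cup \Gamma_0^2 \cup \Gamma_S^2 \cup \Gamma_l$ but is otherwise arbitrary on $\Gamma_0^1$. The time integration by parts, together with the initial condition $u_\varepsilon|_{t=0} = u_0^{(1)}$ from \eqref{e2.01b} and Lemma \ref{lem.3.01}(i), yields, in addition to the interior terms already present in \eqref{e4.03}, the boundary contribution
\[
\mathcal{B}[v] := \int_{\Xi^1}\int_{-M_0}^{M_0} \chi(\lambda; v(\boldsymbol{x},s))\, \eta'(\lambda)\, \zeta(\boldsymbol{x},0,s)\, d\lambda\, d\boldsymbol{x}\, ds,
\]
evaluated at $v = u_0^{(1)}$. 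Since $\mathcal{B}[u_0^{(1)}]$ does not depend on $\varepsilon$ and the interior convergences \eqref{e4.12i}--\eqref{e4.12iv} carry through unchanged, passing to the subsequence limit $\varepsilon'' \to 0+$ produces the augmented counterpart of \eqref{e4.12} with $\mathcal{B}[u_0^{(1)}]$ added to its left-hand side.

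For the second identity, I would take the already-proved distributional equality \eqref{e4.12} for the limiting triple $(u,m,n)$ and extend its admissible class of test functions to those nonvanishing on $\Gamma_0^1$. Because Lemma \ref{lem.5.1} combined with Remark \ref{rem.5.2} guarantees that $\chi(\lambda; u)$ possesses the strong $L^1$-trace $\chi(\lambda; u_0^{{\rm tr},(1)})$ at $t=0+$, the same integration by parts in $t$ is legitimate and produces the same augmented identity, but now with $u_0^{{\rm tr},(1)}$ in place of $u_0^{(1)}$ in the boundary term. Subtraction gives
\[
\int_{\Xi^1}\int_{-M_0}^{M_0} \Big[\chi(\lambda; u_0^{(1)}) - \chi(\lambda; u_0^{{\rm tr},(1)})\Big]\, \eta'(\lambda)\, \zeta(\boldsymbol{x},0,s)\, d\lambda\, d\boldsymbol{x}\, ds = 0
\]
for all admissible $\zeta$ and $\eta$. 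Since the linear span of $\{\zeta(\boldsymbol{x},0,s)\eta'(\lambda)\}$ is dense in $C_0(\Xi^1\times[-M_0,M_0])$, this forces $\chi(\lambda;u_0^{(1)}) = \chi(\lambda;u_0^{{\rm tr},(1)})$ a.e., and Lemma \ref{lem.3.01}(i) with $\Psi(\lambda)=\lambda$ then yields $u_0^{(1)} = u_0^{{\rm tr},(1)}$ a.e.\ on $\Xi^1$; coupled with \eqref{e5.05}, this is exactly \eqref{e3.01b}.

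The main obstacle will be justifying rigorously that the distributional identity \eqref{e4.12} indeed extends to test functions nonvanishing on $\Gamma_0^1$, i.e., that no hidden mass of $m+n$ concentrates on the slice $\{t=0\}\times[-M_0,M_0]$. For $n = \delta_{(\lambda=u)}|\nabla_x u|^2$ this is automatic from the energy bound \eqref{e2.04} and the absolute continuity of $|\nabla_x u|^2 \in L^1(G_{T,S})$ with respect to Lebesgue measure. For $m$, one would invoke its construction in \eqref{e4.12iv} together with the uniform bound \eqref{e4.05}, or equivalently bypass the issue by choosing auxiliary test functions supported in shrinking slabs $\{\delta < t\}$ and letting $\delta \to 0+$, exploiting the inner regularity of $m$ on $G_{T,S}$ and the dominated-convergence control on $\mathcal{B}$.
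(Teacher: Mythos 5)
Your reduction of the lemma to the identification $u_0^{{\rm tr},(1)}=u_0^{(1)}$ is exactly the right target, and your route to it (compare the boundary terms produced at $t=0$ by the $\varepsilon$-level weak formulation and by the limit equation) is genuinely different from the paper's. However, as written there is a real gap at the point you yourself flag, and your proposed repairs do not close it. The uniform total-mass bound \eqref{e4.05} is perfectly compatible with weak$^*$ concentration of the limiting defect measure on the slice $\{t=0\}$: when you pass to the limit in $\langle k_{\varepsilon''},\zeta\eta''\rangle$ with $\zeta(\cdot,0,\cdot)\neq0$ (this is step \eqref{e4.12iii}, which does \emph{not} ``carry through unchanged''), the limit is a measure $\bar m$ on the closed cylinder whose restriction to $\{t=0\}$ may be a nontrivial nonnegative measure $\mu_0$. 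The shrinking-slab device only re-derives the interior identity \eqref{e4.12} and says nothing about $\mu_0$. If $\mu_0\neq0$, your subtraction yields not $\mathcal{B}[u_0^{(1)}]=\mathcal{B}[u_0^{{\rm tr},(1)}]$ but a kinetic boundary relation of the form $\chi(\lambda;u_0^{(1)})-\chi(\lambda;u_0^{{\rm tr},(1)})=\partial_\lambda\mu_0$ tested against $\zeta\eta'$ --- precisely the situation at $s=0$ and $s=S$ (cf. \eqref{e3.01c}--\eqref{e3.01d}), where the data are \emph{not} attained. So for general convex $\eta$ your argument proves strictly less than claimed, and ruling out $\mu_0$ is essentially equivalent to the statement being proved.

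The good news is that your own scheme contains the fix: you never need general $\eta$. Run the comparison only with $\eta(\lambda)=\lambda$ (equivalently, use the plain weak formulation \eqref{e2.02} with test functions $\kappa(\boldsymbol{x},s)\rho_\delta(t)$ not vanishing at $t=0$). Then $\eta''\equiv0$, the terms $\langle k_{\varepsilon''},\zeta\eta''\rangle$ vanish identically, no defect measure ever enters, and the limit passage uses only the strong $L^2$ convergence of $u_{\varepsilon''}$, the weak convergence of $\nabla_xu_{\varepsilon''}$ and $\varepsilon''\partial_su_{\varepsilon''}\to0$. Letting $\delta\to0+$ and invoking the strong trace \eqref{e5.04} gives $\int_{\Xi^1}u_0^{{\rm tr},(1)}\kappa=\int_{\Xi^1}u_0^{(1)}\kappa$ for all admissible $\kappa$, hence $u_0^{{\rm tr},(1)}=u_0^{(1)}$ a.e., and \eqref{e5.05} then delivers \eqref{e3.01b} exactly as in your closing step. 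For comparison, the paper extracts the same linear-test-function information dynamically: it bounds $\partial_tu_\varepsilon$ uniformly in $L^\infty(0,T;W^{-1,2}(\Xi^1))$, deduces equicontinuity of $t\mapsto u_\varepsilon(t)$ and, via Rellich and Arzel\`a--Ascoli, uniform-in-$t$ convergence of $u_{\varepsilon''}(t)$ in $W^{-1,2}(\Xi^1)$, so that the limits $t\to0+$ and $\varepsilon''\to0$ commute and the triangle inequality gives $u_0^{{\rm tr},(1)}=u_0^{(1)}$.
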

  \begin{proof}
From the integral equality \eqref{e2.02}, the maximum principle
\eqref{e2.03} and the energy estimate \eqref{e2.04}, using the
standard techniques, we easily derive the bound
\[
\langle\partial_tu_\varepsilon(t),\phi\rangle_{W^{-1,2}(\Xi^1),\text{\it\r{W}}{}_2^{\,1}(\Xi^1)}\leqslant
C_2\|\phi\|_{W_2^1(\Xi^1)},\quad\forall\,\phi\in
\text{\it\r{W}}{}_2^{\,1}(\Xi^1),\quad\forall\, t\in[0,T], \quad\forall\,
\varepsilon\in(0,1],
\]
where
\[
C_2=\sqrt{S{\rm
meas}(\Omega)}\left(\|a\|_{C[-M_0,M_0]}+\|\boldsymbol{\varphi}\|_{C[-M_0,M_0]}
+\Vert\beta\Vert_{C(\Xi^1\times[-M_0,M_0])}\right)+2\sqrt{C_1},
\]
$C_1$ is the constant from \eqref{e2.04}.

This means that the family $\{\partial_t
u_\varepsilon\}_{\varepsilon\in(0,1]}$ is uniformly bounded in
$L^\infty(0,T;W^{-1.2}(\Xi^1))$. Hence the family
$\{u_\varepsilon\!:\;[0,T]\mapsto
W^{-1,2}(\Xi^1)\}_{\varepsilon\in(0,1]}$ is equicontinuous. In
particular,
\begin{equation}
\label{e6.01i}%
u_\varepsilon(\cdot,t,\cdot)\underset{t\to0+}{\longrightarrow}u_0\text{
strongly in }W^{-1,2}(\Xi^1)\text{ uniformly in
}\varepsilon\in(0,1].
  \end{equation}
On the other hand, due to \eqref{e2.03}, values of the mappings
$t\mapsto u_\varepsilon(\cdot,t,\cdot)$ belong to the set
\[
\{\phi\in L^2(\Xi^1)\!:\;\underset{(\boldsymbol{x},s)\in\Xi^1}{\rm
ess\,sup\,}|\phi(\boldsymbol{x},s)|\leqslant M_0\},
\]
 which is a compact
subset in $W^{-1,2}(\Xi^1)$ by the Rellich theorem. Therefore, by
the Arcel theorem, the set $\{u_\varepsilon\}_{\varepsilon\in(0,1]}$
is relatively compact in $C(0,T;W^{-1,2}(\Xi^1))$. Hence,
\begin{equation}
\label{e6.01ii}%
u_{\varepsilon''}(\cdot,t,\cdot)\underset{\varepsilon''\to0}{\longrightarrow}
u(\cdot,t,\cdot)\text{ strongly in }W^{-1,2}(\Xi^1)\text{ uniformly
on the segment }[0,T].
  \end{equation}
(Recall that $\{\varepsilon''\to0\}$ is the subsequence extracted in
the proof of Lemma \ref{lem.4.1}.) Next, from \eqref{e5.04} it
immediately follows that
\begin{equation}
\label{e6.01iii}%
u(\cdot,t,\cdot)\underset{t\to0+}{\longrightarrow}u_0^{\mathrm{tr},(1)}
\text{ strongly in }W^{-1,2}(\Xi^1).
  \end{equation}
From \eqref{e6.01i}--\eqref{e6.01iii}, by the triangle inequality we
deduce that $u_0^{{\rm tr},(1)}=u_0^{(1)}$. Now, inserting $u_0^{(1)}$ on the place of $u_0^{tr,(1)}$ in \eqref{e5.05}, we
conclude the proof of the lemma.
\end{proof}

In order to derive the kinetic initial and final conditions
\eqref{e3.01c} and \eqref{e3.01d}, we will prove and then apply the
following lemma.
\begin{lemma}
\label{lem.6.2}%
Under assumptions of Lemma \ref{lem.6.1}, the limiting relations
\begin{equation}
\label{e6.02}%
\lim\limits_{\varepsilon''\to0+} \int\limits_{\Xi^2}
\kappa(\boldsymbol{x},t)(-\varepsilon''\partial_s
u_{\varepsilon''}(\boldsymbol{x},t,0))\,d\boldsymbol{x}dt=
\int\limits_{\Xi^2} \kappa(\boldsymbol{x},t)\left( a(u_0^{{\rm
tr},(2)}(\boldsymbol{x},t))-a(u_0^{(2)}(\boldsymbol{x},t))\right)\,d\boldsymbol{x}dt,
  \end{equation}
\begin{equation}
\label{e6.03}%
\lim\limits_{\varepsilon''\to0+} \int\limits_{\Xi^2}
\kappa(\boldsymbol{x},t)(-\varepsilon''\partial_s
u_{\varepsilon''}(\boldsymbol{x},t,S))\,d\boldsymbol{x}dt=
\int\limits_{\Xi^2} \kappa(\boldsymbol{x},t)\left( a(u_S^{{\rm
tr},(2)}(\boldsymbol{x},t))-a(u_S^{(2)}(\boldsymbol{x},t))\right)\,d\boldsymbol{x}dt
  \end{equation}
hold true for all test-functions $\kappa\in C_0^1(\Xi^2)$.
  \end{lemma}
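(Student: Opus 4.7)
\textit{Proof plan.} The strategy is to expose the boundary integral $-\varepsilon\int_{\Xi^2}\kappa\,\partial_s u_\varepsilon(\boldsymbol{x},t,0)\,d\boldsymbol{x}dt$ by testing the classical equation \eqref{e2.01a} against a function localized in a thin slab near $\{s=0\}$, and then passing to the iterated limit: first $\varepsilon''\to 0+$ for fixed localization width $h>0$, and then $h\to 0+$. Concretely, I would take the test function $\phi_h(\boldsymbol{x},t,s)=\kappa(\boldsymbol{x},t)\psi_h(s)$, where $\psi_h$ is the Lipschitz cutoff equal to $1$ on $[0,h]$, affinely decreasing to $0$ on $[h,2h]$, and vanishing on $[2h,S]$. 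Multiplying \eqref{e2.01a} by $\phi_h$ and integrating by parts once in $s$ (in $\partial_s a(u_\varepsilon)$ and $\varepsilon\partial_{ss}u_\varepsilon$, using $u_\varepsilon|_{s=0}=u_0^{(2)}$, $\psi_h(0)=1$, $\psi_h(S)=0$) and in $t$ and $\boldsymbol{x}$ (using $\kappa\in C_0^1(\Xi^2)$ and $u_\varepsilon|_{\Gamma_l}=0$) yields an identity of the form
\begin{multline*}
-\int_{\Xi^2}\varepsilon\,\kappa\,\partial_s u_\varepsilon(\boldsymbol{x},t,0)\,d\boldsymbol{x}dt
=\frac{1}{h}\int_h^{2h}\!\!\int_{\Xi^2}\kappa\,a(u_\varepsilon)\,d\boldsymbol{x}dtds\\
-\int_{\Xi^2}\kappa\,a(u_0^{(2)})\,d\boldsymbol{x}dt+R_\varepsilon(h),
\end{multline*}
where $R_\varepsilon(h)$ collects all contributions from $\partial_t\kappa$, $\nabla_x\kappa\cdot\boldsymbol{\varphi}(u_\varepsilon)$, $\nabla_x\kappa\cdot\nabla_x u_\varepsilon$, $\kappa Z_\gamma$, together with the parasitic term $-\frac{\varepsilon}{h}\int_h^{2h}\int_{\Xi^2}\kappa\,\partial_s u_\varepsilon\,d\boldsymbol{x}dtds$.

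The remainder $R_\varepsilon(h)$ vanishes in the iterated limit. The contributions with $\partial_t\kappa$, $\nabla_x\kappa\cdot\boldsymbol{\varphi}(u_\varepsilon)$ and $\kappa Z_\gamma$ are bounded by $C(\kappa)\,h$ through the maximum principle \eqref{e2.03} and boundedness of $a$, $\boldsymbol{\varphi}$, $Z_\gamma$ on $[-M_0,M_0]$. The diffusion term $\int\psi_h\,\nabla_x\kappa\cdot\nabla_x u_\varepsilon\,d\boldsymbol{x}dtds$ is estimated by $\sqrt{2h}\,\|\nabla_x u_\varepsilon\|_{L^2(G_{T,S})}\|\nabla_x\kappa\|_{L^2(\Xi^2)}\le C(\kappa)\sqrt{h\,C_1}$ via \eqref{e2.04}. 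Finally, the parasitic term is an exact derivative in $s$, hence it equals $\tfrac{\varepsilon}{h}\int_{\Xi^2}\kappa(u_\varepsilon(\boldsymbol{x},t,2h)-u_\varepsilon(\boldsymbol{x},t,h))\,d\boldsymbol{x}dt$, bounded by $2M_0\varepsilon\|\kappa\|_{L^1(\Xi^2)}/h$; for fixed $h$ this vanishes as $\varepsilon''\to 0+$.

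The delicate term is the averaged flux $h^{-1}\int_h^{2h}\int_{\Xi^2}\kappa\,a(u_{\varepsilon''})\,d\boldsymbol{x}dtds$. By Lemma \ref{lem.4.1} the subsequence $u_{\varepsilon''}$ converges to $u$ strongly in $L^1(G_{T,S})$, and together with $\|u_{\varepsilon''}\|_{L^\infty}\le M_0$ and continuity of $a$ this allows passing to the limit $\varepsilon''\to 0+$, producing $h^{-1}\int_h^{2h}\int_{\Xi^2}\kappa\,a(u)\,d\boldsymbol{x}dtds$. The main obstacle is the subsequent limit $h\to 0+$: I would invoke the $L^1$-strong essential trace property \eqref{e3.02} from Lemma \ref{lem.5.3} together with the elementary fact that, if a bounded measurable function $g\colon(0,S)\to\mathbb{R}$ satisfies $\underset{s\to 0+}{\mathrm{ess\,lim}}\,g(s)=L$, then $h^{-1}\int_h^{2h}g(s)\,ds\to L$ (which follows by choosing, for $\epsilon>0$, $\delta>0$ with $|g(s)-L|<\epsilon$ for a.e. $s\in(0,\delta)$, and taking $h<\delta/2$). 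Applied to $g(s):=\int_{\Xi^2}\kappa(\boldsymbol{x},t)\,a(u(\boldsymbol{x},t,s))\,d\boldsymbol{x}dt$, this gives the limit $\int_{\Xi^2}\kappa\,a(u_0^{\mathrm{tr},(2)})\,d\boldsymbol{x}dt$, which combined with the preceding paragraph yields \eqref{e6.02}. The identity \eqref{e6.03} is proved by a strictly symmetric argument with $\psi_h$ localized in $[S-2h,S]$ and $\psi_h(S)=1$; the sign pattern of the boundary contributions arising from integration by parts at $s=S$ reproduces the right-hand side of \eqref{e6.03}.
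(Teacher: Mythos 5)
Your proposal is correct and follows essentially the same route as the paper: the paper also tests the regularized equation against $\kappa(\boldsymbol{x},t)\rho_\delta^0(s)$ with a cutoff $\rho_\delta^0$ supported in $[0,\delta]$ and equal to $1$ at $s=0$ (see \eqref{e6.04}--\eqref{e6.02i}), passes first to the limit $\varepsilon''\to0+$ and then $\delta\to0+$, and identifies the flux term via the strong trace of Lemma \ref{lem.5.3} and the property \eqref{e6.07-Phi} of $(\rho_\delta^0)'$. Your piecewise-linear $\psi_h$ is just an explicit instance of that cutoff family, with $-\psi_h'$ the normalized indicator of $[h,2h]$, so your averaging argument for $h\to0+$ is the same mechanism as \eqref{e6.07-Phi}.
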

\proof At first, notice that the weak traces of $u_\varepsilon$ and
$\partial_s u_\varepsilon$ are well-defined on $\Gamma_{\tilde
s}^2=\bar{\Omega}\times[0,T]\times\{s=\tilde{s}\}$ for all
$\tilde{s}\in[0,S]$ and that the integral equality \eqref{e2.02}
is equivalent to the integral equality
\begin{multline}
\label{e6.04}
\int\limits_{s'}^{s''}\int\limits_{\Xi^2}\big(-u_\varepsilon\partial_t\phi
-a(u_\varepsilon)\partial_s\phi-\boldsymbol{\varphi}(u_\varepsilon)\cdot
\nabla_x\phi+\nabla_xu_\varepsilon\cdot\nabla_x\phi
+\varepsilon\partial_su_\varepsilon\partial_s\phi-Z_\gamma(\boldsymbol{x},t,s,u_\varepsilon)\phi\big)\,d\boldsymbol{x}dtds
\\
+\int\limits_{\Xi^2}a(u_\varepsilon(\boldsymbol{x},t,s''))
\phi(\boldsymbol{x},t,s'')\,d\boldsymbol{x}dt
-\int\limits_{\Xi^2}a(u_\varepsilon(\boldsymbol{x},t,s'))
\phi(\boldsymbol{x},t,s')\,d\boldsymbol{x}dt
\\
+\int\limits_{\Xi^2}(-\varepsilon\partial_s
u_\varepsilon(\boldsymbol{x},t,s'')
\phi(\boldsymbol{x},t,s''))\,d\boldsymbol{x}dt
-\int\limits_{\Xi^2}(-\varepsilon\partial_s
u_\varepsilon(\boldsymbol{x},t,s')
\phi(\boldsymbol{x},t,s'))\,d\boldsymbol{x}dt=0
\end{multline}
for all $s',s''\in[0,S]$ ($s'\leqslant s''$) and for all
test-functions $\phi\in L^\infty(G_{T,S})\cap W_2^1(G_{T,S})$
vanishing in the neighborhood of the planes $\{t=0\}$, $\{t=T\}$
and the boundary $\partial\Omega$.

Existence of weak traces of $u_\varepsilon$ and $\partial_s
u_\varepsilon$ on $\Gamma_{\tilde{s}}^2$ can be proved in a quite
similar way as in \cite[Proposition A.1]{Kuzn-Sazh-1-2018}. Proof of
the equivalence of \eqref{e6.04} to \eqref{e2.02} is quite similar
to the proof in \cite[Chapter 3, Section 1.1]{AKM-1990}. Therefore,
we skip these proofs in this article.

Like in \cite{K-1970} and \cite{DL-1988}, let us introduce a family
of functions
$\{\rho_{\delta}^0:\mathbb{R}^+\mapsto[0,1]\}_{\delta>0}$ such that
\begin{multline}
\label{e6.05}%
\{\rho_\delta^0\}\subset C^2(\mathbb{R}_s^+),\quad
\rho_\delta^0(s)=0,\quad \forall s>\delta,\quad \rho_\delta^0(0)=1,
\quad |(\rho_\delta^0)'(\cdot)\vert\leqslant\frac c\delta
\\
\text{~with a constant~}c>0 \text{~independent of~}\delta,
  \end{multline}
and
\begin{equation}
\label{e6.07-Phi}%
\lim\limits_{\delta\to0+}\int\limits_0^\delta\Phi(s)(\rho_\delta^0)'(s)\,ds=-\Phi(0+)
  \end{equation}
for any integrable in a neighborhood of
$\{s=0\}\subset\mathbb{R}^+$ function $\Phi$ having the trace on
$s=0$ from the right, $\Phi(0+)=\lim\limits_{s\to0+}\Phi(s)$.

In \eqref{e6.04} fix $\delta>0$, $s'=0$, $s''>\delta$ and take
$\phi(\boldsymbol{x},t,s)=\kappa(\boldsymbol{x},t)\rho_\delta^0(s)$,
where $\kappa\in C_0^1(\Xi^2)$ is an arbitrary test-function. On the
strength of Proposition \ref{prop.2.1}, Lemma \ref{lem.4.1} and
properties \eqref{e6.05}, passing to the limit as
$\varepsilon:=\varepsilon''\to0+$, from \eqref{e6.04} we derive the
relation
\begin{multline}
\label{e6.02i}%
\int\limits_0^\delta\rho_\delta^0(s)\int\limits_{\Xi^2}\left(-u\partial_t\kappa
-\boldsymbol{\varphi}(u)\cdot\nabla_x\kappa+\nabla_x u\cdot\nabla_x
\kappa-Z_\gamma(\boldsymbol{x},t,s,u)\kappa\right)\,d\boldsymbol{x}dtds
-\\
\int\limits_0^\delta\left(\rho_\delta^0\right)'(s)
\int\limits_{\Xi^2}a(u)\kappa\,d\boldsymbol{x}dtds
-\int\limits_{\Xi^2}a(u_0^{(2)})\kappa\,d\boldsymbol{x}dt
-\\
\lim\limits_{\varepsilon''\to0+}\int\limits_{\Xi^2}
\left(-\varepsilon''\partial_s
u_{\varepsilon''}(\boldsymbol{x},t,0)\right)\kappa(\boldsymbol{x},t)\,
d\boldsymbol{x}dt=0.
  \end{multline}

Further, \eqref{e6.02} directly follows from \eqref{e6.02i} as
$\delta\to0+$, thanks to Lemma \ref{lem.5.3} and property
\eqref{e6.07-Phi}. And, finally, \eqref{e6.03} is deduced similarly
by means of the test-function $\rho_\delta^S(s)=\rho_\delta^0(S-s)$,
choosing $s'<S-\delta$ and $s''=S$ in \eqref{e6.04}. This
observation completes the proof. \qed\\[1ex]
With the help of Lemma \ref{lem.6.2}, we establish the following
result.
\begin{lemma}
\label{lem.6.3}%
Under assumptions of Lemma \ref{lem.6.1}, kinetic initial and final
conditions \eqref{e3.01c} and \eqref{e3.01d} are valid.
  \end{lemma}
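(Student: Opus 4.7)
My plan is to derive \eqref{e3.01c} by testing against functions that concentrate on the plane $\{s = 0\}$ and performing two successive limit passages: $\varepsilon'' \to 0+$, where Lemma~\ref{lem.6.2} identifies the weak limit of the viscous normal flux, and then $\delta \to 0+$, where property \eqref{e6.07-Phi} combined with Lemma~\ref{lem.5.3} extracts the trace $u_0^{{\rm tr},(2)}$. The companion identity \eqref{e3.01d} will follow by the symmetric substitution $\rho_\delta^0(s) \mapsto \rho_\delta^S(s) := \rho_\delta^0(S - s)$, the sign reversal on the right-hand side of \eqref{e3.01d} reflecting the opposite outward normal at $\{s = S\}$.

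For an arbitrary convex test-function $\eta \in C^2(\mathbb{R})$ and $\kappa \in C_0^1(\Xi^2)$, I would multiply the pointwise equation \eqref{e2.01a} by $\eta'(u_\varepsilon)\kappa(\boldsymbol{x},t)\rho_\delta^0(s)$, integrate over $G_{T,S}$, use the chain rule to assemble the convex entropy-flux triple $(\eta, q_a, \boldsymbol{q}_\varphi)$, and integrate by parts in $t$, $s$ and $\boldsymbol{x}$ so that all derivatives fall on $\kappa\rho_\delta^0$. Since $\rho_\delta^0(S) = (\rho_\delta^0)'(S) = 0$ while $\rho_\delta^0(0) = 1$, the only surviving boundary contributions come from $\{s = 0\}$: the convective flux in $s$ produces $-\int_{\Xi^2} q_a(u_0^{(2)})\kappa\,d\boldsymbol{x}dt$ (using $u_\varepsilon\vert_{s=0} = u_0^{(2)}$), while the viscous flux produces $\int_{\Xi^2}\varepsilon\eta'(u_0^{(2)})\partial_s u_\varepsilon\vert_{s=0}\kappa\,d\boldsymbol{x}dt$. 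Passing $\varepsilon'' \to 0+$ along the subsequence of Lemma~\ref{lem.4.1}, the bulk terms converge via the strong $L^1$-convergence $u_{\varepsilon''} \to u$, the $\varepsilon$-weighted bulk terms vanish, the entropy-dissipation sum converges weakly$^*$ to $\langle m+n,\kappa\rho_\delta^0\eta''(\lambda)\rangle$ by \eqref{e4.12iv}, and Lemma~\ref{lem.6.2} applied to the test-function $\eta'(u_0^{(2)})\kappa \in C_0^1(\Xi^2)$ yields
\[
\int_{\Xi^2}\varepsilon''\eta'(u_0^{(2)})\partial_s u_{\varepsilon''}\vert_{s=0}\kappa\,d\boldsymbol{x}dt \underset{\varepsilon''\to 0+}{\longrightarrow} -\int_{\Xi^2}\eta'(u_0^{(2)})\kappa\bigl(a(u_0^{{\rm tr},(2)}) - a(u_0^{(2)})\bigr)d\boldsymbol{x}dt.
\]
A subsequent passage $\delta \to 0+$ kills all bulk terms weighted by $\rho_\delta^0$ (their $s$-supports shrink to $\{0\}$), and converts $-\int q_a(u)\kappa(\rho_\delta^0)'$ into $\int_{\Xi^2} q_a(u_0^{{\rm tr},(2)})\kappa\,d\boldsymbol{x}dt$ via \eqref{e6.07-Phi} and Lemma~\ref{lem.5.3}. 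Assembling everything produces the integral identity
\[
\int_{\Xi^2}\kappa\bigl[q_a(u_0^{{\rm tr},(2)}) - q_a(u_0^{(2)}) - \eta'(u_0^{(2)})(a(u_0^{{\rm tr},(2)}) - a(u_0^{(2)}))\bigr]d\boldsymbol{x}dt = -\lim_{\delta\to 0+}\langle m+n,\kappa\rho_\delta^0\eta''\rangle.
\]

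Existence of the $\delta$-limit on the right follows automatically from existence of limits for all other terms in the identity. Since $m, n \in \mathcal{M}^+$ and $\rho_\delta^0 \geq 0$, the functional $(\kappa,\eta'') \mapsto \lim_{\delta\to 0+}\langle m+n,\kappa\rho_\delta^0\eta''\rangle$ is nonnegative on nonnegative tensor products, and by density together with the Riesz representation theorem it extends to a Radon measure $\mu_0^{(2)} \in \mathcal{M}^+(\Xi^2 \times [-M_0,M_0])$. To recover the kinetic form \eqref{e3.01c}, I would invoke assertion (i) of Lemma~\ref{lem.3.01} applied to the primitive $q_a' = a'\eta'$ (with the harmless normalization $q_a(0) = 0$) to verify the algebraic identity
\begin{multline*}
\int_{-M_0}^{M_0}\eta'(\lambda)\bigl[a'(\lambda)(\chi(\lambda;u_0^{{\rm tr},(2)}) - \chi(\lambda;u_0^{(2)})) - \delta_{(\lambda=u_0^{(2)})}(a(u_0^{{\rm tr},(2)}) - a(u_0^{(2)}))\bigr]d\lambda \\
= q_a(u_0^{{\rm tr},(2)}) - q_a(u_0^{(2)}) - \eta'(u_0^{(2)})(a(u_0^{{\rm tr},(2)}) - a(u_0^{(2)})),
\end{multline*}
so the preceding integral equality is exactly the distributional pairing of \eqref{e3.01c} against $\kappa(\boldsymbol{x},t)\eta'(\lambda)$ after one integration by parts in $\lambda$. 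Density of the linear span of tensor products $\{\kappa \otimes \eta'\}$ in $C_0^1(\Xi^2 \times [-M_0,M_0])$ completes the derivation of \eqref{e3.01c}. I anticipate the main technical obstacle to be the identification of $\mu_0^{(2)}$ as a bona fide nonnegative Radon measure on the product space: the existence of the $\delta$-limit is automatic from the identity, but passage from a functional defined only on tensor products to a genuine measure requires careful density/positivity bookkeeping together with local total-variation bounds inherited from $m$ and $n$.
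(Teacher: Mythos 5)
Your overall route --- concentrating in $s$ with $\rho_\delta^0$, invoking Lemma \ref{lem.6.2} for the limit of the viscous normal flux, using \eqref{e6.07-Phi} and Lemma \ref{lem.5.3} to extract the $q_a$-trace, and converting the resulting entropy boundary relation into the kinetic form via Lemma \ref{lem.3.01}(i) --- is the paper's route, and your final algebraic identity matches the paper's formulas \eqref{e6.12}--\eqref{e6.15}. The gap is in your handling of the entropy dissipation term. You pass $\varepsilon''\to0+$ first and assert that
\[
\int\limits_{G_{T,S}}\eta''(u_{\varepsilon''})\bigl(|\nabla_xu_{\varepsilon''}|^2+\varepsilon''|\partial_su_{\varepsilon''}|^2\bigr)\kappa\rho_\delta^0\,d\boldsymbol{x}dtds
\;\longrightarrow\;\bigl\langle m+n,\kappa\rho_\delta^0\eta''\bigr\rangle .
\]
But the convergence in \eqref{e4.12iii}--\eqref{e4.12iv} is weak* in $\mathcal{M}(G_{T,S}\times[-M_0,M_0])$, i.e.\ tested against functions supported compactly in the \emph{open} set $G_{T,S}$, whereas $\kappa\rho_\delta^0\eta''$ satisfies $\rho_\delta^0(0)=1$ and does not vanish on $\{s=0\}$. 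As $\varepsilon''\to0+$ part of the dissipation mass can, and in general does, concentrate on $\{s=0\}$, and that boundary concentration is invisible to $m+n$. Your subsequent $\delta$-limit is then computed wrongly: since $m+n$ is a finite measure on $G_{T,S}\times[-M_0,M_0]$ and $\rho_\delta^0\to0$ pointwise on $(0,S)$, dominated convergence gives $\lim_{\delta\to0+}\langle m+n,\kappa\rho_\delta^0\eta''\rangle=0$, so your final identity would force
\[
q_a(u_0^{{\rm tr},(2)})-q_a(u_0^{(2)})-\eta'(u_0^{(2)})\bigl(a(u_0^{{\rm tr},(2)})-a(u_0^{(2)})\bigr)=0\quad\text{a.e. on }\Xi^2
\]
for every convex $\eta$, i.e.\ the boundary condition would hold with equality and $\mu_0^{(2)}\equiv0$. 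That is false in general: the non-monotonicity of $a$ is precisely why \eqref{e1.01c} is stated with $\approx$ and why \eqref{e3.01c} carries a nontrivial nonnegative defect measure.

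The paper avoids this issue entirely: in \eqref{e6.07}--\eqref{e6.09} the dissipation term is simply \emph{discarded} using its sign ($\eta''\geqslant0$, $\Theta\geqslant0$, $\rho_\delta^0\geqslant0$), which produces the one-sided relation \eqref{e6.10} rather than an identity; the measure $\mu_0^{(2)}$ is then \emph{defined explicitly} as the $\lambda$-primitive \eqref{e6.13} of the functional $\nu_0^{(2)}$ in \eqref{e6.12}, built directly from $a$, $\chi$, $u_0^{{\rm tr},(2)}$ and $u_0^{(2)}$, and its nonnegativity is \emph{deduced} from \eqref{e6.10} by density of the span of $\{\Theta\eta''\}$. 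If you wish to keep the dissipation term and realize $\mu_0^{(2)}$ as a limit of dissipation (a legitimate and somewhat more informative statement), you must first pass to a further subsequence converging weakly* in $\mathcal{M}(\overline{G}_{T,S}\times[-M_0,M_0])$ and retain the part of the limit measure carried by $\{s=0\}$; as written, your argument loses exactly that part. A minor additional point: the positivity argument at the end requires the spatial test function to be nonnegative, which you use only implicitly.
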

\proof In \eqref{e6.04} fix $s'=0$, $s''>\delta$, where $\delta$ is
an arbitrary rather small positive constant, and take the
test-function
\[
\phi(\boldsymbol{x},t,s):=\eta'(u_\varepsilon(\boldsymbol{x},t,s))\Theta(\boldsymbol{x},t)
\rho_\delta^0(s),
\]
where $\eta\in C^2(\mathbb{R})$ is convex, $\Theta\in C_0^1(\Xi^2)$
is nonnegative, and $\rho_\delta^0$ is the same as in the proof of
Lemma \ref{lem.6.2}. Due to sufficient regularity of
$u_\varepsilon$, such choice of $\phi$ is legitime. Using
integration by parts and the chain rule, from \eqref{e6.04} we
deduce the integral equality
\begin{multline}
\label{e6.07}%
\int\limits_0^\delta \int\limits_{\Xi^2}\big(
-\eta(u_\varepsilon)\rho_\delta^0\partial_t\Theta
-q_a(u_\varepsilon)(\rho_\delta^0)'\Theta-
\rho_\delta^0\boldsymbol{q}_\varphi(u_\varepsilon)\cdot\nabla_x\Theta
-Z_\gamma(\boldsymbol{x},t,s,u_\varepsilon)\eta'(u_\varepsilon)\Theta\rho_\delta^0+
\\
\eta''(u_\varepsilon)\left\vert\nabla_xu_\varepsilon\right\vert^2\Theta\rho_\delta^0
+\rho_\delta^0\nabla_x\eta(u_\varepsilon)\cdot\nabla_x\Theta+
\varepsilon\eta''(u_\varepsilon)
\left\vert\partial_su_\varepsilon\right\vert^2\Theta\rho_\delta^0+
\varepsilon\left(\rho_\delta^0\right)'
\partial_s\eta(u_\varepsilon)\Theta\big)\,d\boldsymbol{x}dtds-
\\
\int\limits_{\Xi^2}q_a(u_0^{(2)})\Theta\,d\boldsymbol{x}dt
-\int\limits_{\Xi^2}(-\varepsilon\partial_s
u_\varepsilon(\boldsymbol{x},t,0))\eta'(u_0^{(2)})\Theta\,d\boldsymbol{x}dt=0.
\end{multline}
Remark that
\begin{equation}
\label{e6.08}%
\eta''(u_\varepsilon)\left(\left\vert\nabla_xu_\varepsilon\right\vert^2+
\varepsilon\left\vert\partial_su_\varepsilon\right\vert^2
\right)\Theta\rho_\delta^0\geqslant 0\quad \text{in
}\Xi^2\times(0,\delta),
  \end{equation}
due to the choice of $\eta$, $\Theta$ and $\rho_\delta^0$, and that
\begin{multline}
\label{e6.09i}%
\lim\limits_{\varepsilon''\to0}
\int\limits_0^\delta\int\limits_{\Xi^2}
 \big(-\eta(u_{\varepsilon''})\rho_\delta^0\partial_t\Theta-q_a(u_{\varepsilon''})(\rho_\delta^0)'\Theta
-\rho_\delta^0\boldsymbol{q}_\varphi(u_{\varepsilon''})\cdot\nabla_x\Theta
-Z_\gamma(\boldsymbol{x},t,s,u_{\varepsilon''})\eta'(u_{\varepsilon''})\Theta\rho_\delta^0
\\
+\rho_\delta^0\nabla_x\eta(u_{\varepsilon''})\cdot\nabla_x\Theta+\varepsilon''(\rho_\delta^0)'
\partial_s\eta(u_{\varepsilon''})\Theta\big)\,d\boldsymbol{x}dtds=
\int\limits_0^\delta\int\limits_{\Xi^2}
\big(-\eta(u)\rho_\delta^0\partial_t\Theta
-q_a(u)(\rho_\delta^0)'\Theta
\\
-\rho_\delta^0\boldsymbol{q}_\varphi(u)\cdot\nabla_x\Theta
-Z_\gamma(\boldsymbol{x},t,s,u)\eta'(u)\Theta\rho_\delta^0
+\rho_\delta^0\nabla_x\eta(u)\cdot\nabla_x\Theta\big)\,d\boldsymbol{x}dtds
\end{multline}
due to Lemma \ref{lem.4.1} and the structure of function
$\rho_\delta^0$.

On the strength of \eqref{e6.08} and \eqref{e6.09i}, from
\eqref{e6.07} we derive the inequality
\begin{multline}
\label{e6.09} \lim\limits_{\varepsilon''\to0}\int\limits_{\Xi^2}
(-\varepsilon''\partial_s
u_{\varepsilon''}(\boldsymbol{x},t,0))\eta'(u_0^{(2)})\Theta\,d\boldsymbol{x}dt
 -\int\limits_0^\delta\int\limits_{\Xi^2}
 \big(-\eta(u)\rho_\delta^0\partial_t\Theta-q_a(u)(\rho_\delta^0)'\Theta
-\rho_\delta^0\boldsymbol{q}_\varphi(u)\cdot\nabla_x\Theta
\\
-Z_\gamma(\boldsymbol{x},t,s,u)\eta'(u)\Theta\rho_\delta^0
+\rho_\delta^0\nabla_x\eta(u)\cdot\nabla_x\Theta\big)\,d\boldsymbol{x}dtds
+\int\limits_{\Xi^2}q_a(u_0^{(2)})\Theta\,d\boldsymbol{x}dt\geqslant0,
\end{multline}
as $\varepsilon''\to0+$. Further, remark that
\begin{equation}
\label{e6.10i}%
\lim\limits_{\varepsilon''\to0}\int\limits_{\Xi^2}
(-\varepsilon''\partial_s
u_{\varepsilon''}(\boldsymbol{x},t,0))\eta'(u_0^{(2)})\Theta\,d\boldsymbol{x}dt
= \int\limits_{\Xi^2}\left(a(u_0^{{\rm
tr},(2)})-a(u_0^{(2)})\right)\eta'(u_0^{(2)})\Theta\,d\boldsymbol{x}dt
\end{equation}
due to Lemma \ref{lem.6.2},
\begin{equation}
\label{e6.10ii}%
\lim\limits_{\delta\to0+}\int\limits_0^\delta\int\limits_{\Xi^2}
q_a(u)(\rho_\delta^0)'\Theta\,d\boldsymbol{x}dtds=
-\int\limits_{\Xi^2}
q_a(u_0^{\mathrm{tr},(2)})\Theta\,d\boldsymbol{x}dt
\end{equation}
due to property \eqref{e6.07-Phi} of the derivative $(\rho_\delta^0)'$.

On the strength of \eqref{e6.10i}, \eqref{e6.10ii}, and the bounds
$|u|\leqslant M_0$ (a.e. in $G_{T,S}$) and
$0\leqslant\rho_\delta^0\leqslant 1$, as $\delta\to0+$ from \eqref{e6.09} we deduce
\begin{equation}
\label{e6.10}%
\int\limits_{\Xi^2}\left((a(u_0^{{\rm
tr},(2)})-a(u_0^{(2)}))\eta'(u_0^{(2)})-(q_a(u_0^{{\rm
tr},(2)})-q_a(u_0^{(2)}))\right)\Theta\,d\boldsymbol{x}dt\geqslant0.
\end{equation}
Quite analogously, we derive the integral inequality
\begin{equation}
\label{e6.11}%
\int\limits_{\Xi^2}\left(\left(a(u_S^{{\rm
tr},(2)})-a(u_S^{(2)})\right)\eta'(u_S^{(2)})-\left(q_a(u_S^{{\rm
tr},(2)})-q_a(u_S^{(2)})\right)\right)\Theta\,d\boldsymbol{x}dt\leqslant0.
\end{equation}
To this end, it suffices to substitute $s'=0$, $s''>\delta$ and
$\phi=\eta'(u_\varepsilon)\Theta\rho_\delta^0$ by $s'<S-\delta$,
$s''=S$ and $\phi=\eta'(u_\varepsilon)\Theta\rho_\delta^S$ (with
$\rho_\delta^S(s)=\rho_\delta^0(S-s)$), respectively, and to keep
track of the above outline.

Now, let us prove that \eqref{e6.10} is equivalent to \eqref{e3.01c}.
This procedure is rather standard and simple. Introduce the linear
functional $\nu_0^{(2)}$ by the formula
\begin{multline}
\label{e6.12}%
\left\langle\nu_0^{(2)},\Phi\right\rangle
_{\mathcal{M}(\Xi^2\times\mathbb{R}_\lambda),C_0(\Xi^2\times\mathbb{R}_\lambda)}:=\\
\int\limits_{\Xi^2}\Big[\bigl(a(u_0^{{\rm
tr},(2)}(\boldsymbol{x},t))-a(u_0^{(2)}(\boldsymbol{x},t))\bigr)\left\langle
\delta_{(\cdot=u_0^{(2)}(\boldsymbol{x},t))},\Phi(\boldsymbol{x},t,\cdot)
\right\rangle_{\mathcal{M}(\mathbb{R}_\lambda),C_0(\mathbb{R}_\lambda)}-
\\
\int\limits_{-M_0}^{M_0}a'(\lambda)\bigl(\chi(\lambda;u_0^{{\rm
tr},(2)}(\boldsymbol{x},t))-\chi(\lambda;u_0^{(2)}(\boldsymbol{x},t))\bigr)\Phi(\boldsymbol{x},t,\lambda)\,d\lambda\Big]\,d\boldsymbol{x}dt
 \end{multline}
for all $\Phi\in C_0(\Xi^2\times\mathbb{R}_\lambda)$.
On the strength of the maximum principle \eqref{e2.03}, we have that $\vert u_0^{{\rm tr},(2)}\vert\leqslant M_0$ a.e. on
$\Xi^2$. Hence, ${\rm supp}\,\nu_0^{(2)}$ lays in the layer
$\{-M_0\leqslant \lambda \leqslant M_0\}$, and there exists $C_3={\rm const}>0$
independent of $\Phi$ such that
\[
\left\vert\left\langle\nu_0^{(2)},\Phi\right\rangle\right\vert\leqslant
C_3\left\Vert\Phi\right\Vert_{L^1(\Xi^2;C[-M_0,M_0])},\quad\forall\Phi\in
L^1(\Xi^2;C[-M_0,M_0]),
\]
 i.e. $\nu_0^{(2)}\in L_w^\infty(\Xi^2;\mathcal{M}[-M_0,M_0])$.

Further, introduce the primitive $\mu_0^{(2)}$ of $\nu_0^{(2)}$ with
respect to $\lambda$:
\begin{equation}
\label{e6.13}%
\left\langle\mu_0^{(2)},\Phi'\right\rangle_{\mathcal{M}(\mathbb{R}_\lambda),C_0(\mathbb{R}_\lambda)}=
-\left\langle\nu_0^{(2)},\Phi\right\rangle_{\mathcal{M}(\mathbb{R}_\lambda),C_0(\mathbb{R}_\lambda)}
\text{ a.e. in }\Xi^2,\quad\forall\,\Phi\in C_0^1(\mathbb{R}_\lambda).
  \end{equation}

Clearly, $\mu_0^{(2)}\in
L_w^\infty(\Xi^2;\mathcal{M}(\mathbb{R}_\lambda))$, and ${\rm
supp}\,\mu_0^{(2)}$ lays in the layer $\{-M_0\leqslant \lambda \leqslant M_0\}$.
Using assertion (i) in Lemma \ref{lem.3.01} and formulas
\eqref{e6.12} and \eqref{e6.13}, we rewrite \eqref{e6.10} in terms
of the mapping $\mu_0^{(2)}$ as follows:
\begin{equation}
\label{e6.14}%
\left\langle\partial_\lambda\mu_0^{(2)},\Theta\eta'\right\rangle\leqslant0,\quad
\forall\,\Theta\in C_0^1(\Xi^2),\quad\forall\,\eta\in
C_0^2(\mathbb{R}_\lambda),\quad\eta\text{ is convex on }[-M_0,M_0].
\end{equation}

In the sense of distributions, \eqref{e6.14} is equivalent to the
inequality $\left\langle\mu_0^{(2)},\Theta\eta''\right\rangle
\geqslant 0$. In turn, since $\Theta\geqslant0$, $\eta''\geqslant0$
and the linear span of the set $\left\{\Theta\eta''\!:\,\Theta\in
C_0^1(\Xi^2),\,\eta\in C_0^2(\mathbb{R}_\lambda)\right\}$ is dense
in $L^1(\Xi^2;C[-M_0,M_0])$, from \eqref{e6.12}--\eqref{e6.14} it
follows that $\mu_0^{(2)}$ is a positive Radon measure on
$[-M_0,M_0]$ for a.e. $(\boldsymbol{x},t)\in\Xi^2$, i.e.,
\begin{equation*}
\mu_0^{(2)}\in L_w^\infty(\Xi^2;\mathcal{M}^+[-M_0,M_0]),
  \end{equation*}
and the integral equality
\begin{multline}
\label{e6.15}%
\int\limits_{\Xi^2}\Biggl(\,\int\limits_{-M_0}^{M_0} a'(\lambda)\left(
\chi(\lambda;u_0^{{\rm
tr},(2)})-\chi(\lambda;u_0^{(2)})\right)\Phi\,d\lambda-\\
\left(a(u_0^{{\rm
tr},(2)})-a(u_0^{(2)})\right)\left\langle
\delta_{(\cdot\,=u_0^{(2)})},\Phi
\right\rangle_{\mathcal{M}(\mathbb{R}_\lambda),C_0(\mathbb{R}_\lambda)}\Biggr)\,d\boldsymbol{x}dt=
\\
-\int\limits_{\Xi^2}\left\langle\mu_0^{(2)}(\boldsymbol{x},t,\cdot),\partial_\lambda
\Phi(\boldsymbol{x},t,\cdot)\right\rangle_{\mathcal{M}(\mathbb{R}_\lambda),\,
C_0(\mathbb{R}_\lambda)}\,d\boldsymbol{x}dt,\quad\forall\Phi\in
L^1(\Xi^2;C_0^1(\mathbb{R}_\lambda))
  \end{multline}
holds true. In the sense of distributions, \eqref{e6.15} is
equivalent to \eqref{e3.01c}. Quite analogously, we verify that
\eqref{e6.11} is equivalent to \eqref{e3.01d}. Lemma \ref{lem.6.3}
is proved. \qed

\begin{remark}
\label{rem.6.1}%
Collecting altogether the results of Lemmas \ref{lem.4.1},
\ref{lem.4.2}, \ref{lem.6.1}, and \ref{lem.6.3}, we establish that
there is an $L^1$-strong limiting point $u$ of the family
$\{u_\varepsilon\}_{\varepsilon>0}$ of solutions to Problem
$\Pi_{\gamma\varepsilon}$ such that $u$ is a kinetic solution of
Problem $\Pi_\gamma$ in the sense of Definition \ref{def.3.02}.
  \end{remark}

\section{Stability and uniqueness of kinetic solutions} \label{sec.7}
Let $u_1$ and $u_2$ be two kinetic solutions of Problem $\Pi_\gamma$
corresponding to two given sets of data $(u_{1,0}^{(1)}$,
$u_{1,0}^{(2)},$ $u_{1,S}^{(2)})$ and $(u_{2,0}^{(1)},$
$u_{2,0}^{(2)},$ $u_{2,S}^{(2)})$, respectively. In order to verify
inequality \eqref{e3.05}, we revisit, modify, and somewhat improve
the procedure outlined in \cite[Section 4]{Kuz-2017}. We start with
fulfilling the normalization procedure for the difference of two
kinetic equations for $u_1$ and $u_2$. More precisely, we prove the
following lemma.
\begin{lemma}
\label{prop.7.1}%
For an arbitrary fixed $t'\in(0,T]$ and for all nonnegative
functions $\xi\in C^2(\Omega)$ the renormalized inequality
\begin{multline}
\label{e7.01}
\int\limits_{\Xi^1\times(-M_1,M_1)}|\chi(\lambda;u_{1,t'-0}^{{\rm
tr},(1)}(\boldsymbol{x},s))-\chi(\lambda;u_{2,t'-0}^{{\rm
tr},(1)}(\boldsymbol{x},s))|^2\xi(\boldsymbol{x})\,
d\boldsymbol{x}dsd\lambda
-\\ \int\limits_{(0,t')\times\Xi^1\times(-M_1,M_1)}
\Big(\boldsymbol{\varphi}'(\lambda)\cdot\nabla_x\xi(\boldsymbol{x})
+\Delta_x\xi(\boldsymbol{x})+\partial_\lambda
Z_\gamma(\boldsymbol{x},t,s,\lambda)\xi(\boldsymbol{x})\Big)\times \\ \left\vert\chi(\lambda;u_1(\boldsymbol{x},t,s))
-\chi(\lambda;u_2(\boldsymbol{x},t,s))\right\vert^2\,d\boldsymbol{x}dtdsd\lambda
\leqslant
\\
\int\limits_{\Xi^1\times(-M_1,M_1)}|\chi(\lambda;u_{1,0}^{{\rm
tr},(1)}(\boldsymbol{x},s))-\chi(\lambda;u_{2,0}^{{\rm
tr},(1)}(\boldsymbol{x},s))|^2\xi(\boldsymbol{x})\,d\boldsymbol{x}dsd\lambda+
\\
\int\limits_{(0,t')\times\Omega\times(-M_1,M_1)}
a'(\lambda)\Bigl(|\chi(\lambda;u_{1,0}^{{\rm
tr},(2)}(\boldsymbol{x},t))-\chi(\lambda;u_{2,0}^{{\rm
tr},(2)}(\boldsymbol{x},t))|^2-
\\
|\chi(\lambda;u_{1,S}^{{\rm
tr},(2)}(\boldsymbol{x},t))-\chi(\lambda;u_{2,S}^{{\rm
tr},(2)}(\boldsymbol{x},t))|^2\Bigr)\xi(\boldsymbol{x})\,
d\boldsymbol{x}dtd\lambda
\end{multline}
is valid, where $M_1=M_1(t')$ is given by \eqref{e7.01bis}.
\end{lemma}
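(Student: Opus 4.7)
The plan is to adapt the kinetic $L^1$-contraction argument of \cite[Section 4]{Kuz-2017} to accommodate the source term $Z_\gamma$, in the spirit of the doubling-of-variables philosophy. Denote $\chi_i := \chi(\lambda; u_i(\boldsymbol{x},t,s))$, $i = 1,2$. Each $\chi_i$ satisfies the integral identity \eqref{e4.12} with its own defect measures $m_i$, $n_i = \delta_{(\lambda=u_i)}|\nabla_x u_i|^2$ and boundary measures $\mu_{i,0}^{(2)}, \mu_{i,S}^{(2)}$. I would test the identity for $\chi_1$ with $\Phi_1 := 2\chi_2\,\xi(\boldsymbol{x})\,\rho(t)$ and, symmetrically, the identity for $\chi_2$ with $\Phi_2 := 2\chi_1\,\xi(\boldsymbol{x})\,\rho(t)$, where $\rho$ is a smooth cutoff approximating $\mathbf{1}_{[0,t']}(t)$. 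Since $\chi_i$ is only bounded measurable, these formal choices must be made rigorous by mollifying $\chi_j$ in $(\boldsymbol{x},t,s,\lambda)$ and passing to the limit, as is standard in the kinetic framework.

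Adding the two resulting identities and using the algebraic identity $|\chi_1 - \chi_2|^2 = \chi_1^2 + \chi_2^2 - 2\chi_1\chi_2$ together with assertion (iii) of Lemma \ref{lem.3.01}, the $\partial_t$-contributions, upon passing $\rho \to \mathbf{1}_{[0,t']}$, yield both the first term on the LHS and the first term on the RHS of \eqref{e7.01}; this uses the identification $u_{i,0}^{\mathrm{tr},(1)} = u_{i,0}^{(1)}$ provided by Lemma \ref{lem.6.1} and the one-sided traces $u_{i,t'-0}^{\mathrm{tr},(1)}$ furnished by Lemma \ref{lem.5.2}. The convective term with $\boldsymbol{\varphi}'(\lambda)$ and the diffusive term $\Delta_x$, after integration by parts against $\xi(\boldsymbol{x})$, deliver the factor $\boldsymbol{\varphi}'(\lambda)\cdot\nabla_x\xi + \Delta_x\xi$ on the LHS. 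The cross gradient term $-2\nabla_x\chi_1 \cdot \nabla_x\chi_2$ generated by the Laplacian is precisely absorbed by the nonnegative contribution $n_1 + n_2$ (after integration against $\xi\geqslant0$), while the positive measures $m_1, m_2$, when tested against the $\lambda$-monotone function $\chi_j$, produce a sign-definite term that is discarded to convert the equality into the desired inequality.

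The source term $Z_\gamma$ is handled via symmetrization: integration by parts in $\lambda$ of $Z_\gamma \partial_\lambda \chi_i$ tested against $\chi_j$, combined with exchanging the roles of $i$ and $j$, produces the $-\partial_\lambda Z_\gamma\cdot|\chi_1-\chi_2|^2\,\xi(\boldsymbol{x})$ contribution on the LHS of \eqref{e7.01}, while the singular Dirac terms $\delta_{(\lambda=0)} Z_\gamma$, being identical in the two kinetic equations and appearing with coefficients $2\chi_j(\boldsymbol{x},t,s,0) = 0$ at $\lambda = 0$, cancel out exactly.

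The main obstacle is the treatment of the $s$-boundary contributions. The $\partial_s$-integration by parts forces $a'(\lambda)$-weighted boundary traces of $|\chi_1 - \chi_2|^2$ at $s = 0, S$, expressed in terms of the solution traces $u_{i,0}^{\mathrm{tr},(2)}, u_{i,S}^{\mathrm{tr},(2)}$ supplied by Lemma \ref{lem.5.3}, rather than in terms of the prescribed data $u_{i,0}^{(2)}, u_{i,S}^{(2)}$. To reshape these into the RHS terms of \eqref{e7.01}, I would invoke the kinetic boundary equalities \eqref{e3.01c}, \eqref{e3.01d} and exploit the positivity of $\mu_{i,0}^{(2)}, \mu_{i,S}^{(2)}$, following the boundary-condition interpretation of \cite{BLN-1979,O-1996}. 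The delicate point is that $a$ is genuinely nonlinear but non-monotone, so the Dirac corrections $-\delta_{(\lambda=u_{i,0}^{(2)})}(a(u_{i,0}^{\mathrm{tr},(2)}) - a(u_{i,0}^{(2)}))$ (and their $s=S$ analogue) must be paired symmetrically between the two solutions so that, after testing against $2\chi_j\,\xi$ and using assertion (iii) of Lemma \ref{lem.3.01}, the net boundary contribution becomes precisely $a'(\lambda)\bigl(|\chi(\lambda;u_{1,0}^{\mathrm{tr},(2)}) - \chi(\lambda;u_{2,0}^{\mathrm{tr},(2)})|^2 - |\chi(\lambda;u_{1,S}^{\mathrm{tr},(2)}) - \chi(\lambda;u_{2,S}^{\mathrm{tr},(2)})|^2\bigr)\xi(\boldsymbol{x})$ on the RHS of \eqref{e7.01}, completing the proof.
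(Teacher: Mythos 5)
Your overall strategy coincides with the paper's: mollify the kinetic identity \eqref{e4.12} in all variables, form an equation for $|\chi(\lambda;u_1)-\chi(\lambda;u_2)|^2$ (your cross-testing plus the identity $|\chi_1-\chi_2|^2=\chi_1^2+\chi_2^2-2\chi_1\chi_2$ is algebraically the same as the paper's multiplication of the difference of the two smoothed equations by $2(\chi_{1}-\chi_{2})$), integrate against $\xi$, and identify the $t$- and $s$-boundary contributions with the strong traces. However, there is a genuine gap at the single most delicate point: the defect-measure terms. Using $\partial_\lambda\chi(\lambda;v)=\delta_{(\lambda=0)}-\delta_{(\lambda=v)}$ (formula \eqref{e7.01vi}), the renormalization produces, besides the cross terms $-2\langle\delta_{(\lambda=u_2)},m_1+n_1\rangle-2\langle\delta_{(\lambda=u_1)},m_2+n_2\rangle\leqslant 0$ which may indeed be discarded, the \emph{self-interaction} terms $+2\langle\delta_{(\lambda=u_1)},m_1+n_1\rangle+2\langle\delta_{(\lambda=u_2)},m_2+n_2\rangle\geqslant 0$, and these sit on the source side of the identity with the \emph{unfavorable} sign: they cannot be discarded, and your blanket claim that the $m_i$ "produce a sign-definite term that is discarded" is false for them. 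Moreover $\langle\delta_{(\lambda=u_i)},n_i\rangle$ with $n_i=\delta_{(\lambda=u_i)}|\nabla_x u_i|^2$ is a product of mutually singular objects that only acquires meaning through the mollification. The paper disposes of these terms by proving that they vanish in the limit (relation \eqref{e7.01xiv}, the analogue of Perthame's formula (4.3.5)), and this is what forces the specific two-stage limiting procedure: first $\varepsilon_0\to0+$ in $\lambda$, and only then $\varepsilon_1,\varepsilon_2,\varepsilon_3\to0+$ with $\varepsilon_1/\varepsilon_3,\varepsilon_2/\varepsilon_3\to 0$. Without this step the argument does not close.

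A secondary, less damaging confusion: you propose to invoke the kinetic boundary conditions \eqref{e3.01c}--\eqref{e3.01d} and the positivity of $\mu_{i,0}^{(2)},\mu_{i,S}^{(2)}$ to shape the $s$-boundary terms. This is not needed for \eqref{e7.01}, whose right-hand side is already expressed in terms of the \emph{traces} $u_{i,0}^{{\rm tr},(2)},u_{i,S}^{{\rm tr},(2)}$; those terms drop out directly from the integration by parts in $s$ together with the strong-trace existence of Lemma \ref{lem.5.3} (see \eqref{e7.01xvii}). The boundary conditions and the measures $\mu_{i,0}^{(2)},\mu_{i,S}^{(2)}$ enter only in the subsequent Lemma \ref{prop.7.2}, where the traces are replaced by the prescribed data $u_{i,0}^{(2)},u_{i,S}^{(2)}$.
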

\proof The proof is divided into five steps.\\[1ex]
{\it Step 1. The smoothing of the kinetic equation.}
 Introduce the regularizing kernel $\omega\in
C_0^\infty(\mathbb{R})$, $\|\omega\|_{L^1(\mathbb{R})}=1$ that is a
nonnegative smooth function with a compact support on $[0,1]$. For
any measurable function or enough regular distribution
$f$: $\mathbb{R}_x^d\times\mathbb{R}_t^+\times\mathbb{R}_s^+\times\mathbb{R}_\lambda\mapsto\mathbb{R}$
we denote
\begin{equation*}
\begin{split}
 f_{\varepsilon_0}(\boldsymbol{x},t,s,\cdot)=\omega_{\varepsilon_0}*f(\boldsymbol{x},t,s,\cdot),
\quad
f_{\varepsilon_1}(\boldsymbol{x},t,\cdot,\lambda)=\omega_{\varepsilon_1}*f(\boldsymbol{x},t,\cdot,\lambda),\\
 f_{\varepsilon_2}(\boldsymbol{x},\cdot,s,\lambda)=\omega_{\varepsilon_2}*f(\boldsymbol{x},\cdot,s,\lambda),
\quad
f_{\varepsilon_3}(\cdot,t,s,\lambda)=\omega_{\varepsilon_3}*f(\cdot,t,s,\lambda),
\end{split}
  \end{equation*}
where $\varepsilon_0$, $\varepsilon_1$, $\varepsilon_2$ and
$\varepsilon_3$ are small positive parameters, and
\begin{equation*}
\begin{split}
& \omega_{\varepsilon_0}(\lambda)=\frac1{\varepsilon_0}\omega\left(\frac{\lambda}{\varepsilon_0}\right),
\quad\omega_{\varepsilon_1}(s)=\frac1{\varepsilon_1}\omega\left(\frac{s}{\varepsilon_1}\right),\\
& \omega_{\varepsilon_2}(t)=\frac1{\varepsilon_2}\omega\left(\frac{t}{\varepsilon_2}\right),
\quad\omega_{\varepsilon_3}(\boldsymbol{x})=\frac1{\varepsilon_3^d}
\omega\left(\frac{x_1}{\varepsilon_3}\right)\cdot\ldots\cdot\omega\left(\frac{x_d}{\varepsilon_3}\right).
\end{split}
  \end{equation*}
Further we write $f_{\alpha\beta}$ instead of $(f_{\alpha})_{\beta}$
for
$\alpha,\beta=\varepsilon_0,\varepsilon_1,\varepsilon_2,\varepsilon_3$
and denote
$f_{\vec{\varepsilon}}=f_{\varepsilon_0,\varepsilon_1,\varepsilon_2,\varepsilon_3}$
for the sake of brevity. Set
\[
\Phi(\boldsymbol{x},t,s,\lambda)=\omega_{\varepsilon_0}(\bar{\lambda}-\lambda)
\omega_{\varepsilon_1}(\bar{s}-s)\omega_{\varepsilon_2}(\bar{t}-t)
\omega_{\varepsilon_3}(\bar{\boldsymbol{x}}-\boldsymbol{x})
\]
for $\bar{\lambda}\in\mathbb{R}_\lambda$,
$\bar{s}\in[\varepsilon_1,S]$, $\bar{t}\in[\varepsilon_2,T]$ and
$\bar{\boldsymbol{x}}\in\overline{\Omega}_{\varepsilon_3}\overset{\mathrm{def}}{=}
\{\boldsymbol{x}'\in\Omega:\,
\mathrm{dist}(\boldsymbol{x}',\partial\Omega)\geqslant\varepsilon_3\}$.
Notice that this is a legitimate test function for \eqref{e4.12}.

Substituting $\Phi(\boldsymbol{x},t,s,\lambda)$ on the place of a
test function in \eqref{e4.12}, we obtain the following equation for
the smoothed $\chi$-function, where we write
$\boldsymbol{x}$, $t$, $s$, and $\lambda$ instead of
$\bar{\boldsymbol{x}}$, $\bar{t}$, $\bar{s}$ and $\bar{\lambda}$:
\begin{multline}
\label{e7.01i}
\partial_t\chi_{\vec{\varepsilon}}(\lambda;u)+a'(\lambda)
\partial_s\chi_{\vec{\varepsilon}}(\lambda;u)+\boldsymbol{\varphi}'(\lambda)\cdot
\nabla_x\chi_{\vec{\varepsilon}}(\lambda;u)
-\Delta_x\chi_{\vec{\varepsilon}}(\lambda;u)+Z_\gamma(\boldsymbol{x},t,s,\lambda)
\partial_\lambda\chi_{\vec{\varepsilon}}(\lambda;u)
\\
-(Z_\gamma)_{\varepsilon_1,\varepsilon_2,\varepsilon_3}(\boldsymbol{x},t,s,0)\omega_{\varepsilon_0}(\lambda)=
\partial_\lambda(m_{\vec{\varepsilon}}+n_{\vec{\varepsilon}})+R_1^{(\vec{\varepsilon})}+
R_2^{(\vec{\varepsilon})}+R_3^{(\vec{\varepsilon})}+
R_4^{(\vec{\varepsilon})}
\\
\text{ in
}\overline{\Omega}_{\varepsilon_3}\times[\varepsilon_2,T]\times[\varepsilon_1,S]\times\mathbb{R}_\lambda,
\end{multline}
where
\[
\chi_{\vec{\varepsilon}}(\lambda;u)=\chi_{\varepsilon_0,\varepsilon_1,\varepsilon_2,\varepsilon_3}(\cdot;u(\cdot,\cdot,\cdot)),
\]
and the rest terms are given by the formulas
\begin{equation*}
\begin{split}
& R_1^{(\vec{\varepsilon})}=a'(\lambda)\partial_s\chi_{\vec{\varepsilon}}(\lambda;u)-
\partial_s(a'\chi)_{\vec{\varepsilon}},\quad
R_2^{(\vec{\varepsilon})}={\rm
div}_x\left(\boldsymbol{\varphi}'(\lambda)\chi_{\vec{\varepsilon}}(\lambda;u)-
(\boldsymbol{\varphi}'(\cdot)\chi)_{\vec{\varepsilon}}\right),\\
& R_3^{(\vec{\varepsilon})}=(\chi\partial_\lambda
Z_\gamma)_{\vec{\varepsilon}}-\chi_{\vec{\varepsilon}}\partial_\lambda
Z_\gamma(\boldsymbol{x},t,s,\lambda),\quad
R_4^{(\vec{\varepsilon})}=
\partial_\lambda\left(\chi_{\vec{\varepsilon}}(\lambda;u)Z_\gamma(\boldsymbol{x},t,s,\lambda)-(\chi
Z_\gamma)_{\vec{\varepsilon}}\right).
\end{split}
\end{equation*}
{\it Step 2. Renormalization of the smoothed kinetic equation.}
Now subtract \eqref{e7.01i} with
$$\chi_{\vec{\varepsilon}}(\lambda;u)=\chi_{\vec{\varepsilon}}(\lambda;u_2),\quad
m_{\vec{\varepsilon}}=m_{2\vec{\varepsilon}},\quad
n_{\vec{\varepsilon}}=n_{2\vec{\varepsilon}},\quad
R_k^{(\vec{\varepsilon})}=R_{2k}^{(\vec{\varepsilon})} \quad (k=1,2,3,4)$$
from \eqref{e7.01i} with
$$\chi_{\vec{\varepsilon}}(\lambda;u)=\chi_{\vec{\varepsilon}}(\lambda;u_1),\quad
m_{\vec{\varepsilon}}=m_{1\vec{\varepsilon}},\quad
n_{\vec{\varepsilon}}=n_{1\vec{\varepsilon}},\quad
R_k^{(\vec{\varepsilon})}=R_{1k}^{(\vec{\varepsilon})}\quad
(k=1,2,3,4)$$
and multiply the both sides of the resulting equation
by
$2\left(\chi_{\vec{\varepsilon}}(\lambda;u_1)-\chi_{\vec{\varepsilon}}(\lambda;u_2)\right)$
to get
\begin{multline}
\label{e7.01ii}
\partial_t\vert\chi_{\vec{\varepsilon}}(\lambda;u_1)-\chi_{\vec{\varepsilon}}(\lambda;u_2)\vert^2
+a'(\lambda)\partial_s\vert\chi_{\vec{\varepsilon}}(\lambda;u_1)-\chi_{\vec{\varepsilon}}(\lambda;u_2)\vert^2
+\\ \boldsymbol{\varphi}'(\lambda)\cdot\nabla_x\vert\chi_{\vec{\varepsilon}}(\lambda;u_1)-\chi_{\vec{\varepsilon}}(\lambda;u_2)\vert^2
-\Delta_x\vert\chi_{\vec{\varepsilon}}(\lambda;u_1)-\chi_{\vec{\varepsilon}}(\lambda;u_2)\vert^2
+\\
2\vert\nabla_x(\chi_{\vec{\varepsilon}}(\lambda;u_1)-\chi_{\vec{\varepsilon}}(\lambda;u_2))\vert^2+
Z_\gamma(\boldsymbol{x},t,s,\lambda)\partial_\lambda\vert\chi_{\vec{\varepsilon}}(\lambda;u_1)-\chi_{\vec{\varepsilon}}(\lambda;u_2)\vert^2
=\\
2\left(\chi_{\vec{\varepsilon}}(\lambda;u_1)-\chi_{\vec{\varepsilon}}(\lambda;u_2)\right)
\Bigl(\partial_\lambda\left((m_{1\vec{\varepsilon}}+n_{1\vec{\varepsilon}})-
(m_{2\vec{\varepsilon}}+n_{2\vec{\varepsilon}})\right)
+\sum\limits_{k=1}^4\bigl(R_{1k}^{(\vec{\varepsilon})}-R_{2k}^{(\vec{\varepsilon})}\bigr)\Bigr)
\\
\text{ in
}\overline{\Omega}_{\varepsilon_3}\times[\varepsilon_2,T]\times[\varepsilon_1,S]\times\mathbb{R}_\lambda.
\end{multline}
Since $u_1\vert_{\Gamma_l}=u_2\vert_{\Gamma_l}=0$ by \eqref{e1.01d},
equation \eqref{e7.01ii} can be extended as the trivial identity
beyond $\overline{\Omega}_{\varepsilon_3}$ onto the whole space
$\mathbb{R}_x^d$. Let $B$ be an arbitrary open ball in
$\mathbb{R}_x^d$ containing $\overline{\Omega}$. In particular, we
have $\mathrm{dist}(\partial\Omega,\partial B)>0$. Let $\zeta\in
C_0^2(B)$ be arbitrary and not necessarily finite in $\Omega$.
Multiply the both sides of \eqref{e7.01ii} by $\zeta$, integrate
over $(\varepsilon_2,t')\times(\varepsilon_1,s')\times
B\times\mathbb{R}_\lambda$ $(s'\in(0,S)$, $t'\in[\varepsilon_2,T))$
and integrate by parts in $\boldsymbol{x}$ and $\lambda$:
\begin{multline}
\label{e7.01iii}%
\int\limits_{B\times(\varepsilon_2,t')\times(\varepsilon_1,s')\times\mathbb{R}_\lambda}
\zeta(\boldsymbol{x})\left(\partial_t\vert\chi_{\vec{\varepsilon}}(\lambda;u_1)-\chi_{\vec{\varepsilon}}(\lambda;u_2)\vert^2
+a'(\lambda)\partial_s\vert\chi_{\vec{\varepsilon}}(\lambda;u_1)-\chi_{\vec{\varepsilon}}(\lambda;u_2)\vert^2
\right)\,d\boldsymbol{x}dtdsd\lambda-
\\
\int\limits_{B\times(\varepsilon_2,t')\times(\varepsilon_1,s')\times\mathbb{R}_\lambda}
\bigl(\boldsymbol{\varphi}'(\lambda)\cdot\nabla_x\zeta(\boldsymbol{x})+
\Delta_x\zeta(\boldsymbol{x})+  \hspace{6cm}  \\ \partial_\lambda
Z_\gamma(\boldsymbol{x},t,s,\lambda)\zeta(\boldsymbol{x})\bigr)
\vert\chi_{\vec{\varepsilon}}(\lambda;u_1)-\chi_{\vec{\varepsilon}}(\lambda;u_2)\vert^2
\,d\boldsymbol{x}dtdsd\lambda+
\\
\int\limits_{B\times(\varepsilon_2,t')\times(\varepsilon_1,s')\times\mathbb{R}_\lambda}
2\zeta(\boldsymbol{x})\vert\nabla_x(\chi_{\vec{\varepsilon}}(\lambda;u_1)-\chi_{\vec{\varepsilon}}(\lambda;u_2))\vert^2
\,d\boldsymbol{x}dtdsd\lambda=
\\
\int\limits_{B\times(\varepsilon_2,t')\times(\varepsilon_1,s')\times\mathbb{R}_\lambda}
\Bigl(
-2\zeta(\boldsymbol{x})\partial_\lambda(\chi_{\vec{\varepsilon}}(\lambda;u_1)
-\chi_{\vec{\varepsilon}}(\lambda;u_2))
((m_{1\vec{\varepsilon}}+n_{1\vec{\varepsilon}})-(m_{2\vec{\varepsilon}}+n_{2\vec{\varepsilon}}))+
\\
2\zeta(\boldsymbol{x})\left(\chi_{\vec{\varepsilon}}(\lambda;u_1)-\chi_{\vec{\varepsilon}}(\lambda;u_2)\right)
\sum\limits_{k=1}^4\left(R_{1k}^{(\vec{\varepsilon})}-R_{2k}^{(\vec{\varepsilon})}\right)\Bigr)
\,d\boldsymbol{x}dtdsd\lambda.
\end{multline}
{\it Step 3. Passage to the limit, as\; $\varepsilon_0\to0+$.}
Further, for the sake of brevity, we denote\linebreak $f_{\hat{\varepsilon}}=
f_{\varepsilon_1,\varepsilon_2,\varepsilon_3}$ for
$f=\chi(\lambda;u_i)$, $f=m_i$, $f=n_i$, etc., and
$R_{ik}^{(\hat{\varepsilon})}=
R_{ik}^{(\varepsilon_1\varepsilon_2\varepsilon_3)}$ $(k=1,2,3,4,\,
i=1,2)$.

\begin{remark}
\label{rem.7.1i}%
We start with the remark that the limiting passage as
$\varepsilon_0\to0+$ in the left-hand side of \eqref{e7.01iii} is simple, and in the limit we arrive at the same
expressions with subscript $\hat{\varepsilon}$ on the
places of $\vec{\varepsilon}.$
  \end{remark}

In the right hand side, due to the standard properties of the
regularizing kernels $\omega_{\varepsilon_0}$,
$\omega_{\varepsilon_1}$, $\omega_{\varepsilon_2}$ and
$\omega_{\varepsilon_3}$, we have
\begin{multline}
\label{e7.01iv}%
R_{ik}^{(\vec{\varepsilon})}
\underset{\varepsilon_0\to0+}{\longrightarrow}
R_{ik}^{(\hat{\varepsilon})}\equiv0\quad
(i,k=1,2),\quad R_{i3}^{(\vec{\varepsilon})}
\underset{\varepsilon_0\to0+}{\longrightarrow}
R_{i3}^{(\hat{\varepsilon})},\quad
\chi_{\vec{\varepsilon}}(\lambda;u_i)
\underset{\varepsilon_0\to0+}{\longrightarrow}
 \chi_{\hat{\varepsilon}}(\lambda;u_i)\quad (i=1,2)
 \\
\text{ strongly in }L_{\rm loc}^r(B\times[\varepsilon_2,T]\times[\varepsilon_1,S]\times\mathbb{R}_\lambda),
\quad \forall\, r\in[1,+\infty).
\end{multline}

Hence
\begin{multline}
\label{e7.01v}%
\int\limits_{B\times(\varepsilon_2,t')\times(\varepsilon_1,s')\times\mathbb{R}_\lambda}
2\zeta(\boldsymbol{x})(\chi_{\vec{\varepsilon}}(\lambda;u_1)-\chi_{\vec{\varepsilon}}(\lambda;u_2))
\sum\limits_{k=1}^3
\left(R_{1k}^{(\vec{\varepsilon})}-R_{2k}^{(\vec{\varepsilon})}\right)
d\boldsymbol{x}dtdsd\lambda
\underset{\varepsilon_0\to0+}{\longrightarrow}
\\
\int\limits_{B\times(\varepsilon_2,t')\times(\varepsilon_1,s')\times\mathbb{R}_\lambda}
2\zeta(\boldsymbol{x})(\chi_{\hat{\varepsilon}}(\lambda;u_1)-\chi_{\hat{\varepsilon}}(\lambda;u_2))
\left(R_{13}^{(\hat{\varepsilon})}-R_{23}^{(\hat{\varepsilon})}\right)
d\boldsymbol{x}dtdsd\lambda.
\end{multline}

According to assertion (i) in Lemma \ref{lem.3.01} we have that
\begin{equation}
\label{e7.01vi}%
\partial_\lambda\chi(\lambda;v)=\delta_{(\lambda=0)}-\delta_{(\lambda=v)},\quad
\forall\, v\in\mathbb{R}\quad (\text{in }\mathcal{M}(\mathbb{R}_\lambda)).
\end{equation}

Using \eqref{e7.01vi}, integrating by parts in $\lambda$, and
passing to the limit, we get
\begin{multline}
\label{e7.01vii}%
\int\limits_{B\times(\varepsilon_2,t')\times(\varepsilon_1,s')\times
\mathbb{R}_\lambda}2\zeta(\boldsymbol{x})
(\chi_{\vec{\varepsilon}}(\lambda;u_1)-\chi_{\vec{\varepsilon}}(\lambda;u_2))
\left(R_{14}^{(\vec{\varepsilon})}-R_{24}^{(\vec{\varepsilon})}\right)
d\boldsymbol{x}dtdsd\lambda=
\\
-\int\limits_{B\times(\varepsilon_2,t')\times(\varepsilon_1,s')\times\mathbb{R}_\lambda}2\zeta(\boldsymbol{x})
\partial_\lambda(\chi_{\vec{\varepsilon}}(\lambda;u_1)-\chi_{\vec{\varepsilon}}(\lambda;u_2))
\Bigl(Z_\gamma(\boldsymbol{x},t,s,\lambda)(\chi_{\vec{\varepsilon}}(\lambda;u_1)-\chi_{\vec{\varepsilon}}(\lambda;u_2))-
\\
(Z_\gamma(\chi(\lambda;u_1)-\chi(\lambda;u_2)))_{\vec{\varepsilon}}\Bigr)
d\boldsymbol{x}dtdsd\lambda
\underset{\varepsilon_0\to0+}{\longrightarrow}
\\
-\int\limits_{B\times(\varepsilon_2,t')\times(\varepsilon_1,s')}2\zeta(\boldsymbol{x})
\bigr\langle\left(\delta_{(\lambda=u_2)}-\delta_{(\lambda=u_1)}\right)_{\hat{\varepsilon}},
Z_\gamma(\boldsymbol{x},t,s,\cdot)(\chi_{\hat{\varepsilon}}(\lambda;u_1)-
\chi_{\hat{\varepsilon}}(\lambda;u_2))-
\\
(Z_\gamma(\chi(\lambda;u_1)-\chi(\lambda;u_2)))_{\hat{\varepsilon}}
\bigr\rangle_{\mathcal{M}(\mathbb{R}_\lambda),C_0(\mathbb{R}_\lambda)}
d\boldsymbol{x}dtds.
\end{multline}

Notice that, since $\lambda\mapsto\chi(\lambda;v)$ is the finite
step-function in $\mathbb{R}_\lambda$, the duality bracket\linebreak
$\bigl\langle\cdot,\cdot\bigr\rangle_{\mathcal{M}(\mathbb{R}_\lambda),C_0(\mathbb{R}_\lambda)}$
is well-defined, although the right-hand side in the bracket is not
continuous in $\lambda$.
Next, integrate \eqref{e7.01i} over the interval
$(-\infty,\lambda_0)$ with respect to $\lambda$. Since
$\chi(\lambda;u_i)$ and $m_i+n_i$ vanish for $\lambda<-M_1(T)$, we
have
\begin{multline}
\label{e7.01viii}%
m_{i\vec{\varepsilon}}(\boldsymbol{x},t,s,\lambda_0)+
n_{i\vec{\varepsilon}}(\boldsymbol{x},t,s,\lambda_0)=\\
\int\limits_{-\infty}^{\lambda_0}\Bigl(
\partial_t\chi_{\vec{\varepsilon}}(\lambda;u_i)+a'(\lambda)
\partial_s\chi_{\vec{\varepsilon}}(\lambda;u_i)+
\boldsymbol{\varphi}'(\lambda)\cdot
\nabla_x\chi_{\vec{\varepsilon}}(\lambda;u_i)
-\Delta_x\chi_{\vec{\varepsilon}}(\lambda;u_i)-
\\
\chi_{\vec{\varepsilon}}(\lambda;u_i)\partial_\lambda
Z_\gamma(\boldsymbol{x},t,s,\lambda)-
Z_{\gamma\hat{\varepsilon}}(\boldsymbol{x},t,s,0)\omega_{\varepsilon_0}(\lambda)
-\sum\limits_{k=1}^4R_{ik}^{(\vec{\varepsilon})}\Bigr)d\lambda+\\
Z_\gamma(\boldsymbol{x},t,s,\lambda_0)\chi_{\vec{\varepsilon}}(\lambda_0;u_i)\quad
(i=1,2).
\end{multline}

Passing to the limit in \eqref{e7.01viii}, we derive
\begin{equation}
\label{e7.01ixa}%
m_{i\vec{\varepsilon}}+n_{i\vec{\varepsilon}}
\underset{\varepsilon_0\to0+}{\longrightarrow}
m_{i\hat{\varepsilon}}+n_{i\hat{\varepsilon}}\text{
strongly in }L_{\rm
loc}^r\left(B\times[\varepsilon_2,T]\times[\varepsilon_1,S]\times\mathbb{R}_{\lambda_0}\right),\quad\forall\,r\in[1,\infty),
\end{equation}
where
\begin{multline}
\label{e7.01ixb}%
m_{i\hat{\varepsilon}}(\boldsymbol{x},t,s,\lambda_0)+n_{i\hat{\varepsilon}}(\boldsymbol{x},t,s,\lambda_0)=\\
\int\limits_{-\infty}^{\lambda_0}\Bigl(
\partial_t\chi_{\hat{\varepsilon}}(\lambda;u_i)+a'(\lambda)
\partial_s\chi_{\hat{\varepsilon}}(\lambda;u_i)+
\boldsymbol{\varphi}'(\lambda)\cdot
\nabla_x\chi_{\hat{\varepsilon}}(\lambda;u_i)
-\Delta_x\chi_{\hat{\varepsilon}}(\lambda;u_i)-
\\
(\chi(\lambda;u_i)\partial_\lambda
Z_\gamma)_{\hat{\varepsilon}}(\boldsymbol{x},t,s,\lambda)
\Bigr)d\lambda+(\chi(\lambda;u_i)Z_\gamma)_{\hat{\varepsilon}}(\boldsymbol{x},t,s,\lambda_0)
-Z_{\gamma\hat{\varepsilon}}(\boldsymbol{x},t,s,0)H(\lambda_0),
\quad (i=1,2),
\end{multline}
$H(\lambda_0)=\mathbf{1}_{(\lambda_0\geqslant0)}$ (the
right-continuous Heaviside function). On the strength of the
structure of function $\chi$ (recall \eqref{chi-f}), properties of
the regularizing kernels $\omega_{\varepsilon_1}$,
$\omega_{\varepsilon_2}$ and $\omega_{\varepsilon_3}$, Conditions on
$a\&\boldsymbol{\varphi}\&Z_\gamma$, and representation
\eqref{e7.01ixb}, we conclude that the bound
\begin{equation}
\label{e7.01x}%
0\leqslant m_{i\hat{\varepsilon}}+
n_{i\hat{\varepsilon}}\leqslant
C_4=C_4(\varepsilon_1,\varepsilon_2,\varepsilon_3)\quad\forall
(\boldsymbol{x},t,s,\lambda_0)\in B\times[\varepsilon_2,T]\times
[\varepsilon_1,S]\times\mathbb{R}_{\lambda_0}
  \end{equation}
holds true and that, for all $(\boldsymbol{x},t,s)\in
B\times[\varepsilon_2,T]\times[\varepsilon_1,S]$, the function
\begin{equation}
\label{e7.01xi}%
 \lambda_0\mapsto
m_{i\hat{\varepsilon}}(\boldsymbol{x},t,s,\lambda_0)+n_{i\hat{\varepsilon}}(\boldsymbol{x},t,s,\lambda_0)
\quad(i=1,2)
  \end{equation}
is absolutely continuous on $\mathbb{R}$ except for the point
$\lambda_0=0$, where it suffers a finite jump.

Due to these properties of
$m_{i\hat{\varepsilon}}+n_{i\hat{\varepsilon}}$, the
limiting relation \eqref{e7.01ixa} and representation
\eqref{e7.01vi}, we establish that
\begin{equation}
\label{e7.01xii}%
\begin{split}
& \int\limits_{B\times(\varepsilon_2,t')\times(\varepsilon_1,s')\times\mathbb{R}_\lambda}\Big(
-2\zeta(\boldsymbol{x})\partial_\lambda(\chi_{\vec{\varepsilon}}(\lambda;u_1)
-\chi_{\vec{\varepsilon}}(\lambda;u_2))
\left((m_{1\vec{\varepsilon}}+n_{1\vec{\varepsilon}})-
(m_{2\vec{\varepsilon}}+n_{2\vec{\varepsilon}})\right)\Big)
\,d\boldsymbol{x}dtdsd\lambda\\
& \underset{\varepsilon_0\to 0+}{\longrightarrow}
\\
& -\int\limits_{B\times(\varepsilon_2,t')\times(\varepsilon_1,s')}2\zeta(\boldsymbol{x})\left\langle(\delta_{(\lambda=u_2)}
-\delta_{(\lambda=u_1)})_{\hat{\varepsilon}},
(m_{1\hat{\varepsilon}}+n_{1\hat{\varepsilon}})-
(m_{2\hat{\varepsilon}}+n_{2\hat{\varepsilon}})
\right\rangle_{\mathcal{M}(\mathbb{R}_\lambda),C_0(\mathbb{R}_\lambda)}\,d\boldsymbol{x}dtds.
\end{split}
  \end{equation}

Now we are in a position to pass to the limit in \eqref{e7.01iii},
as $\varepsilon_0\to0+$. Aggregating \eqref{e7.01v},
\eqref{e7.01vii}, \eqref{e7.01xii} and recalling Remark
\ref{rem.7.1i}, from \eqref{e7.01iii} we derive the integral
equality
\begin{multline*}
\int\limits_{B\times(\varepsilon_2,t')\times(\varepsilon_1,s')\times\mathbb{R}_\lambda}
\Big\{\zeta(\boldsymbol{x})\Big(\partial_t\vert\chi_{\hat{\varepsilon}}(\lambda;u_1)-
\chi_{\hat{\varepsilon}}(\lambda;u_2)\vert^2+
a'(\lambda)\partial_s\left\vert\chi_{\hat{\varepsilon}}(\lambda;u_1)-
\chi_{\hat{\varepsilon}}(\lambda;u_2)\right\vert^2\Big)-
\\
\left(\boldsymbol{\varphi}'(\lambda)\cdot\nabla_x(\boldsymbol{x})+\Delta_x\zeta(\boldsymbol{x})
+\zeta(\boldsymbol{x})\partial_\lambda
Z_\gamma(\boldsymbol{x},t,s,\lambda)\right)
\left\vert\chi_{\hat{\varepsilon}}(\lambda;u_1)-
\chi_{\hat{\varepsilon}}(\lambda;u_2)\right\vert^2+
\\
2\zeta(\boldsymbol{x})\left\vert\nabla_x\left(\chi_{\hat{\varepsilon}}(\lambda;u_1)-
\chi_{\hat{\varepsilon}}(\lambda;u_2)\right)\right\vert^2
 \Big\}\,d\boldsymbol{x}dtdsd\lambda=\\
 -\int\limits_{B\times(\varepsilon_2,t')\times(\varepsilon_1,s')}
2\zeta(\boldsymbol{x})\left\langle(\delta_{(\lambda=u_2)}
-\delta_{(\lambda=u_1)})_{\hat{\varepsilon}},
(m_{1\hat{\varepsilon}}+n_{1\hat{\varepsilon}})-
(m_{2\hat{\varepsilon}}+n_{2\hat{\varepsilon}})
\right\rangle_{\mathcal{M}(\mathbb{R}_\lambda),C_0(\mathbb{R}_\lambda)}d\boldsymbol{x}dtds-
\end{multline*}
 \begin{multline}
\int\limits_{B\times(\varepsilon_2,t')\times(\varepsilon_1,s')}
2\zeta(\boldsymbol{x})\bigl\langle(\delta_{(\lambda=u_2)}
-\delta_{(\lambda=u_1)})_{\hat{\varepsilon}},
Z_\gamma(\boldsymbol{x},t,s,\cdot)\left(\chi_{\hat{\varepsilon}}(\lambda;u_1)-
\chi_{\hat{\varepsilon}}(\lambda;u_2)\right)-
\\
(Z_\gamma(\chi(\lambda;u_1)-
\chi(\lambda;u_2)))_{\hat{\varepsilon}}
\bigr\rangle_{\mathcal{M}(\mathbb{R}_\lambda),C_0(\mathbb{R}_\lambda)}d\boldsymbol{x}dtds+
\\
\int\limits_{B\times(\varepsilon_2,t')\times(\varepsilon_1,s')\times {\mathbb R}_\lambda}2\zeta(\boldsymbol{x})\left(\chi_{\hat{\varepsilon}}(\lambda;u_1)-
\chi_{\hat{\varepsilon}}(\lambda;u_2)\right)
\left(R_{13}^{(\hat{\varepsilon})}-R_{23}^{(\hat{\varepsilon})}\right)
d\boldsymbol{x}dtdsd\lambda.
\label{e7.01xiii}%
  \end{multline}
{\it Step 4. Passage to the limit as $\varepsilon_1,\varepsilon_2,\varepsilon_3\to0+$.}
We pass to the limit in \eqref{e7.01xiii}, as
$\hat{\varepsilon}\to0+$, i.e., as
$\varepsilon_1,\varepsilon_2,\varepsilon_3\to0+$ simultaneously,
provided that ratios $\displaystyle \frac{\varepsilon_1}{\varepsilon_3}$ and
$\displaystyle \frac{\varepsilon_2}{\varepsilon_3}$ tend to zero as well, for the
technical reasons. Repeating arguments from \cite[Chapter 4, proof of
Theorem 4.3.1 (third step)]{Per-2002} with natural modifications, we
deduce that
\begin{equation}
\label{e7.01xiv}%
\lim\limits_{\hat{\varepsilon}\to 0+}
\int\limits_{B\times(\varepsilon_2,t')\times(\varepsilon_1,s')}
2\zeta(\boldsymbol{x}) \left\langle
(\delta_{(\lambda=u_i)})_{\hat{\varepsilon}},
m_{i\hat{\varepsilon}}+n_{i\hat{\varepsilon}}
\right\rangle_{\mathcal{M}(\mathbb{R}_\lambda),C_0(\mathbb{R}_\lambda)}
d\boldsymbol{x}dtds=0\quad(i=1,2).
\end{equation}
(This limiting relation is analogous to \cite[formula
(4.3.5)]{Per-2002}.)

Since $\delta_{(\lambda=u_i)}$, $m_i$ and $n_i$ are nonnegative
measures and $\zeta$ is a nonnegative function, we have that
\begin{multline}
\label{e7.01xv}%
-\int\limits_{B\times(\varepsilon_2,t')\times(\varepsilon_1,s')}
2\zeta(\boldsymbol{x})\Big(
\left\langle(\delta_{(\lambda=u_2)})_{\hat{\varepsilon}},
m_{1\hat{\varepsilon}}+n_{1\hat{\varepsilon}}
\right\rangle_{\mathcal{M}(\mathbb{R}_\lambda),C_0(\mathbb{R}_\lambda)}
\\
+\left\langle(\delta_{(\lambda=u_1)})_{\hat{\varepsilon}},
m_{2\hat{\varepsilon}}+n_{2\hat{\varepsilon}}
\right\rangle_{\mathcal{M}(\mathbb{R}_\lambda),C_0(\mathbb{R}_\lambda)}
\Big)d\boldsymbol{x}dtds\leqslant0,
  \end{multline}
\begin{equation}
\label{e7.01xvi}%
\int\limits_{B\times(\varepsilon_2,t')\times(\varepsilon_1,s')\times\mathbb{R}_\lambda}
2\zeta(\boldsymbol{x})
\vert\nabla_x(\chi_{\hat\varepsilon}(\lambda;u_1)-\chi_{\hat\varepsilon}(\lambda;u_2))\vert^2
d\boldsymbol{x}dtdsd\lambda\geqslant0\quad\forall\varepsilon_1,\varepsilon_2,\varepsilon_3\in(0,1].
  \end{equation}

On the strength of the standard properties of the regularizing
kernels $\omega_{\varepsilon_1}$, $\omega_{\varepsilon_2}$ and
$\omega_{\varepsilon_3}$, we deduce the following limiting relations
\begin{multline}
\label{e7.01xvii}%
\int\limits_{B\times(\varepsilon_2,t')\times(\varepsilon_1,s')\times\mathbb{R}_\lambda}
\zeta(\boldsymbol{x})\Big(
\partial_t\vert\chi_{\hat{\varepsilon}}(\lambda;u_1)-
\chi_{\hat{\varepsilon}}(\lambda;u_2))\vert^2
+a'(\lambda)\partial_s\vert\chi_{\hat{\varepsilon}}(\lambda;u_1)-
\chi_{\hat{\varepsilon}}(\lambda;u_2))\vert^2\Big)
d\boldsymbol{x}dtdsd\lambda\equiv
\\
\int\limits_{B\times(\varepsilon_1,s')\times\mathbb{R}_\lambda}
\zeta(\boldsymbol{x})\big(\vert\chi_{\hat{\varepsilon}}(\lambda;u_1)(\boldsymbol{x},t,s)-
\chi_{\hat{\varepsilon}}(\lambda;u_2)(\boldsymbol{x},t,s)\vert^2
\big)\big\vert_{t=\varepsilon_2}^{t=t'}\,d\boldsymbol{x}dsd\lambda+
\\
\int\limits_{B\times(\varepsilon_2,t')\times\mathbb{R}_\lambda}
\zeta(\boldsymbol{x})a'(\lambda)\big(
\vert\chi_{\hat{\varepsilon}}(\lambda;u_1)(\boldsymbol{x},t,s)-
\chi_{\hat{\varepsilon}}(\lambda;u_2)(\boldsymbol{x},t,s)\vert^2
\big)\big\vert_{s=\varepsilon_1}^{s=s'}\,d\boldsymbol{x}dtd\lambda
\underset{\hat{\varepsilon}\to0+}{\longrightarrow}
\\
\int\limits_{\Omega\times(0,s')\times\mathbb{R}_\lambda}
\zeta(\boldsymbol{x})\Big(
\vert\chi(\lambda;u_{1,t'-0}^{\mathrm{tr},(1)}(\boldsymbol{x},s))-
\chi(\lambda;u_{2,t'-0}^{\mathrm{tr},(1)}(\boldsymbol{x},s))\vert^2-\vspace{8cm}\\
\vert\chi(\lambda;u_{1,0}^{\mathrm{tr},(1)}(\boldsymbol{x},s))-
\chi(\lambda;u_{2,0}^{\mathrm{tr},(1)}(\boldsymbol{x},s))\vert^2
\Big)d\boldsymbol{x}dsd\lambda+
\\
\int\limits_{\Omega\times(0,t')\times\mathbb{R}_\lambda}\zeta(\boldsymbol{x})a'(\lambda)\Big(
\vert\chi(\lambda;u_{1,s'-0}^{\mathrm{tr},(2)}(\boldsymbol{x},t))-
\chi(\lambda;u_{2,s'-0}^{\mathrm{tr},(2)}(\boldsymbol{x},t))\vert^2-\\
\vert\chi(\lambda;u_{1,0}^{\mathrm{tr},(2)}(\boldsymbol{x},t))-
\chi(\lambda;u_{2,0}^{\mathrm{tr},(2)}(\boldsymbol{x},t))\vert^2
\Big)d\boldsymbol{x}dtd\lambda,
  \end{multline}
\begin{multline}
\label{e7.01xviii}%
-\int\limits_{B\times(\varepsilon_2,t')\times(\varepsilon_1,s')\times\mathbb{R}_\lambda}
\Big(\boldsymbol{\varphi}'(\lambda)\cdot\nabla_x\zeta(\boldsymbol{x})+
\Delta_x\zeta(\boldsymbol{x})+\zeta(\boldsymbol{x})\partial_\lambda
Z_\gamma(\boldsymbol{x},t,s,\lambda)\Big)\times \\
\vert\chi_{\hat{\varepsilon}}(\lambda;u_1)-
\chi_{\hat{\varepsilon}}(\lambda;u_2)\vert^2
d\boldsymbol{x}dtdsd\lambda
\underset{\hat{\varepsilon}\to0+}{\longrightarrow}
\\
-\int\limits_{\Omega\times(\varepsilon_2,t')\times(\varepsilon_1,s')\times\mathbb{R}_\lambda}
\Big(\boldsymbol{\varphi}'(\lambda)\cdot\nabla_x\zeta(\boldsymbol{x})+\Delta_x\zeta(\boldsymbol{x})+\zeta(\boldsymbol{x})\partial_\lambda
Z_\gamma(\boldsymbol{x},t,s,\lambda)\Big)\times\\ \vert\chi(\lambda;u_1)-
\chi(\lambda;u_2)\vert^2d\boldsymbol{x}dtdsd\lambda,
  \end{multline}
\begin{multline}
\label{e7.01xix}%
-\int\limits_{B\times(\varepsilon_2,t')\times(\varepsilon_1,s')}
2\zeta(\boldsymbol{x}) \Big\langle
(\delta_{(\lambda=u_2)}-\delta_{(\lambda=u_1)})_{\hat{\varepsilon}},
Z_\gamma(\boldsymbol{x},t,s,\cdot)(\chi_{\hat{\varepsilon}}(\lambda;u_1)-\chi_{\hat{\varepsilon}}(\lambda;u_2))
\\
-(Z_\gamma(\chi(\lambda;u_1)-\chi(\lambda;u_2)))_{\hat{\varepsilon}}
\Big\rangle_{\mathcal{M}(\mathbb{R}_\lambda),C_0(\mathbb{R}_\lambda)}
d\boldsymbol{x}dtds
\\
+\int\limits_{B\times(\varepsilon_2,t')\times(\varepsilon_1,s')\times\mathbb{R}_\lambda}
2\zeta(\boldsymbol{x})(\chi_{\hat{\varepsilon}}(\lambda;u_1)-\chi_{\hat{\varepsilon}}(\lambda;u_2))
(R_{13}^{(\hat{\varepsilon})}-R_{23}^{(\hat{\varepsilon})})
d\boldsymbol{x}dtdsd\lambda\underset{\varepsilon\to0+}{\longrightarrow}0.
  \end{multline}

Combining \eqref{e7.01xiii}, \eqref{e7.01xv} and \eqref{e7.01xvi},
and then passing to the limit in the resulting inequality, on the
strength of \eqref{e7.01xiv}, \eqref{e7.01xvii}, \eqref{e7.01xviii}
and \eqref{e7.01xix}, we derive the integral inequality
\eqref{e7.01} with $s'$ on the place of $S$.\\[1ex]
{\it The last
(fifth) step of the proof consists of the passage to the limit as
$s\to S-0$.}
This limiting transition is quite straightforward thanks to
existence of the strong traces $u_{1,S}^{{\rm tr},(2)}$ and
$u_{2,S}^{{\rm tr},(2)}$ due to Lemma \ref{lem.5.3}.

Lemma \ref{prop.7.1} is proved.
\qed

\begin{remark}
\label{rem.7.1}%
 In view of relation \eqref{e3.01b} and Lemma
\ref{lem.5.2}, it is legitimate to insert
$u_{1,0}^{(1)}(\boldsymbol{x},s)$ and $u_1(\boldsymbol{x},t',s)$ on
the places of $u_{1,0}^{{\rm tr},(1)}(\boldsymbol{x},s)$ and
$u_{1,t'-0}^{{\rm tr},(1)}(\boldsymbol{x},s)$, and to insert
$u_{2,0}^{(1)}(\boldsymbol{x},s)$ and $u_2(\boldsymbol{x},t',s)$ on
the places of $u_{2,0}^{{\rm tr},(1)}(\boldsymbol{x},s)$ and
$u_{2,t'-0}^{{\rm tr},(1)}(\boldsymbol{x},s)$ in \eqref{e7.01} and
in the further considerations.
    \end{remark}

Next, we find the valuable relations between prescribed data
\eqref{e1.01c} and traces of kinetic solutions of Problem
$\Pi_\gamma$ on $\Gamma_0^2$ and $\Gamma_S^2$. Verification of these
relations heavily relies on the kinetic boundary conditions
\eqref{e3.01c} and \eqref{e3.01d}.

\begin{lemma}
\label{prop.7.2}%
For an arbitrarily fixed $t'\in(0,\tau]$ and for all nonnegative
test-functions $\xi\in C(\mathbb{R}^d)$ the inequalities
\begin{multline}
\label{e7.02}%
\int\limits_0^{t'}\int\limits_\Omega \int\limits_{-M_1(t')}^{M_1(t')}
a'(\lambda)\vert\chi(\lambda;u_{1,0}^{{\rm
tr},(2)}(\boldsymbol{x},t))-\chi(\lambda;u_{2,0}^{{\rm
tr},(2)}(\boldsymbol{x},t))\vert^2\xi(\boldsymbol{x})\,d\lambda
d\boldsymbol{x}dt\leqslant
\\
\int\limits_0^{t'}\int\limits_\Omega \int\limits_{-M_1(t')}^{M_1(t')}
a'(\lambda)\vert\chi(\lambda;u_{1,0}^{(2)}(\boldsymbol{x},t))-
\chi(\lambda;u_{2,0}^{(2)}(\boldsymbol{x},t))\vert^2\xi(\boldsymbol{x})\,
d\lambda d\boldsymbol{x}dt
\end{multline}
and
\begin{multline}
\label{e7.03}%
-\int\limits_0^{t'}\int\limits_\Omega \int\limits_{-M_1(t')}^{M_1(t')}
a'(\lambda)\vert\chi(\lambda;u_{1,S}^{{\rm
tr},(2)}(\boldsymbol{x},t))-\chi(\lambda;u_{2,S}^{{\rm
tr},(2)}(\boldsymbol{x},t))\vert^2\xi(\boldsymbol{x})\,d\lambda
d\boldsymbol{x}dt\leqslant
\\
-\int\limits_0^{t'}\int\limits_\Omega \int\limits_{-M_1(t')}^{M_1(t')}
a'(\lambda)\vert\chi(\lambda;u_{1,S}^{(2)}(\boldsymbol{x},t))-
\chi(\lambda;u_{2,S}^{(2)}(\boldsymbol{x},t))\vert^2\xi(\boldsymbol{x})\,d\lambda
d\boldsymbol{x}dt
\end{multline}
are valid. (Values $M_1(t')$ are given by \eqref{e7.01bis}.)
  \end{lemma}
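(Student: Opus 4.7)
The strategy is to derive inequalities \eqref{e7.02}--\eqref{e7.03} by a kinetic ``doubling in $\lambda$'' applied to the kinetic boundary conditions \eqref{e3.01c} and \eqref{e3.01d}, exploiting the nonnegativity of the Radon measures $\mu_{1,0}^{(2)},\mu_{2,0}^{(2)},\mu_{1,S}^{(2)},\mu_{2,S}^{(2)}\in\mathcal{M}^+$. First, I would reformulate \eqref{e7.02} via items (ii) and (iii) of Lemma \ref{lem.3.01}: since
\begin{equation*}
\int\limits_{-M_1(t')}^{M_1(t')} a'(\lambda)\bigl|\chi(\lambda;v)-\chi(\lambda;w)\bigr|^2\,d\lambda=\mathrm{sgn}(v-w)\bigl(a(v)-a(w)\bigr)
\end{equation*}
for $|v|,|w|\leqslant M_1(t')$, inequality \eqref{e7.02} is equivalent to the Bardos--LeRoux--N\'ed\'elec-type boundary dissipation bound
\begin{equation*}
\int\limits_0^{t'}\!\int\limits_\Omega\xi(\boldsymbol{x})\Bigl[\mathrm{sgn}(u_{1,0}^{\mathrm{tr},(2)}-u_{2,0}^{\mathrm{tr},(2)})\bigl(a(u_{1,0}^{\mathrm{tr},(2)})-a(u_{2,0}^{\mathrm{tr},(2)})\bigr)-\mathrm{sgn}(u_{1,0}^{(2)}-u_{2,0}^{(2)})\bigl(a(u_{1,0}^{(2)})-a(u_{2,0}^{(2)})\bigr)\Bigr]\,d\boldsymbol{x}dt\leqslant 0.
\end{equation*}

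To establish this, I would test the integral identity \eqref{e6.15} for $u_1$ with a $(\boldsymbol{x},t)$-dependent mollification in $\lambda$ of $\Phi^{(2)}(\lambda):=\chi(\lambda;u_{2,0}^{\mathrm{tr},(2)})-\chi(\lambda;u_{2,0}^{(2)})$, and test the analogue of \eqref{e6.15} for $u_2$ with a mollification of $\Phi^{(1)}(\lambda):=\chi(\lambda;u_{1,0}^{\mathrm{tr},(2)})-\chi(\lambda;u_{1,0}^{(2)})$. Adding the two identities and passing to the limit in the mollification parameter, the left-hand sides combine, through items (i) and (ii) of Lemma \ref{lem.3.01}, into precisely the Kruzhkov-type expression displayed above. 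Simultaneously, the right-hand sides reduce to pairings of $\mu_{1,0}^{(2)}$ and $\mu_{2,0}^{(2)}$ against the distributional derivatives $\partial_\lambda\Phi^{(i)}=\delta_{(\lambda=u_{i,0}^{(2)})}-\delta_{(\lambda=u_{i,0}^{\mathrm{tr},(2)})}$, and the nonnegativity $\mu_{i,0}^{(2)}\geqslant 0$, together with cancellations of the Dirac contributions at matched points, forces the correct sign in the final inequality.

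The main technical obstacle is the limit passage in the mollification, since the step functions $\Phi^{(i)}$ are only piecewise constant and therefore not admissible test functions for \eqref{e6.15}, while the measures $\mu_{j,0}^{(2)}$ could in principle carry atoms at the hyperplanes $\{\lambda=u_{i,0}^{\mathrm{tr},(2)}\}$ or $\{\lambda=u_{i,0}^{(2)}\}$. I would resolve this by a Lebesgue differentiation argument based on the local finiteness of $\mu_{i,0}^{(2)}\in L_w^\infty(\Xi^2;\mathcal{M}^+[-M_0,M_0])$ and on the $L^1$-strong trace property of $u_{i,0}^{\mathrm{tr},(2)}$ provided by Lemma \ref{lem.5.3}, which guarantees stability of the concentrated terms along suitable subsequences of mollifiers. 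Inequality \eqref{e7.03} is then obtained by the identical procedure applied to the kinetic boundary condition \eqref{e3.01d} at $s=S$; the overall sign reversal in the final inequality is a direct consequence of the opposite sign $-\partial_\lambda\mu_{S}^{(2)}$ appearing on the right-hand side of \eqref{e3.01d}.
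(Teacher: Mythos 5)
Your reduction of \eqref{e7.02}, via items (ii)--(iii) of Lemma \ref{lem.3.01}, to the pointwise bound $\mathrm{sgn}(u_{1,0}^{\mathrm{tr},(2)}-u_{2,0}^{\mathrm{tr},(2)})\bigl(a(u_{1,0}^{\mathrm{tr},(2)})-a(u_{2,0}^{\mathrm{tr},(2)})\bigr)\leqslant \mathrm{sgn}(u_{1,0}^{(2)}-u_{2,0}^{(2)})\bigl(a(u_{1,0}^{(2)})-a(u_{2,0}^{(2)})\bigr)$ is correct, but the plan to extract this from the kinetic boundary conditions \eqref{e3.01c} of $u_1$ and $u_2$ alone cannot close. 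The entire content of \eqref{e3.01c} is the nonnegativity of $\mu_{i,0}^{(2)}$, which the paper shows (formulas \eqref{e6.10}--\eqref{e6.15} and Section \ref{sec.8}) is equivalent to the entropy boundary inequality \eqref{e3.04d}; and \eqref{e3.04d} constrains each pair $(u_{i,0}^{\mathrm{tr},(2)},u_{i,0}^{(2)})$ separately and is \emph{vacuous} on outflow configurations. Concretely, \eqref{e3.04d} can be rewritten as $\int_{u_0^{(2)}}^{u_0^{\mathrm{tr},(2)}}a'(z)\bigl(\eta'(z)-\eta'(u_0^{(2)})\bigr)\,dz\leqslant 0$; if $a'<0$ (admissible under \eqref{e1.02}, e.g.\ $a(\lambda)=-\lambda-\lambda^3$) this holds for \emph{every} pair by convexity of $\eta$, so all four values $u_{1,0}^{\mathrm{tr},(2)},u_{2,0}^{\mathrm{tr},(2)},u_{1,0}^{(2)},u_{2,0}^{(2)}$ are unconstrained by everything you invoke, while your target inequality becomes $-\vert a(u_{1,0}^{\mathrm{tr},(2)})-a(u_{2,0}^{\mathrm{tr},(2)})\vert\leqslant -\vert a(u_{1,0}^{(2)})-a(u_{2,0}^{(2)})\vert$, which fails for equal traces and distinct data. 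The same objection hits \eqref{e7.03} at $s=S$ when $a'>0$. The defect is also visible in your sign bookkeeping: after doubling, the measure pairings are $\mu_{1,0}^{(2)}(\{u_{2,0}^{\mathrm{tr},(2)}\})-\mu_{1,0}^{(2)}(\{u_{2,0}^{(2)}\})+\mu_{2,0}^{(2)}(\{u_{1,0}^{\mathrm{tr},(2)}\})-\mu_{2,0}^{(2)}(\{u_{1,0}^{(2)}\})$, and the two negative terms carry the unfavorable sign with nothing to cancel them, so "nonnegativity plus cancellation" does not force the conclusion.

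What is missing is precisely the extra structure that the paper's (omitted) proof, deferred to \cite[Proposition 2]{Kuz-2017}, exploits: $u_1$ and $u_2$ arise as limits of the parabolic approximations of Problem $\Pi_{\gamma\varepsilon}$, which attain the Dirichlet data $u_{i,0}^{(2)}$ and $u_{i,S}^{(2)}$ exactly on $\{s=0\}$ and $\{s=S\}$. The comparison is performed at the viscous level, where the $s$-boundary terms involve $a(u_{i,0}^{(2)})$ and the viscous fluxes $-\varepsilon\partial_s u_{i\varepsilon}\vert_{s=0}$, and Lemma \ref{lem.6.2} identifies the limits of the latter with $a(u_{i,0}^{\mathrm{tr},(2)})-a(u_{i,0}^{(2)})$; it is this identification that couples the traces of the two solutions to the prescribed data and yields \eqref{e7.02}--\eqref{e7.03}. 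Your secondary concern about mollifying the step functions $\Phi^{(i)}$ and about atoms of $\mu_{j,0}^{(2)}$ is legitimate but moot: the strategy needs to be replaced, not repaired.
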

\proof Assertion of Lemma \ref{prop.7.2} is just a minor
modification of \cite[Proposition 2]{Kuz-2017}. Therefore
justification is almost the same, as in \cite{Kuz-2017}, and we omit
it. \qed

Now, let us establish an important auxiliary inequality for the
difference $u_1-u_2$. The following lemma is somewhat similar to
 \cite[Corollary 1]{Kuz-2017}.

\begin{lemma}
\label{lem.7.1}%
For an arbitrarily fixed $t'\in(0,\tau]$ and for all nonnegative
test-functions\linebreak $\xi\in C^2(\mathbb{R}^d)$ the inequality
\begin{multline}
\label{e7.04}%
\int\limits_{\Xi^1}\left\vert
u_1(\boldsymbol{x},t',s)-u_2(\boldsymbol{x},t',s)\right\vert\xi(\boldsymbol{x})\,d\boldsymbol{x}ds
\\
-\int\limits_0^{t'}\int\limits_{\Xi^1}
\sum\limits_{i=1}^d\partial_{x_i}\xi(\boldsymbol{x})
\big(\varphi_i(u_1(\boldsymbol{x},t,s))
-\varphi_i(u_2(\boldsymbol{x},t,s))\big)\,{\rm
sgn}(u_1(\boldsymbol{x},t,s)-u_2(\boldsymbol{x},t,s))
\,d\boldsymbol{x}dsdt
\\
-\int\limits_0^{t'}\int\limits_{\Xi^1}\Delta_x\xi(\boldsymbol{x})
\big\vert
u_1(\boldsymbol{x},t,s)-u_2(\boldsymbol{x},t,s)\big\vert\,d\boldsymbol{x}dsdt
\\
-\int\limits_0^{t'}\int\limits_{\Xi^1}
(Z_\gamma(\boldsymbol{x},t,s,u_1(\boldsymbol{x},t,s))-Z_\gamma(\boldsymbol{x},t,s,u_2(\boldsymbol{x},t,s)))
{\rm
sgn}(u_1(\boldsymbol{x},t,s)-u_2(\boldsymbol{x},t,s))\xi(\boldsymbol{x})\,d\boldsymbol{x}dsdt
\leqslant
\\
\int\limits_{\Xi^1} \big\vert
u_{1,0}^{(1)}(\boldsymbol{x},s)-u_{2,0}^{(1)}(\boldsymbol{x},s)\big\vert\xi(\boldsymbol{x})
\,d\boldsymbol{x}ds
\\
+\max\limits_{\lambda\in\left[-M_1(t'),M_1(t')\right]}\vert
a'(\lambda)\vert\int\limits_0^{t'}\int\limits_\Omega\Big(\big\vert
u_{1,0}^{(2)}(\boldsymbol{x},t)-u_{2,0}^{(2)}(\boldsymbol{x},t)\big\vert+\big\vert
u_{1,S}^{(2)}(\boldsymbol{x},t)-u_{2,S}^{(2)}(\boldsymbol{x},t)\big\vert\Big)\xi(\boldsymbol{x})
\,d\boldsymbol{x}dt
  \end{multline}
holds true.
\end{lemma}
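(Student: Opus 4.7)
My plan is to deduce inequality \eqref{e7.04} from the renormalized inequality \eqref{e7.01} of Lemma \ref{prop.7.1} combined with the trace--data comparison bounds \eqref{e7.02}--\eqref{e7.03} of Lemma \ref{prop.7.2}, by performing an integration in $\lambda\in[-M_1(t'),M_1(t')]$ and then reducing every $\lambda$-integral of $|\chi(\lambda;u_1)-\chi(\lambda;u_2)|^2$-type expressions to quantities involving $u_1-u_2$, $a(u_1)-a(u_2)$, $\varphi_i(u_1)-\varphi_i(u_2)$, and $Z_\gamma(\cdot,u_1)-Z_\gamma(\cdot,u_2)$ by means of the identities collected in Lemma \ref{lem.3.01}.

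First, I would apply Lemma \ref{prop.7.1} with the prescribed nonnegative test-function $\xi\in C^2(\mathbb{R}^d)$ (restricted to $\Omega$) and with the given $t'\in(0,\tau]$. Invoking Remark \ref{rem.7.1}, I substitute $u_i(\boldsymbol{x},t',s)$ for $u_{i,t'-0}^{{\rm tr},(1)}(\boldsymbol{x},s)$ and $u_{i,0}^{(1)}(\boldsymbol{x},s)$ for $u_{i,0}^{{\rm tr},(1)}(\boldsymbol{x},s)$ on the left-- and right--hand sides of \eqref{e7.01}, respectively. Because $\xi\geqslant 0$, the two one--sided boundary inequalities \eqref{e7.02} and \eqref{e7.03} of Lemma \ref{prop.7.2} may then be used to bound the trace--type summands involving $u_{i,0}^{{\rm tr},(2)}$ and $u_{i,S}^{{\rm tr},(2)}$ in the right--hand side of \eqref{e7.01} from above by the analogous expressions in terms of the prescribed data $u_{i,0}^{(2)}$ and $u_{i,S}^{(2)}$. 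This eliminates all the unknown trace quantities on $\Gamma_0^2$ and $\Gamma_S^2$.

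Second, I would integrate the resulting inequality in $\lambda$ over $[-M_1(t'),M_1(t')]$. By assertion (iii) of Lemma \ref{lem.3.01}, $|\chi(\lambda;v)-\chi(\lambda;w)|^2=|\chi(\lambda;v)-\chi(\lambda;w)|$, so every squared expression becomes a modulus. Assertion (ii) of the same lemma, applied successively with $\Psi(\lambda)=\lambda$, $\Psi=\varphi_i$, $\Psi=a$, and $\Psi(\lambda)=Z_\gamma(\boldsymbol{x},t,s,\lambda)$ (each locally Lipschitz in $\lambda$ by the regularity \eqref{u1.04}), converts the corresponding $\lambda$-integrals into $|u_1-u_2|$, ${\rm sgn}(u_1-u_2)(\varphi_i(u_1)-\varphi_i(u_2))$, ${\rm sgn}(u_1-u_2)(a(u_1)-a(u_2))$, and ${\rm sgn}(u_1-u_2)(Z_\gamma(\cdot,u_1)-Z_\gamma(\cdot,u_2))$, respectively. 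The two $s$-boundary terms weighted by $a'(\lambda)$ are then estimated by the elementary bound $|{\rm sgn}(v-w)(a(v)-a(w))|\leqslant \max_{[-M_1(t'),M_1(t')]}|a'(\lambda)|\,|v-w|$ obtained from the mean value theorem, which produces the prefactor $\max_{\lambda}|a'(\lambda)|$ appearing in \eqref{e7.04}.

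Assembling the identities and estimates from the previous paragraph exactly yields \eqref{e7.04}. The main (essentially bookkeeping) difficulty is keeping track of signs when substituting \eqref{e7.02}--\eqref{e7.03} into \eqref{e7.01}: the direction of these substitutions is correct precisely because $\xi\geqslant 0$, while the inequality \eqref{e7.01} itself involves $a'(\lambda)|\chi-\chi|^2$ (which has no definite sign), so one must perform the replacement \emph{before} integrating in $\lambda$, not after. No additional compactness or regularity argument is needed beyond what is already furnished by Lemmas \ref{prop.7.1}, \ref{prop.7.2}, \ref{lem.3.01} and Remark \ref{rem.7.1}.
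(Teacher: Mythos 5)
Your proposal is correct and follows essentially the same route as the paper's (very terse) proof: substitute the boundary-trace inequalities \eqref{e7.02}--\eqref{e7.03} into the renormalized inequality \eqref{e7.01}, invoke Remark \ref{rem.7.1} to replace the $t$-traces by $u_i(\cdot,t',\cdot)$ and $u_{i,0}^{(1)}$, and then reduce all the $\lambda$-integrals via assertions (iii) and (ii) of Lemma \ref{lem.3.01}, finishing with the mean value bound that produces the factor $\max|a'(\lambda)|$. No gap.
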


\proof Firstly, we apply assertion (iii) and, after this,
assertion  (ii) of Lemma \ref{lem.3.01} to all terms in equalities
\eqref{e7.01}, \eqref{e7.02} and \eqref{e7.03}. Secondly, we combine
these inequalities properly and take into account Remark
\ref{rem.7.1}.
Thus we arrive at inequality \eqref{e7.04}, which completes the
proof of the lemma.
\qed

Now we are in a position to establish estimate \eqref{e3.05}. Take
$\xi\equiv 1$ in \eqref{e7.04}, which is a legal choice of
test-function. Then estimate the last integral in the left-hand side
using the Lagrange mean value theorem:
\begin{multline}
\label{e7.04bis}%
\left\vert \int\limits_0^{t'} \int\limits_{\Xi^1}
\left(Z_\gamma(\boldsymbol{x},t,s,u_1(\boldsymbol{x},t,s))
-Z_\gamma(\boldsymbol{x},t,s,u_2(\boldsymbol{x},t,s))\right)\mathrm{sgn}(u_1(\boldsymbol{x},t,s)-u_2(\boldsymbol{x},t,s))\,d\boldsymbol{x}dsdt\right\vert
\leqslant
\\
\int\limits_0^{t'}
\max\limits_{(\boldsymbol{x},s,\lambda)\in\Xi^1\times[-M_1(t'),M_1(t')]}
|\partial_\lambda Z_\gamma(\boldsymbol{x},t,s,\lambda)|
\int\limits_{\Xi^1}
|u_1(\boldsymbol{x},t,s)-u_2(\boldsymbol{x},t,s)|\,d\boldsymbol{x}dsdt
\end{multline}

Combining this and \eqref{e7.04} (with $\xi\equiv 1$), we obtain the
estimate
\begin{multline}
\label{e7.05}%
\int\limits_{\Xi^1}\vert u_1(\boldsymbol{x},t',s)-
u_2(\boldsymbol{x},t',s)\vert\,d\boldsymbol{x}ds\leqslant
\int\limits_{\Xi^1}\vert
u_{1,0}^{(1)}(\boldsymbol{x},s)-u_{2,0}^{(1)}(\boldsymbol{x},s)\vert\,d\boldsymbol{x}ds+
\\
\int\limits_0^{t'}\Big\{
\max\limits_{(\boldsymbol{x},s,\lambda)\in\Xi^1\times[-M_1(t'),M_1(t')]}
|\partial_\lambda Z_\gamma(\boldsymbol{x},t,s,\lambda)|
\int\limits_{\Xi^1}|u_{1,0}^{(1)}-u_{2,0}^{(1)}|\,d\boldsymbol{x}ds+
\\
\max\limits_{\lambda\in[-M_1(t'),M_1(t')]}|a'(\lambda)| \int\limits_\Omega
\left(\vert
u_{1,0}^{(2)}(\boldsymbol{x},t)-u_{2,0}^{(2)}(\boldsymbol{x},t)\vert+
\vert
u_{1,S}^{(2)}(\boldsymbol{x},t)-u_{2,S}^{(2)}(\boldsymbol{x},t)\vert\right)\,d\boldsymbol{x}\Big\}dt,\\
\forall\, t'\in(0,T].
\end{multline}

Applying Gr\"onwall's lemma to \eqref{e7.05} and exchanging $t$ and
$t'$, we arrive exactly at the estimate \eqref{e3.05}. Clearly, the
uniqueness and stability of kinetic solutions to Problem
$\Pi_\gamma$ follow from this estimate.

Thus, the proof of assertion 1 of Theorem \ref{theo.3.1} is
complete.

\begin{remark}
\label{rem.7.2}%
Since the kinetic solution $u=u(\boldsymbol{x},t,s)$ of Problem
$\Pi_\gamma$ is unique, we conclude that {\bf the whole family}
$\{u_\varepsilon\}_{\varepsilon \in (0,1]}$ converges to $u$ as
$\varepsilon\to 0+$ strongly in $L^2(G_{T,S})$. Thus we have
$u=\mbox{\rm s-}\lim\limits_{\varepsilon\to 0} u_\varepsilon$, and the
limiting relation $u=\mbox{\rm s-}\lim\limits_{{\varepsilon''}\to 0}
u_{\varepsilon''}$ (see \eqref{e4.10}) is a particular case.
\end{remark}

\section{Proof of assertion 2 and 3 of Theorem \ref{theo.3.1}}
\label{sec.8}
The proof of equivalency of $L^{\infty}$-solutions of the kinetic
equation \eqref{e3.01a} to the entropy inequality \eqref{e3.04a} is
quite similar to justification of \cite[Remark 2.5]{Ch-P-2003}. It
is directly based on Lemma \ref{lem.3.01} and nonnegativity of the
kinetic defect measure $m$. The initial condition \eqref{e3.01b} is
equivalent to the initial condition \eqref{e3.04c} thanks to assertion
(ii) of Lemma \ref{lem.3.01}. The kinetic boundary conditions
\eqref{e3.01c} and \eqref{e3.01d} are equivalent to the entropy boundary
conditions \eqref{e3.04d} and \eqref{e3.04e}, respectively, due
to assertion (i) of Lemma \ref{lem.3.01} and nonnegativity of the
boundary kinetic defect measures $\mu_0^{(2)}$ and $\mu_S^{(2)}$. In
fact, we have proved equivalency of \eqref{e3.01c}--\eqref{e3.01d}
to \eqref{e3.04d}--\eqref{e3.04e} during justification of Lemma
\ref{lem.6.3} (see formulas \eqref{e6.10}--\eqref{e6.15}).

Thus, the proof of assertion 3 of Theorem \ref{theo.3.1} is
complete. Assertion 2 of Theorem \ref{theo.3.1} directly  follows
from assertions 1 and 3. Thus, Theorem \ref{theo.3.1} is proved. \qed

\section{Passage to the limit, as $\boldsymbol{\gamma\to0+}$:\\ formulation of the main results}
\label{SingLim}
Now suppose that the source term $Z_\gamma$ in \eqref{e1.01a} has
the form \eqref{u1.06} and Conditions on $K_\gamma\&\beta$ hold.
That is, \eqref{e1.01a} has the form
\begin{equation}
\label{e9.00}%
\partial_t
u+\partial_sa(u)+\mathrm{div}_x\boldsymbol{\varphi}(u)=\Delta_xu+K_\gamma(t,\tau)\beta(\boldsymbol{x},s,u).
\end{equation}

On the strength of \eqref{u1.08}, for $Z_\gamma=Z_\gamma(\boldsymbol{x},t,s,\lambda)$ we have
\begin{equation}
\label{e9.01}%
Z_\gamma\underset{\gamma\to0+}{\longrightarrow}\beta\delta_{(t=\tau-0)}\text{
weakly}^*\text{ in }
C^1(\Xi^1\times\mathbb{R}_\lambda)\times\mathcal{M}(0,T).
\end{equation}
Inserting directly $\beta(\boldsymbol{x},s,u)\delta_{(t=\tau-0)}$ on
the place of $K_\gamma(t,\tau)\beta(\boldsymbol{x},s,u)$ in equation
\eqref{e9.00}, we straightly set up the following formulation.\\[1ex]
\noindent \textbf{Problem} $\boldsymbol{\Pi_0.}$ {\bf (The Cauchy --- Dirichlet problem
for the impulsive Kolmogorov-type equation.)} \emph{For arbitrarily given
initial and final data satisfying Conditions on
$u_0^{(1)}\&u_0^{(2)}\&u_S^{(2)}$ and for arbitrarily given
impulsive perturbation $\beta$ satisfying the demands of item (ii) in
Conditions on $K_\gamma\&\beta$, it is necessary to find a function
$u$: $G_{T,S}\mapsto\mathbb{R}$ satisfying
the quasi-linear ultra-parabolic equation
\begin{subequations}
\begin{equation}
\label{e9.02a}%
\partial_tu+\partial_sa(u)+{\rm div}_x\boldsymbol{\varphi}(u)=
\Delta_xu,\quad(\boldsymbol{x},t,s)\in G_{T,S}\setminus \{t=\tau\},
\end{equation}
the impulsive condition
\begin{equation}
\label{e9.02b}%
u(\boldsymbol{x},\tau+0,s)=u(\boldsymbol{x},\tau-0,s)+\beta(\boldsymbol{x},s,u(\boldsymbol{x},\tau-0,s)),\quad
(\boldsymbol{x},s)\in\Xi^1,
\end{equation}
\end{subequations}
the initial and final conditions
\eqref{e1.01b} and \eqref{e1.01c},
and the homogeneous boundary condition \eqref{e1.01d}.}\\[1ex]
\indent In this formulation, the nonlinearities $a=a(\lambda)$ and
$\boldsymbol{\varphi}=\boldsymbol{\varphi}(\lambda)=(\varphi_1(\lambda),\ldots,\varphi_d(\lambda))$
satisfy the demands of items (i) and (ii) in Conditions on
$a\&\boldsymbol{\varphi}\& Z_\gamma$.

\begin{remark}
In the formulation of Problem $\Pi_0$, we have noticed that the
system of equations \eqref{e9.02a} and impulsive condition
\eqref{e9.02b} is equivalent to the equation
\begin{equation}
\label{e9.03}%
\partial_tu+\partial_sa(u)+{\rm
div}_x\boldsymbol{\varphi}(u)-\Delta_xu=\beta(\boldsymbol{x},s,u)\delta_{(t=\tau-0)},\quad(\boldsymbol{x},t,s)\in
G_{T,S},
\end{equation}
in the sense of distributions.
\end{remark}

We introduce the notions of kinetic and entropy solutions to Problem
$\Pi_0$  similarly to Definitions \ref{def.3.02} and \ref{def.3.03},
with natural changes.

\begin{definition}
\label{def.9.01}%
\begin{subequations}
Function $u\in L^\infty(G_{T,S})\cap
L^2((0,T)\times(0,S);\text{\it\r{W}}{}_2^{\,1}(\Omega))$ is called a
kinetic solution of Problem $\Pi_0$, if it satisfies the kinetic
equation
\begin{multline}
\label{e9.04a}%
\partial_t\chi(\lambda;u(\boldsymbol{x},t,s))+a'(\lambda)\partial_s\chi(\lambda;u(\boldsymbol{x},t,s))+
\boldsymbol{\varphi}'(\lambda)\cdot\nabla_x\chi(\lambda;u(\boldsymbol{x},t,s))=\\ \Delta_x
\chi(\lambda;u(\boldsymbol{x},t,s))
+\partial_\lambda(m(\boldsymbol{x},t,s,\lambda)+n(\boldsymbol{x},t,s,\lambda)),\\
(\boldsymbol{x},t,s,\lambda)\in (G_{T,S}\times \RR_\lambda)\setminus \{t=\tau\},
\end{multline}
with some measure
$m\in\mathcal{M}^+(G_{T,S}\times\mathbb{R}_\lambda)$ such that
$\mathrm{supp\,}m\subset G_{T,S}\times[-M_3,M_3]$, and with
$n=\delta_{(\lambda=u)}|\nabla_x u|^2$; the kinetic initial
condition
\begin{equation}
\label{e9.04b}%
\underset{t\to0+}{\mathrm{esslim}}\int\limits_{-M_2}^{M_2}\int\limits_{\Xi^1}
\left\vert\chi(\lambda;u(\boldsymbol{x},s,t))-
\chi(\lambda;u_0^{(1)}(\boldsymbol{x},s))\right\vert\,d\boldsymbol{x}dsd\lambda=0,
\end{equation}
the kinetic boundary conditions
\begin{multline}
\label{e9.04c}%
a^{\prime}(\lambda)\big(\chi(\lambda;u_0^{{\rm
tr},(2)}(\boldsymbol{x},t))-
\chi(\lambda;u_0^{(2)}(\boldsymbol{x},t))\big)-
\\
\delta_{(\lambda=u_0^{(2)}(\boldsymbol{x},t))} \big(a(u_0^{{\rm
tr},(2)}(\boldsymbol{x},t)) -a(u_0^{(2)}(\boldsymbol{x},t))\big)=
\partial_\lambda\mu_0^{(2)}(\boldsymbol{x},t,\lambda),\\
(\mathbf{x},t,\lambda) \in \Xi^2 \times [-M_3,M_3],
  \end{multline}
\begin{multline}
\label{e9.04d}%
a^{\prime}(\lambda)\big(\chi(\lambda;u_{S}^{{\rm
tr},(2)}(\boldsymbol{x},t))
-\chi(\lambda;u_{S}^{(2)}(\boldsymbol{x},t))\big)-
\\
\delta_{(\lambda=u_{S}^{(2)}(\boldsymbol{x},t))} \big(a(u_{S}^{{\rm
tr},(2)}(\boldsymbol{x},t)) -a(u_{S}^{(2)}(\boldsymbol{x},t))\big)=
-\partial_\lambda\mu_{S}^{(2)}(\boldsymbol{x},t,\lambda),\\
(\mathbf{x},t,\lambda) \in \Xi^2 \times [-M_3,M_3],
  \end{multline}
with some measures $\mu_0^{(2)}$,
$\mu_S^{(2)}\in\mathcal{M}^+(\Xi^2\times\mathbb{R}_\lambda)$  such
that $\mathrm{supp\,}\mu_0^{(2)}$,
$\mathrm{supp\,}\mu_S^{(2)}\subset\Xi^2\times[-M_3,M_3]$; and the
kinetic impulsive condition
\begin{equation}
\label{e9.04e}%
\int\limits_{-M_3}^{M_3}\chi(\lambda;u(\boldsymbol{x},\tau+0,s))\,d\lambda=
\int\limits_{-M_3}^{M_3}(1+\partial_\lambda\beta(\boldsymbol{x},s,\lambda))\chi(\lambda;u(\boldsymbol{x},\tau-0,s))\,d\lambda
+\beta(\boldsymbol{x},s,0),\quad(\boldsymbol{x},s)\in\Xi^1.
\end{equation}
\end{subequations}
  \end{definition}
Constants $M_2$ and $M_3$ arise from the maximum principle. They are
defined by the formulas
\begin{equation}
\label{e9.05}%
M_2:=\max\left\{\|u_0^{(1)}\|_{L^\infty(\Xi^1)},
\|u_0^{(2)}\|_{L^\infty(\Omega\times(0,\tau))},
\|u_S^{(2)}\|_{L^\infty(\Omega\times(0,\tau))}\right\}
\end{equation}
and
\begin{equation}
\label{e9.06}%
M_3:=\max\left\{M_2+\|\beta\|_{C(\Xi^1\times[-M_2,M_2])},
\|u_0^{(2)}\|_{L^\infty(\Omega\times(\tau,T))},
\|u_S^{(2)}\|_{L^\infty(\Omega\times(\tau,T))}\right\}.
\end{equation}
Functions $u_0^{\mathrm{tr},(2)}$ and $u_S^{\mathrm{tr},(2)}$ are
strong traces of a solution $u=u(\boldsymbol{x},t,s)$ (if any) of
the kinetic equation \eqref{e9.04a} on the planes $\{s=0\}$ and
$\{s=S\}$, respectively. They are understood in the sense of
limiting relations \eqref{e3.02} and \eqref{e3.03}.

The kinetic equation \eqref{e9.04a} and the kinetic boundary
conditions \eqref{e9.04c} and \eqref{e9.04d} are understood in the
sense of distributions.

\begin{remark}
\label{rem.9.01-bis}%
The system consisting of the kinetic equation \eqref{e9.04a} and the kinetic impulsive
condition \eqref{e9.04e} can be equivalently written in the form of
the kinetic equation with the singular term, incorporating
delta-measure $\delta_{(t=\tau-0)}$, as follows:
\begin{equation}
\label{e9.04a-bis}%
\partial_t\chi(\lambda;u)+a'(\lambda)\partial_s\chi(\lambda;u)+\boldsymbol{\varphi}'(\lambda)\cdot
\nabla_x\chi(\lambda;u)-\Delta_x\chi(\lambda;u)=\partial_\lambda
\left(m+n+\delta_{t=\tau-0}\mathbf{1}_{\lambda\geqslant
u}\beta(x,s,u)\right).
\end{equation}
Justification of this claim is given further in detail in Section \ref{ImpulsiveKC}.
\end{remark}

\begin{definition}
\label{def.9.02}
\begin{subequations}
Function $u\in L^\infty(G_{T,S})\cap
L^2((0,T)\times(0,S);\text{\it\r{W}}{}_2^{\,1}(\Omega))$ is called
an entropy solution of Problem $\Pi_0$, if it satisfies the entropy
inequality
\begin{equation}
\label{e9.07a}%
\partial_{t}\eta(u)+\partial_{s}q_a(u)+{\rm
div}_x\boldsymbol{q}_\varphi(u)-\Delta_x\eta(u)\leqslant
-\eta^{\prime\prime}(u)|\nabla_x u|^2, \quad (\boldsymbol{x},t,s)\in G_{T,S}\setminus \{t=\tau\},
\end{equation}
the maximum principle
\begin{equation}
\label{e9.07b}%
\left\Vert u\right\Vert_{L^\infty(\Xi^1\times(0,\tau))}\leqslant
M_2,\quad \left\Vert
u\right\Vert_{L^\infty(\Xi^1\times(\tau,T))}\leqslant M_3,
   \end{equation}
the initial condition
\begin{equation}
\label{e9.07c}%
\underset{t\to0+}{\mathrm{ess\,lim}}\int\limits_{\Xi^1} \left\vert
u(\boldsymbol{x},t,s)-u_0^{(1)}(\boldsymbol{x},s)\right\vert\,d\boldsymbol{x}ds=0,
\end{equation}
the entropy boundary conditions
\begin{equation}
\label{e9.07d}%
q_{a}(u_0^{{\rm tr},(2)}(\boldsymbol{x},t))
-q_{a}(u_0^{(2)}(\boldsymbol{x},t))
-\eta^{\prime}(u_0^{(2)}(\boldsymbol{x},t)) (a(u_0^{{\rm
tr},(2)}(\boldsymbol{x},t))
-a(u_0^{(2)}(\boldsymbol{x},t)))\leqslant0,
\quad(\boldsymbol{x},t)\in \Xi^2,
  \end{equation}
\begin{equation}
\label{e9.07e}%
q_a(u_{S}^{{\rm tr},(2)}(\boldsymbol{x},t))-
q_a(u_{S}^{(2)}(\boldsymbol{x},t))
-\eta^{\prime}(u_{S}^{(2)}(\boldsymbol{x},t)) (a(u_{S}^{{\rm
tr},(2)}(\boldsymbol{x},t))
-a(u_{S}^{(2)}(\boldsymbol{x},t)))\geqslant0,
\quad(\boldsymbol{x},t)\in \Xi^2.
  \end{equation}
and the impulsive condition \eqref{e9.02b}. In \eqref{e9.07a},
\eqref{e9.07d} and \eqref{e9.07e}, $\eta\in C^2(\mathbb{R})$ is an
arbitrary convex  test-function, i.e., $\eta''(\lambda)\geqslant 0$ $\forall\,
\lambda\in\mathbb{R}$, and $(\eta,q_a,\boldsymbol{q}_\varphi)$ is a convex
entropy flux triple:
\begin{equation*}
q_a^{\prime}(\lambda)=a^{\prime}(\lambda)\eta^{\prime}(\lambda),\quad
\boldsymbol{q}_\varphi^{\prime}(\lambda)=\boldsymbol{\varphi}^{\prime}(\lambda)\eta^{\prime}(\lambda),\quad
\lambda\in\mathbb{R}.
  \end{equation*}
Entropy inequality \eqref{e9.07a} is understood in the sense of
distributions. Entropy boundary conditions \eqref{e9.07d} and \eqref{e9.07e} and impulsive condition \eqref{e9.02b} are
understood almost everywhere in $\Xi^2$ and $\Xi^1$, respectively.
\end{subequations}
\end{definition}

We are going to fulfill and rigorously justify the limiting passage
from Problem $\Pi_\gamma$ (with $Z_\gamma=K_\gamma\beta$) to Problem
$\Pi_0$ as $\gamma\to0+$, and to establish the well-posedness of Problem
$\Pi_0$. More precisely, in Sections \ref{Equiv}--\ref{Limited} further
we prove the following theorem, which is the second main result of
the article.

\begin{theorem}
\label{theo.9.1}
{\; \bf 1.\;(Convergence result.)} Let the source term $Z_\gamma$ in
Problem $\Pi_\gamma$ have the form \eqref{u1.06}, where functions
$K_\gamma$ and $\beta$ satisfy Conditions on $K_\gamma\&\beta$. Let
the nonlinearities $a=a(\lambda)$ and
$\boldsymbol{\varphi}=\boldsymbol{\varphi}(\lambda)$ satisfy the demands
of items (i) and (ii) in Conditions on
$a\&\boldsymbol{\varphi}\&Z_\gamma$ and the additional demands
\begin{equation}
\label{e9.08i}
\max\limits_{\lambda\in\mathbb{R}}|a'(\lambda)|\leqslant C_5,\quad
\max\limits_{\lambda\in\mathbb{R}}|\boldsymbol{\varphi}'(\lambda)|\leqslant
C_5,\quad C_5=\mathrm{const}<+\infty.
\end{equation}
Then there exists the unique limiting function $u_*\in
L^\infty(G_{T,S})\cap
L^2((0,T)\times(0,S);\text{\it\r{W}}{}_2^{\,1}(\Omega))$ of the
family $\{u_\gamma\}_{\gamma>0}$ of kinetic and entropy solutions of
Problem $\Pi_\gamma$ as $\gamma\to0+$, such that
\begin{eqnarray}
\label{e9.08} u_\gamma\underset{\gamma\to0+}{\longrightarrow}
u_* & & \text{strongly in }L^1(G_{T,S}),\\
\label{e9.09} & & \text{weakly in
}L^2((0,T)\times(0,S);\text{\it\r{W}}{}_2^{\,1}(\Omega)).
\end{eqnarray}
Furthermore, $u_*$ is a kinetic and entropy solution of Problem
$\Pi_0$ in the sense of Definitions \ref{def.9.01} and
\ref{def.9.02}.\\[1ex]
{\bf 2.\;(Existence, uniqueness and stability of kinetic solutions to Problem
$\boldsymbol{\Pi_0}$.)} Let the nonlinearities $a=a(\lambda)$ and
$\boldsymbol{\varphi}=\boldsymbol{\varphi}(\lambda)$ satisfy demands
of items (i) and (ii) in Conditions on
$a\&\boldsymbol{\varphi}\&Z_\gamma$. Whenever the impulsive perturbation
$\beta=\beta(\boldsymbol{x},s,\lambda)$ belongs to
$C_{\mathrm{loc}}^1(\Xi^1\times\mathbb{R}_\lambda)$ and initial and
final data meet Conditions on $u_0^{(1)}\&u_0^{(2)}\&u_S^{(2)}$,
Problem $\Pi_0$ has the unique kinetic solution
$u_*=u_*(\boldsymbol{x},t,s)$ in the sense of Definition
\ref{def.9.01}.

Moreover, let $u_{*1}$ and $u_{*2}$ be two kinetic solutions of
Problem $\Pi_0$ corresponding to two given sets of data
$(\beta_1,u_{1,0}^{(1)},u_{1,0}^{(2)},u_{1,S}^{(2)})$ and
$(\beta_2,u_{2,0}^{(1)},u_{2,0}^{(2)},u_{2,S}^{(2)})$, respectively,
then the estimate
\begin{multline}
\label{e9.10}%
\|u_{*1}(\cdot,t,\cdot)-u_{*2}(\cdot,t,\cdot)\|_{L^1(\Xi^1)}\leqslant
\\
\|u_{1,0}^{(1)}-u_{2,0}^{(1)}\|_{L^1(\Xi^1)}+\|a'\|_{C[-M_4(T),M_4(T)]}
\left(\|u_{1,0}^{(2)}-u_{2,0}^{(2)}\|_{L^1(\Omega\times(0,t))}+
\|u_{1,S}^{(2)}-u_{2,S}^{(2)}\|_{L^1(\Omega\times(0,t))}\right)+
\\
\mathbf{1}_{(t\geqslant\tau)}\Big[(S\,\mathrm{meas\,}\Omega)
\|\beta_1-\beta_2\|_{C(\Xi^1\times[-M_5(\tau),M_5(\tau)])}
+\\ \max\limits_{\Xi^1\times[-M_5(\tau),M_5(\tau)]}|\partial_\lambda\beta_1(\boldsymbol{x},s,\lambda)|
\big(\|u_{1,0}^{(1)}-u_{2,0}^{(1)}\|_{L^1(\Xi^1)}
\\
+\|a'\|_{C[-M_5(\tau),M_5(\tau)]}
\left(\|u_{1,0}^{(2)}-u_{2,0}^{(2)}\|_{L^1(\Omega\times(0,\tau))}+
\|u_{1,S}^{(2)}-u_{2,S}^{(2)}\|_{L^1(\Omega\times(0,\tau))}\right)\big)\Big],\quad
t\in(0,T],
  \end{multline}
holds true.

Here,
\begin{multline}
\label{e9.11}%
M_4(t):=\max\Bigl\{M_5(\tau)+\|\beta_1\|_{C(\Xi^1\times[-M_5(\tau),M_5(\tau)])},
\|u_{1,0}^{(2)}\|_{L^\infty(\Omega\times(\tau,t))},\|u_{1,S}^{(2)}\|_{L^\infty(\Omega\times(\tau,t))},
\\
M_5(\tau)+\|\beta_2\|_{C(\Xi^1\times[-M_5(\tau),M_5(\tau)])},
\|u_{2,0}^{(2)}\|_{L^\infty(\Omega\times(\tau,t))},\|u_{2,S}^{(2)}\|_{L^\infty(\Omega\times(\tau,t))}
\Bigr\}\quad \text{for }t\in(\tau,T]
  \end{multline}
and
\begin{multline}
\label{e9.12}%
M_5(t):=\max\Bigl\{\|u_{1,0}^{(1)}\|_{L^\infty(\Xi^1)},\|u_{1,0}^{(2)}\|_{L^\infty(\Omega\times(0,t))},
\|u_{1,S}^{(2)}\|_{L^\infty(\Omega\times(0,t))},\\ \|u_{2,0}^{(1)}\|_{L^\infty(\Xi^1)},
\|u_{2,0}^{(2)}\|_{L^\infty(\Omega\times(0,t))},
\|u_{2,S}^{(2)}\|_{L^\infty(\Omega\times(0,t))}\Bigr\}\quad \text{for }
t\in(0,\tau].
  \end{multline}
{\bf 3.\;(Equivalency of the notions of kinetic and entropy
solutions.)} Function $u_*$ is an entropy
solution of Problem $\Pi_0$ in the sense of Definition
\ref{def.9.02}, if and only if it is a kinetic solution in the
sense of Definition \ref{def.9.01}.
\end{theorem}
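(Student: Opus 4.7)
The plan is to combine the well-posedness of $\Pi_\gamma$ established in Theorem \ref{theo.3.1} with careful uniform-in-$\gamma$ estimates and a localized limiting argument around $t=\tau$ to extract the impulsive condition. I first establish uniform-in-$\gamma$ bounds on $\{u_\gamma\}_{\gamma>0}$. Since $Z_\gamma = K_\gamma\beta$ is supported on $\overline{\Omega}\times[\tau-\gamma,\tau]\times[0,S]$, equation \eqref{e9.00} is source-free on $(0,\tau-\gamma)$ and on $(\tau,T)$. Applying the maximum principle \eqref{e2.03} on $(0,\tau-\gamma)$ with vanishing source gives $\|u_\gamma\|_{L^\infty(\Xi^1\times(0,\tau-\gamma))}\leqslant M_2$. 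On the thin slab $(\tau-\gamma,\tau)$, the growth constants from Remark \ref{rem.1.5} scale like $1/\gamma$, but the time length is $\gamma$, so $e^{\xi\gamma}$ remains bounded; direct integration of the source together with \eqref{e9.05}--\eqref{e9.06} yields $\|u_\gamma(\cdot,\tau,\cdot)\|_{L^\infty(\Xi^1)}\leqslant M_2 + \|\beta\|_{C(\Xi^1\times[-M_2,M_2])} + o(1)$, whence $\|u_\gamma\|_{L^\infty(G_{T,S})}\leqslant M_3 + o(1)$ uniformly in $\gamma$. Testing equation \eqref{e9.00} against $u_\gamma$ together with the $L^\infty$-bound supplies a uniform estimate on $\{\nabla_x u_\gamma\}$ in $L^2(G_{T,S})$.

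Next I extract a convergent subsequence. Rewriting the kinetic equation in the equivalent form \eqref{e3.01bis} with the source absorbed into a kinetic measure, the hypotheses of Proposition \ref{prop.4.1} are satisfied uniformly in $\gamma$. This yields, along a subsequence $\gamma_n\to 0+$, strong $L^1$-convergence $u_{\gamma_n}\to u_*$ in $G_{T,S}$ together with \eqref{e9.09} from the uniform energy estimate. Passing to the limit in the integral equality \eqref{e4.12} written for $u_{\gamma_n}$ against test functions supported in $G_{T,S}\setminus\{t=\tau\}$ (for which $Z_{\gamma_n}$ vanishes for $n$ large enough) gives exactly \eqref{e9.04a}. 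The initial condition \eqref{e9.04b} and the kinetic boundary conditions \eqref{e9.04c}--\eqref{e9.04d} pass to the limit thanks to strong $L^1$-convergence of the corresponding traces via Lemmas \ref{lem.5.1}--\ref{lem.5.3} applied to $u_*$.

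The key new ingredient is the impulsive condition \eqref{e9.04e} (equivalently \eqref{e9.02b}). Starting from \eqref{e2.02} for $u_{\gamma_n}$, I choose test functions of the form $\phi(\boldsymbol{x},s)\psi_\eta(t)$, where $\psi_\eta$ is a smooth regularization of $\mathbf{1}_{[\tau-\eta,\tau+\eta]}$ with $\eta>\gamma_n$, and integrate by parts in $t$ to obtain
\[
\int_{\Xi^1}\phi\bigl(u_{\gamma_n}(\boldsymbol{x},\tau+\eta,s)-u_{\gamma_n}(\boldsymbol{x},\tau-\eta,s)\bigr)d\boldsymbol{x}ds + R_\eta(\gamma_n) = \int_{G_{T,S}} K_{\gamma_n}(t,\tau)\beta(\boldsymbol{x},s,u_{\gamma_n})\phi\psi_\eta\,d\boldsymbol{x}dtds,
\]
where $R_\eta(\gamma_n)$ collects the flux and diffusion contributions. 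Thanks to \eqref{e9.08i} and the uniform $L^2$-bound on $\nabla_x u_{\gamma_n}$, $|R_\eta(\gamma_n)|=O(\eta)$ uniformly in $\gamma_n$. Letting $\gamma_n\to 0+$ using Remark \ref{rem.1.4} together with strong $L^1$-convergence of $u_{\gamma_n}$ and continuity of $\beta$, and then $\eta\to 0+$ using existence of $L^1$-strong one-sided traces $u_*(\cdot,\tau\pm 0,\cdot)$ (obtained by a variant of Lemma \ref{lem.5.2} applied to \eqref{e3.01bis}), I arrive at
\[
\int_{\Xi^1}\phi(\boldsymbol{x},s)\bigl(u_*(\boldsymbol{x},\tau+0,s)-u_*(\boldsymbol{x},\tau-0,s)\bigr)d\boldsymbol{x}ds = \int_{\Xi^1}\phi(\boldsymbol{x},s)\beta(\boldsymbol{x},s,u_*(\boldsymbol{x},\tau-0,s))d\boldsymbol{x}ds
\]
for arbitrary $\phi$, which yields \eqref{e9.02b} almost everywhere on $\Xi^1$. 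The hard part of the whole argument lies here: rigorously showing $\int K_{\gamma_n}\beta(\boldsymbol{x},s,u_{\gamma_n})\,dt \to \beta(\boldsymbol{x},s,u_*(\cdot,\tau-0,\cdot))$ requires marrying the concentration of $K_{\gamma_n}$ with the $L^1$-strong one-sided trace at $t=\tau-0$ in the nonlinear setting.

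For parts 2 and 3, I apply Theorem \ref{theo.3.1} with $Z\equiv 0$ to the restrictions of $u_{*1}, u_{*2}$ to $G_{T,S}\cap\{t<\tau\}$, obtaining \eqref{e3.05} on $(0,\tau)$. Using \eqref{e9.02b} together with the Lipschitz bound \eqref{u1.07iv}, I derive
\[
\|u_{*1}(\cdot,\tau+0,\cdot)-u_{*2}(\cdot,\tau+0,\cdot)\|_{L^1(\Xi^1)} \leqslant (1+b_0)\|u_{*1}(\cdot,\tau-0,\cdot)-u_{*2}(\cdot,\tau-0,\cdot)\|_{L^1(\Xi^1)} + (S\,\mathrm{meas}\,\Omega)\|\beta_1-\beta_2\|_{C(\Xi^1\times[-M_5(\tau),M_5(\tau)])},
\]
then apply Theorem \ref{theo.3.1} a second time on $(\tau,T)$ with initial data $u_{*i}(\cdot,\tau+0,\cdot)$; assembling the two estimates produces \eqref{e9.10}. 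Uniqueness in turn upgrades the subsequential convergence in part 1 to convergence of the whole family, proving \eqref{e9.08}. Finally, the equivalency (part 3) follows by the argument of Section \ref{sec.8} applied separately on $(0,\tau)$ and $(\tau,T)$, both formulations sharing the same impulsive condition \eqref{e9.02b}.
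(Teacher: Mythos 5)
Your overall strategy coincides with the paper's: parts 2 and 3 are obtained exactly as in Section \ref{Equiv} (split $G_{T,S}$ at $t=\tau$, apply Theorem \ref{theo.3.1} with $Z\equiv 0$ on each piece, and control the jump via \eqref{e9.02b} and \eqref{u1.07iv}), and for part 1 you correctly identify uniform estimates, averaging compactness, and a localized test near $t=\tau$ as the ingredients. But there is one genuine gap, and you name it yourself without closing it: the convergence
$\int K_{\gamma_n}(t,\tau)\,\beta(\boldsymbol{x},s,u_{\gamma_n})\,\phi\,dt\,d\boldsymbol{x}\,ds \to \int \beta(\boldsymbol{x},s,u_*(\boldsymbol{x},\tau-0,s))\,\phi\,d\boldsymbol{x}\,ds$. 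Strong $L^1(G_{T,S})$ convergence of $u_{\gamma_n}$ gives no information about $u_{\gamma_n}$ on the shrinking slab $[\tau-\gamma_n,\tau]$ where $K_{\gamma_n}$ concentrates, so ``strong convergence plus continuity of $\beta$'' does not suffice, and one cannot simply invoke a one-sided trace of $u_*$ for a quantity evaluated along $u_{\gamma_n}$. The paper closes this in Lemma \ref{lem.12.6} by an idea you are missing: since $\supp K_{\gamma}(\cdot,\tau)\subset[\tau-\gamma,\tau]$, the functions $u_{\gamma_\nu}$ and $u_*$ solve the \emph{same} source-free problem on $\Xi^1\times[0,\tau-\gamma_\nu]$, hence by the uniqueness in Theorem \ref{theo.3.1} they \emph{coincide} there (identity \eqref{e12.27}); combined with $u_*\in C(0,\tau;L^1(\Xi^1))$ (Lemma \ref{lem.12.5}, proved via $W^{-1,2}$-equicontinuity), the substitution $t=\tau-\zeta\gamma_\nu$ gives $u_{\gamma_\nu}(\cdot,\tau-\zeta\gamma_\nu,\cdot)\to u_*(\cdot,\tau-0,\cdot)$ a.e., and dominated convergence finishes the identification. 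Without this (or an equivalent device) the derivation of \eqref{e9.02b}, and hence of \eqref{e9.04e}, is incomplete.

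A secondary weak point: you assert that the kinetic boundary conditions \eqref{e9.04c}--\eqref{e9.04d} ``pass to the limit thanks to strong $L^1$-convergence of the corresponding traces.'' For $t<\tau$ this is fine (again because $u_\gamma\equiv u_*$ there), but for $t>\tau$ the traces $u_{\gamma 0}^{\mathrm{tr},(2)}$, $u_{\gamma S}^{\mathrm{tr},(2)}$ are not known to converge strongly; the paper has to pass through Young measures (Tartar's theorem, relations \eqref{e14.00}--\eqref{e14.09a}) and the consistency identities \eqref{e14.07}--\eqref{e14.08} to recover \eqref{e9.07d}--\eqref{e9.07e} on $\Omega\times(\tau,T)$. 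Your uniform $L^\infty$ bound on the slab is also stated too loosely: citing \eqref{e2.03} with $b_\gamma^{(1)},b_\gamma^{(2)}\sim\gamma^{-1}$ gives a factor $e^{\xi\gamma}$ with $\xi\gamma$ of order one, i.e.\ a fixed multiplicative constant, not $M_2+\|\beta\|+o(1)$; the paper's Lemma \ref{lem.11.4} instead exploits the localization $\beta=0$ for $|\lambda|>b_1$ to choose $\xi_*$ killing the source contribution. These are repairable, but as written they are not proofs.
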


We are going to fulfill justification of Theorem \ref{theo.9.1} in
the reverse order. That is, firstly, we prove assertion 3. Secondly,
we prove assertion 2. Finally, we establish assertion 1 (convergence
result), which is the most difficult part in the proof of Theorem
\ref{theo.9.1}.

\begin{remark}
\label{rem.9.02} From the arguments of Section \ref{Equiv} below, it becomes clear that the impulsive perturbation
$\beta=\beta(\boldsymbol{x},s,\lambda)$ and the nonlinearities
$a=a(\lambda)$ and
$\boldsymbol{\varphi}=\boldsymbol{\varphi}(\lambda)$ may not
necessarily meet restrictions \eqref{u1.07iv} and \eqref{e9.08i} in
assertions 2 and 3 of Theorem \ref{theo.9.1}.
\end{remark}
\section{Proof of assertions 2 and 3 of Theorem \ref{theo.9.1}} \label{Equiv} %10
The proof of assertion 3 of Theorem \ref{theo.9.1} is quite similar
to the proof of assertion 3 of Theorem \ref{theo.3.1} in Section
\ref{sec.8}. Now let us show that assertion 2 of Theorem \ref{theo.9.1} directly
follows from assertion 1 of Theorem \ref{theo.3.1}.

We clearly notice that, in order to find a kinetic solution of
Problem $\Pi_0$, it is necessary and sufficient to fulfill the
following two steps. Firstly, in the subdomain
$\Omega\times(0,\tau)\times(0,S)\subset G_{T,S}$, we find the
kinetic solution $u_*$ of Problem $\Pi_\gamma$ provided with the
homogeneous right-hand side $Z_\gamma\equiv0$ and the initial data
\eqref{e1.01b}. Secondly, in the subdomain
$\Omega\times(\tau,T)\times(0,S)\subset G_{T,S}$ we find the kinetic
solution $u_*$ of Problem $\Pi_\gamma$ provided with
$Z_\gamma\equiv0$ and the initial data
\begin{equation}
\label{e10.01}%
u_*(\boldsymbol{x},t,s)\vert_{t=\tau}=u_*(\boldsymbol{x},\tau-0,s)+\beta(\boldsymbol{x},s,u_*(\boldsymbol{x},\tau-0,s)),\quad
(\boldsymbol{x},s)\in \Xi^1.
\end{equation}
In the right-hand side of \eqref{e10.01},
$u_*(\boldsymbol{x},\tau-0,s)$ is the trace on $\{t=\tau-0\}$ of the
kinetic solution $u_*$ obtained on the first step of the above
described procedure.

On the strength of assertion 1 of Theorem \ref{theo.3.1}, each of
the problems considered on the first and second steps has the unique
kinetic solution. Thus, Problem $\Pi_0$ has the unique kinetic
solution in the sense of Definition \ref{def.9.01}. The maximum
principle \eqref{e9.07b} directly follows from \eqref{e2.03}, since
we have $b_\gamma^{(1)}=b_\gamma^{(2)}=0$ in \eqref{e2.03} in the case
when $Z_\gamma\equiv0$.

Analogously, estimate \eqref{e9.10} follows from estimate
\eqref{e3.05}. More precisely, estimate \eqref{e3.05} with
$Z_\gamma\equiv0$ reads as follows:
\begin{multline}
\label{e10.02a}%
\|
u_{*1}(\cdot,t,\cdot)-u_{*2}(\cdot,t,\cdot)\|_{L^1(\Xi^1)}\leqslant
\| u_{1,0}^{(1)}-u_{2,0}^{(1)}\|_{L^1(\Xi^1)}
\\
+\|a'\|_{C[-M_5(t),M_5(t)]}\left(\|
u_{1,0}^{(2)}-u_{2,0}^{(2)}\|_{L^1(\Omega\times(0,t))}+ \|
u_{1,S}^{(2)}-u_{2,S}^{(2)}\|_{L^1(\Omega\times(0,t))}\right)\quad \text{for }t\in(0,\tau)
\end{multline}
and
\begin{multline}
\label{e10.02b}%
\|
u_{*1}(\cdot,t,\cdot)-u_{*2}(\cdot,t,\cdot)\|_{L^1(\Xi^1)}\leqslant
\\
\|
u_{*1}(\cdot,\tau-0,\cdot)+\beta_1(\cdot,\cdot,u_{*1}(\cdot,\tau-0,\cdot))
-u_{*2}(\cdot,\tau-0,\cdot)-\beta_2(\cdot,\cdot,u_{*2}(\cdot,\tau-0,\cdot))\|_{L^1(\Xi^1)}
\\
+\| a'\|_{C[-M_4(t),M_4(t)]}\left(\|
u_{1,0}^{(2)}-u_{2,0}^{(2)}\|_{L^1(\Omega\times(\tau,t))}+\|
u_{1,S}^{(2)}-u_{2,S}^{(2)}\|_{L^1(\Omega\times(\tau,t))}\right)\quad \text{for }t\in[\tau,T].
\end{multline}

We evaluate in the right-hand side of \eqref{e10.02b} to get
\begin{multline}
\label{e10.03}%
\left\Vert
u_{*1}(\cdot,\tau-0,\cdot)+\beta_1(\cdot,\cdot,u_{*1}(\cdot,\tau-0,\cdot))
-u_{*2}(\cdot,\tau-0,\cdot)-\beta_2(\cdot,\cdot,u_{*2}(\cdot,\tau-0,\cdot))\right\Vert_{L^1(\Xi^1)}\leqslant
\\
\left\Vert
u_{*1}(\cdot,\tau-0,\cdot)-u_{*2}(\cdot,\tau-0,\cdot)\right\Vert_{L^1(\Xi^1)}
+\\
\left\Vert\beta_1(\cdot,\cdot,u_{*1}(\cdot,\tau-0,\cdot))-
\beta_1(\cdot,\cdot,u_{*2}(\cdot,\tau-0,\cdot))\right\Vert_{L^1(\Xi^1)}+
\\
\left\Vert\beta_1(\cdot,\cdot,u_{*2}(\cdot,\tau-0,\cdot))-
\beta_2(\cdot,\cdot,u_{*2}(\cdot,\tau-0,\cdot))\right\Vert_{L^1(\Xi^1)}\leqslant
\\
\| u_{1,0}^{(1)}-u_{2,0}^{(1)}\|_{L^1(\Xi^1)}+\|
a'\|_{C[-M_5(\tau),M_5(\tau)]}\bigl(\|
u_{1,0}^{(2)}-u_{2,0}^{(2)}\|_{L^1(\Omega\times(0,\tau))}+
\|
u_{1,S}^{(2)}-u_{2,S}^{(2)}\|_{L^1(\Omega\times(0,\tau))}
\bigr)+
\\
\max\limits_{\Xi^1\times[-M_5(\tau),M_5(\tau)]}|
\partial_\lambda \beta_1(\boldsymbol{x},s,\lambda)|\Bigl[
\| u_{1,0}^{(1)}-u_{2,0}^{(1)}\|_{L^1(\Xi^1)}+\|
a'\|_{C[-M_5(\tau),M_5(\tau)]}\bigl(\|
u_{1,0}^{(2)}-u_{2,0}^{(2)}\|_{L^1(\Omega\times(0,\tau))}+
\\
\|u_{1,S}^{(2)}-u_{2,S}^{(2)}\|_{L^1(\Omega\times(0,\tau))}
\bigr)\Bigr]+(\mathrm{meas\,}\Xi^1)\left\Vert\beta_1-\beta_2\right\Vert_{C(\Xi^1\times[-M_5(\tau),M_5(\tau)])}.
\end{multline}

Remark that $\mathrm{meas\,}\Xi^1=S\,\mathrm{meas\,}\Omega$,
$[-M_5(t),M_5(t)]\subset[-M_5(\tau),M_5(\tau)]$ $\forall\,
t\in(0,\tau]$, and $[-M_5(\tau),M_5(\tau)]\subset[-M_4(t),M_4(t)]
\subset[-M_4(T),M_4(T)]$ $\forall\, t\in[\tau,T)$. Taking this into
account and combining \eqref{e10.02a}, \eqref{e10.02b} and
\eqref{e10.03}, we arrive at \eqref{e9.10}, which completes the
proof of assertion 2 of Theorem \ref{theo.9.1}.

\section{Uniform in $\gamma$ estimates of the family $\boldsymbol{\{u_\gamma\}_{\gamma>0}}$} \label{Unif} %11
In order to pass to the limit as $\gamma\to0+$ in the
ultra-parabolic equation \eqref{e9.00} on the rigorous mathematical
level, we need to establish some appropriate uniform in $\gamma$
estimates of the family of kinetic solutions $u_\gamma$ of Problem
$\Pi_\gamma$, $\gamma>0$.
To this end, at first, we revisit the strictly regularized formulation, i.e., Problem
$\Pi_{\gamma\varepsilon}$ incorporating the source term
\eqref{u1.06}, and build the refined energy estimate.

\begin{lemma}
\label{lem.11.2}%
Let the source term $Z_\gamma$ in Problem $\Pi_{\gamma\varepsilon}$
have the form \eqref{u1.06}, where functions $K_\gamma$ and $\beta$
satisfy Conditions on $K_\gamma\&\beta$. Let the nonlinearities
$a=a(\lambda)$ and
$\boldsymbol{\varphi}=\boldsymbol{\varphi}(\lambda)$ satisfy demands
of items (i) and (ii) in Conditions on
$a\&\boldsymbol{\varphi}\&Z_\gamma$ and the additional bounds
\eqref{e9.08i}.

Set
\begin{equation}
\label{e11.00}%
\gamma_0:=\min\{\tau,T-\tau\}.
\end{equation}

Then the family of classical solutions
$\{u_{\gamma\varepsilon}\}_{{}_{\varepsilon\in(0,1]}^{\gamma\in(0,\gamma_0]}}$
of Problem $\Pi_{\gamma\varepsilon}$ satisfies the energy estimate
\begin{equation}
\label{e11.10}%
\Vert u_{\gamma\varepsilon}(t')\Vert_{L^2(\Xi^1)}+ \Vert\nabla_x
u_{\gamma\varepsilon}\Vert_{L^2(\Xi^1\times(0,t'))}+\sqrt{\varepsilon}
\Vert\partial_s
u_{\gamma\varepsilon}\Vert_{L^2(\Xi^1\times(0,t'))}\leqslant
C_6\quad\forall\, t'\in(0,T],
  \end{equation}
where the positive constant $C_6$ does not depend on $\gamma$ and
$\varepsilon$. Thus, \eqref{e11.10} is the uniform bound on the
family
$\{u_{\gamma\varepsilon}\}_{{}_{\varepsilon\in(0,1]}^{\gamma\in(0,\gamma_0]}}$.
  \end{lemma}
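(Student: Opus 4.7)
The plan is to multiply equation \eqref{e2.01a} (with $Z_\gamma=K_\gamma(t,\tau)\beta(\boldsymbol{x},s,u_{\gamma\varepsilon})$) by $u_{\gamma\varepsilon}$ and integrate over $\Omega\times(0,t')\times(0,S)$, aiming at a standard $L^2$-based energy identity. The principal obstacle is that the maximum-principle constant $M_0$ in \eqref{e2.03} degenerates as $\gamma\to 0+$ (since $b_\gamma^{(1)}=O(\gamma^{-1})$ by Remark \ref{rem.1.5}), so we cannot use an $L^\infty$-bound on $u_{\gamma\varepsilon}$ to dominate the source term. Instead, the uniform control must come directly from the $L^2$ framework, exploiting the $L^1(0,T)$-normalisation $\int_0^T K_\gamma(t,\tau)\,dt=1$, valid for every $\gamma\in(0,\gamma_0]$ thanks to the choice \eqref{e11.00} together with \eqref{u1.07i}--\eqref{u1.07iii} and the evenness of $\omega$.

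First I would handle the convective terms via primitives: introducing $A(\lambda):=\int_0^\lambda\mu a'(\mu)\,d\mu$ and $B_i(\lambda):=\int_0^\lambda\mu\varphi_i'(\mu)\,d\mu$, one has $u_{\gamma\varepsilon}\partial_s a(u_{\gamma\varepsilon})=\partial_s A(u_{\gamma\varepsilon})$ and $u_{\gamma\varepsilon}\partial_{x_i}\varphi_i(u_{\gamma\varepsilon})=\partial_{x_i}B_i(u_{\gamma\varepsilon})$. Integration in $\boldsymbol{x}$ combined with \eqref{e2.01d} kills the $B_i$-contribution, whereas the $A$-contribution reduces to $\int_{\Xi^2}\bigl(A(u_S^{(2)})-A(u_0^{(2)})\bigr)\,d\boldsymbol{x}dt$, which is bounded by a constant depending only on $C_5$ (from \eqref{e9.08i}) and the $L^\infty$-norms of the fixed data. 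The diffusive terms after integration by parts produce exactly $\|\nabla_x u_{\gamma\varepsilon}\|_{L^2(\Xi^1\times(0,t'))}^2$ and $\varepsilon\|\partial_s u_{\gamma\varepsilon}\|_{L^2(\Xi^1\times(0,t'))}^2$, up to boundary contributions at $\{s=0\}, \{s=S\}$ that are again absorbed into an absolute constant thanks to boundedness of $u_0^{(2)},u_S^{(2)}$ and the regularity of $u_{\gamma\varepsilon}$ provided by Proposition \ref{prop.2.0}. Finally, the time derivative contributes $\tfrac12\|u_{\gamma\varepsilon}(t')\|_{L^2(\Xi^1)}^2-\tfrac12\|u_0^{(1)}\|_{L^2(\Xi^1)}^2$.

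The decisive step is the treatment of the source-term integral. Applying the Lagrange bound \eqref{u1.10} and then Cauchy's inequality to absorb the linear-in-$u_{\gamma\varepsilon}$ remainder yields
\begin{equation*}
\int_0^{t'}\!\!\int_{\Xi^1}K_\gamma(t,\tau)\,\beta(\boldsymbol{x},s,u_{\gamma\varepsilon})\,u_{\gamma\varepsilon}\,d\boldsymbol{x}dsdt\leqslant C'+C''\int_0^{t'}K_\gamma(t,\tau)\,\|u_{\gamma\varepsilon}(t)\|_{L^2(\Xi^1)}^2\,dt,
\end{equation*}
with $C',C''$ depending only on $b_0$, $\|\beta(\cdot,\cdot,0)\|_{C(\Xi^1)}$, $|\Xi^1|$, and $\int_0^T K_\gamma(t,\tau)\,dt=1$; crucially neither depends on $\gamma$ nor $\varepsilon$. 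Collecting all the estimates produces
\begin{equation*}
\tfrac12\|u_{\gamma\varepsilon}(t')\|_{L^2(\Xi^1)}^2+\int_0^{t'}\!\!\bigl(\|\nabla_x u_{\gamma\varepsilon}\|_{L^2(\Xi^1)}^2+\varepsilon\|\partial_s u_{\gamma\varepsilon}\|_{L^2(\Xi^1)}^2\bigr)dt\leqslant C+C''\!\int_0^{t'}\!\!K_\gamma(t,\tau)\|u_{\gamma\varepsilon}(t)\|_{L^2(\Xi^1)}^2dt.
\end{equation*}

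Gronwall's lemma applied to the map $t'\mapsto\|u_{\gamma\varepsilon}(t')\|_{L^2(\Xi^1)}^2$, together with the normalisation $\int_0^T K_\gamma(t,\tau)\,dt=1$, yields $\|u_{\gamma\varepsilon}(t')\|_{L^2(\Xi^1)}^2\leqslant 2C\exp(2C'')$ uniformly in $\gamma\in(0,\gamma_0]$, $\varepsilon\in(0,1]$, $t'\in(0,T]$. Reinserting this bound into the preceding inequality controls the dissipation terms and delivers \eqref{e11.10}. The hard part throughout is that the pointwise blow-up of $K_\gamma$ must be bypassed by integrating in $t$ against $\|u_{\gamma\varepsilon}(t)\|_{L^2(\Xi^1)}^2$ rather than against $\|u_{\gamma\varepsilon}\|_{L^\infty}$; the symmetric cut-off structure of $K_\gamma$ provided by \eqref{u1.07i}--\eqref{u1.07iii} is what makes this possible.
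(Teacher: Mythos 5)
Your overall strategy --- test the equation with the solution itself, exploit the normalisation $\int_0^T K_\gamma(t,\tau)\,dt=1$ to make the source-term constants $\gamma$-independent, and close with Gr\"onwall --- is the same as the paper's, and your treatment of the convective terms via primitives, of the source term via \eqref{u1.10}, and of the Gr\"onwall step is sound. However, there is one genuine gap: the boundary terms you dismiss. When you multiply $\varepsilon\partial_{ss}^2u_{\gamma\varepsilon}$ by $u_{\gamma\varepsilon}$ and integrate by parts in $s$, you pick up
\begin{equation*}
\varepsilon\int\limits_{\Xi^2}\Bigl(u_S^{(2)}(\boldsymbol{x},t)\,\partial_su_{\gamma\varepsilon}(\boldsymbol{x},t,S)-u_0^{(2)}(\boldsymbol{x},t)\,\partial_su_{\gamma\varepsilon}(\boldsymbol{x},t,0)\Bigr)\,d\boldsymbol{x}dt,
\end{equation*}
since the data in \eqref{e2.01c} are not zero. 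This involves the traces of the normal derivative $\partial_su_{\gamma\varepsilon}$ on $\{s=0\}$ and $\{s=S\}$, which are \emph{not} controlled by the interior quantity $\varepsilon\Vert\partial_su_{\gamma\varepsilon}\Vert_{L^2(\Xi^1\times(0,t'))}^2$ appearing on the left-hand side (a trace of a first derivative requires control of second derivatives), and the regularity supplied by Proposition \ref{prop.2.0} is explicitly $\varepsilon$-dependent --- and, with the source blowing up pointwise as $\gamma\to0+$, also $\gamma$-dependent --- so ``boundedness of $u_0^{(2)},u_S^{(2)}$ and the regularity of $u_{\gamma\varepsilon}$'' does not absorb this term into a uniform constant. (The paper does establish later, in Lemma \ref{lem.6.2}, that $-\varepsilon\partial_su_\varepsilon\vert_{s=0,S}$ has a bounded weak limit, but that is an a posteriori fact obtained from the weak formulation, not an a priori bound usable inside the energy identity.)

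The paper circumvents exactly this difficulty by first constructing a smooth lift $\hat{u}$ of all the initial and final data (e.g.\ the harmonic extension) and testing the equation for $u_{\gamma\varepsilon}-\hat{u}$ against $u_{\gamma\varepsilon}-\hat{u}$; since this difference vanishes on $\Gamma_0^1\cup\Gamma_0^2\cup\Gamma_S^2\cup\Gamma_l$, every boundary term produced by integration by parts disappears identically, at the harmless price of extra interior terms involving derivatives of $\hat{u}$, which are handled by the Lipschitz bounds \eqref{e9.08i}, \eqref{u1.07iv} and Young's inequality. To repair your argument you should either introduce such a lift or restrict to homogeneous data in $s$; as written, the step ``boundary contributions \ldots are absorbed into an absolute constant'' would fail.
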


The constant $C_6$ will be defined explicitly in the proof, see formula \eqref{e11.25} below.

\proof Justification of Lemma \ref{lem.11.2} is fulfilled by means
of the standard techniques for derivation of energy estimates.

Notice that due to Conditions on $u_0^{(1)}\& u_0^{(2)}\& u_S^{(2)}$
we can take $\hat{u}$ belonging to $C^{2+\alpha}(G_{T,S})$. For
example $\hat{u}$ can be constructed explicitly in terms of
$u_0^{(1)}$, $u_0^{(2)}$ and $u_S^{(2)}$ as the solution of the
nonhomogeneous Dirichlet problem for Laplace's equation \cite[Section
2.2.4]{E-2010}:
\[
\Delta_{x,s,t}\hat{u}=0\text{ in
}G_{T,S},\quad \hat{u}\vert_{\Gamma_l}=0,\quad \hat{u}\vert_{\Gamma_0^1}=u_0^{(1)},\quad\hat{u}\vert_{\Gamma_T^1}=0,
\quad\hat{u}\vert_{\Gamma_0^2}=u_0^{(2)},\quad\hat{u}\vert_{\Gamma_S^2}=u_S^{(2)}.
\]

Rewrite \eqref{e9.00} in the form
\begin{multline}
\label{e11.11}%
\partial_t(u_{\gamma\varepsilon}-\hat{u})+\partial_s
a(u_{\gamma\varepsilon})+
\mathrm{div}_x\boldsymbol{\varphi}(u_{\gamma\varepsilon})-\Delta_x(u_{\gamma\varepsilon}-\hat{u})-\\
\varepsilon\partial_s^2(u_{\gamma\varepsilon}-\hat{u})-K_\gamma(t,\tau)(\beta(\boldsymbol{x},s,u_{\gamma\varepsilon})-
\beta(\boldsymbol{x},s,\hat{u}))=
\\
-\partial_t\hat{u}+\Delta_x\hat{u}+\varepsilon\partial_s^2\hat{u}+
K_\gamma(t,\tau)\beta(\boldsymbol{x},s,\hat{u}).
\end{multline}

Multiply the both sides of \eqref{e11.11} by
$u_{\gamma\varepsilon}-\hat{u}$, integrate over $\Xi^1\times(0,t')$,
where value $t'\in(0,T]$ is taken arbitrarily, and consider
 terms in the resulting equality one by one. Applying
integration by parts and Green's formula, we derive
\begin{multline*}
\int\limits_0^{t'}\int\limits_{\Xi^1}
(\partial_t(u_{\gamma\varepsilon}-\hat{u}))(u_{\gamma\varepsilon}-\hat{u})\,d\boldsymbol{x}dsdt=\\
\frac12\int\limits_{\Xi^1}
|u_{\gamma\varepsilon}(\boldsymbol{x},t',s)
-\hat{u}(\boldsymbol{x},t',s)|^2\,d\boldsymbol{x}ds-\frac12\int\limits_{\Xi^1}
|u_0^{(1)}-u_0^{(1)}|^2\,d\boldsymbol{x}ds=
\\
\frac12\int\limits_{\Xi^1}
|u_{\gamma\varepsilon}(\boldsymbol{x},t',s)
-\hat{u}(\boldsymbol{x},t',s)|^2\,d\boldsymbol{x}ds;
\end{multline*}
\begin{multline*}
\int\limits_0^{t'}\int\limits_{\Xi^1}
\partial_sa(u_{\gamma\varepsilon})(u_{\gamma\varepsilon}-\hat{u})\,d\boldsymbol{x}dsdt=
\int\limits_0^{t'}\int\limits_{\Xi^1}
\partial_s\tilde{q}_a(u_{\gamma\varepsilon})\,d\boldsymbol{x}dsdt-
\int\limits_0^{t'}\int\limits_{\Xi^1}
\partial_sa(u_{\gamma\varepsilon})\hat{u}\,d\boldsymbol{x}dsdt=
\\
\int\limits_0^{t'}\int\limits_\Omega
\left(\tilde{q}_a(u_S^{(2)})-\tilde{q}_a(u_0^{(2)})-u_S^{(2)}
a(u_S^{(2)})+u_0^{(2)} a(u_0^{(2)})\right)\,d\boldsymbol{x}dt+
\int\limits_0^{t'}\int\limits_{\Xi^1}
a(u_{\gamma\varepsilon})\partial_s\hat{u}\,d\boldsymbol{x}dsdt=
\\
\int\limits_0^{t'}\int\limits_\Omega
\left(\tilde{q}_a(u_S^{(2)})-\tilde{q}_a(u_0^{(2)})
+A(u_S^{(2)})-A(u_0^{(2)})-u_S^{(2)} a(u_S^{(2)})+u_0^{(2)}
a(u_0^{(2)})\right)\,d\boldsymbol{x}dt
\\
+\int\limits_0^{t'}\int\limits_{\Xi^1}
(a(u_{\gamma\varepsilon})-a(\hat{u}))\partial_s\hat{u}\,d\boldsymbol{x}dsdt=
\int\limits_0^{t'}\int\limits_{\Xi^1}
(a(u_{\gamma\varepsilon})-a(\hat{u}))\partial_s\hat{u}\,d\boldsymbol{x}dsdt;
\end{multline*}
where $\displaystyle \tilde{q}_a(\lambda)=\int\limits_0^\lambda
a'(\lambda)\lambda\,d\lambda$, $\displaystyle A(\lambda)=\int\limits_0^\lambda
a(\lambda)\,d\lambda$, and we have noticed that
\[
\tilde{q}_a(\lambda)+A(\lambda)-a(\lambda)\lambda=0\quad\forall\,\lambda\in\mathbb{R};
\]
\begin{multline*}
\int\limits_0^{t'}\int\limits_{\Xi^1}
(\mathrm{div}_x\boldsymbol{\varphi}(u_{\gamma\varepsilon}))(u_{\gamma\varepsilon}-\hat{u})\,d\boldsymbol{x}dsdt=\\
\int\limits_0^{t'}\int\limits_0^S\int\limits_{\partial\Omega}
\widetilde{\boldsymbol{q}}_\varphi(u_{\gamma\varepsilon}(\boldsymbol{\sigma},t,s))\cdot
\boldsymbol{n}(\boldsymbol{\sigma})\,d\boldsymbol{\sigma}dsdt
- \int\limits_0^{t'}\int\limits_0^S\int\limits_{\partial\Omega}
\boldsymbol{\varphi}(u_{\gamma\varepsilon}(\boldsymbol{\sigma},t,s))
\hat{u}(\boldsymbol{\sigma},t,s)\cdot
\boldsymbol{n}(\boldsymbol{\sigma})\,d\boldsymbol{\sigma}dsdt+\\
\int\limits_0^{t'}\int\limits_{\Xi^1}
\boldsymbol{\varphi}(u_{\gamma\varepsilon})
\cdot\nabla_x\hat{u}\,d\boldsymbol{x}dsdt\pm\int\limits_0^{t'}\int\limits_{\Xi^1}
\boldsymbol{\varphi}(\hat{u})\cdot\nabla_x\hat{u}\,d\boldsymbol{x}dsdt=
\\
\int\limits_0^{t'}\int\limits_{\Xi^1}
(\boldsymbol{\varphi}(u_{\gamma\varepsilon})-
\boldsymbol{\varphi}(\hat{u}))\cdot\nabla_x\hat{u}\,d\boldsymbol{x}dsdt+
\int\limits_0^{t'}\int\limits_0^S\int\limits_{\partial\Omega}
\widetilde{\boldsymbol{q}}_\varphi(\hat{u}(\boldsymbol{\sigma},t,s))\cdot
\boldsymbol{n}(\boldsymbol{\sigma})\,d\boldsymbol{\sigma}dsdt=\\
\int\limits_0^{t'}\int\limits_{\Xi^1}
(\boldsymbol{\varphi}(u_{\gamma\varepsilon})-
\boldsymbol{\varphi}(\hat{u}))\cdot\nabla_x\hat{u}\,d\boldsymbol{x}dsdt,
\end{multline*}
where
$\displaystyle \widetilde{\boldsymbol{q}}_\varphi(\lambda)=\int\limits_0^\lambda
\boldsymbol{\varphi}'(\lambda)\lambda\,d\lambda,$ $\boldsymbol{n}=
\boldsymbol{n}(\sigma)$ is the unit outward normal to
$\partial\Omega$, and we have noticed that
$\widetilde{\boldsymbol{q}}_\varphi(u_{\gamma\varepsilon})$,
$\widetilde{\boldsymbol{q}}_\varphi(\hat{u})$ and $\hat{u}$ vanish
on $\partial\Omega$;
\begin{equation*}
-\int\limits_0^{t'}\int\limits_{\Xi^1}
(\Delta_x(u_{\gamma\varepsilon}-\hat{u}))(u_{\gamma\varepsilon}-\hat{u})\,d\boldsymbol{x}dsdt=
\int\limits_0^{t'}\int\limits_{\Xi^1}
|\nabla_x(u_{\gamma\varepsilon}-\hat{u})|^2\,d\boldsymbol{x}dsdt;
\end{equation*}
\begin{equation*}
-\int\limits_0^{t'}\int\limits_{\Xi^1}
(\partial_s^2(u_{\gamma\varepsilon}-\hat{u}))\varepsilon(u_{\gamma\varepsilon}-\hat{u})\,d\boldsymbol{x}dsdt=
\int\limits_0^{t'}\int\limits_{\Xi^1}
\varepsilon\vert\partial_s(u_{\gamma\varepsilon}-\hat{u})\vert^2\,d\boldsymbol{x}dsdt.
  \end{equation*}
Using these representations, integrating by parts in $s$, using
Green's formula with respect to $\boldsymbol{x}$ in the right-hand side, and
properly arranging summands, from \eqref{e11.11} we derive the first
energy identity as follows:
\begin{multline*}
\frac12\int\limits_{\Xi^1}\vert
u_{\gamma\varepsilon}(\boldsymbol{x},t',s)-\hat{u}(\boldsymbol{x},t',s)\vert^2\,d\boldsymbol{x}ds+\\
\int\limits_0^{t'}\int\limits_{\Xi^1}
\vert\nabla_x(u_{\gamma\varepsilon}-\hat{u})\vert^2\,d\boldsymbol{x}dsdt
+\varepsilon\int\limits_0^{t'}\int\limits_{\Xi^1}
\vert\partial_s(u_{\gamma\varepsilon}-\hat{u})\vert^2\,d\boldsymbol{x}dsdt=
\\
-\int\limits_0^{t'}\int\limits_{\Xi^1}
(a(u_{\gamma\varepsilon})-a(\hat{u}))\partial_s\hat{u}\,d\boldsymbol{x}dsdt
-\int\limits_0^{t'}\int\limits_{\Xi^1}
\left(\boldsymbol{\varphi}(u_{\gamma\varepsilon})-\boldsymbol{\varphi}(\hat{u})\right)\cdot\nabla_x\hat{u}
\,d\boldsymbol{x}dsdt+
\\
\int\limits_0^{t'}K_\gamma(t,\tau)\int\limits_{\Xi^1}
\left(\beta(\boldsymbol{x},s,u_{\gamma\varepsilon})-\beta(\boldsymbol{x},s,\hat{u})\right)(u_{\gamma\varepsilon}-\hat{u})\,d\boldsymbol{x}dsdt
- \int\limits_0^{t'}\int\limits_{\Xi^1}
\partial_t\hat{u}(u_{\gamma\varepsilon}-\hat{u})\,d\boldsymbol{x}dsdt-
\end{multline*}
\begin{multline}
\int\limits_0^{t'}\int\limits_{\Xi^1}
\nabla_x\hat{u}\cdot\nabla(u_{\gamma\varepsilon}-\hat{u})\,d\boldsymbol{x}dsdt
-\varepsilon\int\limits_0^{t'}\int\limits_{\Xi^1}
\partial_s\hat{u}\,\partial_s(u_{\gamma\varepsilon}-\hat{u})\,d\boldsymbol{x}dsdt+
\\
\int\limits_0^{t'}K_\gamma(t,\tau)\int\limits_{\Xi^1}
\beta(\boldsymbol{x},s,\hat{u})(u_{\gamma\varepsilon}-\hat{u})\,d\boldsymbol{x}dsdt,\quad
t'\in(0,T].
\label{e11.12}%
\end{multline}

From the Lagrange mean value theorem it follows that
\begin{equation}
\label{e11.13}%
\left\vert
a(u_{\gamma\varepsilon})-a(\hat{u})\right\vert\leqslant\max\limits_{\lambda\in\mathbb{R}}
\vert a'(\lambda)\vert \vert
u_{\gamma\varepsilon}-\hat{u}\vert\leqslant C_5 \vert
u_{\gamma\varepsilon}-\hat{u}\vert,
\end{equation}
\begin{equation}
\label{e11.14}%
\left\vert \boldsymbol{\varphi}(u_{\gamma\varepsilon})-
\boldsymbol{\varphi}(\hat{u})\right\vert\leqslant\max\limits_{\lambda\in\mathbb{R}}
\vert \boldsymbol{\varphi}'(\lambda)\vert \vert
u_{\gamma\varepsilon}-\hat{u}\vert\leqslant C_5\vert
u_{\gamma\varepsilon}-\hat{u}\vert,
\end{equation}
and
\begin{equation}
\label{e11.15}%
\left\vert \beta(\boldsymbol{x},s,u_{\gamma\varepsilon})-
\beta(\boldsymbol{x},s,\hat{u})\right\vert\leqslant\left(
\max\limits_{(\boldsymbol{x},s,\lambda)\in\Xi^1\times\mathbb{R}}
\vert
\partial_\lambda\beta(\boldsymbol{x},s,\lambda)\vert\right)\,\vert
u_{\gamma\varepsilon}-\hat{u}\vert\leqslant b_0\vert
u_{\gamma\varepsilon}-\hat{u}\vert.
\end{equation}

Using these estimates and Young's inequality, from \eqref{e11.12} we
derive the following inequality:
\begin{multline}
\label{e11.16}%
\frac12\|u_{\gamma\varepsilon}(t')-\hat{u}(t')\|_{L^2(\Xi^1)}^2+\left(1-\frac{\delta_4}2\right)
\int\limits_0^{t'}\|\nabla_x(u_{\gamma\varepsilon}(t)-\hat{u}(t))\|_{L^2(\Xi^1)}^2\,dt+\\
\left(1-\frac{\delta_5}2\right)
\varepsilon\int\limits_0^{t'}\|\partial_s(u_{\gamma\varepsilon}(t)-\hat{u}(t))\|_{L^2(\Xi^1)}^2\,dt\leqslant
\\
\int\limits_0^{t'} \left( \frac{\delta_1}2C_5+\frac{\delta_2}2C_5+
\frac{\delta_3}2+\frac{\delta_6}2K_\gamma(t,\tau)+b_0K_\gamma(t,\tau)\right)
\Vert u_{\gamma\varepsilon}(t)-\hat{u}(t)\Vert_{L^2(\Xi^1)}^2\,dt+
\\
\int\limits_0^{t'}\Biggl[
\left(\frac{C_5}{2\delta_2}+\frac{1}{2\delta_4}\right)\Vert\nabla_x\hat{u}(t)\Vert_{L^2(\Xi^1)}^2+
\left(\frac{C_5}{2\delta_1}+\frac{\varepsilon}{2\delta_5}\right)\|\partial_s\hat{u}(t)\|_{L^2(\Xi^1)}^2+
\\
\frac1{2\delta_3} \Vert\partial_t\hat{u}(t)\Vert_{L^2(\Xi^1)}^2+
\frac1{2\delta_6}\Vert\beta\Vert_{C(\Xi^1\times[-M_6,M_6])}^2(\mathrm{meas\,}\Omega)
S\,K_\gamma(t,\tau)\Biggr]\,dt,\quad t'\in(0,T],
\end{multline}
where $M_6=\|\hat{u}\|_{C(\overline{G}_{T,S})}$ and
$\delta_k\in\mathbb{R}$ $(k=1,\ldots,6)$ are arbitrary. Take
$\delta_1=\delta_2=\ldots=\delta_6=1$ and denote
\begin{equation}
\label{e11.17}%
C_7(\gamma,t):=2C_5+1+(1+2b_0)K_\gamma(t,\tau)
  \end{equation}
and
\begin{multline}
\label{e11.18}%
C_8(\gamma,\varepsilon,t):=(C_5+1)\Vert\nabla_x\hat{u}(t)\Vert_{L^2(\Xi^1)}^2+
\left(C_5+\varepsilon\right)\Vert\partial_s\hat{u}(t)\Vert_{L^2(\Xi^1)}^2
+\\
\Vert\partial_t\hat{u}(t)\Vert_{L^2(\Xi^1)}^2+
\Vert\beta\Vert_{C(\Xi^1\times[-M_7,M_7])}^2(\mathrm{meas\,}\Omega)
S\,K_\gamma(t,\tau)
  \end{multline}
for the sake of conciseness. With this choice of $\delta_k$ and
notation, inequality \eqref{e11.16} reduces to
\begin{multline}
\label{e11.19}%
\Vert
u_{\gamma\varepsilon}(t')-\hat{u}(t')\Vert_{L^2(\Xi^1)}^2+\int\limits_0^{t'}
\Vert\nabla_x(u_{\gamma\varepsilon}(t)-\hat{u}(t))\Vert_{L^2(\Xi^1)}^2\,dt+
\varepsilon\int\limits_0^{t'}
\Vert\partial_s(u_{\gamma\varepsilon}(t)-\hat{u}(t))\Vert_{L^2(\Xi^1)}^2\,dt\leqslant
\\
\int\limits_0^{t'}\left(C_7(\gamma,t)\Vert
u_{\gamma\varepsilon}(t)-\hat{u}(t)\Vert_{L^2(\Xi^1)}^2+C_8(\gamma,\varepsilon,t)\right)\,dt\quad
\forall t'\in(0,T].
\end{multline}
Here, discarding the second and the third terms in the left-hand side
and applying Gr\"onwall's lemma, we establish the bound
\begin{equation}
\label{e11.20}%
\Vert
u_{\gamma\varepsilon}(t')-\hat{u}(t')\Vert_{L^2(\Xi^1)}^2\leqslant
\exp\left\{\int\limits_0^{t'}C_7(\gamma,t)\,dt\right\}\left(
\int\limits_0^{t'}
C_8(\gamma,\varepsilon,t)\exp\left\{-\int\limits_0^t
C_7(\gamma,\xi)\,d\xi\right\}dt\right).
\end{equation}
Combining \eqref{e11.19} and \eqref{e11.20} we obtain the estimate
\begin{multline}
\label{e11.21}%
\Vert
u_{\gamma\varepsilon}(t')-\hat{u}(t')\Vert_{L^2(\Xi^1)}^2+\int\limits_0^{t'}
\Vert\nabla_x(u_{\gamma\varepsilon}(t)-\hat{u}(t))\Vert_{L^2(\Xi^1)}^2\,dt+
\varepsilon\int\limits_0^{t'}
\Vert\partial_s(u_{\gamma\varepsilon}(t)-\hat{u}(t))\Vert_{L^2(\Xi^1)}^2\,dt\leqslant
\\
\int\limits_0^{t'}\Bigl(C_7(\gamma,t)\exp\Bigl\{\int\limits_0^{t}C_7(\gamma,t'')\,dt''\Bigr\}\Bigl(
\int\limits_0^t
C_8(\gamma,\varepsilon,t'')\exp\Bigl\{-\int\limits_0^{t''}
C_7(\gamma,\xi)\,d\xi\Bigr\}dt''\Bigr)
+C_8(\gamma,\varepsilon,t)\Bigr)\,dt\\ \forall t'\in(0,T].
\end{multline}
In the right-hand side of \eqref{e11.21} we easily see that
\begin{equation}
\label{e11.22}%
0\leqslant\int\limits_0^t C_7(\gamma,t'')\,dt''\leqslant
2C_5 T+1+2b_0\equiv C_9\quad \forall\,\gamma\in(0,\gamma_0],\quad\forall\,t\in(0,T],
\end{equation}
\begin{multline}
\label{e11.23}%
0\leqslant\int\limits_0^t C_8(\gamma,\varepsilon,t'')\,dt''\leqslant
(C_5+1)\left(\Vert\nabla_x\hat{u}\Vert_{L^2(0,T;L^2(\Xi^1))}^2+
\Vert\partial_s\hat{u}\Vert_{L^2(0,T;L^2(\Xi^1))}^2\right)
+
\\
\Vert\partial_t\hat{u}\Vert_{L^2(0,T;L^2(\Xi^1))}^2+S\,\mathrm{meas\,}\Omega\,\Vert\beta\Vert_{C(\Xi^1\times[-M_6,M_6])}^2\equiv
C_{10}\\ \forall\,\gamma\in(0,\gamma_0],\; \forall\,\varepsilon\in(0,1],\;\forall\,t\in(0,T]
\end{multline}
due to \eqref{e11.17}, \eqref{e11.18} and due to the fact that $\mathrm{supp\,}K_\gamma(\cdot,\tau)\subset[0,T]$ for
$\gamma\in(0,\gamma_0]$, and therefore $\int\limits_0^TK_\gamma(t'',\tau)\,dt''=1$. Using \eqref{e11.22} and \eqref{e11.23}, from \eqref{e11.21} we
derive the bound
\begin{multline}
\label{e11.24}%
\Vert u_{\gamma\varepsilon}(t')-\hat{u}(t')\Vert_{L^2(\Xi^1)}^2+
\Vert\nabla_x(u_{\gamma\varepsilon}-\hat{u})\Vert_{L^2(\Xi^1\times(0,t'))}^2+
\varepsilon
\Vert\partial_s(u_{\gamma\varepsilon}-\hat{u})\Vert_{L^2(\Xi^1\times(0,t'))}^2\leqslant
(C_9 e^{C_9}+1)C_{10}\\
\forall\,\gamma\in(0,\gamma_0],\quad \forall\,\varepsilon\in (0,1],\quad \forall\,
t'\in(0,T].
\end{multline}
Using the elementary inequality
\[
\frac{A_1+A_2+A_3}3\leqslant\sqrt{\frac{A_1^2+A_2^2+A_3^2}{3}}\quad
\forall\, A_1,A_2,A_3\in\mathbb{R}
\]
and the triangle inequality, from \eqref{e11.24} we finally deduce
\eqref{e11.10} with
\begin{equation}
\label{e11.25}%
C_6=\sqrt{3C_{10}(C_9 e^{C_9}+1)}+
\Vert\hat{u}\Vert_{L^\infty(0,T;L^2(\Xi^1))}+
\Vert\nabla_x\hat{u}\Vert_{L^2(G_{T,S})}+
\Vert\partial_s\hat{u}\Vert_{L^2(G_{T,S})}.
  \end{equation}
Lemma \ref{lem.11.2} is proved.\qed\\[1ex]
\indent Recall that the kinetic equation \eqref{e3.01a} with
$Z_\gamma=K_\gamma\beta$ can be written in the form
\begin{multline}
\label{e11.26}%
\partial_t\chi(\lambda;u_\gamma)+a'(\lambda)
\partial_s\chi(\lambda;u_\gamma)+\boldsymbol{\varphi}'(\lambda)\cdot
\nabla_x\chi(\lambda;u_\gamma)-\Delta_x\chi(\lambda;u_\gamma)=\partial_\lambda
(m_\gamma+n_\gamma+\mathcal{Z}_\gamma),
\\
(\boldsymbol{x},t,s,\lambda)\in G_{T,S}\times\mathbb{R}_\lambda,
  \end{multline}
where
\begin{equation}
\label{e11.26ii}%
\mathcal{Z}_\gamma\overset{\mathrm{def}}{=}\mathbf{1}_{(\lambda\geqslant
u_\gamma)}K_\gamma(t,\tau)\beta(\boldsymbol{x},s,\lambda)-\int\limits_0^\lambda
\mathbf{1}_{(\lambda'\geqslant
u_\gamma)}K_\gamma(t,\tau)\partial_{\lambda'}\beta(\boldsymbol{x},s,\lambda')\,d\lambda',
  \end{equation}
  see equation \eqref{e3.01bis}.

Recall from Lemma \ref{lem.4.2} and Remark \ref{rem.7.2} that
$u_\gamma$ appears as the strong limiting point of the sequence
$\{u_{\gamma\varepsilon}\}_{\varepsilon\to0}$. This fact  along with
Lemma \ref{lem.11.2} has the following implications.

\begin{lemma} \label{lem.11.3}
{\bf (i)} The family $\{u_\gamma\}_{\gamma\in(0,\gamma_0]}$ satisfies the
energy estimate
\begin{equation}
\label{e11.27}%
\Vert u_\gamma(t')\Vert_{L^2(\Xi^1)}+\Vert\nabla_x
u_\gamma\Vert_{L^2(\Xi^1\times(0,t'))}\leqslant C_6\quad \forall\,
t'\in(0,T].
\end{equation}
{\bf (ii)} The families of measures $m_\gamma\in\mathcal{M}^+(G_{T,S}\times\mathbb{R}_\lambda)$
and
$n_\gamma=\delta_{(\lambda=u_\gamma)}|\nabla_xu_\gamma|^2\in\mathcal{M}^+(G_{T,S}\times\mathbb{R}_\lambda)$
are bounded uniformly in $\gamma$. More precisely,
\begin{equation}
\label{e11.28}%
\Vert
m_\gamma\Vert_{\mathcal{M}(G_{T,S}\times\mathbb{R}_\lambda)}\leqslant
3C_6^2,\quad \Vert
n_\gamma\Vert_{\mathcal{M}(G_{T,S}\times\mathbb{R}_\lambda)}\leqslant
C_6^2\quad\forall\,\gamma\in(0,\gamma_0].
\end{equation}
{\bf (iii)} There exists a constant $C_{11}>0$, independent of
$\gamma$, such that
\begin{equation}
\label{e11.29}%
\Vert\mathcal{Z}_\gamma\Vert_{L^1(G_{T,S}\times\mathbb{R}_\lambda)}\leqslant
C_{11}\quad\forall\,\gamma\in(0,\gamma_0].
\end{equation}
\end{lemma}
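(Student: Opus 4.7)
The plan is to treat the three assertions in order, combining the uniform energy estimate of Lemma~\ref{lem.11.2} with weak* lower semicontinuity of total mass for positive Radon measures, and with the explicit structure of $\mathcal{Z}_\gamma$.

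For assertion~(i), I would pass to the limit $\varepsilon\to 0+$ in \eqref{e11.10}. Since Theorem~\ref{theo.3.1} guarantees uniqueness of the kinetic solution $u_\gamma$ of Problem~$\Pi_\gamma$, the argument of Remark~\ref{rem.7.2} gives $u_{\gamma\varepsilon}\to u_\gamma$ strongly in $L^2(G_{T,S})$ as $\varepsilon\to 0+$ for the entire family. The uniform $L^2$-bound on $\{\nabla_x u_{\gamma\varepsilon}\}_{\varepsilon>0}$ furnished by \eqref{e11.10} permits extracting $\nabla_x u_{\gamma\varepsilon}\rightharpoonup\nabla_x u_\gamma$ weakly in $L^2(G_{T,S})$. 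Weak (and weak*) lower semicontinuity of the $L^2$-norms then transfers \eqref{e11.10} into \eqref{e11.27} with the same constant~$C_6$.

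For assertion~(ii), the estimate on $n_\gamma$ is immediate from (i):
\[
\|n_\gamma\|_{\mathcal{M}(G_{T,S}\times\mathbb{R}_\lambda)}=\int_{G_{T,S}}|\nabla_x u_\gamma|^2\,d\boldsymbol{x}dtds\leq C_6^2.
\]
For $m_\gamma$, I would use the representation from \eqref{e4.12iv} adapted to the present family: $m_\gamma+n_\gamma$ is the weak* limit in $\mathcal{M}(G_{T,S}\times\mathbb{R}_\lambda)$ of the positive measures $\delta_{(\lambda=u_{\gamma\varepsilon})}(|\nabla_x u_{\gamma\varepsilon}|^2+\varepsilon|\partial_s u_{\gamma\varepsilon}|^2)$, whose total mass equals $\int_{G_{T,S}}(|\nabla_x u_{\gamma\varepsilon}|^2+\varepsilon|\partial_s u_{\gamma\varepsilon}|^2)\,d\boldsymbol{x}dtds$ and is therefore majorated by $2C_6^2$ via \eqref{e11.10}. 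Weak* lower semicontinuity of total mass for positive Radon measures yields $\|m_\gamma+n_\gamma\|_{\mathcal{M}}\leq 2C_6^2$, and the triangle inequality
\[
\|m_\gamma\|_{\mathcal{M}}=\|(m_\gamma+n_\gamma)-n_\gamma\|_{\mathcal{M}}\leq \|m_\gamma+n_\gamma\|_{\mathcal{M}}+\|n_\gamma\|_{\mathcal{M}}\leq 2C_6^2+C_6^2=3C_6^2
\]
completes the estimate on $m_\gamma$.

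For assertion~(iii), the strategy is to exploit the explicit form of $\mathcal{Z}_\gamma$. An integration-by-parts in $\lambda$ applied to \eqref{e11.26ii}, together with the fact that $\beta(\boldsymbol{x},s,\cdot)$ is supported in $[-b_1,b_1]$, reduces $\mathcal{Z}_\gamma$ to a step function in $\lambda$ with pointwise bound
\[
|\mathcal{Z}_\gamma(\boldsymbol{x},t,s,\lambda)|\leq 2K_\gamma(t,\tau)\,\|\beta\|_{C(\Xi^1\times[-b_1,b_1])},
\]
whose $\lambda$-variation is confined to a bounded window determined by the range of $u_\gamma$ and by $[-b_1,b_1]$. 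Combined with the uniform identity $\int_0^T K_\gamma(t,\tau)\,dt=1$ from \eqref{u1.07iii} and a uniform-in-$\gamma$ bound $|u_\gamma|\leq M^*$, this gives
\[
\|\mathcal{Z}_\gamma\|_{L^1(G_{T,S}\times[-M^*,M^*])}\leq 4M^*\|\beta\|_{C(\Xi^1\times\mathbb{R})}\, S\,\mathrm{meas}(\Omega)=:C_{11}.
\]
The main obstacle is the uniform $L^\infty$-bound on $u_\gamma$, because the generic maximum principle \eqref{e2.03} produces a $\gamma$-dependent constant through the divergent $b_\gamma^{(1)},b_\gamma^{(2)}$ of Remark~\ref{rem.1.5}. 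I would overcome this by a direct parabolic comparison-principle argument for Problem~$\Pi_{\gamma\varepsilon}$: the time-dependent function
\[
\bar u(t):=\max\{\|u_0^{(1)}\|_\infty,\|u_0^{(2)}\|_\infty,\|u_S^{(2)}\|_\infty\}+\int_0^t K_\gamma(t',\tau)\|\beta\|_{C(\Xi^1\times\mathbb{R})}\,dt'
\]
is a super-solution whose supremum over $[0,T]$ is majorated by $M^*:=\max\{\|u_0^{(1)}\|_\infty,\|u_0^{(2)}\|_\infty,\|u_S^{(2)}\|_\infty\}+\|\beta\|_{C(\Xi^1\times\mathbb{R})}$ uniformly in $\gamma$ and $\varepsilon$, and the bound transfers to $u_\gamma$ in the limit $\varepsilon\to 0+$.
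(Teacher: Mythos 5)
Your treatment of parts (i) and (ii) coincides with the paper's: \eqref{e11.27} is obtained from \eqref{e11.10} via the strong convergence $u_{\gamma\varepsilon}\to u_\gamma$ of Remark \ref{rem.7.2} together with lower semicontinuity of the norms, the bound on $n_\gamma$ is the identity $\|n_\gamma\|_{\mathcal M}=\|\nabla_x u_\gamma\|_{L^2(G_{T,S})}^2\leqslant C_6^2$, and $m_\gamma$ is estimated by viewing it as the weak* limit of the $\varepsilon$-level dissipation measures (of total mass $\leqslant 2C_6^2$ by \eqref{e11.10}) minus $n_\gamma$; your triangle-inequality phrasing is the same computation as the paper's duality bound $|\langle m_\gamma,\phi\rangle|\leqslant 3C_6^2\|\phi\|_{L^\infty}$.

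In part (iii) you take a genuinely different route. The paper's proof is a direct evaluation that uses no information about $u_\gamma$ at all: the indicators $\mathbf 1_{(\lambda\geqslant u_\gamma)}$ are simply majorized by $1$, the $\lambda$-localization comes entirely from the hypothesis \eqref{u1.07v} that $\beta$ and $\partial_\lambda\beta$ vanish for $|\lambda|>b_1$, and the $t$-integration is handled by $\int_0^TK_\gamma(t,\tau)\,dt=1$, so the resulting constant involves only $b_1$, $\|\beta\|_{C^1}$, $S$ and $\mathrm{meas}\,\Omega$. You instead obtain the $\lambda$-localization from a $\gamma$-uniform bound $|u_\gamma|\leqslant M^*$, which forces you to prove a uniform maximum principle by a super-solution comparison. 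That detour is correct --- and your super-solution $\bar u$ is, if anything, more transparent than the exponential substitution and truncation used in the paper's Lemma \ref{lem.11.4} --- but it is unnecessary for \eqref{e11.29}; the paper establishes the uniform $L^\infty$ bound separately (Lemma \ref{lem.11.4} and Corollary \ref{cor.11.1}) because it is needed for the compactness argument, not for this estimate. One caveat that your write-up shares with the paper: with the representation \eqref{e11.26ii} the quantity $\mathcal Z_\gamma$ tends to $K_\gamma(t,\tau)\beta(\boldsymbol{x},s,\max(u_\gamma,0))$ as $\lambda\to+\infty$, so its $L^1$-norm over all of $\mathbb R_\lambda$ is not literally finite; both you (restricting to $[-M^*,M^*]$) and the paper (restricting de facto to $[-b_1,b_1]$) in fact bound the norm over a compact $\lambda$-slab, which is all that the subsequent arguments require.
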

\proof {\bf (i)} Energy estimate \eqref{e11.27} immediately follows from
\eqref{e11.10}, Remark \ref{rem.7.2}, and the lower semicontinuity
property: $\displaystyle \Vert\nabla_xu_\gamma\Vert_{L^2(\Xi^1\times(0,t'))} \leqslant
\liminf\limits_{\varepsilon\to0+} \Vert
\nabla_xu_{\gamma\varepsilon}\Vert_{L^2(\Xi^1\times(0,t'))}$.\\
{\bf (ii)} For any $\phi\in L^{\infty}(G_{T,S}\times\mathbb{R}_\lambda)$
we have
\begin{multline*}
\vert\langle n_\gamma,\phi\rangle\vert=\left\vert
\,\int\limits_{G_{T,S}}\left\vert\nabla_xu_\gamma\right\vert^2\phi(\boldsymbol{x},t,s,u_\gamma)\,
d\boldsymbol{x}dtds\right\vert\leqslant\\
\left\Vert\nabla_xu_\gamma\right\Vert_{L^2(G_{T,S})}^2
\left\Vert\phi\right\Vert_{L^\infty(G_{T,S}\times\mathbb{R}_\lambda)}
\leqslant
C_6^2\left\Vert\phi\right\Vert_{L^\infty(G_{T,S}\times\mathbb{R}_\lambda)},
  \end{multline*}
and
\begin{multline*}
\vert\langle m_\gamma,\phi\rangle\vert= \left\vert \left\langle
\lim\limits_{\varepsilon\to 0+}
\delta_{(\lambda=u_{\gamma\varepsilon})}\left(|\nabla_x
u_{\gamma\varepsilon}|^2+\varepsilon|\partial_s
u_{\gamma\varepsilon}|^2\right)-\delta_{(\lambda=u_\gamma)}|\nabla_x
u_\gamma|^2,\phi\right\rangle\right\vert=
\\
\left\vert\,\int\limits_{G_{T,S}}\left(
\lim\limits_{\varepsilon\to0+} \left(|\nabla_x
u_{\gamma\varepsilon}|^2+
\varepsilon|\partial_s u_{\gamma\varepsilon}|^2\right)
\phi(\boldsymbol{x},t,s,u_{\gamma\varepsilon})-
|\nabla_xu_{\gamma}|^2\phi(\boldsymbol{x},t,s,u_\gamma)\right)\,
d\boldsymbol{x}dtds\right\vert\leqslant\\
3C_6^2\left\Vert\phi\right\Vert_{L^\infty(G_{T,S}\times\mathbb{R}_\lambda)}
\end{multline*}
due to \eqref{e11.27}, \eqref{e4.12iii} and \eqref{e4.12iv}. This
immediately yields the bounds \eqref{e11.28}.
\\
{\bf (iii)} Simple evaluation gives
\begin{multline*}
\left\Vert\mathcal{Z}_\gamma\right\Vert_{L^1(G_{T,S}\times\mathbb{R}_\gamma)}=\\
\int\limits_{G_{T,S}\times\mathbb{R}_\lambda}
\Bigl\vert\mathbf{1}_{(\lambda\geqslant
u_\gamma)}K_\gamma(t,\tau)\beta(\boldsymbol{x},s,\lambda)-\int\limits_0^\lambda
\mathbf{1}_{(\lambda'\geqslant u_\gamma)}
K_\gamma(t,\tau)\partial_{\lambda'}\beta(\boldsymbol{x},s,\lambda')\,d\lambda'\Bigr\vert
d\boldsymbol{x}dtdsd\lambda\leqslant
\\
\int\limits_0^T\left(2b_1S(\mathrm{meas\,}\Omega)\|\beta\|_{C(\Xi^1\times[-b_1,b_1])}+
2b_1^2S\,(\mathrm{meas\,}\Omega)\|\partial_\lambda\beta\|_{C(\Xi^1\times[-b_1,b_1])}
\right)K_\gamma(t,\tau)\,dt=
\\
2b_1 S\,(\mathrm{meas\,}\Omega)\left(\Vert\beta\Vert_{C^1(\Xi^1\times[-b_1,b_1])}
+\Vert\partial_\lambda\beta\Vert_{C(\Xi^1\times[-b_1,b_1])}\right)\stackrel{def}{=}
C_{12}.
\end{multline*}
This estimate completes the proof of Lemma \ref{lem.11.3}.\qed

\begin{lemma}
\label{lem.11.4}%
Let the source term $Z_\gamma$ in Problem
$\Pi_{\gamma\varepsilon}$ have the form \eqref{u1.06}, where functions
$K_\gamma$ and $\beta$ satisfy Conditions on $K_\gamma\&\beta$. Let
the nonlinearities $a=a(\lambda)$ and
$\boldsymbol{\varphi}=\boldsymbol{\varphi}(\lambda)$ satisfy the demands
of items (i) and (ii) in Conditions on $a\&\boldsymbol{\varphi}\&
Z_\gamma$. Set $\gamma_0:=\min\{\tau,T-\tau\}$. Then the family of
classical solutions
$\{u_{\gamma\varepsilon}\}_{{}_{\varepsilon\in(0,1]}^{\gamma\in(0,{\gamma_0}/2]}}$
of Problem $\Pi_{\gamma\varepsilon}$ satisfies the maximum principle
\begin{multline}
\label{e11.30}%
\Vert
u_{\gamma\varepsilon}(\cdot,t',\cdot)\Vert_{L^\infty(\Xi^1)}\leqslant
e^{\xi_*
t'}\max\{\|u_0^{(1)}\|_{L^\infty(\Xi^1)},\|u_0^{(2)}\|_{L^\infty(\Xi^2)},
\|u_S^{(2)}\|_{L^\infty(\Xi^2)}\}\stackrel{def}{=}M_7(t')\\ \forall\,
t'\in(0,T],\;
\forall\,\gamma\in(0,\gamma_0/2],\; \forall\,\varepsilon\in(0,1],
  \end{multline}
where
\begin{equation}
\label{e11.31}%
\xi_*=\left\{
\begin{array}{lll}
0 &\mbox{ for }& \displaystyle b_1\leqslant\Vert u_0^{(1)}
\Vert_{L^\infty(\Xi^1)}-1,
\\[1ex] \displaystyle
\frac{2}{2\tau-\gamma_0}\ln\frac{b_1+1}{\Vert
u_0^{(1)}\Vert_{L^\infty(\Xi^1)}} &\mbox{ for }& \displaystyle b_1>\Vert u_0^{(1)}
\Vert_{L^\infty(\Xi^1)}-1.
\end{array}
\right.\end{equation} In particular,
$\{u_{\gamma\varepsilon}\}_{{}_{\varepsilon\in(0,1]}^{\gamma\in(0,{\gamma_0}/2]}}$
is uniformly in $\gamma$ and $\varepsilon$ bounded in
$L^\infty(G_{T,S})$.
\end{lemma}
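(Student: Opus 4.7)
\medskip
\noindent\emph{Proof plan.} The plan is to construct an explicit spatially constant super-solution $\bar u(t)=M^+ e^{\xi_* t}$ (with $M^+:=\max\{\|u_0^{(1)}\|_{L^\infty(\Xi^1)},\|u_0^{(2)}\|_{L^\infty(\Xi^2)},\|u_S^{(2)}\|_{L^\infty(\Xi^2)}\}$ and $\xi_*$ as defined in \eqref{e11.31}) and to apply the classical comparison principle for the strictly parabolic equation \eqref{e2.01a}. The key observation is the localization property \eqref{u1.07v}$_2$: $\beta(\boldsymbol{x},s,\lambda)=0$ for $|\lambda|>b_1$. Thus whenever the super-solution ordinate lies above $b_1$, the nonlinear source is automatically switched off.

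The first step is the arithmetic check that $\bar u(t)>b_1$ throughout the support of $K_\gamma(\cdot,\tau)$ for every admissible $\gamma$. By \eqref{u1.07i}--\eqref{u1.07iii}, $\mathrm{supp\,}K_\gamma(\cdot,\tau)\subset[\tau-\gamma,\tau]$, and the restriction $\gamma\leqslant\gamma_0/2$ yields $\tau-\gamma\geqslant\tau-\gamma_0/2$. In Case 2 of \eqref{e11.31} the value of $\xi_*$ is chosen precisely so that
\[
M^+ e^{\xi_*(\tau-\gamma_0/2)}=M^+\exp\Bigl(\tfrac{2}{2\tau-\gamma_0}\cdot\tfrac{2\tau-\gamma_0}{2}\ln\tfrac{b_1+1}{M^+}\Bigr)=b_1+1>b_1,
\]
so $\bar u(t)\geqslant b_1+1$ on $[\tau-\gamma,\tau]$; in Case 1 the trivial bound $\bar u\equiv M^+\geqslant b_1+1$ gives the same conclusion.

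Next I would verify that $\bar u$ is indeed a classical super-solution of \eqref{e2.01a}. Since $\bar u$ depends only on $t$, all the derivatives $\partial_s a(\bar u)$, $\mathrm{div}_x\boldsymbol{\varphi}(\bar u)$, $\Delta_x\bar u$, and $\varepsilon\partial_{ss}^2\bar u$ vanish, and the super-solution inequality reduces to
\[
\xi_* M^+ e^{\xi_* t}\geqslant K_\gamma(t,\tau)\beta(\boldsymbol{x},s,\bar u(t))\quad \text{for all }(\boldsymbol{x},t,s)\in G_{T,S}.
\]
For $t\notin[\tau-\gamma,\tau]$ the right-hand side vanishes by support of $K_\gamma$, while for $t\in[\tau-\gamma,\tau]$ it vanishes by the previous paragraph and \eqref{u1.07v}$_2$; the left-hand side is non-negative. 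The initial condition $\bar u(0)=M^+\geqslant u_0^{(1)}$, the boundary ordering $\bar u(t)\geqslant M^+\geqslant u_0^{(2)}(\boldsymbol{x},t),u_S^{(2)}(\boldsymbol{x},t)$ on $\Gamma_0^2\cup\Gamma_S^2$, and the trivial inequality $\bar u(t)\geqslant 0=u_{\gamma\varepsilon}$ on $\Gamma_l$ are all immediate. The standard comparison principle for strictly parabolic equations of the form \cite[Chapter 1, Theorem 2.9]{LSU-1968} (applied with $(\boldsymbol{x},s)$ as spatial variables, $t$ as time, noting that $\varepsilon\partial_{ss}^2$ makes the operator uniformly parabolic) then yields $u_{\gamma\varepsilon}\leqslant\bar u$ in $G_{T,S}$. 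The lower bound $u_{\gamma\varepsilon}\geqslant -\bar u$ is obtained symmetrically with the sub-solution $\underline u:=-\bar u$, exploiting the evenness of the support condition $|\lambda|>b_1\Rightarrow \beta=0$.

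The main (rather minor) obstacle I anticipate is justifying that the ordinary parabolic comparison principle applies cleanly to the present ultra-parabolic equation with the non-monotone $s$-flux $a(u)$. The point is that for fixed $\varepsilon>0$ the operator in \eqref{e2.01a} is uniformly parabolic in $(\boldsymbol{x},s)$ with time variable $t$, and the difference $u_{\gamma\varepsilon}-\bar u$ satisfies a linear (after Lagrange mean-value linearisation of $a$, $\boldsymbol\varphi$, and $\beta$) parabolic inequality; the classical argument evaluating the first interior maximum of $u_{\gamma\varepsilon}-\bar u$ in $\bar G_{T,S}$ then leads to a contradiction with the just-established super-solution property, thanks to the vanishing of $\beta$ at the super-solution. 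The uniform $L^\infty$-bound follows by taking the supremum over $(\boldsymbol{x},s)\in\Xi^1$ at fixed $t'$.
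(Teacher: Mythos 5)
Your argument is correct, and it rests on exactly the same key idea as the paper's proof: the barrier $Me^{\xi_*t}$ with $\xi_*$ tuned via \eqref{e11.31} so that the barrier exceeds $b_1+1$ on $[\tau-\gamma_0/2,T]\supset\mathrm{supp\,}K_\gamma(\cdot,\tau)$ for every $\gamma\in(0,\gamma_0/2]$, whence the localization condition \eqref{u1.07v} switches the source off along the barrier and the resulting bound is uniform in $\gamma$ and $\varepsilon$. The only genuine difference is how the comparison $|u_{\gamma\varepsilon}|\leqslant Me^{\xi_*t}$ is certified. You invoke the pointwise comparison principle for the (for fixed $\varepsilon>0$) uniformly parabolic equation \eqref{e2.01a}; the paper instead substitutes $u_{\gamma\varepsilon}=e^{\xi t}\omega_{\gamma\varepsilon}$, tests the equation with the Stampacchia truncation $(\omega_{\gamma\varepsilon}-M_*)^+$ and closes an $L^2$ Gr\"onwall inequality --- a weak-formulation version of the same comparison, which avoids any linearization of the convective terms or discussion of the zeroth-order coefficient $K_\gamma\partial_\lambda\beta$ (of size $O(\gamma^{-1})$, harmless for fixed $\gamma$ in your route as well). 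Your route is more transparent but must justify the comparison principle for the quasilinear operator; this is routine for the classical solutions of Proposition \ref{prop.2.0}, since the barrier is $(\boldsymbol{x},s)$-independent, so $\partial_s a(u_{\gamma\varepsilon})-\partial_s a(\bar u)=a'(u_{\gamma\varepsilon})\partial_s(u_{\gamma\varepsilon}-\bar u)$ and the difference satisfies a linear parabolic inequality with bounded coefficients. One cosmetic slip: in your arithmetic check you wrote $\ln\frac{b_1+1}{M^+}$ where \eqref{e11.31} has $\ln\frac{b_1+1}{\|u_0^{(1)}\|_{L^\infty(\Xi^1)}}$; since $M^+\geqslant\|u_0^{(1)}\|_{L^\infty(\Xi^1)}$ this only strengthens the estimate, giving $M^+e^{\xi_*(\tau-\gamma_0/2)}\geqslant b_1+1$ rather than equality, and the conclusion stands.
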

\proof Substitute
$u_{\gamma\varepsilon}(\boldsymbol{x},t,s)=e^{\xi
t}\omega_{\gamma\varepsilon}(\boldsymbol{x},t,s)$ into
\eqref{e9.00}. Here
$\omega_{\gamma\varepsilon}=\omega_{\gamma\varepsilon}(\boldsymbol{x},t,s)$
is a new dependent variable and $\xi$ is an
arbitrarily fixed positive parameter. Upon division by $e^{\xi t}$, we get
\begin{multline}
\label{e11.32}%
\partial_t\omega_{\gamma\varepsilon}+\partial_s\left(
e^{-\xi t}a(\omega_{\gamma\varepsilon}e^{\xi t})\right)+
\mathrm{div}_x\left( e^{-\xi
t}\boldsymbol{\varphi}(\omega_{\gamma\varepsilon}e^{\xi
t})\right)=\\
\Delta_x\omega_{\gamma\varepsilon}+\varepsilon\partial_s^2\omega_{\gamma\varepsilon}+
K_\gamma(t,\tau)e^{-\xi
t}\beta(\boldsymbol{x},s,\omega_{\gamma\varepsilon}e^{\xi
t})-\xi\omega_{\gamma\varepsilon}.
\end{multline}
Also we have
\begin{equation*}
\omega_{\gamma\varepsilon}\vert_{t=0}=u_0^{(1)},\quad
\omega_{\gamma\varepsilon}\vert_{s=0}=u_0^{(2)}e^{-\xi t},\quad
\omega_{\gamma\varepsilon}\vert_{s=S}=u_S^{(2)}e^{-\xi t}.
\end{equation*}
Set
$M_*=\Vert\omega_{\gamma\varepsilon}\Vert_{L^\infty(\Gamma_0^1\cup\Gamma_0^2\cup\Gamma_S^2)}$
and
\begin{equation}
\label{e11.33}%
\omega_{\gamma\varepsilon}^{M_*}=(\omega_{\gamma\varepsilon}-M_*)^+\overset{\mathrm{def}}{=}\max\{\omega_{\gamma\varepsilon}-M_*,0\}.
\end{equation}
Remark that
\begin{equation}
\label{e11.34}%
\omega_{\gamma\varepsilon}^{M_*}\vert_{\partial
G_{T,S}\setminus\{t=T\}}=0,
\end{equation}
\begin{equation}
\label{e11.35}%
\partial_t\omega_{\gamma\varepsilon}^{M_*}=
\left\{\begin{array}{lll}
\partial_t\omega_{\gamma\varepsilon}
&\text{ for }&\omega_{\gamma\varepsilon}>M_*,
\\
0&\text{ for }&\omega_{\gamma\varepsilon}\leqslant M_*,
\end{array}\right.
\end{equation}
\begin{equation}
\label{e11.36}%
\partial_s\omega_{\gamma\varepsilon}^{M_*}=
\left\{\begin{array}{lll}
\partial_s\omega_{\gamma\varepsilon}
&\text{ for }&\omega_{\gamma\varepsilon}>M_*,
\\
0&\text{ for }&\omega_{\gamma\varepsilon}\leqslant M_*,
\end{array}\right.
\end{equation}
\begin{equation}
\label{e11.37}%
\nabla_x\omega_{\gamma\varepsilon}^{M_*}= \left\{\begin{array}{lll}
\nabla_x\omega_{\gamma\varepsilon}&\text{ for
}&\omega_{\gamma\varepsilon}>M_*,
\\
0&\text{ for }&\omega_{\gamma\varepsilon}\leqslant M_*,
\end{array}\right.
\end{equation}
and that $\omega_{\gamma\varepsilon}^{M_*}\in W^1_p(G_{T,S})$
$\forall\, p\in[1,+\infty)$ due to \cite[Appendix A.1, Lemma
1.7]{MNRR-1996}.

Multiply \eqref{e11.32} by $\omega_{\gamma\varepsilon}^{M_*}$ and
integrate on $\Xi^1\times(0,t')$:
\begin{multline*}
\int\limits_0^{t'}\int\limits_{\Xi^1}\omega_{\gamma\varepsilon}^{M_*}
\partial_t\omega_{\gamma\varepsilon}\,d\boldsymbol{x}dsdt
+\\
\int\limits_0^{t'}\int\limits_{\Xi^1}\omega_{\gamma\varepsilon}^{M_*}
\partial_s(e^{-\xi t}a(\omega_{\gamma\varepsilon}e^{\xi
t}))\,d\boldsymbol{x}dsdt
+\int\limits_0^{t'}\int\limits_{\Xi^1}\omega_{\gamma\varepsilon}^{M_*}
\mathrm{div}_x(e^{-\xi
t}\boldsymbol{\varphi}(\omega_{\gamma\varepsilon}e^{\xi
t}))\,d\boldsymbol{x}dsdt=
\end{multline*}
\begin{multline}
\int\limits_0^{t'}\int\limits_{\Xi^1}\omega_{\gamma\varepsilon}^{M_*}
\Delta_x\omega_{\gamma\varepsilon}\,d\boldsymbol{x}dsdt+
\varepsilon\int\limits_0^{t'}\int\limits_{\Xi^1}\omega_{\gamma\varepsilon}^{M_*}
\partial_s^2\omega_{\gamma\varepsilon}\,d\boldsymbol{x}dsdt+\\
\int\limits_0^{t'}\int\limits_{\Xi^1}\omega_{\gamma\varepsilon}^{M_*}
K_\gamma(t,\tau)e^{-\xi
t}\beta(\boldsymbol{x},s,\omega_{\gamma\varepsilon}e^{\xi
t})\,d\boldsymbol{x}dsdt-
\xi\int\limits_0^{t'}\int\limits_{\Xi^1}\omega_{\gamma\varepsilon}^{M_*}
\omega_{\gamma\varepsilon}\,d\boldsymbol{x}dsdt,\quad t'\in(0,T].
\label{e11.38}%
\end{multline}

Let us consider each term in \eqref{e11.38} separately. We have
\begin{equation}
\label{e11.39i}%
\int\limits_0^{t'}\int\limits_{\Xi^1}\omega_{\gamma\varepsilon}^{M_*}
\partial_t\omega_{\gamma\varepsilon}\,d\boldsymbol{x}dsdt\overset{\eqref{e11.35}}{=}
\int\limits_0^{t'}\int\limits_{\Xi^1}\omega_{\gamma\varepsilon}^{M_*}
\partial_t\omega_{\gamma\varepsilon}^{M_*}\,d\boldsymbol{x}dsdt\overset{\eqref{e11.34}}{=}
\frac12\Vert\omega_{\gamma\varepsilon}^{M_*}(t')\Vert_{L^2(\Xi^1)};
\end{equation}
\begin{multline}
\label{e11.39ii}%
\int\limits_0^{t'}\int\limits_{\Xi^1}\omega_{\gamma\varepsilon}^{M_*}
\partial_s(e^{-\xi t}a(\omega_{\gamma\varepsilon}e^{\xi
t}))\,d\boldsymbol{x}dsdt=\int\limits_0^{t'}\int\limits_{\Xi^1}\omega_{\gamma\varepsilon}^{M_*}
a'(\omega_{\gamma\varepsilon}e^{\xi
t})\partial_s\omega_{\gamma\varepsilon}\,d\boldsymbol{x}dsdt\overset{\eqref{e11.33},\eqref{e11.36}}{=}
\\
\int\limits_0^{t'}\int\limits_{\Xi^1}\omega_{\gamma\varepsilon}^{M_*}
a'((\omega_{\gamma\varepsilon}^{M_*}+M_*)e^{\xi
t})\partial_s\omega_{\gamma\varepsilon}^{M_*}\,d\boldsymbol{x}dsdt=
\int\limits_0^{t'}\int\limits_{\Xi^1}
\partial_s
W(t,\omega_{\gamma\varepsilon}^{M_*})\,d\boldsymbol{x}dsdt=
\\
\int\limits_0^{t'}\int\limits_{\Omega}\left(W(t,\omega_{\gamma\varepsilon}^{M_*}(\boldsymbol{x},t,S))-
W(t,\omega_{\gamma\varepsilon}^{M_*}(\boldsymbol{x},t,0))\right)\,d\boldsymbol{x}dt,
\end{multline}
where $\displaystyle W(t,\lambda)=\int\limits_0^\lambda \lambda
a'((\lambda+M_*)e^{\xi t})\,d\lambda$;
\begin{multline}
\label{e11.39iii}%
\int\limits_0^{t'}\int\limits_{\Xi^1}\omega_{\gamma\varepsilon}^{M_*}
\mathrm{div}_x(e^{-\xi
t}\boldsymbol{\varphi}(\omega_{\gamma\varepsilon}e^{\xi
t}))\,d\boldsymbol{x}dsdt=
\int\limits_0^{t'}\int\limits_{\Xi^1}\omega_{\gamma\varepsilon}^{M_*}
\boldsymbol{\varphi}'(\omega_{\gamma\varepsilon}e^{\xi t}))
\cdot\nabla_x\omega_{\gamma\varepsilon}\,d\boldsymbol{x}dsdt\overset{\eqref{e11.33},\eqref{e11.37}}{=}
\\
\int\limits_0^{t'}\int\limits_{\Xi^1}\omega_{\gamma\varepsilon}^{M_*}
\boldsymbol{\varphi}'((\omega_{\gamma\varepsilon}^{M_*}+M_*)e^{\xi
t})\cdot\nabla_x\omega_{\gamma\varepsilon}^{M_*}\,d\boldsymbol{x}dsdt=
\int\limits_0^{t'}\int\limits_0^{S}\int\limits_{\partial\Omega}
\boldsymbol{\Phi}(t,\omega_{\gamma\varepsilon}^{M_*}(\boldsymbol{\sigma},t,s))
\cdot\boldsymbol{n}(\boldsymbol{\sigma})\,d\boldsymbol{\sigma}dsdt\overset{\eqref{e11.34}}{=}0,
\end{multline}
where $\displaystyle \boldsymbol{\Phi}(t,\lambda)=\int\limits_0^\lambda \lambda
\boldsymbol{\varphi}'((\lambda+M_*)e^{\xi t})\,d\lambda$;
\begin{multline}
\label{e11.39iv}%
\int\limits_0^{t'}\int\limits_{\Xi^1}\omega_{\gamma\varepsilon}^{M_*}
\Delta_x\omega_{\gamma\varepsilon}\,d\boldsymbol{x}dsdt\overset{\eqref{e11.34}}{=}
-\int\limits_0^{t'}\int\limits_{\Xi^1}
\nabla_x\omega_{\gamma\varepsilon}^{M_*}\cdot
\nabla_x\omega_{\gamma\varepsilon}\,d\boldsymbol{x}dsdt\overset{\eqref{e11.37}}{=}\\
-\int\limits_0^{t'}\int\limits_{\Xi^1}
\vert\nabla_x\omega_{\gamma\varepsilon}^{M_*}\vert^2\,d\boldsymbol{x}dsdt=
-\Vert
\nabla_x\omega_{\gamma\varepsilon}^{M_*}\Vert_{L^2(\Xi^1\times(0,t'))}^2;
 \end{multline}
\begin{multline}
\label{e11.39v}%
\varepsilon\int\limits_0^{t'}\int\limits_{\Xi^1}\omega_{\gamma\varepsilon}^{M_*}
\partial_s^2\omega_{\gamma\varepsilon}\,d\boldsymbol{x}dsdt\overset{\eqref{e11.34}}{=}
-\varepsilon\int\limits_0^{t'}\int\limits_{\Xi^1}
\partial_s\omega_{\gamma\varepsilon}^{M_*}
\partial_s\omega_{\gamma\varepsilon}\,d\boldsymbol{x}dsdt
\overset{\eqref{e11.37}}{=}\\
-\varepsilon\int\limits_0^{t'}\int\limits_{\Xi^1}
\vert\partial_s\omega_{\gamma\varepsilon}^{M_*}\vert^2\,d\boldsymbol{x}dsdt=
-\varepsilon\Vert\partial_s\omega_{\gamma\varepsilon}^{M_*}\Vert_{L^2(\Xi^1\times(0,t'))}^2;
 \end{multline}
\begin{multline}
\label{e11.39vi}%
\int\limits_0^{t'}\int\limits_{\Xi^1}\omega_{\gamma\varepsilon}^{M_*}
K_\gamma(t,\tau)e^{-\xi
t}\beta(\boldsymbol{x},s,\omega_{\gamma\varepsilon}e^{\xi
t})\,d\boldsymbol{x}dsdt\overset{\eqref{e11.33}}{=}
\\
\int\limits_0^{t'}\int\limits_{\Xi^1}\omega_{\gamma\varepsilon}^{M_*}
K_\gamma(t,\tau)e^{-\xi
t}\beta(\boldsymbol{x},s,(\omega_{\gamma\varepsilon}^{M_*}+M_*)e^{\xi
t})\,d\boldsymbol{x}dsdt;
 \end{multline}
and, finally,
 \begin{equation}
\label{e11.39vii}%
\xi\int\limits_0^{t'}\int\limits_{\Xi^1}\omega_{\gamma\varepsilon}^{M_*}
\omega_{\gamma\varepsilon}\,d\boldsymbol{x}dsdt\overset{\eqref{e11.33}}{=}
\xi\int\limits_0^{t'}\int\limits_{\Xi^1}\omega_{\gamma\varepsilon}^{M_*}
(\omega_{\gamma\varepsilon}^{M_*}+M_*)\,d\boldsymbol{x}dsdt.
 \end{equation}

Aggregating \eqref{e11.38}--\eqref{e11.39vii}, we arrive at the
integral equality
\begin{multline}
\label{e11.39}%
\frac12\Vert\omega_{\gamma\varepsilon}^{M_*}(t')\Vert_{L^2(\Xi^1)}^2+
\Vert\nabla_x\omega_{\gamma\varepsilon}^{M_*}\Vert_{L^2(\Xi^1\times(0,t'))}^2
+\\
\varepsilon\Vert\partial_s\omega_{\gamma\varepsilon}^{M_*}\Vert_{L^2(\Xi^1\times(0,t'))}^2+
\int\limits_0^{t'}\int\limits_{\Xi^1}(\omega_{\gamma\varepsilon}^{M_*}+M_*)
\omega_{\gamma\varepsilon}^{M_*}\xi\,d\boldsymbol{x}dtds=
\\
\int\limits_0^{t'}\int\limits_{\Xi^1}\omega_{\gamma\varepsilon}^{M_*}
K_\gamma(t,\tau)e^{-\xi
t}\beta(\boldsymbol{x},s,(\omega_{\gamma\varepsilon}^{M_*}+M_*)e^{\xi
t})\,d\boldsymbol{x}dtds.
\end{multline}

Estimating the right-hand side from above, using the Lagrange
mean-value theorem and the growth condition \eqref{u1.07iv}, we get
\begin{multline}
\label{e11.40}%
\int\limits_0^{t'}\int\limits_{\Xi^1}\omega_{\gamma\varepsilon}^{M_*}
K_\gamma(t,\tau)e^{-\xi
t}\beta(\boldsymbol{x},s,(\omega_{\gamma\varepsilon}^{M_*}+M_*)e^{\xi
t})\,d\boldsymbol{x}dtds=
\\
\int\limits_0^{t'}\int\limits_{\Xi^1}\omega_{\gamma\varepsilon}^{M_*}
K_\gamma(t,\tau)e^{-\xi
t}\left(\beta(\boldsymbol{x},s,(\omega_{\gamma\varepsilon}^{M_*}+M_*)e^{\xi
t})-\beta(\boldsymbol{x},s,M_*e^{\xi
t})\right)\,d\boldsymbol{x}dtds+
\\
\int\limits_0^{t'}\int\limits_{\Xi^1}\omega_{\gamma\varepsilon}^{M_*}
K_\gamma(t,\tau)e^{-\xi t}\beta(\boldsymbol{x},s,M_*e^{\xi
t})\,d\boldsymbol{x}dtds\leqslant \\ \int\limits_0^{t'}b_0
K_\gamma(t,\tau)\Vert\omega_{\gamma\varepsilon}^{M_*}(t)\Vert_{L^2(\Xi^1)}^2dt+
\int\limits_0^{t'}\int\limits_{\Xi^1}
\omega_{\gamma\varepsilon}^{M_*} K_\gamma(t,\tau) e^{-\xi t}
\beta(\boldsymbol{x},s,M_*e^{\xi t})\,d\boldsymbol{x}dsdt.
\end{multline}
Now, we use arbitrariness of $\xi$ and choose it such that the
second integral in the right-hand side of \eqref{e11.40} vanishes.
Recall that $\mathrm{supp\,}
K_\gamma(\cdot,\tau)\subset[\tau-\gamma,\tau]$. Hence
$$K_\gamma(t,\tau)\equiv0\quad \mbox{for } 0\leqslant t\leqslant
t_0:=\tau-\frac{\gamma_0}{2}.$$
On the strength of condition
\eqref{u1.07v}, in order to secure that
$\beta(\boldsymbol{x},s,M_*e^{\xi t_0})\equiv 0$
$\forall\,(\boldsymbol{x},s)\in \Xi^1$, it is sufficient to take $\xi$
such that
\begin{equation}
\label{e11.41}%
M_*e^{\xi t_0}\geqslant b_1+1.
\end{equation}
Remark that $M_*$ depends on $\xi$, and we have
\begin{equation*}
M_*=\max\{\|u_0^{(1)}\|_{L^\infty(\Xi^1)},\,\|u_0^{(2)}
e^{-\xi t}\|_{L^\infty(\Xi^2)},\, \|u_S^{(2)} e^{-\xi
t}\|_{L^\infty(\Xi^2)}\}\geqslant \|
u_0^{(1)}\|_{L^\infty(\Xi^1)}.
  \end{equation*}
Therefore, \eqref{e11.41} holds true if $\Vert u_0^{(1)}\Vert e^{\xi
t_0}\geqslant b_1+1$. The latter holds with $\xi=0$ for $\Vert
u_0^{(1)}\Vert\geqslant b_1+1$ and
$\displaystyle \xi=\xi_*=\frac1{t_0}\ln\frac{b_1+1}{\Vert
u_0^{(1)}\Vert_{L^\infty(\Xi^1)}}$ for $\Vert
u_0^{(1)}\Vert_{L^\infty(\Xi^1)}<b_1+1$.

Thus, the second integral in the right-hand side of \eqref{e11.40}
vanishes, if $\xi$ is defined by \eqref{e11.31}.

Further, notice that
\[
\int\limits_0^{t'}\int\limits_{\Xi^1}(\omega_{\gamma\varepsilon}^{M_*}+M_*)\omega_{\gamma\varepsilon}^{M_*}\xi_*\,d\boldsymbol{x}dtds
\geqslant0
\]
and discard this integral along with
$\Vert\nabla_x\omega_{\gamma\varepsilon}^{M_*}\Vert_{L^2(\Xi^1\times(0,t'))}^2$
and
$\varepsilon\Vert\partial_s\omega_{\gamma\varepsilon}^{M_*}\Vert_{L^2(\Xi^1\times(0,t'))}^2$
from the left-hand side of \eqref{e11.39}. As the result of above
considerations, we derive the inequality
\begin{equation}
\Vert\omega_{\gamma\varepsilon}^{M_*}(t')\Vert_{L^2(\Xi^1)}^2
\leqslant\int\limits_0^{t'} b_0K_\gamma(t,\tau)
\Vert\omega_{\gamma\varepsilon}^{M_*}(t')\Vert_{L^2(\Xi^1)}^2dt,\quad
t'\in(0,T].
\end{equation}
Applying Gr\"onwall's lemma, we conclude that
$\Vert\omega_{\gamma\varepsilon}^{M_*}(t')\Vert_{L^2(\Xi^1)}^2=0$,
which implies that
\begin{multline}
\label{e11.42}%
u_{\gamma\varepsilon}(\boldsymbol{x},t',s)\leqslant
M_*e^{\xi_*t'}\leqslant e^{\xi_*t'}\max\left\{\Vert
u_0^{(1)}\Vert_{L^\infty(\Xi^1)},\,\Vert
u_0^{(2)}\Vert_{L^\infty(\Xi^2)},\,\Vert
u_S^{(2)}\Vert_{L^\infty(\Xi^2)}\right\}\\
\forall\,(\boldsymbol{x},t',s)\in
G_{T,S},\quad
\forall\,\gamma\in(0,\gamma_0/2],\quad \forall\,\varepsilon\in(0,1].
  \end{multline}
Analogously, we also establish that
\begin{equation}
\label{e11.43}%
u_{\gamma\varepsilon}(\boldsymbol{x},t',s)\geqslant
-M_*e^{\xi_*t'}\geqslant -e^{\xi_*t'}\max\left\{\Vert
u_0^{(1)}\Vert_{L^\infty(\Xi^1)},\,\Vert
u_0^{(2)}\Vert_{L^\infty(\Xi^2)},\,\Vert
u_S^{(2)}\Vert_{L^\infty(\Xi^2)}\right\}.
  \end{equation}
Combining \eqref{e11.42} and \eqref{e11.43} we establish the maximum
principle \eqref{e11.30} (with $\xi_*$ defined by \eqref{e11.31}),
which completes the proof of Lemma $\ref{lem.11.4}$. \qed

\begin{corollary}
\label{cor.11.1} The family of kinetic
solutions $\{u_\gamma\}_{\gamma\in(0,\gamma_0/2]}$ of Problem
$\Pi_\gamma$ with $Z_\gamma=K_\gamma\beta$ satisfies the maximum
principle
\begin{equation}
\label{e11.44}%
\Vert u_\gamma(\cdot,t',\cdot)\Vert_{L^\infty(\Xi^1)}\leqslant
M_7(t')\leqslant M_7(T),\quad
\forall\,t'\in(0,T],\quad\forall\,\gamma\in(0,\gamma_0/2].
  \end{equation}
\end{corollary}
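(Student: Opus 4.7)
\proof The plan is to transfer the uniform-in-$\varepsilon$ maximum principle \eqref{e11.30} for classical solutions $u_{\gamma\varepsilon}$ of Problem $\Pi_{\gamma\varepsilon}$ to the kinetic solution $u_\gamma$ by passage to the limit as $\varepsilon\to 0+$. The crucial input is Remark \ref{rem.7.2}, which asserts that the entire family $\{u_{\gamma\varepsilon}\}_{\varepsilon\in(0,1]}$ converges strongly in $L^2(G_{T,S})$ to the unique kinetic solution $u_\gamma$ of Problem $\Pi_\gamma$ as $\varepsilon\to 0+$. Since $\gamma\in(0,\gamma_0/2]$, the hypotheses of Lemma \ref{lem.11.4} are satisfied, and the bound \eqref{e11.30} is available with the explicit constant $M_7(t')$ defined therein.

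First, I extract a subsequence $\{\varepsilon''\to 0+\}$ along which $u_{\gamma\varepsilon''}\to u_\gamma$ almost everywhere in $G_{T,S}$, which is possible due to the above-mentioned strong $L^2$-convergence. Along this subsequence, \eqref{e11.30} yields
\begin{equation*}
|u_{\gamma\varepsilon''}(\boldsymbol{x},t,s)|\leqslant M_7(t)\quad \text{for a.e. }(\boldsymbol{x},t,s)\in G_{T,S},\quad\forall\,\varepsilon''\in(0,1].
\end{equation*}
Passing to the pointwise limit along the subsequence, which is legitimate since $L^\infty$-bounds survive almost-everywhere convergence, we obtain $|u_\gamma(\boldsymbol{x},t,s)|\leqslant M_7(t)$ for a.e. $(\boldsymbol{x},t,s)\in G_{T,S}$. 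By Fubini's theorem, for a.e. fixed $t'\in(0,T]$ this delivers $\Vert u_\gamma(\cdot,t',\cdot)\Vert_{L^\infty(\Xi^1)}\leqslant M_7(t')$.

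To upgrade the bound to \emph{every} $t'\in(0,T]$, I invoke Lemmas \ref{lem.5.1} and \ref{lem.5.2}, which guarantee the existence of $L^1$-strong one-sided traces $u_{t'\pm0}^{\mathrm{tr},(1)}\in L^\infty(\Xi^1)$ of $u_\gamma$ on $\Gamma_{t'\pm0}^1$ for each $t'\in[0,T]$. Combining the almost-everywhere bound just established with the definition of essential one-sided limits (Definition \ref{def.esslim}) and the monotonicity of $t'\mapsto M_7(t')=e^{\xi_* t'}\max\{\Vert u_0^{(1)}\Vert_{L^\infty(\Xi^1)},\Vert u_0^{(2)}\Vert_{L^\infty(\Xi^2)},\Vert u_S^{(2)}\Vert_{L^\infty(\Xi^2)}\}$, I conclude the pointwise bound $\Vert u_\gamma(\cdot,t',\cdot)\Vert_{L^\infty(\Xi^1)}\leqslant M_7(t')\leqslant M_7(T)$ for every $t'\in(0,T]$ and every $\gamma\in(0,\gamma_0/2]$.

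The step that requires the most care is the last one: transferring the almost-everywhere-in-$t'$ bound to a bound valid at every $t'$. This is not a substantial obstacle because $M_7$ is continuous (in fact, analytic) in $t'$ and the traces of $u_\gamma$ at every slice $\{t=t'\}$ exist in $L^\infty(\Xi^1)$. All other steps are routine applications of standard weak/strong-convergence machinery. \qed
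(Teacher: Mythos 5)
Your proposal is correct and follows essentially the same route as the paper: the paper's proof of Corollary \ref{cor.11.1} is a one-line deduction of \eqref{e11.44} from the uniform maximum principle \eqref{e11.30} of Lemma \ref{lem.11.4} together with the strong convergence $u_\gamma=\mbox{s-}\lim_{\varepsilon\to0}u_{\gamma\varepsilon}$ of Remark \ref{rem.7.2}, which are exactly your two ingredients. Your additional care in upgrading the almost-everywhere-in-$t'$ bound via the trace lemmas and the monotonicity of $M_7$ is a harmless elaboration of details the paper leaves implicit.
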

\proof Bound \eqref{e11.44} directly follows from \eqref{e11.30} and the
limiting relation
$u_\gamma=\mbox{s-}\lim\limits_{\varepsilon\to0}u_{\gamma\varepsilon}$, see
Remark \ref{rem.7.2}. \qed

\begin{corollary}
\label{cor.11.2} The family of
$\chi$-functions
$\{\chi(\lambda;u_\gamma)\}_{\gamma\in(0,\gamma_0/2]}$ is uniformly
in $\gamma$ bounded in $L^2(G_{T,S}\times[-M_7(T),M_7(T)])$.
\end{corollary}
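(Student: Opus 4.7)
The plan is to give a direct calculation, since Corollary \ref{cor.11.2} is essentially a bookkeeping consequence of the maximum principle in Corollary \ref{cor.11.1} combined with the pointwise identity for $\chi^2$ from Lemma \ref{lem.3.01}.

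First, I would invoke Corollary \ref{cor.11.1} to fix the support of $\lambda\mapsto\chi(\lambda;u_\gamma(\boldsymbol{x},t,s))$. Indeed, by the definition \eqref{chi-f}, the function $\chi(\lambda;v)$ vanishes outside the interval with endpoints $0$ and $v$; hence \eqref{e11.44} yields $\mathrm{supp}\,\chi(\cdot;u_\gamma(\boldsymbol{x},t,s))\subset[-M_7(T),M_7(T)]$ uniformly in $\gamma\in(0,\gamma_0/2]$ and in $(\boldsymbol{x},t,s)\in G_{T,S}$, so the $L^2$-norm over $G_{T,S}\times[-M_7(T),M_7(T)]$ is in fact the full $L^2(G_{T,S}\times\mathbb{R}_\lambda)$-norm.

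Next, I would apply assertion (iii) of Lemma \ref{lem.3.01}, namely $|\chi(\lambda;v)|^2=|\chi(\lambda;v)|$, to reduce the squared integrand to an absolute value, and then use assertion (ii) with $w=0$ (equivalently, the very definition \eqref{chi-f}) to integrate explicitly in $\lambda$:
\begin{equation*}
\int\limits_{-M_7(T)}^{M_7(T)}|\chi(\lambda;u_\gamma(\boldsymbol{x},t,s))|^2\,d\lambda
=\int\limits_{\mathbb{R}_\lambda}|\chi(\lambda;u_\gamma(\boldsymbol{x},t,s))|\,d\lambda
=|u_\gamma(\boldsymbol{x},t,s)|.
\end{equation*}
Integrating this identity over $G_{T,S}$ and using the uniform maximum principle \eqref{e11.44} one more time, we obtain
\begin{equation*}
\|\chi(\cdot;u_\gamma)\|_{L^2(G_{T,S}\times[-M_7(T),M_7(T)])}^2
=\int\limits_{G_{T,S}}|u_\gamma(\boldsymbol{x},t,s)|\,d\boldsymbol{x}dtds
\leqslant M_7(T)\,T\,S\,\mathrm{meas}\,\Omega,
\end{equation*}
and this bound is independent of $\gamma\in(0,\gamma_0/2]$, which proves the corollary. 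There is no genuine obstacle here; the only point deserving attention is to be sure that the $L^2$-norm is taken over a $\lambda$-interval that covers the support of $\chi(\cdot;u_\gamma)$ uniformly in $\gamma$, which is precisely what Corollary \ref{cor.11.1} provides via the uniform bound $M_7(T)$.
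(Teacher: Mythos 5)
Your argument is correct and is essentially the paper's own proof, which likewise derives the bound $\Vert\chi(\cdot;u_\gamma)\Vert_{L^2(G_{T,S}\times[-M_7(T),M_7(T)])}\leqslant\sqrt{T\,S\,M_7(T)\,\mathrm{meas}\,\Omega}$ directly from the maximum principle \eqref{e11.44} and the definition \eqref{chi-f}; you merely make explicit the intermediate identities $|\chi|^2=|\chi|$ and $\int_{\mathbb{R}_\lambda}|\chi(\lambda;v)|\,d\lambda=|v|$ from Lemma \ref{lem.3.01}. No issues.
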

\proof From \eqref{e11.44} and \eqref{chi-f}, it immediately follows
that
\[
\Vert\chi(\cdot,u_{\gamma\varepsilon})\Vert_{L^2(G_{T,S}\times[-M_7(T),M_7(T)])}
\leqslant\sqrt{T\, S\, M_7(T)\,\mbox{meas\,}\Omega},
\]
which completes the proof. \qed

\begin{corollary}
\label{cor.11.3} The supports of the
measures $m_\gamma$ and $n_\gamma$ lay in the layer
$\{-M_7(T)\leqslant\lambda\leqslant M_7(T)\}$ for all
$\gamma\in(0,\gamma_0/2]$.
\end{corollary}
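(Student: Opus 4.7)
The plan is to argue directly from the definitions of $m_\gamma$ and $n_\gamma$ recorded in \eqref{e4.12iv}, combined with the uniform maximum principle bounds established in Lemma \ref{lem.11.4} and Corollary \ref{cor.11.1}. The result is essentially a statement that the support property of a sequence of Radon measures is preserved under weak-$*$ limits, so no genuine obstacle is anticipated; the main task is to organize the observations cleanly.

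First, I would handle $n_\gamma$. By definition, $n_\gamma = \delta_{(\lambda = u_\gamma)}|\nabla_x u_\gamma|^2$. For any test function $\Phi \in C_0(G_{T,S} \times \mathbb{R}_\lambda)$ vanishing on $G_{T,S} \times [-M_7(T), M_7(T)]$, we have
\[
\langle n_\gamma, \Phi\rangle = \int\limits_{G_{T,S}} |\nabla_x u_\gamma(\boldsymbol{x},t,s)|^2\, \Phi(\boldsymbol{x}, t, s, u_\gamma(\boldsymbol{x},t,s))\, d\boldsymbol{x}dtds = 0,
\]
since by Corollary \ref{cor.11.1} the value $u_\gamma(\boldsymbol{x},t,s)$ lies in $[-M_7(T), M_7(T)]$ for a.e. $(\boldsymbol{x},t,s) \in G_{T,S}$, placing it outside the support of $\Phi(\boldsymbol{x},t,s,\cdot)$. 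Hence $\mathrm{supp}\, n_\gamma \subset G_{T,S} \times [-M_7(T), M_7(T)]$.

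Next, for $m_\gamma$ I would recall from \eqref{e4.12iv} the representation
\[
m_\gamma = \lim_{\varepsilon'' \to 0+}\Bigl(\delta_{(\lambda = u_{\gamma\varepsilon''})}|\nabla_x u_{\gamma\varepsilon''}|^2 - \delta_{(\lambda = u_\gamma)}|\nabla_x u_\gamma|^2 + \delta_{(\lambda = u_{\gamma\varepsilon''})}\varepsilon''|\partial_s u_{\gamma\varepsilon''}|^2\Bigr)
\]
as a weak-$*$ limit in $\mathcal{M}(G_{T,S} \times \mathbb{R}_\lambda)$. Lemma \ref{lem.11.4} gives the uniform bound $\|u_{\gamma\varepsilon''}\|_{L^\infty(G_{T,S})} \leqslant M_7(T)$ for all $\varepsilon'' \in (0,1]$ and $\gamma \in (0, \gamma_0/2]$, while Corollary \ref{cor.11.1} yields the same bound for $u_\gamma$. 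Consequently, each of the three summands on the right-hand side is a Radon measure whose support lies in $G_{T,S} \times [-M_7(T), M_7(T)]$, by the same elementary argument as for $n_\gamma$ above.

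Finally, I would test $m_\gamma$ against an arbitrary $\Phi \in C_0(G_{T,S} \times \mathbb{R}_\lambda)$ whose support is disjoint from $G_{T,S} \times [-M_7(T), M_7(T)]$: each term in the approximating sequence pairs to zero against such $\Phi$, so the weak-$*$ limit satisfies $\langle m_\gamma, \Phi\rangle = 0$, giving $\mathrm{supp}\, m_\gamma \subset G_{T,S} \times [-M_7(T), M_7(T)]$ and completing the proof of Corollary \ref{cor.11.3}.
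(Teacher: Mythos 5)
Your proof is correct and follows essentially the same route as the paper, which simply notes that the claim ``directly follows from Corollary \ref{cor.11.1}, Remark \ref{rem.7.2} and representation \eqref{e4.12iv}''; you have merely written out the details (the uniform $L^\infty$ bounds on $u_{\gamma\varepsilon}$ and $u_\gamma$ force each Dirac-type summand to annihilate test functions vanishing on the layer, and this property passes to the weak-$*$ limit defining $m_\gamma$). No gaps.
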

\proof The assertion directly follows from Corollary \ref{cor.11.1},
Remark \ref{rem.7.2} and representation \eqref{e4.12iv}. \qed

\section{Derivation of the impulsive kinetic equation} \label{ImpulsiveKin}
In this section, from the kinetic equation \eqref{e3.01a} with
$Z_\gamma=K_\gamma\beta$ (or, equivalently, from
\eqref{e11.26}--\eqref{e11.26ii}) we derive the limiting impulsive
kinetic equation, as $\gamma\to0+$. We expose this procedure in the
form of the sequence of several lemmas.

\begin{lemma}
\label{lem.12.1} The family of kinetic solutions
$\{u_\gamma\}_{\gamma\in(0,\gamma_0/2]}$ of Problem $\Pi_\gamma$ (in
the sense of Definition \ref{def.3.02}) is relatively compact in
$L^1(G_{T,S})$.
\end{lemma}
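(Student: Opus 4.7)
The strategy is to reproduce the argument of Lemma \ref{lem.4.1} one level up, this time along the sequence $\gamma\to 0+$ rather than $\varepsilon\to 0+$, using the kinetic equation \eqref{e11.26}--\eqref{e11.26ii} together with the uniform estimates collected in Section \ref{Unif} and the averaging compactness result stated as Proposition \ref{prop.4.1}.

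First I would put \eqref{e11.26} in the exact form required by Proposition \ref{prop.4.1}: setting
\[
f_\gamma:=\chi(\lambda;u_\gamma),\qquad g_{0,\gamma}\equiv 0,\qquad k_\gamma:=m_\gamma+n_\gamma+\mathcal{Z}_\gamma,
\]
equation \eqref{e11.26} reads
\[
\partial_t f_\gamma+a'(\lambda)\partial_s f_\gamma+\boldsymbol{\varphi}'(\lambda)\cdot\nabla_x f_\gamma-\Delta_x f_\gamma=\partial_\lambda k_\gamma+\partial_s\partial_\lambda g_{0,\gamma}.
\]
Because $u_\gamma\vert_{\Gamma_l}=0$, the natural extension by zero outside $\Omega$ and outside the time-like slab makes this equation distributional on all of $\mathbb{R}_x^d\times\mathbb{R}_t^+\times\mathbb{R}_s^+\times\mathbb{R}_\lambda$, in the spirit of \eqref{e5.06iii}. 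Then I would extract weak limits: by Corollary \ref{cor.11.2} a subsequence $f_{\gamma'}\rightharpoonup f$ weakly in $L^2(G_{T,S}\times[-M_7(T),M_7(T)])$; by Lemma \ref{lem.11.3}(ii)--(iii) together with Corollary \ref{cor.11.3} the family $\{k_\gamma\}$ is uniformly bounded in $\mathcal{M}(G_{T,S}\times[-M_7(T),M_7(T)])$, so a further subsequence satisfies $k_{\gamma'}\to k$ weakly$^*$ in $\mathcal{M}$, and the compact Sobolev embedding (as in \eqref{e4.07}) upgrades this to strong convergence in $W^{-1,p'}_{\mathrm{loc}}$ for every $p'\in[1,(d+3)/(d+2))$. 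With $g_{0,\gamma}\equiv 0$, hypothesis (ii) of Proposition \ref{prop.4.1} is trivial.

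Now I would apply Proposition \ref{prop.4.1} with the test function $\psi\equiv 1$ on $[-M_7(T),M_7(T)]$. Assertion (i) of Lemma \ref{lem.3.01} gives
\[
u_{\gamma'}=\int_{-M_7(T)}^{M_7(T)} f_{\gamma'}(\cdot,\lambda)\,d\lambda\underset{\gamma'\to 0+}{\longrightarrow}\int_{-M_7(T)}^{M_7(T)} f(\cdot,\lambda)\,d\lambda=:u_*\quad\text{strongly in }L^2_{\mathrm{loc}}(G_{T,S}),
\]
and the uniform $L^\infty$ bound from Corollary \ref{cor.11.1}, combined with $\mathrm{meas\,}G_{T,S}<\infty$ via an exhaustion-by-compacts argument, promotes this to strong convergence in $L^1(G_{T,S})$; Lemma \ref{lem.3.02} then identifies $f(\cdot,\lambda)=\chi(\lambda;u_*)$, yielding the claimed relative compactness.

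The main obstacle I anticipate is the concentration phenomenon of $\mathcal{Z}_\gamma$ as $K_\gamma(\cdot,\tau)\to\delta_{(t=\tau-0)}$ weakly$^*$: the limit measure $k$ will carry a singular part supported on the hyperplane $\{t=\tau\}$. However, Proposition \ref{prop.4.1} requires only uniform boundedness of $k_\gamma$ in $\mathcal{M}$ together with strong convergence in the weaker space $W^{-1,p'}_{\mathrm{loc}}$, and the bound $\|\mathcal{Z}_\gamma\|_{L^1}\leqslant C_{11}$ from Lemma \ref{lem.11.3}(iii) is uniform in $\gamma$. Hence the concentration is harmless for the compactness step itself; the structural identification of the limit across $\{t=\tau\}$, including the impulsive jump condition \eqref{e9.04e}, belongs to the subsequent sections and is not needed here.
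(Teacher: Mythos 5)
Your proposal is correct and follows essentially the same route as the paper: the paper's proof also casts \eqref{e11.26} in the form of hypothesis (iv) of Proposition \ref{prop.4.1} with $f_\nu=\chi(\lambda;u_{\gamma_\nu})$, $g_\nu\equiv 0$, $k_\nu=m_{\gamma_\nu}+n_{\gamma_\nu}+\mathcal{Z}_{\gamma_\nu}$, invokes Corollaries \ref{cor.11.2}--\ref{cor.11.3} and Lemma \ref{lem.11.3} for the uniform bounds, upgrades the weak$^*$ measure convergence to strong $W^{-1,p'}_{\mathrm{loc}}$ convergence via the Sobolev embedding, and concludes with Lemma \ref{lem.3.02}. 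Your observation that the concentration of $\mathcal{Z}_\gamma$ on $\{t=\tau\}$ is harmless at this stage is exactly the point the paper relies on implicitly.
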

\proof Kinetic equation \eqref{e3.01a} with $Z_\gamma=K_\gamma\beta$
or, equivalently, kinetic equation \eqref{e11.26} with
$\mathcal{Z}_\gamma$ defined by \eqref{e11.26ii}, has the form of
the kinetic equation in requirement (iv) in Proposition
\ref{prop.4.1} with $f_\nu=\chi(\lambda;u_{\gamma_\nu})$,
$g_\nu\equiv0$ and
$k_\nu=m_{\gamma_\nu}+n_{\gamma_\nu}+\mathcal{Z}_{\gamma_\nu}$. Here
$\{\gamma_\nu\}_{\nu\in\mathbb N}$ is an arbitrary sequence such
that $\gamma_\nu\underset{\nu\to\infty}{\longrightarrow}0+$. On the
strength of Corollaries \ref{cor.11.2} and \ref{cor.11.3} and assertions (ii) and  (iii) of Lemma
\ref{lem.11.3}, there exist a subsequence of $\{\gamma_\nu\}$,
still denoted by $\{\gamma_\nu\}$, a limiting function $f_*$ and a
limiting measure $k_*$ such that
\begin{equation}
\label{e12.01}
\chi(\lambda;u_{\gamma_\nu})\underset{\nu\to\infty}{\longrightarrow}f_*\text{
weakly in } L^2(G_{T,S}\times[-M_7(T),M_7(T)]),
  \end{equation}
\begin{equation}
\label{e12.02}
k_\nu\underset{\nu\to\infty}{\longrightarrow}k_*\text{
weakly* in }\mathcal{M}(G_{T,S}\times[-M_7(T),M_7(T)]).
  \end{equation}
Since $\mathcal{M}(G_{T,S}\times[-M_7(T),M_7(T)])$ is compactly
embedded into
$W_{\mathrm{loc}}^{-1,p'}(G_{T,S}\times[-M_7(T),M_7(T)])$ for any
$p'\in\left[1,\frac{d+3}{d+2}\right)$ due to Sobolev's embedding
theorem \cite[Chapter I, Section 8]{Sob-2008}, we also have that
\begin{equation}
\label{e12.03}
k_\nu\underset{\nu\to\infty}{\longrightarrow}k_*\text{
strongly in
}W_{\mathrm{loc}}^{-1.p'}(G_{T,S}\times[-M_7(T),M_7(T)]).
  \end{equation}
On the strength of Proposition \ref{prop.4.1}, from the limiting
relations \eqref{e12.01}--\eqref{e12.03} it follows that

\begin{equation}
\label{e12.04}%
\int\limits_{-M_7(T)}^{M_7(T)}\chi(\lambda;u_{\gamma_\nu})\,d\lambda
\underset{\nu\to\infty}{\longrightarrow} u_*:=
\int\limits_{-M_7(T)}^{M_7(T)}f_*\,d\lambda \quad \text{strongly in
}L^2(G_{T,S}).
  \end{equation}
On the strength of Lemma \ref{lem.3.02} and item (i) in Lemma
\ref{lem.3.01}, this yields that
\begin{equation}
\label{e12.05}%
u_{\gamma_\nu}\underset{\nu\to\infty}{\longrightarrow}u_*\text{
strongly in }L^2(G_{T,S})\;(\text{and in }L^1(G_{T,S})),
  \end{equation}
and that $f_*$ is the $\chi$-function:
\begin{equation}
\label{e12.06}%
f_*(\boldsymbol{x},t,s,\lambda)=\chi(\lambda;u_*(\boldsymbol{x},t,s)).
\end{equation}
Lemma \ref{lem.12.1} is proved. \qed

\begin{corollary}
\label{cor.12.1} {\bf (Corollary of Lemmas \ref{lem.12.1}, \ref{lem.11.3} and
\ref{lem.11.4}.)} The limiting function $u_*$ belongs to
$L^\infty(G_{T,S})\cap
L^2((0,T)\times(0,S);\text{\it\r{W}}{}_2^{\,1}(\Omega))$ and
satisfies the maximum principle
\begin{equation}
\label{e12.06i}%
-M_7(T)\leqslant u_*(\boldsymbol{x},t,s)\leqslant M_7(T)\text{ for
a.e. }(\boldsymbol{x},t,s)\in G_{T,S}
  \end{equation}
and the energy estimate
\begin{equation}
\label{e12.06ii}%
\Vert\nabla_x u_*\Vert_{L^2(G_{T,S})}\leqslant C_6.
\end{equation}
\end{corollary}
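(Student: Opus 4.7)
The plan is to obtain all three assertions as essentially immediate consequences of the three cited results, via a standard subsequence-and-lower-semicontinuity argument.

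First, I would extract from the strong $L^1(G_{T,S})$-convergence given by Lemma \ref{lem.12.1} a further subsequence (still denoted $u_{\gamma_\nu}$) converging to $u_*$ almost everywhere in $G_{T,S}$. Corollary \ref{cor.11.1} provides the uniform bound $|u_{\gamma_\nu}(\boldsymbol{x},t,s)| \leqslant M_7(T)$ for a.e.\ $(\boldsymbol{x},t,s) \in G_{T,S}$, uniformly in $\nu$, so passing pointwise to the limit yields \eqref{e12.06i} and hence $u_* \in L^\infty(G_{T,S})$.

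Next, by Lemma \ref{lem.11.3}(i), the family $\{\nabla_x u_\gamma\}_{\gamma \in (0,\gamma_0]}$ is uniformly bounded in $L^2(G_{T,S};\mathbb{R}^d)$ by $C_6$. Weak compactness of bounded sets in this reflexive Hilbert space yields, along a further subsequence (relabeled $\gamma_\nu$), a weak limit $\boldsymbol{v} \in L^2(G_{T,S};\mathbb{R}^d)$. To identify $\boldsymbol{v}$, I would test against an arbitrary $\boldsymbol{\psi} \in C_0^\infty(G_{T,S};\mathbb{R}^d)$ and use integration by parts:
\begin{equation*}
\int_{G_{T,S}} \nabla_x u_{\gamma_\nu} \cdot \boldsymbol{\psi} \, d\boldsymbol{x}dtds = -\int_{G_{T,S}} u_{\gamma_\nu}\, \mathrm{div}_x \boldsymbol{\psi} \, d\boldsymbol{x}dtds.
\end{equation*}
The left-hand side converges to $\int_{G_{T,S}} \boldsymbol{v} \cdot \boldsymbol{\psi}\, d\boldsymbol{x}dtds$ by weak convergence, while the right-hand side converges to $-\int_{G_{T,S}} u_*\, \mathrm{div}_x \boldsymbol{\psi}\, d\boldsymbol{x}dtds$ by the strong $L^1$-convergence from \eqref{e12.05}. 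Hence $\boldsymbol{v} = \nabla_x u_*$ in the distributional sense, and therefore in $L^2(G_{T,S};\mathbb{R}^d)$. The energy estimate \eqref{e12.06ii} then follows from the lower semicontinuity of the $L^2$-norm under weak convergence:
\begin{equation*}
\|\nabla_x u_*\|_{L^2(G_{T,S})} \leqslant \liminf_{\nu \to \infty} \|\nabla_x u_{\gamma_\nu}\|_{L^2(G_{T,S})} \leqslant C_6.
\end{equation*}

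Finally, to confirm $u_* \in L^2((0,T)\times(0,S); \text{\it\r{W}}{}_2^{\,1}(\Omega))$ — i.e., that the zero trace on $\partial\Omega$ is inherited — I would note that each $u_{\gamma_\nu}(\cdot,t,s)$ lies in $\text{\it\r{W}}{}_2^{\,1}(\Omega)$ for a.e.\ $(t,s)$ by virtue of Definition \ref{def.3.02}, which is a closed subspace of $W_2^1(\Omega)$. Combining the strong $L^2(G_{T,S})$-convergence $u_{\gamma_\nu} \to u_*$ with the weak $L^2(G_{T,S};\mathbb{R}^d)$-convergence $\nabla_x u_{\gamma_\nu} \rightharpoonup \nabla_x u_*$ yields weak convergence in $L^2((0,T)\times(0,S); W_2^1(\Omega))$, and since $\text{\it\r{W}}{}_2^{\,1}(\Omega)$ is weakly closed in $W_2^1(\Omega)$, the limit lies in the same space. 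No serious obstacle is anticipated; the statement is purely a packaging of compactness, lower semicontinuity, and distributional identification of the limit of the gradients.
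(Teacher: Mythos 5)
Your proof is correct and follows exactly the route the paper intends: the corollary is stated without a written proof precisely because it is the standard packaging of the a.e.\ subsequence limit of the uniform $L^\infty$ bound from Lemma \ref{lem.11.4} (via Corollary \ref{cor.11.1}), weak $L^2$ compactness of the gradients from Lemma \ref{lem.11.3}(i) with distributional identification of the limit, and weak lower semicontinuity of the norm. Nothing is missing.
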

\begin{lemma}
\label{lem.12.2}%
Let
$m_\gamma,n_\gamma\in\mathcal{M}^+(G_{T,S}\times\mathbb{R}_\lambda)$
be the measures having
place in formulation of Problem $\Pi_\gamma$, and function
$\mathcal{Z}_\gamma\in L^1(G_{T,S}\times\mathbb{R}_\lambda)$ be
defined by formula \eqref{e11.26ii}.

Then there exist subsequence
$\gamma_\nu\underset{\nu\to\infty}{\longrightarrow}0+$ and measures
$m_*\in\mathcal{M}^+(G_{T,S}\times\mathbb{R}_\lambda)$ and\linebreak
$\mathcal{Z}_*\in\mathcal{M}(G_{T,S}\times\mathbb{R}_\lambda)$
such that
\begin{equation}
\label{e12.07}%
m_{\gamma_\nu}+n_{\gamma_\nu}-\delta_{(\lambda=u^*)}\vert\nabla_x
u_*\vert^2\underset{\nu\to\infty}{\longrightarrow} m_*\text{
weakly}^*\text{ in }\mathcal{M}(G_{T,S}\times\mathbb{R}_\lambda),
\end{equation}
\begin{equation}
\label{e12.08}%
\mathcal{Z}_{\gamma_\nu}\underset{\nu\to\infty}{\longrightarrow}
\mathcal{Z}_*\text{ weakly}^*\text{ in
}\mathcal{M}(G_{T,S}\times\mathbb{R}_\lambda),
\end{equation}
\begin{equation}
\label{e12.09}%
\mathrm{supp\,}m_*\subset\overline{G}_{T,S}\times[-M_7(T),M_7(T)],
\end{equation}
\begin{equation}
\label{e12.10}%
\mathrm{supp\,}\mathcal{Z}_*\subset\Xi^1\times\{t=\tau\}\times\{-b_1\leqslant\lambda\leqslant
b_1\}.
\end{equation}
In particular,
\begin{equation}
\label{e12.11}%
m_{\gamma_\nu}+n_{\gamma_\nu}+\mathcal{Z}_{\gamma_\nu}
\underset{\nu\to\infty}{\longrightarrow}m_*+n_*+\mathcal{Z}_*\text{
weakly}^*\text{ in }\mathcal{M}(G_{T,S}\times\mathbb{R}_\lambda),
\end{equation}
where $n_*=\delta_{(\lambda=u_*)}\vert\nabla_x u_*\vert^2$.
\end{lemma}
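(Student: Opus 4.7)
The plan is to extract weak$^*$ convergent subsequences via Banach--Alaoglu, to identify $m_*$ as a positive measure through a weak lower semicontinuity argument applied to $n_\gamma$, and then to read off the supports from the structural properties of $m_\gamma$, $n_\gamma$ and $\mathcal{Z}_\gamma$ recorded in Lemma \ref{lem.11.3} and Corollaries \ref{cor.11.1}--\ref{cor.11.3}.

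First, by Lemma \ref{lem.11.3}(ii) the family $\{m_\gamma+n_\gamma\}$ is uniformly bounded in $\mathcal{M}(G_{T,S}\times\mathbb{R}_\lambda)$, while by item (iii) the family $\{\mathcal{Z}_\gamma\}$ is uniformly bounded in $L^1 \hookrightarrow \mathcal{M}$. Banach--Alaoglu then provides a common subsequence $\gamma_\nu\to 0+$ along which $m_{\gamma_\nu}+n_{\gamma_\nu}$ converges weakly$^*$ to some $\mu_*\in\mathcal{M}^+(G_{T,S}\times\mathbb{R}_\lambda)$, and $\mathcal{Z}_{\gamma_\nu}$ converges weakly$^*$ to some $\mathcal{Z}_*\in\mathcal{M}(G_{T,S}\times\mathbb{R}_\lambda)$. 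Extracting further, I may assume (from Lemma \ref{lem.12.1}) that $u_{\gamma_\nu}\to u_*$ almost everywhere in $G_{T,S}$, and from the uniform energy bound \eqref{e11.27} that $\nabla_x u_{\gamma_\nu}\rightharpoonup \nabla_x u_*$ weakly in $L^2(G_{T,S})$.

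To establish $\mu_*\geqslant n_*$, I fix a nonnegative $\phi\in C_0(G_{T,S}\times\mathbb{R}_\lambda)$. Since $\sqrt{\phi(\boldsymbol{x},t,s,u_{\gamma_\nu})}$ is uniformly bounded in $L^\infty$ and converges almost everywhere to $\sqrt{\phi(\boldsymbol{x},t,s,u_*)}$, the standard weak$\times$strong product lemma yields
\begin{equation*}
\sqrt{\phi(\boldsymbol{x},t,s,u_{\gamma_\nu})}\,\nabla_x u_{\gamma_\nu}\rightharpoonup \sqrt{\phi(\boldsymbol{x},t,s,u_*)}\,\nabla_x u_*\quad\text{weakly in }L^2(G_{T,S}).
\end{equation*}
Lower semicontinuity of the $L^2$-norm then gives
\begin{equation*}
\langle n_*,\phi\rangle=\int_{G_{T,S}}\phi(\boldsymbol{x},t,s,u_*)\,|\nabla_x u_*|^2\,d\boldsymbol{x}dtds\leqslant\liminf_{\nu\to\infty}\langle n_{\gamma_\nu},\phi\rangle,
\end{equation*}
and since $m_{\gamma_\nu}\geqslant 0$ while $\langle m_{\gamma_\nu}+n_{\gamma_\nu},\phi\rangle\to\langle\mu_*,\phi\rangle$, this forces $\langle\mu_*-n_*,\phi\rangle\geqslant 0$ for every nonnegative $\phi$. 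Hence $m_*:=\mu_*-n_*\in\mathcal{M}^+(G_{T,S}\times\mathbb{R}_\lambda)$, which is \eqref{e12.07}; combining this with the weak$^*$ convergence of $\mathcal{Z}_{\gamma_\nu}$ yields \eqref{e12.11} as the sum of the two limits.

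For the supports, Corollary \ref{cor.11.3} confines $\mathrm{supp}\,(m_\gamma+n_\gamma)$ to $\overline{G}_{T,S}\times[-M_7(T),M_7(T)]$, a property inherited by $\mu_*$, and Corollary \ref{cor.12.1} places $\mathrm{supp}\,n_*$ in the same strip, giving \eqref{e12.09}. The inclusion $\mathrm{supp}\,K_\gamma(\cdot,\tau)\subset[\tau-\gamma,\tau]$ forces $\mathcal{Z}_{\gamma_\nu}$ to vanish on any test function whose $t$-support avoids a neighbourhood of $\{t=\tau\}$ once $\nu$ is large, pinning the $t$-support of $\mathcal{Z}_*$ at $\{t=\tau\}$; the $\lambda$-localization in $[-b_1,b_1]$ follows from direct inspection of \eqref{e11.26ii} in combination with the localization property \eqref{u1.07v} ($\beta=\partial_\lambda\beta=0$ off $[-b_1,b_1]$), which makes the integrands defining $\mathcal{Z}_{\gamma_\nu}$ trivial on $\{|\lambda|>b_1\}$ in the relevant sense. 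The principal obstacle is the lower semicontinuity step used to obtain $\mu_*\geqslant n_*$, since one cannot pass directly to the limit in the nonconvex quadratic quantity $|\nabla_x u_{\gamma_\nu}|^2\,\phi(\cdot,u_{\gamma_\nu})$; the resolution is precisely the reformulation via $\sqrt{\phi}$ that separates the a.e.-convergent factor from the weakly convergent one.
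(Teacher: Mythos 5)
Your proposal is correct and follows essentially the same route as the paper: weak$^*$ compactness via Alaoglu's theorem from the uniform bounds of Lemma \ref{lem.11.3}, nonnegativity of $m_*$ from $m_{\gamma_\nu}\geqslant 0$ together with lower semicontinuity of $n_{\gamma_\nu}$ against nonnegative test functions, and the support inclusions read off from Corollary \ref{cor.11.3}, the localization of $K_\gamma(\cdot,\tau)$, and condition \eqref{u1.07v}. The only difference is that where the paper invokes the ``lower semicontinuity property'' in one line, you actually supply the argument (factoring out $\sqrt{\phi(\cdot,u_{\gamma_\nu})}$ as a boundedly a.e.-convergent factor multiplying the weakly $L^2$-convergent gradients), which is a welcome elaboration rather than a departure.
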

\proof Firstly, on the strength of Alaoglu's theorem, \eqref{e12.07}
follows from assertion (ii) of Lemma \ref{lem.11.3}, and
\eqref{e12.08} follows from assertion (iii) of Lemma
\ref{lem.11.3}.
Secondly, notice that measure
$\lim\limits_{\nu\to\infty}(m_{\gamma_\nu}+n_{\gamma_\nu}-\delta_{(\lambda=u_*)}\vert\nabla_xu_*\vert^2)$
is nonnegative, since $m_{\gamma_\nu}$ is nonnegative for any $\gamma_\nu$ and
$\lim\limits_{\nu\to\infty}(n_{\gamma_\nu}-\delta_{(\lambda=u_*)}\vert\nabla_xu_*\vert^2)$ is
nonnegative due to lower semicontinuity property. Thus $m_*$ is nonnegative.
Finally, inclusion \eqref{e12.09} follows from Corollary
\ref{cor.11.3}, and inclusion \eqref{e12.10}
follows from representation \eqref{e11.26ii} and condition
\eqref{u1.07v}.

 Lemma \ref{lem.12.2} is proved. \qed

\begin{lemma}
\label{lem.12.3}%
The limiting quadruple $(u_*,m_*,n_*,\mathcal{Z}_*)$, defined in
Lemmas \ref{lem.12.1} and \ref{lem.12.2}, resolves the kinetic
equation
\begin{equation}
\label{e12.12}%
\partial_t\chi(\lambda;u_*)+a'(\lambda)\partial_s\chi(\lambda;u_*)
+\boldsymbol{\varphi}'(\lambda)\cdot\nabla_x\chi(\lambda;u_*)-
\Delta_x\chi(\lambda;u_*)=\partial_\lambda(m_*+n_*+\mathcal{Z}_*)
\end{equation}
in the sense of distributions.
\end{lemma}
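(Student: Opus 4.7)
The plan is to start from the distributional formulation of the kinetic equation \eqref{e11.26} satisfied by each $u_{\gamma_\nu}$. Testing against an arbitrary $\Phi\in C_0^2(G_{T,S}\times\mathbb{R}_\lambda)$ and integrating by parts, we have
\begin{multline*}
\int\limits_{G_{T,S}}\int\limits_{\mathbb{R}_\lambda}\chi(\lambda;u_{\gamma_\nu})\bigl(\partial_t\Phi+a'(\lambda)\partial_s\Phi+\boldsymbol{\varphi}'(\lambda)\cdot\nabla_x\Phi+\Delta_x\Phi\bigr)d\lambda d\boldsymbol{x}dtds\\
=-\langle m_{\gamma_\nu}+n_{\gamma_\nu}+\mathcal{Z}_{\gamma_\nu},\,\partial_\lambda\Phi\rangle,
\end{multline*}
with $\{\gamma_\nu\to 0+\}$ the subsequence produced jointly by Lemmas \ref{lem.12.1} and \ref{lem.12.2}. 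By Corollaries \ref{cor.11.1}, \ref{cor.11.3} and Lemma \ref{lem.12.2}, the supports of $\chi(\lambda;u_{\gamma_\nu})$, $m_{\gamma_\nu}+n_{\gamma_\nu}$ and $\mathcal{Z}_{\gamma_\nu}$ all lie in the layer $\{|\lambda|\leqslant M_7(T)\}$ uniformly in $\nu$, so both sides effectively involve only a bounded slice of $\lambda$.

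For the left-hand side, combining the strong convergence \eqref{e12.05} with item (ii) of Lemma \ref{lem.3.01} gives $\chi(\lambda;u_{\gamma_\nu})\to\chi(\lambda;u_*)$ strongly in $L^1(G_{T,S}\times[-M_7(T),M_7(T)])$; since each of $\partial_t\Phi$, $a'(\lambda)\partial_s\Phi$, $\boldsymbol{\varphi}'(\lambda)\cdot\nabla_x\Phi$ and $\Delta_x\Phi$ is bounded on $\mathrm{supp\,}\Phi$, we may pass to the limit term by term. For the right-hand side, the weak$^*$ convergence \eqref{e12.11} in $\mathcal{M}(G_{T,S}\times\mathbb{R}_\lambda)$, paired against the test element $\partial_\lambda\Phi\in C_0(G_{T,S}\times\mathbb{R}_\lambda)$, yields $\langle m_{\gamma_\nu}+n_{\gamma_\nu}+\mathcal{Z}_{\gamma_\nu},\partial_\lambda\Phi\rangle\to\langle m_*+n_*+\mathcal{Z}_*,\partial_\lambda\Phi\rangle$. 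Collecting both limits, we obtain the integral identity
\begin{multline*}
\int\limits_{G_{T,S}}\int\limits_{\mathbb{R}_\lambda}\chi(\lambda;u_*)\bigl(\partial_t\Phi+a'(\lambda)\partial_s\Phi+\boldsymbol{\varphi}'(\lambda)\cdot\nabla_x\Phi+\Delta_x\Phi\bigr)d\lambda d\boldsymbol{x}dtds\\
=-\langle m_*+n_*+\mathcal{Z}_*,\,\partial_\lambda\Phi\rangle,\quad\forall\,\Phi\in C_0^2(G_{T,S}\times\mathbb{R}_\lambda),
\end{multline*}
which is precisely the distributional meaning of \eqref{e12.12}.

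There is no real obstacle left at this stage: the genuine nonlinearity, the compensated compactness argument and the extraction of weak$^*$ limits of measures have all been discharged in Lemmas \ref{lem.12.1} and \ref{lem.12.2}. The conceptual point that makes the limiting procedure immediate is that the whole $\gamma$-dependence of the source has been recast as a single measure $\mathcal{Z}_\gamma$ via the equivalent formulation \eqref{e3.01bis}--\eqref{e5.06ii}; had we instead worked with the original right-hand side $\delta_{(\lambda=0)}Z_\gamma+Z_\gamma\partial_\lambda\chi(\lambda;u_\gamma)$, we would have had to contend with a singular product of the Dirac mass $\delta_{(t=\tau-0)}$ with the (merely $L^\infty$) function $\chi(\lambda;u_\gamma)$, a manipulation that is not justified without further compactness. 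The present packaging sidesteps this issue entirely, leaving only routine weak/weak$^*$ limits to be combined. \qed
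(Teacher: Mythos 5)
Your proposal is correct and follows essentially the same route as the paper: the paper's own proof is a two-line statement that one passes to the limit in the distributional form of \eqref{e11.26} using the convergence of the $\chi$-functions (\eqref{e12.01} together with the identification \eqref{e12.06}, for which your strong $L^1$ argument via Lemma \ref{lem.3.01}(ii) and \eqref{e12.05} is an equivalent substitute) and the weak$^*$ convergence \eqref{e12.11} of the measures. Your write-up merely makes explicit the term-by-term limits and the role of the compactly supported test functions, which is exactly what the paper leaves implicit.
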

\proof The proof is achieved by limiting passage in \eqref{e11.26}
as $\gamma_\nu\underset{\nu\to\infty}{\longrightarrow}0+$. This
passage relies on the limiting relations \eqref{e12.01},
\eqref{e12.11} and representation \eqref{e12.06}. \qed

\begin{lemma}
\label{lem.12.4}%
Let the quadruple $u_*\in L^\infty(G_{T,S})\cap
L^2((0,T)\times(0,S);\text{\it\r{W}}{}_2^{\,1}(\Omega))$,
$m_*\in\mathcal{M}^+(G_{T,S}\times[-M_7(T),M_7(T)])$,
$n_*=\delta_{(\lambda=u_*)}|\nabla_x u_*|^2$ and
$\mathcal{Z}_*\in\mathcal{M}(G_{T,S}\times[-M_7(T),M_7(T)])$
resolve the kinetic equation \eqref{e12.12} in the sense of
distributions. (As the matter of fact, we consider the case when
$u_*=\lim\limits_{\nu\to\infty}u_{\gamma_\nu}$.)

Then there exists $u_{*0}^{\mathrm{tr},(1)}\in L^\infty(\Xi^1)$ ---
the trace of $u_*$ on $\Gamma_0^1$ --- such that
\begin{equation}
\label{e12.13}%
\underset{t\to0+}{\mathrm{ess\,lim}}\int\limits_{\Xi^1} \left\vert
u_*(\boldsymbol{x},t,s)-
u_{*0}^{\mathrm{tr},(1)}(\boldsymbol{x},s)\right\vert\,d\boldsymbol{x}ds=0,
\end{equation}
\begin{equation}
\label{e12.14}%
\underset{t\to0+}{\mathrm{ess\,lim}}\int\limits_{-M_7(T)}^{M_7(T)}\int\limits_{\Xi^1}
\left\vert\chi(\lambda;u_*(\boldsymbol{x},t,s))-
\chi(\lambda;u_{*0}^{\mathrm{tr},(1)}(\boldsymbol{x},s))\right\vert\,d\boldsymbol{x}dsd\lambda=0.
\end{equation}

Furthermore, for any fixed $\tau_*\in(0,T)$ there exist the both
one-sided traces $u_{*\tau_*\pm 0}^{\mathrm{tr},(1)}\in
L^\infty(\Xi^1)$ such that
\begin{equation}
\label{e12.15}%
\underset{t\to\tau_*\pm0}{\mathrm{esslim}}\int\limits_{\Xi^1}
\left\vert u_*(\boldsymbol{x},t,s)-
u_{*\tau_{*}\pm0}^{\mathrm{tr},(1)}(\boldsymbol{x},s)\right\vert\,d\boldsymbol{x}ds=0,
\end{equation}
\begin{equation}
\label{e12.16}%
\underset{t\to\tau_*\pm0}{\mathrm{esslim}}\int\limits_{-M_7(T)}^{M_7(T)}\int\limits_{\Xi^1}
\left\vert\chi(\lambda;u_*(\boldsymbol{x},t,s))-
\chi(\lambda;u_{*\tau_{*}\pm0}^{\mathrm{tr},(1)}(\boldsymbol{x},s))\right\vert\,d\boldsymbol{x}dsd\lambda=0.
\end{equation}
In particular, \eqref{e12.15} and \eqref{e12.16} are valid for
$\tau_*=\tau$.
\end{lemma}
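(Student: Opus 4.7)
The plan is to reduce Lemma \ref{lem.12.4} to Lemmas \ref{lem.5.1} and \ref{lem.5.2} by exploiting the fact that the singular support of $\mathcal Z_*$ is confined to $\{t=\tau\}$. Concretely, by \eqref{e12.10}, on any open time interval $I\subset(0,T)$ with $\tau\notin\overline{I}$ the kinetic equation \eqref{e12.12} restricts to
\begin{equation*}
\partial_t\chi(\lambda;u_*)+a'(\lambda)\partial_s\chi(\lambda;u_*)+\boldsymbol{\varphi}'(\lambda)\cdot\nabla_x\chi(\lambda;u_*)-\Delta_x\chi(\lambda;u_*)=\partial_\lambda(m_*+n_*)
\end{equation*}
on $\Omega\times I\times(0,S)\times\mathbb{R}_\lambda$. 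This is exactly the form \eqref{e5.02} with $f=\chi(\lambda;u_*)$ and $k=m_*+n_*$, to which Proposition \ref{prop.5.1} (or its natural modification with $t=0+$ replaced by $t=\tau_*\pm 0$, cf. Lemma \ref{lem.5.2}) applies.

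The first step is to verify the hypotheses of Proposition \ref{prop.5.1}. The compact-support condition \eqref{e5.03} holds with $M=M_7(T)$ thanks to the maximum principle \eqref{e12.06i}; the measure $m_*+n_*$ is locally finite on $G_{T,S}\times[-M_7(T),M_7(T)]$ by Lemma \ref{lem.12.2} and \eqref{e12.09}; and the regularity $\partial_{x_i}\chi(\lambda;u_*)\in L_w^2(G_{T,S};\mathcal{M}(\mathbb{R}_\lambda))$ is obtained by repeating verbatim the Green-formula computation of \eqref{e5.06iii}, invoking $\nabla_x u_*\in L^2(G_{T,S})$ from \eqref{e12.06ii} together with $u_*\vert_{\Gamma_l}=0$ (which is inherited from $u_{\gamma_\nu}\vert_{\Gamma_l}=0$ and the $L^1$-convergence in \eqref{e12.05}).

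The second step is to apply Proposition \ref{prop.5.1} on the slab $\Omega\times(0,\tau/2)\times(0,S)\times\mathbb{R}_\lambda$ to produce an averaged trace $f_0\in L^\infty(\Xi^1\times\mathbb{R}_\lambda)$ at $t=0+$. Setting $u_{*0}^{\mathrm{tr},(1)}:=\int_{-M_7(T)}^{M_7(T)} f_0\,d\lambda$ and invoking assertions (i)--(ii) of Lemma \ref{lem.3.01} followed by Lemma \ref{lem.3.02}, exactly as in the final part of the proof of Lemma \ref{lem.5.1}, yields \eqref{e12.13} and \eqref{e12.14}; in particular $f_0(\boldsymbol{x},s,\lambda)=\chi(\lambda;u_{*0}^{\mathrm{tr},(1)}(\boldsymbol{x},s))$ by Remark \ref{rem.5.2}. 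For a general $\tau_*\in(0,T)$ the same scheme, applied on a slab $\Omega\times J_\pm\times(0,S)\times\mathbb{R}_\lambda$ with $J_-\subset(0,\tau_*)$ and $J_+\subset(\tau_*,T)$ chosen not to meet $\tau$, produces the one-sided traces $u_{*\tau_*\pm 0}^{\mathrm{tr},(1)}$ and the relations \eqref{e12.15}--\eqref{e12.16}.

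The only point requiring care is the case $\tau_*=\tau$, which is the key interest in view of the forthcoming impulsive analysis. Here I would take $J_-=(\tau/2,\tau)$ and $J_+=(\tau,(T+\tau)/2)$: both strictly avoid the set $\{t=\tau\}$, so the restricted kinetic equation is again free of the singular term $\partial_\lambda\mathcal Z_*$, and the time-shifted versions of Definition \ref{def.5.1} and Proposition \ref{prop.5.1} deliver the traces from the left and from the right independently. The a priori obstacle — that $\mathcal Z_*$ is concentrated precisely at $t=\tau$ — is thus circumvented by the one-sided (essential) character of the limits in \eqref{e12.15}--\eqref{e12.16}. The remaining work is purely a matter of bookkeeping identical to the proofs of Lemmas \ref{lem.5.1} and \ref{lem.5.2}, and I would accordingly omit the repeated computations, merely indicating the substitution $t=0+ \rightsquigarrow t=\tau_*\pm 0$ in the formulations and proofs.
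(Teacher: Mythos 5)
Your proposal is correct and follows essentially the same route as the paper, which disposes of this lemma by declaring it ``a minor modification of the proofs of Lemmas \ref{lem.5.1} and \ref{lem.5.2}'' resting on Lemma \ref{lem.12.3} and Corollary \ref{cor.12.1}; you have simply filled in the details the paper omits (verification of the hypotheses of Proposition \ref{prop.5.1} via \eqref{e12.06i}, \eqref{e12.06ii} and the Green-formula computation \eqref{e5.06iii}, and localization to open time slabs on which $\mathcal Z_*$ vanishes by \eqref{e12.10}). The treatment of $\tau_*=\tau$ by working on one-sided open intervals that miss $\{t=\tau\}$ is exactly the right way to neutralize the singular term, and the rest is the bookkeeping the paper itself skips.
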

\noindent {\it Proof} relies on Lemma \ref{lem.12.3} and Corollary \ref{cor.12.1}.
It is just a minor modification of the proofs of Lemmas
\ref{lem.5.1} and \ref{lem.5.2}; therefore we skip it. \qed

\begin{lemma}
\label{lem.12.5}%
Let $(u_*,m_*,n_*,\mathcal{Z}_*)$ be the limiting point of the
sequence $\left\{
\left(u_{\gamma_\nu},m_{\gamma_\nu},n_{\gamma_\nu},\mathcal{Z}_{\gamma_\nu}\right)
\right\}$ (as $\nu\to +\infty$) and resolve the kinetic equation
\eqref{e12.12} in the sense of distributions.

Then $u_*\in C(0,\tau;L^1(\Xi^1))\cap C(\tau,T;L^1(\Xi^1))$. In
particular, the initial condition
\begin{equation}
\label{e12.17}%
\underset{t\to0+}{\mathrm{ess\,lim}}\int\limits_{\Xi^1} \left\vert
u_*(\boldsymbol{x},t,s)-
u_{0}^{(1)}(\boldsymbol{x},s)\right\vert\,d\boldsymbol{x}ds=0
\end{equation}
holds true.
\end{lemma}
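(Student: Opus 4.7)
The plan is to adapt the scheme of Lemma~\ref{lem.6.1} on each of the two intervals $(0,\tau)$ and $(\tau,T)$, exploiting that the singular source drops out away from $t=\tau$. Since $\supp K_{\gamma_\nu}(\cdot,\tau)\subset[\tau-\gamma_\nu,\tau]$, for any fixed $\tau'\in(0,\tau)$ and every $\gamma_\nu<\tau-\tau'$ the kinetic solution $u_{\gamma_\nu}$ solves a source-free equation on $\overline{\Omega}\times[0,\tau']\times[0,S]$; likewise on $\overline{\Omega}\times[\tau,T]\times[0,S]$ the source vanishes identically for every $\gamma_\nu$. So on each such slab the strategy of Lemma~\ref{lem.6.1} applies directly.

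First I would establish a uniform-in-$\gamma_\nu$ bound on $\partial_t u_{\gamma_\nu}$. Starting from the weak formulation \eqref{e2.02} at the regularized level and using the uniform bounds \eqref{e11.10} and \eqref{e11.30} of Lemmas~\ref{lem.11.2} and~\ref{lem.11.4}, I would derive
\[
\langle\partial_t u_{\gamma_\nu\varepsilon}(t),\phi\rangle_{W^{-1,2}(\Xi^1),\text{\it\r{W}}{}_2^{\,1}(\Xi^1)}\leq C_{13}\|\phi\|_{W_2^1(\Xi^1)}\quad\forall\,\phi\in\text{\it\r{W}}{}_2^{\,1}(\Xi^1),
\]
uniformly for $\varepsilon\in(0,1]$, $\gamma_\nu\in(0,\gamma_0/2]$, and $t\in(0,\tau')\cup(\tau,T)$. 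Passing $\varepsilon\to 0+$ via Remark~\ref{rem.7.2} transfers the bound to $\partial_t u_{\gamma_\nu}$ in the distributional sense, so $\{u_{\gamma_\nu}\}$ is equicontinuous into $W^{-1,2}(\Xi^1)$ on each of $[0,\tau']$ and $[\tau,T]$. Combined with the uniform $L^\infty$-bound \eqref{e11.44} and Rellich compactness of $L^2(\Xi^1)$ in $W^{-1,2}(\Xi^1)$, the Arzel\`a-Ascoli theorem yields relative compactness of $\{u_{\gamma_\nu}\}$ in $C([0,\tau'];W^{-1,2}(\Xi^1))$ and in $C([\tau,T];W^{-1,2}(\Xi^1))$; the strong $L^1$-convergence of Lemma~\ref{lem.12.1} identifies the limit as $u_*$, so $u_*\in C([0,\tau'];W^{-1,2}(\Xi^1))\cap C([\tau,T];W^{-1,2}(\Xi^1))$.

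To verify \eqref{e12.17} I would chain three facts: Lemma~\ref{lem.6.1} applied to each $u_{\gamma_\nu}$ gives $u_{\gamma_\nu}(\cdot,t,\cdot)\to u_0^{(1)}$ in $W^{-1,2}(\Xi^1)$ as $t\to 0+$ uniformly in $\nu$; the compactness established above gives $u_{\gamma_\nu}(\cdot,t,\cdot)\to u_*(\cdot,t,\cdot)$ in $W^{-1,2}(\Xi^1)$ uniformly on $[0,\tau']$; and \eqref{e12.13} of Lemma~\ref{lem.12.4} gives $u_*(\cdot,t,\cdot)\to u_{*0}^{\mathrm{tr},(1)}$ in $W^{-1,2}(\Xi^1)$ as $t\to 0+$. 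The triangle inequality forces $u_{*0}^{\mathrm{tr},(1)}=u_0^{(1)}$ a.e. on $\Xi^1$; substituting back into \eqref{e12.13} yields \eqref{e12.17}.

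Finally, to upgrade $W^{-1,2}$-continuity to $L^1$-continuity on $(0,\tau)\cup(\tau,T)$, fix $t_0$ in either open interval. The analog of Lemma~\ref{lem.12.4} with $\tau_*=t_0$ supplies one-sided $L^1$-strong traces $u_{*t_0\pm 0}^{\mathrm{tr},(1)}\in L^\infty(\Xi^1)$. Since the continuous embedding $L^1(\Xi^1)\hookrightarrow W^{-1,2}(\Xi^1)$ is injective, both $L^1$-traces must coincide with the $W^{-1,2}$-value $u_*(\cdot,t_0,\cdot)$ of the continuous representative, and hence with each other; this yields $L^1$-continuity at $t_0$. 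Letting $\tau'\nearrow\tau$ completes the proof. The main obstacle I expect is cleanly extracting the uniform-in-$\gamma_\nu$ bound on $\partial_t u_{\gamma_\nu}$ on $[0,\tau']$; fortunately the maximum principle of Lemma~\ref{lem.11.4} (itself relying on the localization condition \eqref{u1.07v}) and the refined energy estimate of Lemma~\ref{lem.11.2} (using \eqref{e9.08i}) together provide the needed uniform control of the nonlinear flux and diffusion terms.
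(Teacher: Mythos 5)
Your proposal is correct and follows essentially the same route as the paper: a uniform-in-$\gamma_\nu$ bound on $\partial_t u_{\gamma_\nu}$ in $L^\infty(0,\tau_0;W^{-1,2}(\Xi^1))$ away from $t=\tau$, equicontinuity plus Rellich and Arzel\`a--Ascoli in $C(0,\tau_0;W^{-1,2}(\Xi^1))$, and identification of the $L^1$-strong one-sided traces of Lemma \ref{lem.12.4} with the values of the $W^{-1,2}$-continuous representative via the triangle inequality. The only cosmetic caveats are that the paper obtains the time-derivative bound directly from the equation for $u_{\gamma_\nu}$ (taking $\eta(u)=\pm u$) rather than through the $\varepsilon$-regularization, and your appeal to a continuous embedding $L^1(\Xi^1)\hookrightarrow W^{-1,2}(\Xi^1)$ is not literally valid in dimension $\geqslant 2$ but is harmless here because the uniform $L^\infty$ bound \eqref{e11.44} lets you identify the limits within the compact set $\mathfrak{F}$.
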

\proof%
We keep track of the proof of Lemma \ref{lem.6.1}, with some
necessary natural modifications.

Taking $\eta(u_{\gamma_\nu})=\pm u_{\gamma_\nu}$ in \eqref{e3.04a}, we arrive at equation
\eqref{e1.01a} with $Z_{\gamma_\nu}=K_{\gamma_\nu}\beta$. In the
sense of distributions, this equation is equivalent to the integral
equality
\begin{multline}
\label{e12.18}%
\langle \partial_t
u_{\gamma_\nu}(t),\phi\rangle_{W^{-1,2}(\Xi^1),\text{\it\r{W}}{}_2^{\,1}(\Xi^1)}=\\
\int\limits_{\Xi^1}(a(u_{\gamma_\nu})\partial_s\phi+
\boldsymbol{\varphi}(u_{\gamma_\nu})\cdot\nabla_x\phi
-\nabla_xu_{\gamma_\nu}\cdot\nabla_x\phi
+K_{\gamma_\nu}(t,\tau)
\beta(\boldsymbol{x},s,u_{\gamma_\nu})\phi)\,d\boldsymbol{x}ds=0
\\ \forall\,\phi\in\text{\it\r{W}}{}_2^{\,1}(\Xi^1),\quad \forall\,
t\in[0,T],\quad \forall\, \gamma_\nu>0.
\end{multline}
Fixing arbitrarily $\tau_0\in(0,\tau)$, from \eqref{e12.18}, the
maximum principle \eqref{e11.44}, the energy estimate \eqref{e11.27}
and the localization property
$\mathrm{supp\,}K_{\gamma_\nu}\subset(\tau-\gamma_\nu,\tau]$, we
easily derive the bound
\begin{multline}
\label{e12.18-bis}%
\langle \partial_t
u_{\gamma_\nu}(t),\phi\rangle_{W^{-1,2}(\Xi^1),\text{\it\r{W}}{}_2^{\,1}(\Xi^1)}
\leqslant C_{13}(\tau_0)\Vert\phi\Vert_{W_2^{\,1}(\Xi^1)} \\
\forall\,\phi\in\text{\it\r{W}}{}_2^{\,1}(\Xi^1),\quad \forall\, t\in[0,\tau_0], \quad
\forall\, \gamma_\nu\in\left(0,\min\left\{\tau_0,\frac{\gamma_0}2\right\}\right),
\end{multline}
where
\[
C_{13}(\tau_0)=\sqrt{S\,\mathrm{meas\,}\Omega}\left(\Vert
a\Vert_{C([-M_8(\tau_0),M_8(\tau_0)])}+
\Vert\boldsymbol{\varphi}\Vert_{C([-M_8(\tau_0),M_8(\tau_0)])}
\right)+2 C_6.
\]
This means that the family $\{\partial_t
u_{\gamma_\nu}\}_{\gamma_\nu\in\left(0,\min\left\{\tau_0,
\frac{\gamma_0}2\right\}\right)}$ is uniformly bounded in
$L^\infty(0,\tau_0;W^{-1,2}(\Xi^1))$. Hence the family of functions
$\{u_{\gamma_\nu}\!:\;[0,\tau_0]\mapsto W^{-1,2}(\Xi^1)\}
_{\gamma_\nu\in\left(0,\min\left\{\tau_0,
\frac{\gamma_0}2\right\}\right)}$ is equicontinuous, and we have
\begin{equation}
\label{e12.19}%
\begin{split}
u_{\gamma_\nu}(\cdot,t,\cdot)\underset{t\to t_0}{\longrightarrow}
u_{\gamma_\nu}(\cdot,t_0,\cdot) & \text{ strongly in
}W^{-1,2}(\Xi^1)\\ & \text{ uniformly in
}\gamma_\nu\in\left(0,\min\left\{\tau_0,
\frac{\gamma_0}2\right\}\right)\text{ for a.e. }
t_0\in[0,\tau_0].
\end{split}
\end{equation}
Here, in the case $t_0=0$, the right-sided limit is meant, and, in the
case $t_0=\tau_0$,  the left-sided limit is meant.

On the other hand, due to \eqref{e11.44}, values of the mappings
$t\mapsto u_{\gamma_\nu}(\cdot,t,\cdot)$ belong to the set
\[
\mathfrak{F}\overset{\mathrm{def}}{=}\{\phi\in L^2(\Xi^1)\!:\;
\underset{(\boldsymbol{x},s)\in\Xi^1}{\mathrm{ess\,sup}}\vert\phi(\boldsymbol{x},s)\vert\leqslant
M_7(\tau)\},
\]
which is a compact subset in $W^{-1,2}(\Xi^1)$ by the Rellich
theorem. Therefore, by the Arcel theorem, the set
$\{u_{\gamma_\nu}\}_{\gamma_\nu\in\left(0,\min\left\{\tau_0,
\frac{\gamma_0}2\right\}\right)}$ is relatively compact in
$C(0,\tau_0;W^{-1,2}(\Xi^1))$. Hence
\begin{equation}
\label{e12.20}%
u_{\gamma_\nu}(\cdot,t,\cdot)\underset{t\to\infty}{\longrightarrow}
u_*(\cdot,t,\cdot)\text{ strongly in }W^{-1,2}(\Xi^1)\text{
uniformly on  the segment }[0,\tau_0].
\end{equation}
Here we recall that $\{\gamma_\nu\to0\}$ is the subsequence
extracted in the proof of Lemma \ref{lem.12.1}. Next, from
\eqref{e12.13} and \eqref{e12.15} it immediately follows that
\begin{equation}
\label{e12.21a}%
u_*(\cdot,t,\cdot)\underset{t\to0+}{\longrightarrow}
u_{*0}^{\mathrm{tr},(1)}\text{ strongly in }W^{-1,2}(\Xi^1),
\end{equation}
\begin{equation}
\label{e12.21b}%
u_*(\cdot,t,\cdot)\underset{t\to t_0\pm0}{\longrightarrow}
u_{*t_0}^{\mathrm{tr},(1)}\text{ strongly in
}W^{-1,2}(\Xi^1)\quad\forall\, t_0\in(0,\tau_0),
\end{equation}
\begin{equation}
\label{e12.21c}%
u_*(\cdot,t,\cdot)\underset{t\to\tau_0-0}{\longrightarrow}
u_{*t_0-0}^{\mathrm{tr},(1)}\text{ strongly in }W^{-1,2}(\Xi^1).
\end{equation}

From \eqref{e12.19}--\eqref{e12.21c}, by the triangle inequality we
deduce that
\begin{equation}
\label{e12.22}%
\begin{split}
& u_{*0}^{\mathrm{tr},(1)}(\boldsymbol{x},s)=u_0^{(1)}(\boldsymbol{x},s),\quad u_{*\tau_0-0}^{\mathrm{tr},(1)}(\boldsymbol{x},s)=
u_*(\boldsymbol{x},\tau_0-0,s),\\
& u_{*t_0}^{\mathrm{tr},(1)}(\boldsymbol{x},s)=u_*(\boldsymbol{x},t_0,s)\quad
\forall\,t_0\in(0,\tau_0).
\end{split}
\end{equation}
Using arbitrariness of $\tau_0\in(0,\tau)$, the localization
property $\mathrm{supp\,}K_{\gamma_\nu}\subset(\tau-\gamma_\nu,\tau]$
and the fact that the set $\mathfrak{F}$ does not depend on
$\tau_0$,  and taking the sequences
$\tau_{0\mu}\underset{\mu\to\infty}{\longrightarrow}\tau-0$ and
$\gamma_\mu\in\left(0,\min\{\tau_{0\mu},\frac{\gamma_0}2\}\right)$
($\gamma_\mu\leqslant \gamma_\nu$), we deduce that equalities
\eqref{e12.22} hold true with $\tau$ on the place of $\tau_0$.
Inserting $u_0^{(1)}(\boldsymbol{x},s)$ on the place of
$u_{*0}^{\mathrm{tr},(1)}(\boldsymbol{x},s)$ in \eqref{e12.13}, $u_*(\vx,t_0,s)$ and $u_*(\vx,\tau-0,s)$ on the places of $u_{*t_0\pm 0}^{tr,(1)}(\vx,s)$ and
$u_{*\tau-0}^{\mathrm{tr},(1)}(\boldsymbol{x},s)$ in
\eqref{e12.15}, we finally establish that $u_*\in
C(0,\tau;L^1(\Xi^1))$.

Quite analogously, we verify that $u_*\in C(\tau,T;L^1(\Xi^1))$ and
thus complete the proof of the lemma.\qed

The following lemma finalizes derivation of the
impulsive kinetic equation \eqref{e9.04a-bis}.

\begin{lemma}
\label{lem.12.6}%
The limiting measure
$\mathcal{Z}_*=\mbox{w*-}\lim\limits_{\nu\to\infty}\mathcal{Z}_{\gamma_\nu}$
admits the representation
\begin{equation}
\label{e12.23}%
\mathcal{Z}_*=\delta_{(t=\tau-0)}\boldsymbol{1}_{(\lambda\geqslant
u_*(\boldsymbol{x},t,s))}\beta(\boldsymbol{x},s,u_*(\boldsymbol{x},t,s)),
  \end{equation}
which is understood in the sense of distributions and therefore can
be equivalently written as follows:
\begin{equation}
\label{e12.24}%
\langle\mathcal{Z}_*,\phi\rangle=\int\limits_{\Xi^1}
\int\limits_{\mathbb{R}_\lambda}\mathbf{1}_{(\lambda\geqslant
u_*(\boldsymbol{x},\tau-0,s))}\beta(\boldsymbol{x},s,u_*(\boldsymbol{x},\tau-0,s))\phi(\boldsymbol{x},\tau-0,s,\lambda)\,d\boldsymbol{x}dsd\lambda
\quad \forall\,\phi\in C_0(G_{T,S}\times\mathbb{R}_\lambda).
  \end{equation}
\end{lemma}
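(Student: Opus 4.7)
The plan is to verify the distributional identity \eqref{e12.24} by testing $\mathcal{Z}_{\gamma_\nu}$ against an arbitrary $\phi\in C_0(G_{T,S}\times\RR_\lambda)$ and passing to the limit $\nu\to\infty$. Writing $\mathcal{Z}_{\gamma_\nu}(\vx,t,s,\lambda)=K_{\gamma_\nu}(t,\tau)\,\mathcal{Q}(\vx,s,\lambda,u_{\gamma_\nu}(\vx,t,s))$ with
\[
\mathcal{Q}(\vx,s,\lambda,u):=\mathbf{1}_{\lambda\geqslant u}\beta(\vx,s,\lambda)-\int\limits_0^\lambda\mathbf{1}_{\lambda'\geqslant u}\partial_{\lambda'}\beta(\vx,s,\lambda')\,d\lambda',
\]
Fubini rewrites the pairing as $\langle\mathcal{Z}_{\gamma_\nu},\phi\rangle=\int_0^T K_{\gamma_\nu}(t,\tau)\,G_\nu(t)\,dt$, where
\[
G_\nu(t):=\int\limits_{\Xi^1\times\RR_\lambda}\mathcal{Q}(\vx,s,\lambda,u_{\gamma_\nu}(\vx,t,s))\,\phi(\vx,t,s,\lambda)\,d\vx\,ds\,d\lambda,
\]
and let $G_*(t)$ denote the analogue obtained by replacing $u_{\gamma_\nu}$ with $u_*$.

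The argument proceeds in three stages. First, using Lemma~\ref{lem.3.01}(ii) (namely $\int_\RR|\mathbf{1}_{\lambda\geqslant u}-\mathbf{1}_{\lambda\geqslant v}|\,d\lambda=|u-v|$), the localization \eqref{u1.07v} and bound \eqref{u1.07iv} on $\beta$, and uniform continuity of $\phi$, one derives the Lipschitz-type estimates
\[
|G_\nu(t)-G_*(t)|\leqslant C_{\beta,\phi}\,\|u_{\gamma_\nu}(\cdot,t,\cdot)-u_*(\cdot,t,\cdot)\|_{L^1(\Xi^1)}
\]
and $|G_*(t)-G_*(t')|\leqslant C_{\beta,\phi}(\|u_*(\cdot,t,\cdot)-u_*(\cdot,t',\cdot)\|_{L^1(\Xi^1)}+\omega_\phi(|t-t'|))$; combined with the left-continuity $u_*\in C(0,\tau;L^1(\Xi^1))$ from Lemma~\ref{lem.12.5}, this produces a finite left-limit $G_*(\tau-0)$. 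Secondly, the boundedness of $G_*$ on $[0,T]$ and its left-trace at $\tau$ permit invoking \eqref{u1.09} of Remark~\ref{rem.1.4} to get $\int_0^T K_{\gamma_\nu}(t,\tau)G_*(t)\,dt\to G_*(\tau-0)$. Thirdly, the remainder
\[
R_\nu:=\int\limits_0^T K_{\gamma_\nu}(t,\tau)[G_\nu(t)-G_*(t)]\,dt
\]
must be shown to vanish, which via the first estimate reduces to controlling $\int_0^T K_{\gamma_\nu}(t,\tau)\|u_{\gamma_\nu}(\cdot,t,\cdot)-u_*(\cdot,t,\cdot)\|_{L^1(\Xi^1)}\,dt$.

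The main obstacle is this last vanishing of $R_\nu$: since $K_{\gamma_\nu}$ concentrates on the shrinking interval $[\tau-\gamma_\nu,\tau]$ with amplitude $O(\gamma_\nu^{-1})$, the integrated strong $L^1(G_{T,S})$-convergence of Lemma~\ref{lem.12.1} is not by itself enough. I would overcome this by combining the uniform-in-$\nu$ Lipschitz-in-time bound \eqref{e12.18-bis} in $W^{-1,2}(\Xi^1)$ with the $L^\infty$ maximum principle \eqref{e11.44} and the Rellich compact embedding $L^2(\Xi^1)\hookrightarrow\hookrightarrow W^{-1,2}(\Xi^1)$, as in the proof of Lemma~\ref{lem.12.5}, to upgrade relative compactness of $\{u_{\gamma_\nu}\}$ to $C([0,\tau_0];W^{-1,2}(\Xi^1))$ for every $\tau_0<\tau$, and then strengthen to $L^1$-control via the uniform $L^\infty$-bound combined with a diagonal argument as $\tau_0\to\tau-0$ and $\gamma_\nu\to 0+$. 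Finally, identification of $G_*(\tau-0)$ with the right-hand side of \eqref{e12.24} follows from a direct integration-by-parts in $\lambda'$ inside $\mathcal{Q}$, which shows that $\mathcal{Q}(\vx,s,\lambda,u)$ coincides with $\mathbf{1}_{\lambda\geqslant u}\beta(\vx,s,u)$ modulo an additive summand independent of $\lambda$; since such a summand lies in $\ker\partial_\lambda$ and enters the kinetic equation \eqref{e12.12} only through the $\partial_\lambda$-operator applied to $\mathcal{Z}_*$, it can be dropped in the representative of the limiting measure, thereby establishing the representation \eqref{e12.23}.
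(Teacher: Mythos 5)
Your overall skeleton (rewrite $\langle\mathcal{Z}_{\gamma_\nu},\phi\rangle=\int_0^T K_{\gamma_\nu}(t,\tau)G_\nu(t)\,dt$ by Fubini, identify the left trace of the smooth part via \eqref{u1.09}, and reduce $\mathcal{Q}$ to $\mathbf{1}_{(\lambda\geqslant u)}\beta(\vx,s,u)$ by integration by parts in $\lambda$) matches the paper's computation \eqref{e12.25}--\eqref{e12.26}. But the step you yourself flag as the main obstacle --- the vanishing of $R_\nu=\int_0^T K_{\gamma_\nu}(t,\tau)\bigl(G_\nu(t)-G_*(t)\bigr)\,dt$ --- is where your argument genuinely fails, and the tool you propose cannot repair it. The equicontinuity bound \eqref{e12.18-bis} is stated, and is only true, for $t\in[0,\tau_0]$ with $\gamma_\nu<\tau_0<\tau$, i.e.\ precisely on the region where $K_{\gamma_\nu}(\cdot,\tau)$ vanishes; on the active window $(\tau-\gamma_\nu,\tau]$ the source contributes $K_{\gamma_\nu}\beta=O(\gamma_\nu^{-1})$ to $\partial_t u_{\gamma_\nu}$ in $W^{-1,2}(\Xi^1)$, so there is no uniform-in-$\nu$ equicontinuity there, and Arzel\`a--Ascoli/Rellich gives you nothing about $u_{\gamma_\nu}(\cdot,t,\cdot)$ for $t$ in that window. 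Since \emph{all} of the mass of $K_{\gamma_\nu}$ sits in $(\tau-\gamma_\nu,\tau]$, a diagonal argument over fixed $\tau_0\to\tau-0$ never reaches the set that actually matters, and $R_\nu\to0$ is not established. The paper's way out is a structural observation you are missing: because $\supp K_{\gamma_\nu}(\cdot,\tau)\subset[\tau-\gamma_\nu,\tau]$, the functions $u_{\gamma_\nu}$ and $u_*$ solve the \emph{same} problem on $\Omega\times(0,\tau-\gamma_\nu)\times(0,S)$, so by the uniqueness part of Theorem \ref{theo.3.1} one has the exact identity $u_{\gamma_\nu}=u_*$ on $\Xi^1\times[0,\tau-\gamma_\nu]$ (identity \eqref{e12.27}); the paper then substitutes $t=\tau-\zeta\gamma_\nu$, invokes this identity together with $u_*\in C(0,\tau;L^1(\Xi^1))$ from Lemma \ref{lem.12.5}, and concludes by dominated convergence. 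Without \eqref{e12.27} (or an equivalent quantitative control of $u_{\gamma_\nu}$ \emph{inside} the impulse window), your limit passage does not close.

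A secondary but real flaw is in your last step. You argue that the $\lambda$-independent summand produced by integrating $\int_0^\lambda\mathbf{1}_{(\lambda'\geqslant u)}\partial_{\lambda'}\beta\,d\lambda'$ ``can be dropped in the representative of the limiting measure'' because it lies in $\ker\partial_\lambda$. That reasoning would identify $\partial_\lambda\mathcal{Z}_*$, but the lemma asserts the representation \eqref{e12.24} of the measure $\mathcal{Z}_*$ itself, tested against arbitrary $\phi\in C_0(G_{T,S}\times\mathbb{R}_\lambda)$ --- not only against $\lambda$-derivatives of test functions. To prove \eqref{e12.24} as stated you must show that this summand is actually absent from $\mathcal{Z}_{\gamma_\nu}$ as defined in \eqref{e11.26ii} (a pointwise computation distinguishing the signs of $u$), not merely that it is invisible in the kinetic equation \eqref{e12.12}.
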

\proof Let $\phi\in C_0(G_{T,S}\times\mathbb{R}_\lambda)$ be an
arbitrary test function. Set
$$\Phi(\boldsymbol{x},t,s,\lambda)=\int\limits_{-\infty}^\lambda\phi(\boldsymbol{x},t,s,\lambda')\,d\lambda',$$
i.e., $\Phi$ is the primitive of $\phi$ with respect to $\lambda$.
On the strength of representation \eqref{e11.26ii} and Conditions on
$K_\gamma\&\beta$, the following chain of equalities holds true:
\begin{multline}
\label{e12.25}%
\langle\mathcal{Z}_{\gamma_\nu},\phi\rangle=\int\limits_{G_{T,S}}
\int\limits_{\mathbb{R}_\lambda}\mathbf{1}_{(\lambda\geqslant
u_{\gamma_\nu})}K_{\gamma_\nu}(t,\tau)\beta(\boldsymbol{x},s,\lambda)
\phi(\boldsymbol{x},t,s,\lambda)\,d\lambda d\boldsymbol{x}dtds-
\\
\int\limits_{G_{T,S}} \int\limits_{\mathbb{R}_\lambda}
\phi(\boldsymbol{x},t,s,\lambda)\int\limits_0^\lambda
\mathbf{1}_{(\lambda'\geqslant
u_{\gamma_\nu})}K_{\gamma_\nu}(t,\tau)\partial_{\lambda'}\beta(\boldsymbol{x},s,\lambda')
\,d\lambda'd\lambda d\boldsymbol{x}dtds=
\\
\int\limits_0^T K_{\gamma_\nu}(t,\tau) \int\limits_{\Xi^1}
 \int\limits_{u_{\gamma_\nu}}^{+\infty}
\beta(\boldsymbol{x},s,\lambda)\phi(\boldsymbol{x},t,s,\lambda)\,d\lambda
d\boldsymbol{x}dtds-
\\
\int\limits_0^T K_{\gamma_\nu}(t,\tau) \int\limits_{\Xi^1}
\int\limits_{\mathbb{R}_\lambda}\phi(\boldsymbol{x},t,s,\lambda)
(\beta(\boldsymbol{x},s,\lambda)-\beta(\boldsymbol{x},s,u_{\gamma_\nu}))
\mathbf{1}_{(\lambda\geqslant u_{\gamma_\nu})}\,d\lambda
d\boldsymbol{x}dtds=
\\
\int\limits_0^T K_{\gamma_\nu}(t,\tau) \int\limits_{\Xi^1}
\beta(\boldsymbol{x},s,u_{\gamma_\nu})\int\limits_{u_{\gamma_\nu}}^{+\infty}\phi(\boldsymbol{x},t,s,\lambda)\,d\lambda
d\boldsymbol{x}dtds=
\\
-\int\limits_{\tau-\gamma_\nu}^\tau\frac{2}{\gamma_\nu}
\omega\left(\frac{t-\tau}{\gamma_\nu}\right) \int\limits_{\Xi^1}
\beta(\boldsymbol{x},s,u_{\gamma_\nu}(\boldsymbol{x},t,s))\Phi(\boldsymbol{x},t,s,u_{\gamma_\nu}(\boldsymbol{x},t,s))\,d\boldsymbol{x}dtds\equiv
I_{\gamma_\nu}.
  \end{multline}
Let us change variable $t$ for
$\displaystyle \zeta=\frac{\tau-t}{\gamma_\nu}$. Taking into account that $\omega$
is even, for $\gamma_\nu<\gamma_0$ we have
\begin{equation}
\label{e12.26}%
I_{\gamma_\nu}=-\int\limits_0^1\int\limits_{\Xi^1}2\omega(\zeta)\beta(\boldsymbol{x},s,u_{\gamma_\nu}(\boldsymbol{x},\tau-\zeta\gamma_\nu,s))
\Phi(\boldsymbol{x},\tau-\zeta\gamma_\nu,s,u_{\gamma_\nu}(\boldsymbol{x},\tau-\zeta\gamma_\nu,s))\,d\boldsymbol{x}dsd\zeta.
\end{equation}
Recall that $Z_\gamma=K_\gamma\beta\equiv0$ for
$t\in[0,\tau-\gamma]$. Therefore, $u_{\gamma_\nu}$ and $u_*$ are
kinetic and entropy solutions of the same problem on the set
$\{(\boldsymbol{x},t,s)\in\Omega\times[0,\tau-\gamma]\times[0,S]\}$,
i.e., Problem $\Pi_\gamma$ (or $\Pi_0$). Since
$u_*=\lim\limits_{\nu\to\infty}u_{\gamma_\nu}$, this implies that
\begin{equation}
\label{e12.27}%
u_{\gamma_\nu}(\boldsymbol{x},t,s)=u_*(\boldsymbol{x},t,s)\text{ on
}\Xi^1\times[0,\tau-\gamma_\nu]
\end{equation}
On the strength of Lemma \ref{lem.12.5} and identity \eqref{e12.27},
we conclude that
\begin{equation}
\label{e12.28}%
u_{\gamma_\nu}(\boldsymbol{x},\tau-\zeta\gamma_\nu,s)\underset{\nu\to\infty}{\longrightarrow}u_*(\boldsymbol{x},\tau-0,s)\quad \text{a.e. in }\Xi^1\times\{0<\zeta<1\}.
\end{equation}
Since $\beta$ and $\Phi$ are smooth and bounded, using the Lebesgue
dominated convergence theorem \cite[Theorem 1.4.48]{TAO}, we establish
the limiting relation
\begin{equation}
\label{e12.29}%
I_{\gamma_\nu}\underset{\nu\to\infty}{\longrightarrow}
-\int\limits_0^1\int\limits_{\Xi^1}2\omega(\zeta)
\beta(\boldsymbol{x},s,u_*(\boldsymbol{x},\tau-0,s))\Phi(\boldsymbol{x},\tau-0,s,u_*(\boldsymbol{x},\tau-0,s))\,d\boldsymbol{x}dsd\zeta.
\end{equation}
Taking into account that $\int\limits_0^12\omega(\zeta)\,d\zeta=1$
and
\[
\Phi(\boldsymbol{x},\tau-0,s,u_*(\boldsymbol{x},\tau-0,s))=-\int\limits_{\mathbb{R}_\lambda}
\mathbf{1}_{(\lambda\geqslant
u_*(\boldsymbol{x},\tau-0,s))}\phi(\boldsymbol{x},\tau-0,s,\lambda)\,d\lambda
\]
and combining \eqref{e12.25}, \eqref{e12.29} and
$\mathcal{Z}_*=\mbox{w*-}\lim\limits_{\nu\to\infty}\mathcal{Z}_{\gamma_\nu}$,
we arrive at the limiting relation \eqref{e12.24}, which completes
the proof of the lemma.\qed

\begin{corollary}
\label{cor.12.2} {\bf (Corollary of Lemmas \ref{lem.12.3} and \ref{lem.12.6}.)}
 The limiting triple
$(u_*,m_*,n_*)$, defined by Lemmas \ref{lem.12.1} and
\ref{lem.12.2}, resolves the kinetic equation \eqref{e9.04a-bis} in
the sense of distributions.
  \end{corollary}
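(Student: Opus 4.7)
The proof plan is essentially to combine the two preceding lemmas, since Corollary \ref{cor.12.2} is by design a direct consequence of Lemma \ref{lem.12.3} and Lemma \ref{lem.12.6}. The main work has already been done: Lemma \ref{lem.12.3} provides the distributional identity
\[
\partial_t\chi(\lambda;u_*)+a'(\lambda)\partial_s\chi(\lambda;u_*)+\boldsymbol{\varphi}'(\lambda)\cdot\nabla_x\chi(\lambda;u_*)-\Delta_x\chi(\lambda;u_*)=\partial_\lambda(m_*+n_*+\mathcal{Z}_*),
\]
where $\mathcal{Z}_*\in\mathcal{M}(G_{T,S}\times\mathbb{R}_\lambda)$ is the weak$^*$ limit of the family $\{\mathcal{Z}_{\gamma_\nu}\}$ produced in Lemma \ref{lem.12.2}. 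Lemma \ref{lem.12.6} identifies this limit explicitly as $\mathcal{Z}_*=\delta_{(t=\tau-0)}\mathbf{1}_{(\lambda\geqslant u_*(\boldsymbol{x},t,s))}\beta(\boldsymbol{x},s,u_*(\boldsymbol{x},t,s))$, the identification being understood via the integral formula \eqref{e12.24}.

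The single step of the proof is therefore to substitute the expression for $\mathcal{Z}_*$ from Lemma \ref{lem.12.6} into the distributional equation of Lemma \ref{lem.12.3}. This substitution is legitimate because both sides of the equation from Lemma \ref{lem.12.3} are well-defined distributions on $G_{T,S}\times\mathbb{R}_\lambda$ acting on test functions $\Phi\in C_0^\infty(G_{T,S}\times\mathbb{R}_\lambda)$, and the pairing $\langle \mathcal{Z}_*,\partial_\lambda\Phi\rangle$ equals $\langle\delta_{(t=\tau-0)}\mathbf{1}_{(\lambda\geqslant u_*)}\beta(\boldsymbol{x},s,u_*),\partial_\lambda\Phi\rangle$ by virtue of Lemma \ref{lem.12.6}. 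One thereby obtains exactly equation \eqref{e9.04a-bis} satisfied by the triple $(u_*,m_*,n_*)$ in the sense of distributions.

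There is no substantive obstacle at this stage: all delicate steps, namely the uniform estimates of Lemma \ref{lem.11.3}, the compactness passage of Lemma \ref{lem.12.1}, the weak$^*$ convergence of the defect measures in Lemma \ref{lem.12.2}, the existence of the left trace $u_*(\boldsymbol{x},\tau-0,s)$ from Lemmas \ref{lem.12.4}--\ref{lem.12.5}, and the crucial identification of the concentrated limit measure $\mathcal{Z}_*$ in Lemma \ref{lem.12.6}, have already been carried out. The corollary is merely a rewriting of the conclusion of Lemma \ref{lem.12.3} once the explicit form of $\mathcal{Z}_*$ is inserted.
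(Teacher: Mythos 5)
Your proposal is correct and coincides with the paper's (implicit) argument: the corollary is obtained precisely by substituting the identification $\mathcal{Z}_*=\delta_{(t=\tau-0)}\mathbf{1}_{(\lambda\geqslant u_*)}\beta(\boldsymbol{x},s,u_*)$ from Lemma \ref{lem.12.6} into the distributional equation \eqref{e12.12} established in Lemma \ref{lem.12.3}, which yields \eqref{e9.04a-bis} immediately. The paper gives no separate proof for exactly this reason, and your remarks on why the substitution is legitimate (both sides being well-defined distributions paired against $\partial_\lambda\Phi$) are accurate.
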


\begin{corollary}
\label{cor.12.3} {\bf (Corollary of Lemmas \ref{lem.12.3} and \ref{lem.12.6}.)}
 The limiting triple
$(u_*,m_*,n_*)$, defined by Lemmas \ref{lem.12.1} and
\ref{lem.12.2}, resolves the kinetic equation \eqref{e9.04a} on
$\Omega\times((0,\tau)\cup(\tau,T))\times(0,S)\times\mathbb{R}_\lambda$
in the sense of distributions.
  \end{corollary}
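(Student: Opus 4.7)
The plan is to deduce Corollary \ref{cor.12.3} directly from Lemma \ref{lem.12.3} together with the support information on $\mathcal{Z}_*$ furnished by Lemma \ref{lem.12.2} (inclusion \eqref{e12.10}) and refined by Lemma \ref{lem.12.6}. Concretely, I would test the kinetic equation \eqref{e12.12} against an arbitrary $\phi\in C_0^\infty\bigl(\Omega\times((0,\tau)\cup(\tau,T))\times(0,S)\times\mathbb{R}_\lambda\bigr)$ and show that the $\mathcal{Z}_*$-contribution drops out.

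First I would record that, by Lemma \ref{lem.12.3}, the quadruple $(u_*,m_*,n_*,\mathcal{Z}_*)$ satisfies, for every $\Phi\in C_0^\infty(G_{T,S}\times\mathbb{R}_\lambda)$,
\begin{equation*}
\int\limits_{G_{T,S}\times\mathbb{R}_\lambda}\chi(\lambda;u_*)\bigl(\partial_t\Phi+a'(\lambda)\partial_s\Phi+\boldsymbol{\varphi}'(\lambda)\cdot\nabla_x\Phi+\Delta_x\Phi\bigr)\,d\boldsymbol{x}dtdsd\lambda=-\langle m_*+n_*+\mathcal{Z}_*,\partial_\lambda\Phi\rangle.
\end{equation*}
Next, I would invoke either inclusion \eqref{e12.10}, namely $\mathrm{supp}\,\mathcal{Z}_*\subset\Xi^1\times\{t=\tau\}\times[-b_1,b_1]$, or equivalently the explicit representation \eqref{e12.24}, which shows that $\mathcal{Z}_*$ is a distribution carried by the single time-slice $\{t=\tau-0\}$. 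Consequently, for any $\phi\in C_0^\infty\bigl(\Omega\times((0,\tau)\cup(\tau,T))\times(0,S)\times\mathbb{R}_\lambda\bigr)$, one has $\mathrm{supp}\,\phi\cap\mathrm{supp}\,\mathcal{Z}_*=\emptyset$, hence also $\mathrm{supp}\,\partial_\lambda\phi\cap\mathrm{supp}\,\mathcal{Z}_*=\emptyset$, and therefore
\begin{equation*}
\langle\partial_\lambda\mathcal{Z}_*,\phi\rangle=-\langle\mathcal{Z}_*,\partial_\lambda\phi\rangle=0.
\end{equation*}

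Substituting this cancellation into the distributional identity of Lemma \ref{lem.12.3} leaves precisely
\begin{equation*}
\partial_t\chi(\lambda;u_*)+a'(\lambda)\partial_s\chi(\lambda;u_*)+\boldsymbol{\varphi}'(\lambda)\cdot\nabla_x\chi(\lambda;u_*)-\Delta_x\chi(\lambda;u_*)=\partial_\lambda(m_*+n_*)
\end{equation*}
in $\mathcal{D}'\bigl(\Omega\times((0,\tau)\cup(\tau,T))\times(0,S)\times\mathbb{R}_\lambda\bigr)$, which is \eqref{e9.04a}. There is no real obstacle here: the whole content is the localization of $\mathcal{Z}_*$ at $t=\tau$, which has already been established in Section \ref{ImpulsiveKin}. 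As a sanity check for the argument, one can note that on each connected component one could equivalently pass to the limit directly in \eqref{e11.26}--\eqref{e11.26ii}: on $\Omega\times(0,\tau)\times(0,S)\times\mathbb{R}_\lambda$ the identity \eqref{e12.27} gives $u_{\gamma_\nu}=u_*$ for $\nu$ large and $K_{\gamma_\nu}\equiv0$ there, so $\mathcal{Z}_{\gamma_\nu}\equiv 0$; on $\Omega\times(\tau,T)\times(0,S)\times\mathbb{R}_\lambda$ one again has $K_{\gamma_\nu}\equiv 0$ for sufficiently small $\gamma_\nu$, so $\mathcal{Z}_{\gamma_\nu}$ vanishes on the support of any fixed test function, confirming the support-based reduction.
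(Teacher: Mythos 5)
Your proposal is correct and follows essentially the same route as the paper, which states this result as an immediate consequence of Lemma \ref{lem.12.3} (the distributional identity with $\partial_\lambda(m_*+n_*+\mathcal{Z}_*)$ on the right) and Lemma \ref{lem.12.6} (the localization of $\mathcal{Z}_*$ on the time-slice $\{t=\tau\}$), so that $\langle\mathcal{Z}_*,\partial_\lambda\phi\rangle=0$ for any test function supported away from $t=\tau$. Your additional sanity check via direct limiting passage in \eqref{e11.26}--\eqref{e11.26ii} on each connected component is consistent with the paper's supporting facts (\eqref{e12.27} and the localization of $K_{\gamma_\nu}$) and adds nothing problematic.
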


\section{Derivation of the kinetic impulsive condition (\ref{e9.04e})}
\label{ImpulsiveKC}
In view of Lemma \ref{lem.12.5} and representation \eqref{e12.24},
the impulsive kinetic equation \eqref{e9.04a-bis} is equivalent in
the sense of distributions to the integral equality
\begin{multline}
\label{e13.01}%
\int\limits_{\Xi^1\times(t_0,t_1)\times\mathbb{R}_\lambda}\chi(\lambda;u_*)
\left(\partial_t\phi+a'(\lambda)\partial_s\phi+\boldsymbol{\varphi}'(\lambda)\cdot\nabla_x\phi+\Delta_x\phi\right)
\,d\boldsymbol{x}dtdsd\lambda=
\\
\int\limits_{\Xi^1\times\mathbb{R}_\lambda}\left(\chi(\lambda;u_*(\boldsymbol{x},t_1,s))
\phi(\boldsymbol{x},t_1,s,\lambda)-\chi(\lambda;u_*(\boldsymbol{x},t_0,s))
\phi(\boldsymbol{x},t_0,s,\lambda)\right)\,d\boldsymbol{x}dsd\lambda+
\\
\int\limits_{\Xi^1\times\mathbb{R}_\lambda}\mathbf{1}_{(\tau\in(t_0,t_1))}
\mathbf{1}_{(\lambda\geqslant
u_*(\boldsymbol{x},\tau-0,s))}\beta(\boldsymbol{x},s,u_*(\boldsymbol{x},\tau-0,s))
\partial_\lambda\phi(\boldsymbol{x},\tau-0,s,\lambda)\,d\boldsymbol{x}dsd\lambda+
\\
\langle
m_*+n_*,\partial_\lambda\phi\rangle_{\mathcal{M}(\Xi^1\times[t_0,t_1]\times\mathbb{R}_\lambda),\,
C(\Xi^1\times[t_0,t_1]\times\mathbb{R}_\lambda)}
\end{multline}
for all values $t_0$, $t_1\in[0,T]$, such that $t_0<t_1$,
$t_0,t_1\neq\tau$, and for all test-functions $\phi\in
C^2(G_{T,S}\times\mathbb{R}_\lambda)$ vanishing in the neighborhood
of $\partial\Xi^1$ and for sufficiently large $\vert\lambda\vert$,
say, for $\vert\lambda\vert\geqslant\max\{M_7(T),b_1\}+1$. (Here $M_7(T)$
is given by \eqref{e11.30} and $b_1$ is given by \eqref{u1.07v}).

Taking
$\phi(\boldsymbol{x},t,s,\lambda)=\phi_1(\boldsymbol{x},s)\phi_2(\lambda)$,
where $\phi_2\equiv 1$ on $[-M_7(T), M_7(T)]$, and passing to the
limit as $t_0\to\tau-0$ and $t_1\to\tau+0$, from \eqref{e13.01} we
derive the integral inequality
\begin{multline}
\label{e13.02}%
\int\limits_{\Xi^1}\phi_1(\boldsymbol{x},s)\int\limits_{-M_7(T)}^{M_7(T)}
\left(\chi(\lambda;u_*(\boldsymbol{x},\tau+0,s))-\chi(\lambda;u_*(\boldsymbol{x},\tau-0,s))\right)\,d\lambda
d\boldsymbol{x}ds=
\\
\int\limits_{\Xi^1}\phi_1(\boldsymbol{x},s)\beta(\boldsymbol{x},s,u_*(\boldsymbol{x},\tau-0,s))\left(
\int\limits_{u_*(\boldsymbol{x},\tau-0,s)}^{+\infty}
\partial_\lambda \phi_2(\lambda)\,d\lambda\right)\,d\boldsymbol{x}ds.
\end{multline}
Here we took into account that function $\chi$ and measures $m_*$
and $n_*$ are supported on the segment
$\{-M_7(T)\leqslant\lambda\leqslant M_7(T)\}$. In the right-hand
side of \eqref{e13.02} notice that
\begin{equation}
\label{e13.03}%
\int\limits_{u_*(\boldsymbol{x},\tau-0,s)}^{+\infty}\partial_\lambda\phi_2(\lambda)\,d\lambda=-\phi_2(u_*(\boldsymbol{x},\tau-0,s))+1
  \end{equation}
and, using assertion (i) of Lemma \ref{lem.3.01}, represent
\begin{equation}
\label{e13.04}%
\beta(\boldsymbol{x},s,u_*(\boldsymbol{x},\tau-0,s))=
\int\limits_{-M_7(T)}^{M_7(T)}\chi(\lambda;u_*(\boldsymbol{x},\tau-0,s))\partial_\lambda\beta(\boldsymbol{x},s,\lambda)\,d\lambda+
\beta(\boldsymbol{x},s,0).
  \end{equation}
Combining \eqref{e13.02}--\eqref{e13.04}, we arrive at the integral
equality
\begin{multline}
\label{e13.05}%
\int\limits_{\Xi^1}\phi_1(\boldsymbol{x},s)\left(
\int\limits_{-M_7(T)}^{M_7(T)}\chi(\lambda;u_*(\boldsymbol{x},\tau+0,s))\,d\lambda\right)\,d\boldsymbol{x}ds=
\\
\int\limits_{\Xi^1}\phi_1(\boldsymbol{x},s)\left(
\int\limits_{-M_7(T)}^{M_7(T)}(1+\partial_\lambda\beta(\boldsymbol{x},s,\lambda))\chi(\lambda;u_*(\boldsymbol{x},\tau-0,s))\,d\lambda+
\beta(\boldsymbol{x},s,0)\right)\,d\boldsymbol{x}ds.
  \end{multline}
Using assertion (i) of Lemma \ref{lem.3.01} again, we rewrite
\eqref{e13.05} in the equivalent form
\begin{equation}
\label{e13.06}%
\int\limits_{\Xi^1}\phi_1(\boldsymbol{x},s)u_*(\boldsymbol{x},\tau+0,s)\,d\boldsymbol{x}ds=
\int\limits_{\Xi^1}\phi_1(\boldsymbol{x},s)\left(u_*(\boldsymbol{x},\tau-0,s)
+\beta(\boldsymbol{x},s,u_*(\boldsymbol{x},\tau-0,s))\right)\,d\boldsymbol{x}ds.
  \end{equation}
Since $\phi_1$ is arbitrary, \eqref{e13.06} immediately yields the
impulsive condition \eqref{e9.02b} for $u_*$.

In turn, \eqref{e9.02b} yields that integration in \eqref{e13.05} with respect to
$\lambda$ is fulfilled over interval $[-M_3,M_3]$. ($M_3$ is
defined in \eqref{e9.06}). Thus, we have established the
following result.

\begin{lemma}
 \label{lem.13.1}
The limiting function $u_*=\lim\limits_{\nu\to\infty}u_{\gamma_\nu}$
satisfies the impulsive condition \eqref{e9.02b} and the kinetic
impulsive condition \eqref{e9.04e}.
\end{lemma}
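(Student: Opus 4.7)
The plan is to start from the impulsive kinetic equation \eqref{e9.04a-bis}, which is available by Corollary \ref{cor.12.2} combined with the representation \eqref{e12.24} of $\mathcal{Z}_*$ furnished by Lemma \ref{lem.12.6}. Testing \eqref{e9.04a-bis} against an arbitrary $\phi \in C^2(G_{T,S}\times\mathbb{R}_\lambda)$ supported in the slab $\Xi^1 \times (t_0, t_1) \times \mathbb{R}_\lambda$ with $0 \le t_0 < \tau < t_1 \le T$ and vanishing near $\partial\Xi^1$ and for large $|\lambda|$, and integrating by parts in $t$, $s$ and $\vx$, yields \eqref{e13.01}. The boundary terms at $t = t_0$ and $t = t_1$ make sense thanks to the strong one-sided trace results of Lemmas \ref{lem.12.4}--\ref{lem.12.5}, and the singular summand in \eqref{e9.04a-bis} contributes precisely the third term in \eqref{e13.01}, involving $u_*(\vx, \tau - 0, s)$, by \eqref{e12.24}.

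Next, I would specialize to separated test functions $\phi(\vx,t,s,\lambda) = \phi_1(\vx,s)\phi_2(\lambda)$ with $\phi_1 \in C_0^\infty(\Xi^1)$ and $\phi_2 \in C^2(\mathbb{R}_\lambda)$ chosen so that $\phi_2 \equiv 1$ on the interval $[-M_7(T), M_7(T)]$ which contains $\supp\chi(\cdot;u_*)$, $\supp m_*$ and $\supp n_*$ by Corollary \ref{cor.11.3} and \eqref{e12.09}. With this choice, $\partial_\lambda \phi = \phi_1 \phi_2'$ vanishes on $\supp(m_*+n_*)$, so the defect contribution drops out; and passing to the limit $t_0 \to \tau - 0$, $t_1 \to \tau + 0$, the interior volume term on $\Xi^1 \times (t_0,t_1) \times \mathbb{R}_\lambda$ vanishes by absolute continuity of Lebesgue integration, while the boundary terms converge strongly by Lemma \ref{lem.12.5}. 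This produces exactly \eqref{e13.02}. Applying assertion (i) of Lemma \ref{lem.3.01} on the left-hand side and to $\beta$ via \eqref{e13.04}, together with the elementary identity \eqref{e13.03}, I obtain the reduced identity \eqref{e13.06}, and arbitrariness of $\phi_1 \in C_0^\infty(\Xi^1)$ gives the impulsive condition \eqref{e9.02b} almost everywhere on $\Xi^1$. The maximum principle derived from \eqref{e9.02b} combined with \eqref{e9.05}--\eqref{e9.06} yields $|u_*(\cdot,\tau+0,\cdot)| \le M_3$, which allows replacing $M_7(T)$ by $M_3$ in \eqref{e13.05} and reinterpreting it as the kinetic impulsive condition \eqref{e9.04e}.

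The main obstacle is the delicate limiting passage $t_0 \to \tau - 0$ and $t_1 \to \tau + 0$: the boundary $\chi$-terms must converge strongly, and the singular contribution in $\mathcal{Z}_*$ must be correctly localized at $t = \tau - 0$ rather than distributed between the two sides. The first issue is resolved by the existence of both one-sided strong $L^1$-traces $u_{*\tau \pm 0}^{\mathrm{tr},(1)}$ in Lemma \ref{lem.12.4}, combined with the continuity-up-to-$\tau$ information of Lemma \ref{lem.12.5}; the second issue is precisely the content of the representation \eqref{e12.24} established in Lemma \ref{lem.12.6}, whose proof via the even-symmetry of $\omega$ and the change of variable $\zeta = (\tau - t)/\gamma_\nu$ ensures that only the left trace $u_*(\vx, \tau-0, s)$ enters the limit. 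Once these two inputs are invoked, the remainder reduces to straightforward $\chi$-function manipulations via Lemma \ref{lem.3.01}.
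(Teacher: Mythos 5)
Your proposal is correct and follows essentially the same route as the paper: pass from the impulsive kinetic equation \eqref{e9.04a-bis} (via Corollary \ref{cor.12.2} and the representation \eqref{e12.24}) to the integral identity \eqref{e13.01} with one-sided time traces, specialize to separated test functions with $\phi_2\equiv 1$ on $[-M_7(T),M_7(T)]$ so that the defect measures drop out, let $t_0\to\tau-0$ and $t_1\to\tau+0$ using Lemmas \ref{lem.12.4}--\ref{lem.12.5} and the localization of $\mathcal{Z}_*$ at $t=\tau-0$ from Lemma \ref{lem.12.6}, and then reduce via Lemma \ref{lem.3.01}(i) to \eqref{e13.06}, hence \eqref{e9.02b}, with the support adjustment to $[-M_3,M_3]$ giving \eqref{e9.04e}. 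The only cosmetic slip is describing $\phi$ as ``supported in the slab'' while simultaneously retaining boundary terms at $t=t_0,t_1$; your subsequent treatment makes clear you intend the latter, which is what the paper does.
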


Notice that in this section we also have justified the claim of
Remark \ref{rem.9.01-bis}, as a byproduct.

\section{Derivation of the kinetic boundary conditions (\ref{e9.04c}) and (\ref{e9.04d}).
Completion of the proof of Theorem \ref{theo.9.1}}
\label{Limited} %14
Recall the sequence $\{u_{\gamma_\nu}\}_{\nu\in{\mathbb N}}$ that
was introduced in the proof of Lemma \ref{lem.12.1}, so that the
limiting relation \eqref{e12.05} holds. Let us consider the traces
$u_{\gamma_\nu 0}^{tr,(2)}$ and $u_{\gamma_\nu S}^{tr,(2)}$ of
$u_{\gamma_\nu}$ on the sets $\Xi^2\times \{s=0+\}$ and $\Xi^2\times
\{s=S-0\}$, respectively. We start with the observation that, by the
Tartar theorem \cite{TAR-1975}, \cite[Chapter 3]{MNRR-1996}, there
exist a subsequence still denoted by
$\{u_{\gamma_\nu}\}_{{}^{\nu\to\infty}_{(\gamma_\nu \to 0+)}}$ and
two families of probability Radon measures $\Lambda_{x,t}^{(0)}$ and
$\Lambda_{x,t}^{(S)}$ supported uniformly on $[-M_3,M_3]$ such that
\begin{subequations} \label{e14.00}
\begin{eqnarray} \displaystyle \phi(u_{\gamma_\nu 0}^{tr,(2)}) \underset{\nu\to\infty}{\longrightarrow} \bar{\phi} \mbox{ weakly* in } L^\infty(\Xi^2),& & \displaystyle \bar{\phi}(\vx,t)=\int\limits_{{\mathbb R}_\lambda} \phi(\lambda) d\Lambda_{x,t}^{(0)}(\lambda), \label{e14.00a}\\
\displaystyle \zeta(u_{\gamma_\nu S}^{tr,(2)}) \underset{\nu\to\infty}{\longrightarrow} \bar{\zeta} \mbox{ weakly* in } L^\infty(\Xi^2),& & \displaystyle \bar{\zeta}(\vx,t)=\int\limits_{{\mathbb R}_\lambda} \zeta(\lambda) d\Lambda_{x,t}^{(S)}(\lambda) \label{e14.00b}
\end{eqnarray}
\end{subequations}
for all $\phi,\zeta\in C({\mathbb R}_\lambda)$.

Measures $\Lambda_{x,t}^{(0)}$ and $\Lambda_{x,t}^{(S)}$ are the
Young measures associated with the subsequences\linebreak
$\{u_{\gamma_\nu 0}^{tr,(2)}\}_{{}^{\nu\to\infty}_{(\gamma_\nu \to
0+)}}$ and $\{u_{\gamma_\nu
S}^{tr,(2)}\}_{{}^{\nu\to\infty}_{(\gamma_\nu \to 0+)}}$,
respectively. The mappings $(\vx,t)\mapsto \Lambda_{x,t}^{(0)}$
and\linebreak $(\vx,t)\mapsto \Lambda_{x,t}^{(S)}$ are weakly*
measurable. More precisely, they belong to the space
$L_w^\infty(\Xi^2;{\mathcal M}({\mathbb R}_\lambda))$ (for details,
see \cite[Chapter 3, Definition 2.7]{MNRR-1996}).

Now we are in a position to formulate and prove the following assertion. \begin{lemma} \label{lem.14.1}
The limiting function $u_*=\lim\limits_{\nu\to\infty}
u_{\gamma_\nu}$ satisfies the entropy boundary conditions
\eqref{e9.07d} and \eqref{e9.07e}.
\end{lemma}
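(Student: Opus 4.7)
The plan proceeds in four stages. First, by assertion 2 of Theorem \ref{theo.3.1} applied to Problem $\Pi_{\gamma_\nu}$, each $u_{\gamma_\nu}$ is an entropy solution and thus the boundary entropy inequalities \eqref{e3.04d}, \eqref{e3.04e} hold almost everywhere on $\Xi^2$ for its strong traces $u_{\gamma_\nu,0}^{\text{tr},(2)}$ and $u_{\gamma_\nu,S}^{\text{tr},(2)}$. Setting $\Psi_0(u,v):=q_a(v)-q_a(u)-\eta'(u)(a(v)-a(u))$, multiplying by an arbitrary nonnegative test function $\Theta\in C_0(\Xi^2)$ and integrating, I obtain
\[
\int_{\Xi^2}\Theta(\vx,t)\,\Psi_0\bigl(u_0^{(2)}(\vx,t),\,u_{\gamma_\nu,0}^{\text{tr},(2)}(\vx,t)\bigr)\,d\vx dt\leqslant 0,
\]
together with the corresponding reverse inequality at $s=S$ involving $u_S^{(2)}$ and $u_{\gamma_\nu,S}^{\text{tr},(2)}$.

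Second, I pass to the limit $\nu\to\infty$ using the Young measure representation \eqref{e14.00}. The traces are bounded uniformly in $L^\infty$ by $M_3$, and for each fixed $(\vx,t)$ the function $\lambda\mapsto\Psi_0(u_0^{(2)}(\vx,t),\lambda)$ is continuous on $[-M_3,M_3]$, so weak$^*$ convergence of $\Psi_0(u_0^{(2)},u_{\gamma_\nu,0}^{\text{tr},(2)})$ to $\int\Psi_0(u_0^{(2)},\cdot)\,d\Lambda_{\vx,t}^{(0)}$ in $L^\infty(\Xi^2)$ yields
\[
\int_{\Xi^2}\Theta(\vx,t)\int_{\mathbb R}\Psi_0\bigl(u_0^{(2)}(\vx,t),\lambda\bigr)\,d\Lambda_{\vx,t}^{(0)}(\lambda)\,d\vx dt\leqslant 0,
\]
and the analogous inequality at $s=S$ with $\Lambda_{\vx,t}^{(S)}$.

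Third, and this is the crux, I reduce the Young measures to Dirac masses. I would first extend the strong trace existence machinery of Lemma \ref{lem.5.3} to the limiting kinetic equation \eqref{e12.12}: by Lemma \ref{lem.12.6} the additional term $\partial_\lambda\mathcal{Z}_*$ is supported on the hyperplane $\{t=\tau\}$, so away from $t=\tau$ the equation \eqref{e12.12} has the same form as \eqref{e3.01a} with $Z_\gamma\equiv 0$, whence $u_*$ admits strong traces $u_{*,0}^{\text{tr},(2)},\,u_{*,S}^{\text{tr},(2)}\in L^\infty(\Xi^2)$ satisfying \eqref{e3.02}, \eqref{e3.03}. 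Next, I would combine the strong $L^1$-convergence $u_{\gamma_\nu}\to u_*$ from Lemma \ref{lem.12.1} with uniform-in-$\nu$ trace estimates furnished by the Aleksi\'c--Mitrovi\'c style theorem applied to the sequence $\{u_{\gamma_\nu}\}$, whose defect measures $m_{\gamma_\nu}+n_{\gamma_\nu}+\mathcal{Z}_{\gamma_\nu}$ are bounded uniformly in $\mathcal{M}(G_{T,S}\times\mathbb{R}_\lambda)$ by Lemma \ref{lem.11.3}. This would yield $u_{\gamma_\nu,0}^{\text{tr},(2)}\to u_{*,0}^{\text{tr},(2)}$ in $L^1(\Xi^2)$ and the analogous result at $s=S$; strong $L^1$ convergence then forces $\Lambda_{\vx,t}^{(0)}=\delta_{u_{*,0}^{\text{tr},(2)}(\vx,t)}$ and $\Lambda_{\vx,t}^{(S)}=\delta_{u_{*,S}^{\text{tr},(2)}(\vx,t)}$ for a.e. $(\vx,t)\in\Xi^2$.

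Fourth, substituting these Dirac masses into the two integrated inequalities, and using arbitrariness of the nonnegative $\Theta$, delivers \eqref{e9.07d} and \eqref{e9.07e} pointwise a.e.\ on $\Xi^2$, completing the proof. The main obstacle is clearly the third stage, namely the upgrade from strong interior $L^1$-convergence to strong trace convergence at $\{s=0\}$ and $\{s=S\}$. Interior compactness does not automatically transfer to the boundary, and the reduction of the Young measures to Dirac masses must exploit both the kinetic equation \eqref{e12.12} (in a uniform-in-$\nu$ form) and the genuine nonlinearity condition \eqref{e1.02}, along the lines of \cite{Kuz-2015,Kuz-2017,PAN-2005,V-2001}, in order to recover the needed boundary compactness.
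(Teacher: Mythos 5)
Your stages 1, 2 and 4 are sound and coincide with what the paper does at the final step (its inequality \eqref{e14.09a} is exactly your Young-measure limit of the boundary entropy condition). The problem is stage 3, which you yourself identify as the crux: you assert that $\Lambda_{\vx,t}^{(0)}$ and $\Lambda_{\vx,t}^{(S)}$ reduce to Dirac masses at the traces of $u_*$, and you propose to obtain this from strong $L^1(\Xi^2)$-convergence of the boundary traces $u_{\gamma_\nu,0}^{{\rm tr},(2)}\to u_{*,0}^{{\rm tr},(2)}$. That convergence is not furnished by the tools you cite. The Aleksi\'c--Mitrovi\'c machinery and the results of \cite{KV-2007,PAN-2005,V-2001} give \emph{existence} of a strong trace for a single solution of a kinetic equation; they do not give equicontinuity up to the boundary along a sequence of solutions, and interior $L^1$-compactness (Lemma \ref{lem.12.1}) does not transfer to the sections $\{s=0\}$, $\{s=S\}$ --- boundary layers in $s$ can in principle survive the limit. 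Even the paper's own stability estimate for traces (Lemma \ref{prop.7.2}) only controls the weighted quantity $\int a'(\lambda)\,|\chi(\lambda;u_{1,0}^{{\rm tr},(2)})-\chi(\lambda;u_{2,0}^{{\rm tr},(2)})|^2\,d\lambda$ with a sign-changing weight $a'$, which does not yield $L^1$-convergence of the traces. So as written, stage 3 is a declared intention, not a proof, and the whole argument hinges on it.

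The paper avoids the Dirac-mass reduction altogether, and this is worth internalizing. For $t<\tau$ no Young measures are needed at all: $\supp K_\gamma(\cdot,\tau)\subset[\tau-\gamma,\tau]$ forces $u_{\gamma}=u_*$ on $\Omega\times(0,\tau-\gamma)\times(0,S)$, hence $u_{\gamma 0}^{{\rm tr},(2)}\to u_{*0}^{{\rm tr},(2)}$ a.e.\ on $\Omega\times(0,\tau)$ and one passes to the limit in \eqref{e3.04d}--\eqref{e3.04e} directly. For $t>\tau$ the paper extracts only two pieces of information about the Young measures: taking $\eta(u)=\pm u$ in the weak formulation with the boundary-layer test function $\theta(\vx,t)\rho_\delta^0(s)$ gives the \emph{identity} $\int a(\lambda)\,d\Lambda_{\vx,t}^{(0)}(\lambda)=a(u_{*0}^{{\rm tr},(2)})$ (its \eqref{e14.07a}), while the convex entropy inequality with the same test function gives the one-sided bound $\int q_a(\lambda)\,d\Lambda_{\vx,t}^{(0)}(\lambda)\geqslant q_a(u_{*0}^{{\rm tr},(2)})$ (its \eqref{e14.08a}). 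Substituting these two facts into your stage-2 inequality immediately yields \eqref{e9.07d}, with the correct sign precisely because the $q_a$-term enters with coefficient $+1$; the $s=S$ case is symmetric. In short: you do not need the Young measures to be Dirac masses, only that their $a$-moment equals $a$ at the strong trace of $u_*$ and their $q_a$-moment dominates $q_a$ there. To repair your proof you should either supply a genuine argument for strong trace convergence (which appears difficult) or replace stage 3 by this moment computation.
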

\proof {\bf (1)} Since the kinetic equation \eqref{e9.04a} on $\Omega\times ((0,\tau)\cup(\tau,T))\times (0,S)\times {\mathbb R}_\lambda$ is exactly the kinetic equation \eqref{e3.01a} with $Z_\gamma \equiv 0$, similarly to Lemma \ref{lem.5.3} we conclude that there exist $u_{*0}^{tr,(2)},u_{*S}^{tr,(2)}\in L^\infty(\Xi^2)$ --- the traces of $u_*$ on $\Gamma_{0+}^2$ and $\Gamma_{S-0}^2$, respectively --- satisfying the limiting relations
\begin{subequations} \label{e14.01}
\begin{eqnarray}
& \displaystyle \underset{s\to 0+}{\mbox{ess\,lim}} \int\limits_{\Xi^2} |u_*(\vx,t,s)-u_{*0}^{tr,(2)}(\vx,t)|d\vx dt=0, \label{e14.01a}\\
& \displaystyle \underset{s\to S-0}{\mbox{ess\,lim}} \int\limits_{\Xi^2} |u_*(\vx,t,s)-u_{*S}^{tr,(2)}(\vx,t)|d\vx dt=0. \label{e14.01b}
\end{eqnarray}
\end{subequations}
{\bf (2)} Firstly, we consider the case $t\leqslant \tau$. Since $\supp K_\gamma(\cdot,\tau) \subset [\tau-\gamma,\tau]$, we deduce that
\begin{equation} \label{e14.02}
u_*(\vx,t,s)=u_\gamma(\vx,t,s) \quad \mbox{a.e. in} \quad \Omega \times (0,\tau-\gamma)\times (0,S).
\end{equation}
Along with Lemma \ref{lem.5.3}, this implies that
\begin{equation} \label{e14.03}
u_{*0}^{tr,(2)}(\vx,t)=u_{\gamma 0}^{tr,(2)}(\vx,t),\quad
u_{*S}^{tr,(2)}(\vx,t)=u_{\gamma S}^{tr,(2)}(\vx,t)\quad \mbox{for a.e. } (\vx,t)\in\Omega \times (0,\tau-\gamma).
\end{equation}
Hence,
\begin{subequations} \label{e14.04}
\begin{eqnarray}
& \displaystyle u_{\gamma 0}^{tr,(2)} \underset{\gamma\to 0+}{\longrightarrow} u_{*0}^{tr,(2)} \quad \mbox{a.e. in } \Omega\times (0,\tau), \label{e14.04a}\\
& \displaystyle u_{\gamma S}^{tr,(2)} \underset{\gamma\to 0+}{\longrightarrow} u_{*S}^{tr,(2)} \quad \mbox{a.e. in } \Omega\times (0,\tau). \label{e14.04b}
\end{eqnarray}
\end{subequations}
Since functions $a=a(\lambda)$ and $q_a=q_a(\lambda)$ are continuous, using \eqref{e14.04} we pass to the limit as $\gamma\to 0+$ in the entropy boundary conditions \eqref{e3.04d} (with $u_0^{tr,(2)}=u_{\gamma 0}^{tr,(2)}$) and \eqref{e3.04e} (with $u_S^{tr,(2)}=u_{\gamma S}^{tr,(2)}$) and derive the entropy boundary conditions \eqref{e9.07d} and \eqref{e9.07e} on $\Omega\times (0,\tau)$.\\
{\bf (3)} Secondly, we consider the case $t>\tau$. Since $\supp K_\gamma(\cdot,\tau) \subset [\tau-\gamma,\tau]$, the inequality \eqref{e3.04a} has the form \eqref{e9.07a} on $\Omega \times (\tau,T) \times (0,S)$ (with $u=u_\gamma$). Taking into account existence of traces $u_{\gamma 0}^{tr,(2)}$ and $u_{\gamma S}^{tr,(2)}$ on $\Omega \times (\tau,T)\times \{s=0+\}$ and $\Omega\times (\tau,T)\times \{s=S-0\}$, respectively, this inequality is equivalent in the sense of distributions to the integral inequality
\begin{multline} \label{e14.05}
\int\limits_\tau^T \int\limits_0^S \int\limits_\Omega \Bigl[\eta(u_\gamma) \partial_t \phi + q_a(u_\gamma) \partial_s \phi + \vq_\varphi(u_\gamma)\cdot\nabla_x\phi + \eta(u_\gamma) \Delta_x \phi - \eta''(u_\gamma) |\nabla_x u_\gamma|^2 \phi \Bigr] d\vx dsdt\\
-\int\limits_\tau^T \int\limits_\Omega \Bigl[q_a(u_{\gamma S}^{tr,(2)}) \phi(\vx,t,S) - q_a(u_{\gamma 0}^{tr,(2)}) \phi(\vx,t,0) \Bigr]d\vx dt \geqslant 0,
\end{multline}
where $\phi=\phi(\vx,t,s)$ is an arbitrary smooth nonnegative test-function vanishing in the neighborhood of the sections $\{t=\tau\}$ and $\{t=T\}$.\\
{\bf (4)} Due to sufficient arbitrariness of the test-function $\phi$, for $\eta(u_\gamma)=\pm u_\gamma$ inequality \eqref{e14.05} reduces to the integral equality
\begin{multline} \label{e14.06}
\int\limits_\tau^T \int\limits_0^S \int\limits_\Omega \Bigl[u_\gamma \partial_t \psi + a(u_\gamma) \partial_s \psi + \vvarphi(u_\gamma)\cdot\nabla_x\psi + u_\gamma \Delta_x \psi \Bigr] d\vx dsdt\\
-\int\limits_\tau^T \int\limits_\Omega \Bigl[a(u_{\gamma S}^{tr,(2)}) \psi(\vx,t,S) - a(u_{\gamma 0}^{tr,(2)}) \psi(\vx,t,0) \Bigr]d\vx dt = 0,
\end{multline}
where $\psi=\psi(\vx,t,s)$ is an arbitrary smooth (not necessarily nonnegative) test-function vanishing in the neighborhood of the sections $\{t=\tau\}$ and $\{t=T\}$. Substitute the test-function $\psi(\vx,t,s) =\theta(\vx,t) \rho_\delta^0(s)$ into \eqref{e14.06}, where $\theta$ is an arbitrary smooth test-function satisfying the above stated finiteness demands, and $\rho_\delta^0$ is defined by \eqref{e6.05} and \eqref{e6.07-Phi}. Passing to the limit in \eqref{e14.06} as $\gamma\to 0+$ along the subsequence $\{\gamma_\nu\}_{\nu\to\infty}$, using \eqref{e12.05} and \eqref{e14.00a} we establish that
\begin{multline*}
\int\limits_0^T \int\limits_0^\delta \int\limits_\Omega \Bigl[u_* \rho_\delta^0(s) \partial_t \theta + a(u_*)(\rho_\delta^0)'(s) \theta + \vvarphi(u_*) \rho_\delta^0(s)\cdot \nabla_x \theta + u_* \rho_\delta^0(s) \Delta_x\theta \Bigr] d\vx ds dt+\\
\int\limits_\tau^T \int\limits_\Omega \Biggl[\,\int\limits_{\RR_\lambda} a(\lambda) d\Lambda_{x,t}^{(0)}(\lambda) \Biggr] \theta d\vx dt=0.
\end{multline*}
From this, passing to the limit as $\delta\to 0+$, we derive the  relation
\begin{equation*}
\int\limits_\tau^T \int\limits_\Omega \Biggl[\,\int\limits_{\RR_\lambda} a(\lambda) d\Lambda_{x,t}^{(0)}(\lambda) \Biggr] \theta d\vx dt = \int\limits_\tau^T \int\limits_\Omega a(u_{*0}^{tr,(2)})\theta d\vx dt,
\end{equation*}
or, equivalently,
\begin{subequations} \label{e14.07}
\begin{equation} \label{e14.07a}
\int\limits_{\RR_\lambda} a(\lambda) d\Lambda_{x,t}^{(0)}(\lambda)  = a(u_{*0}^{tr,(2)}(\vx,t))\quad \mbox{for a.e. } (\vx,t)\in \Omega \times (\tau,T),
\end{equation}
due to arbitrariness of $\theta$.

Quite analogously, using the test-function $\rho_\delta^S(s)=\rho_\delta^0(S-s)$ in \eqref{e14.06} and limiting relations \eqref{e12.05} and \eqref{e14.00b}, we derive the limiting relation
\begin{equation} \label{e14.07b}
\int\limits_{\RR_\lambda} a(\lambda) d\Lambda_{x,t}^{(S)}(\lambda)  = a(u_{*S}^{tr,(2)}(\vx,t))\quad \mbox{for a.e. } (\vx,t)\in \Omega \times (\tau,T).
\end{equation}
\end{subequations}
{\bf (5)} Now substitute $\phi(\vx,t,s)=\theta(\vx,t) \rho_\delta^0(s)$ into \eqref{e14.05}. Here $\theta$ is nonnegative, so that $\phi$ is a valid test-function. Passing to the limit in \eqref{e14.05} as $\gamma\to 0+$ along the subsequence $\{\gamma_\nu\}_{\nu\to\infty}$, using \eqref{e12.05} and \eqref{e14.00a}, we establish that
\begin{multline*}
\int\limits_\tau^T \int\limits_0^\delta \int\limits_\Omega \Bigl[\eta(u_*) \rho_\delta^0(s) \partial_t \theta + q_a(u_*) (\rho_\delta^0)'(s) \theta + \vq_\varphi(u_*) \rho_\delta^0(s)\cdot\nabla_x\theta + \eta(u_*) \rho_\delta^0(s) \Delta_x \theta  \Bigr] d\vx dsdt +\\
\int\limits_\tau^T \int\limits_\Omega \Biggl[\,\int\limits_{\RR_\lambda} q_a(\lambda) d\Lambda_{x,t}^{(0)}(\lambda) \Biggr] \theta d\vx dt \geq \underset{\nu\to\infty}{\liminf} \int\limits_\tau^T \int\limits_0^\delta \int\limits_\Omega \eta''(u_{\gamma_\nu}) |\nabla_x u_{\gamma_\nu}|^2 \rho_\delta^0(s) \theta d\vx ds dt \geqslant 0.
\end{multline*}
From this, passing to the limit as $\delta\to 0+$, we derive the inequality
\begin{equation*}
\int\limits_\tau^T \int\limits_\Omega \Biggl[\,\int\limits_{\RR_\lambda} q_a(\lambda) d\Lambda_{x,t}^{(0)}(\lambda) \Biggr] \theta d\vx dt \geqslant \int\limits_\tau^T \int\limits_\Omega q_a(u_{*0}^{tr,(2)})\theta d\vx dt,
\end{equation*}
or, equivalently,
\begin{subequations} \label{e14.08}
\begin{equation} \label{e14.08a}
\int\limits_{\RR_\lambda} q_a(\lambda) d\Lambda_{x,t}^{(0)}(\lambda) \geqslant  q_a(u_{*0}^{tr,(2)}(\vx,t)) \quad \mbox{for a.e. } (\vx,t)\in \Omega\times (\tau,T),
\end{equation}
due to arbitrariness and non-negativeness of $\theta$.

Quite analogously, using the test-function $\rho_\delta^S(s) =\rho_\delta^0(S-s)$ in \eqref{e14.05} and the limiting relations \eqref{e12.05} and \eqref{e14.00b}, we derive the relation
 \begin{equation} \label{e14.08b}
q_a(u_{*S}^{tr,(2)}(\vx,t)) \geqslant \int\limits_{\RR_\lambda} q_a(\lambda) d\Lambda_{x,t}^{(S)}(\lambda)    \quad \mbox{for a.e. } (\vx,t)\in \Omega\times (\tau,T).
\end{equation}
\end{subequations}
{\bf (6)} Passing to the limit in the entropy boundary condition \eqref{e3.04d} (with $u_0^{tr,(2)}=u_{0\gamma}^{tr,(2)}$) on $\Omega\times (\tau,T)$ as $\gamma\to 0+$ along the subsequence $\{\gamma_\nu\}_{\nu\to\infty}$, using \eqref{e12.05} and \eqref{e14.00a}, we get
\begin{multline} \label{e14.09a}
\int\limits_{\RR_\lambda} q_a(\lambda) d\Lambda_{x,t}^{(0)}(\lambda) - q_a(u_0^{(2)}(\vx,t))-
\eta'(u_0^{(2)}(\vx,t))\Biggl(\,\int\limits_{\RR_\lambda} a(\lambda) d\Lambda_{x,t}^{(0)}(\lambda) - a(u_0^{(2)}(\vx,t)) \Biggr) \leqslant 0 \\ \mbox{for a.e. } (\vx,t)\in \Omega\times (\tau,T).
\end{multline}
Combining \eqref{e14.09a} with \eqref{e14.07a} and \eqref{e14.08a}, we establish \eqref{e9.07d} on $\Omega\times (\tau,T)$.

Analogously, using \eqref{e12.05}, \eqref{e14.00b}, \eqref{e14.07b} and \eqref{e14.08b}, from \eqref{e3.04e} we deduce \eqref{e9.07e} on $\Omega\times (\tau,T)$, which completes the proof of the lemma. \qed

\begin{lemma} \label{lem.14.2}
The limiting function $u_*=\lim\limits_{\nu\to\infty}
u_{\gamma_\nu}$ satisfies the kinetic boundary conditions
\eqref{e9.04c} and \eqref{e9.04d} with some nonnegative Radon
measures $\mu_{*0}^{(2)},\mu_{*S}^{(2)}\in\mathcal{M}^+(\Xi^2\times\mathbb{R}_\lambda)$ such
that $\mathrm{supp\,}\mu_{*0}^{(2)}, \,\mathrm{supp\,}\mu_{*S}^{(2)}\subset\Xi^2\times[-M_3,M_3]$.
\end{lemma}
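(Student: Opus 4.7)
\proof The plan is to deduce the kinetic boundary conditions directly from the entropy boundary conditions established in Lemma \ref{lem.14.1}, by repeating the construction carried out in the proof of Lemma \ref{lem.6.3} (see formulas \eqref{e6.12}--\eqref{e6.15}), only now applied to the limit $u_*$ and its one-sided traces $u_{*0}^{{\rm tr},(2)}$ and $u_{*S}^{{\rm tr},(2)}$ on $\Xi^2$ whose existence is guaranteed by the limiting relations \eqref{e14.01a}--\eqref{e14.01b}. No additional analytic work on the sequence $\{u_{\gamma_\nu}\}$ is required: the information about the limit has already been encoded in \eqref{e9.07d}--\eqref{e9.07e}.

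First, I would define the linear functional $\nu_{*0}^{(2)}$ on $C_0(\Xi^2\times\mathbb{R}_\lambda)$ by literally copying formula \eqref{e6.12}, namely
\begin{multline*}
\langle \nu_{*0}^{(2)},\Phi\rangle := \int\limits_{\Xi^2}\Big[\bigl(a(u_{*0}^{{\rm tr},(2)}(\boldsymbol{x},t))-a(u_0^{(2)}(\boldsymbol{x},t))\bigr)\langle \delta_{(\cdot=u_0^{(2)}(\boldsymbol{x},t))},\Phi(\boldsymbol{x},t,\cdot)\rangle \\
 - \int\limits_{-M_3}^{M_3} a'(\lambda)\bigl(\chi(\lambda;u_{*0}^{{\rm tr},(2)}(\boldsymbol{x},t))-\chi(\lambda;u_0^{(2)}(\boldsymbol{x},t))\bigr)\Phi(\boldsymbol{x},t,\lambda)\,d\lambda\Big]d\boldsymbol{x}dt,
\end{multline*}
and the analogous $\nu_{*S}^{(2)}$ with the opposite sign, from the traces at $s=S$. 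The maximum principle \eqref{e12.06i} combined with Lemma \ref{lem.5.3}-type arguments applied to $u_*$ yields $|u_{*0}^{{\rm tr},(2)}|,|u_{*S}^{{\rm tr},(2)}|\leqslant M_3$ a.e., so the supports of $\nu_{*0}^{(2)}$ and $\nu_{*S}^{(2)}$ lie in $\Xi^2\times[-M_3,M_3]$ and each functional belongs to $L_w^\infty(\Xi^2;\mathcal{M}[-M_3,M_3])$.

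Next, I would introduce the primitives $\mu_{*0}^{(2)},\mu_{*S}^{(2)}$ of $\nu_{*0}^{(2)},\nu_{*S}^{(2)}$ with respect to $\lambda$, as in \eqref{e6.13}. Applying assertion (i) of Lemma \ref{lem.3.01} to expand $q_a(v)-q_a(w)$ exactly as in \eqref{e6.10}--\eqref{e6.14} in Section \ref{sec.6} transcribes the entropy boundary inequalities \eqref{e9.07d}--\eqref{e9.07e} into the distributional inequalities $\langle \partial_\lambda\mu_{*0}^{(2)},\Theta\eta'\rangle\leqslant 0$ and the opposite one for $\mu_{*S}^{(2)}$, valid for all nonnegative $\Theta\in C_0^1(\Xi^2)$ and all convex $\eta\in C^2(\mathbb{R})$. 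Integration by parts converts these to $\langle \mu_{*0}^{(2)},\Theta\eta''\rangle\geqslant 0$ and $\langle \mu_{*S}^{(2)},\Theta\eta''\rangle\geqslant 0$. Since $\{\Theta\eta''\}$ with $\Theta,\eta''\geqslant 0$ has dense linear span in $L^1(\Xi^2;C[-M_3,M_3])$, it follows that $\mu_{*0}^{(2)},\mu_{*S}^{(2)}\in L_w^\infty(\Xi^2;\mathcal{M}^+[-M_3,M_3])$, in particular they are nonnegative Radon measures with the required support.

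Finally, by construction the pair $(u_{*0}^{{\rm tr},(2)},\mu_{*0}^{(2)})$ satisfies the integral identity of the form \eqref{e6.15}, which is the distributional form of the kinetic boundary condition \eqref{e9.04c}; the analogous identity for $(u_{*S}^{{\rm tr},(2)},\mu_{*S}^{(2)})$ yields \eqref{e9.04d}. The main (and essentially only) obstacle is ensuring that every step of the Section \ref{sec.6} chain transfers verbatim: this boils down to verifying that $u_{*0}^{{\rm tr},(2)}$ and $u_{*S}^{{\rm tr},(2)}$ are honest $L^1$-strong traces (which is \eqref{e14.01}) and that the $L^\infty$ bound $M_3$ is respected --- both of which are already in hand from the preceding lemmas, so the argument is essentially a rewriting. \qed
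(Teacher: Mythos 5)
Your proposal is correct and follows essentially the same route as the paper: the paper's proof of Lemma \ref{lem.14.2} likewise reduces the claim to the equivalence of the kinetic boundary conditions \eqref{e9.04c}--\eqref{e9.04d} with the entropy boundary conditions \eqref{e9.07d}--\eqref{e9.07e} already established in Lemma \ref{lem.14.1}, invoking verbatim the construction of $\nu^{(2)}$, its primitive $\mu^{(2)}$, and the density argument from formulas \eqref{e6.10}--\eqref{e6.15} in the proof of Lemma \ref{lem.6.3}. Your write-up simply spells out the steps the paper leaves implicit.
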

 \proof It is sufficient to notice that the kinetic boundary conditions \eqref{e9.04c} and \eqref{e9.04d} are equivalent to the entropy boundary conditions \eqref{e9.07d} and \eqref{e9.07e}, respectively. The proof of this claim is quite analogous to justification of equivalency of \eqref{e3.01c}--\eqref{e3.01d} to \eqref{e3.04d}--\eqref{e3.04e} in the proof of Lemma \ref{lem.6.3} (see formulas \eqref{e6.10}--\eqref{e6.15}).

 Lemma \ref{lem.14.2} is proved. \qed\\

 {\it Completion of the proof of Theorem \ref{theo.9.1}.} Collecting altogether the results of Corollary \ref{cor.12.3} and Lemmas \ref{lem.12.5}, \ref{lem.13.1} and \ref{lem.14.2}, we conclude that the limiting function $u_*$ of the extracted in the beginning of Section \ref{Limited} subsequence $\{u_{\gamma_\nu}\}_{{}^{\nu\to\infty}_{(\gamma_\nu \to 0+)}}$ is a kinetic solution of Problem $\Pi_0$ in the sense of Definition \ref{def.9.01}.
 Due to item 2 of Theorem \ref{theo.9.1}, function $u_*$ is the unique kinetic solution of Problem $\Pi_0$. Therefore, {\bf the whole family} $\{u_\gamma\}_{\gamma>0}$ tends to $u_*$ strongly in $L^1(G_{T,S})$ and weakly in $L^2((0,T)\times(0,S);\text{\it\r{W}}{}_2^{\,1}(\Omega))$, as $\gamma\to 0+$.

 Finally, due to item 3 of Theorem \ref{theo.9.1}, function $u_*$ is the unique entropy solution of Problem $\Pi_0$ in the sense of Definition \ref{def.9.02}. This observation completes the proof of item 1 of Theorem \ref{theo.9.1}.
 Thus, all assertions in Theorem \ref{theo.9.1} are proved. \qed

 % \frenchspacing

\section*{Acknowledgment}
\nocite{} The work was supported by the RSCF grant no. 19-11-00069.

\end{document}